\title[Horocycle dynamics and the eigenform loci]{Horocycle dynamics:
  new invariants  and
  eigenform loci in the stratum 
${\mathcal H}(1,1)$}
\author{Matt Bainbridge}
\address{University of Indiana {\tt mabainbr@indiana.edu}}
\author{John Smillie}
\address{University of Warwick {\tt j.smillie@warwick.ac.uk}}
\author{Barak Weiss}
\address{Tel Aviv University 
{\tt barakw@post.tau.ac.il}}
\newcommand\pr{\mathrm{pr}}
\newcommand\Blo{\mathrm{Blo}}
\newcommand\dev{\mathrm{dev}}
\newcommand\hol{\mathrm{hol}}
\newcommand{\HH}{{\mathcal{H}}}
\newcommand{\Q}{{\mathbb {Q}}}
\newcommand{\R}{{\mathbb{R}}}
\newcommand{\Res}{{\mathrm{Res}}}
\newcommand{\Z}{{\mathbb{Z}}}
\newcommand{\C}{{\mathbb{C}}}
\newcommand{\BB}{{\mathcal{B}}}
\newcommand{\CC}{{\mathcal{C}}}
\newcommand{\N}{{\mathbb{N}}}
\newcommand{\GL}{\operatorname{GL}}
\newcommand{\Mod}{\operatorname{Mod}}
\newcommand{\SL}{\operatorname{SL}}
\newcommand{\Lie}{\operatorname{Lie}}
\newcommand{\SO}{\operatorname{SO}}
\newcommand{\End}{{\rm End}}
\newcommand{\conv}{{\rm conv}}
\newcommand{\EE}{{\mathcal{E}}}
\newcommand{\LL}{{\mathcal L}}
\newcommand{\df}{{\, \stackrel{\mathrm{def}}{=}\, }}
\newcommand{\til}{\widetilde}
\newcommand{\supp}{{\rm supp}}
\newcommand{\rel}{{\mathrm{Rel}}}
\newcommand{\sm}{\smallsetminus}
\newcommand{\vre}{\varepsilon}
\newcommand{\zed}{\mathbb{Z}}
\newcommand{\ratls}{\mathbb{Q}}
\newcommand{\reals}{\mathbb{R}}
\DeclareMathOperator{\Jac}{Jac}
\DeclareMathOperator{\RE}{Re}
\newtheorem{thm}{Theorem}[section]
\newtheorem{lem}[thm]{Lemma}
\newtheorem{prop}[thm]{Proposition}
\newtheorem{cor}[thm]{Corollary}
\newtheorem{claim}{Claim}
\newtheorem{remark}[thm]{Remark}
\newtheorem{dfn}[thm]{Definition}
\begin{document}
\date{\today}


\begin{abstract}
We study dynamics of the horocycle flow on strata of translation
surfaces, introduce new invariants for ergodic measures, and analyze the
interaction of the horocycle flow and real Rel surgeries. 
We use 
this analysis to complete and extend results of Calta and Wortman
classifying horocycle-invariant measures in the eigenform loci. In addition we classify the 
horocycle orbit-closures and prove that every orbit is
equidistributed in its orbit-closure. We also prove equidistribution
results describing limits of sequences of measures. Our results have applications
to the problem of counting closed trajectories on translation surfaces of genus 2.
\end{abstract}

\maketitle
\tableofcontents
\listoffigures

\section{Introduction}
Translation surfaces arise naturally in many different mathematical
contexts, e.g. complex analysis, geometric group theory, geometry and
polygonal billiard dynamics. 
See \cite{MT, zorich survey, Wright_survey} for surveys of translation surface theory
from a dynamical perspective.
A {\em stratum} is a moduli space of translation surfaces of a given 
topological type 
(detailed definitions will be given in \S\ref{sec:strata}). There is a natural geometric action of $\SL_2(\R)$ on strata.
The one-parameter subgroup $U$ of upper triangular unipotent matrices
plays an especially interesting role. Understanding the dynamics of
the action of $\SL_2(\R)$ and $U$ is very useful in understanding
translation surface dynamics and geometry.  
We will give an example of this shortly.

The simplest stratum is the stratum $\HH(0)$ of the torus with one marked point. 
This stratum has the structure of the homogeneous space
$\SL_2(\R)/\SL_2(\Z)$. In this case we can identify the flow $U$ with
the classical horocycle flow.  The dynamical analysis of the horocycle flow in this setting is due to
Hedlund \cite{Hedlund} and Dani \cite{Dani 78, Dani SL2R}. 
For
general dynamical systems we have a description for typical orbits,
but there are usually orbits with exceptional behavior, e.g. we may expect orbit closures to
be fractal sets. 
In stark contrast with a typical dynamical system, for the horocycle
flow, the results of Hedlund and Dani fully describe the distribution
of {\em every} orbit: every orbit is either closed or
dense, and in particular each orbit closure is a manifold and supports a
natural measure. In addition every
dense orbit is uniformly distributed with respect to the globally
supported measure $\mu$. Limits of
measures supported on closed horocycles converge to $\mu$, as do limits of
expanded circle measures.

In the case of the torus, the $U$ flow on a stratum falls can be
analyzed as a particular case of a unipotent flow on a homogeneous
space. The analogy between homogeneous dynamics and
moduli space 
dynamics has played a major role in the development of
the theory. 
The simplest stratum which is not a homogeneous space is that of
translation surfaces of genus 
two. We do not currently have a theory of the dynamics of the
$U$-action on the genus two strata but we can refine the 
question. Since $U\subset\SL_2(\R)$ any $\SL_2(\R)$ orbit closure is
$U$-invariant. We can ask about the dynamics of horocycle flows on
$\SL_2(\R)$ orbit closures which we will call {\em loci} and denote by
$\LL$. In fact we will be able to understand the horocycle dynamics on
{\em proper} loci in genus two stratum components. 

Some interesting loci in genus two were discovered by Veech. He found
3-dimensional loci corresponding to regular pentagon and decagon
billiard tables. These loci correspond to closed $\SL_2(\R)$-orbits
and have the form $\SL_2(\R)/\Gamma$ where  $\Gamma\subset \SL_2(\R)$
is a non-uniform lattice. We will call such loci {\em Veech loci}.
In particular they are examples of homogeneous spaces, and the
horocycle flow on these loci behaves like the horocycle flow in the
torus case. 
Five dimensional loci in genus two were discovered by Calta and McMullen and are called eigenform loci. 
We denote them as $\EE_D(1,1)$ where $D \geq 4$ is an integer congruent to either 0 or 1
mod 4. When $D$ is not a square such a $D$ is the discriminant of a unique order in a quadratic number field and these
orders play a central role in the construction of $\EE_D(1,1)$. When $D$ is a square $\EE_D(1,1)$ consists of surfaces
which cover tori. We refer to this setting as the arithmetic case. See Section \ref{section: eigenform locus}
for more detailed information.
The collection of all loci in genus two was classified by McMullen
\cite{McMullen-SL(2)}. He showed that they are all either Veech loci
or eigenform loci. 

The first description 
 of ergodic invariant measures for eigenform loci is due to Calta and
 Wortman, see 
\cite{CW}. In this paper we will 
show that every result for the horocycle flow in $\SL_2(\R)/\Gamma$
has an analog
 for 
eigenform loci. 
In Theorem \ref{thm: Calta Wortman revisited} we prove a
classification of ergodic invariant measures, extending \cite{CW}. In Theorem \ref{thm:
  orbit closures} we completely describe orbit-closures and we
describe the distribution of each orbit. In Theorem \ref{thm: teaser1}
we describe how families of closed horocycle orbits of increasing
length behave. In Theorem \ref{thm: circle averages} we prove that
circles of increasing radii equidistribute relative to the smooth
measure.

In various settings a good understanding of the horocycle action on a
locus $\LL$ has lead to a quantitative understanding of the rate of
growth of closed orbits on translation surfaces in $\LL$ and the rate
of growth of closed billiard orbits for rational polygonal billiard
tables corresponding to surfaces in $\LL$. In particular proving the
equidistribution of large circles leads to this result. See
\cite{Veechstrata, Veech - alternative, EMS, EMM} and Theorem
\ref{thm: Matt absolved} for more information.

The question discussed in this paper concerning $U$-invariant ergodic
measures and $U$ orbit-closures is finer than the similar questions for
$\SL_2(\R)$-invariant measures and orbit-closures. For the latter
question, recent breakthroughs were made by Eskin, Mirzakhani and Mohammadi
\cite{EM, EMM}, extending the earlier work of McMullen
\cite{McMullen-SL(2)}. We refer to \cite{Wright_survey} for a survey of
these developments.

\subsection{The horocycle flow on eigenform loci}
Before stating our results we point out some ways in which horocycle dynamics in the eigenform loci differ from those in Veech loci. For any locus $\LL$ we can distinguish two types of horocycle
invariant orbits, those that are invariant under $\SL_2(\R)$ and those
that are not. Horocycle measures which are not $\SL_2(\R)$-invariant
come in continuous families which we call {\em beds}. For example in the Veech
loci 
these are continuous families of closed horocycle orbits, and 
for each Veech locus there are
finitely many such beds. These can be described in terms of surfaces with
cylinder decompositions of a fixed combinatorial type.  
They also form orbits under the action of the upper triangular group,
which normalizes the horocycle flow. Both of these descriptions admit
an interesting generalization in the context of eigenform loci.

Surfaces with horizontal cylinder decompositions have horizontal saddle connections and these
are preserved by the horocycle flow. In the setting of eigenform loci there are surfaces with
horizontal saddle connections which do not have cylinder decompositions as well as those that do.
A useful invariant to describe these beds is the `horizontal data
diagram' $\Xi(\mu)$ (see \S \ref{sec: separatrix 
  diagram}), recording topological and geometric structures which are
invariant under the $U$-action. This includes the number and length of
horizontal saddle connections, the topology of the complements, and
the incidence of these saddle connections at singularities. 
Horizontal data diagrams play a central role in describing certain beds.

It is a general 
principle of homogeneous dynamics that if we are interested in a flow
then the normalizer and centralizer of that flow take orbit closures
to orbit closures. The normalizer of the horocycle flow includes the
geodesic flow 
and for Veech loci, each bed is just a union of closed horocycles
which are permuted by the geodesic flow. In eigenform loci 
we have additional centralizing elements. 
These  
involve the real Rel vector fields, which correspond to local 
motion in a direction in a stratum which moves one singularity
horizontally with respect to another, while fixing absolute periods of
the surface. As observed by Calta \cite{Calta}, these vector fields
commute with the $U$-action and can  
be used to create new $U$-orbits out of old $U$-orbits. Combining these vector fields
with the geodesic flow vector field provides directions a family of vector fields which normalize 
the $U$-action, sending $U$-orbits to
$U$-orbits, while inducing a time-change on the orbits.

There is a fundamental difference between homogeneous 
spaces and strata of translation surfaces, in connection with these
vector fields. On homogeneous spaces, natural vector fields can be integrated to
define group actions on the space, and the centralizer and
the normalizer subgroups of the $U$-action play an important role in
studying the dynamics. In the case of strata,
motion in the real Rel directions is not globally
defined; i.e. the solution curves for the differential equation
defined by these vector fields may not be defined for all times. This
implies that the centralizer and normalizer of the horocycle
flow make sense locally but do not correspond to globally defined 
actions of Lie groups.  In \cite{EMM} issues related to locally defined flows are discussed using the
terminology of `pseudo group-actions'.  We will avoid this terminology here
and instead use the more standard language of vector fields and the trajectories they generate. 

In the first part of this paper (\S \ref{sec:strata} -- \S \ref{sec:
  separatrix diagram}) we discuss invariants for the
$U$-action which are defined in terms of the horizontal data diagram
and the real Rel vector field. These are discussed for arbitrary
strata and arbitrarily sub-loci. Then (from \S \ref{section: eigenform
locus} onwards) we specialize to the eigenform
loci in the stratum $\HH(1,1)$, and use these invariants to fully
describe the horocycle dynamics. 
The following result is the following measure classification result,
which is the 
basis for all of our subsequent results on the eigenform loci.

\begin{thm}\label{thm: Calta Wortman revisited} 
Let $D$ be as above and let $\mu$ be a $U$-invariant $U$-ergodic Borel probability measure on
$\EE_D(1,1)$. Then one of the following holds: 
\begin{enumerate}
\item 
Every surface in $\supp \, \mu$ has a horizontal cylinder
decomposition and $\mu$ is the length measure on a periodic $U$-orbit.  
\item 
Every surface in $\supp \, \mu$ has a horizontal cylinder
decomposition into three cylinders and $\mu$ is the area measure on a 2-dimensional
minimal set for the $U$-action. In this case $\mu$ is invariant under
the real Rel operation. 
\item
For every $M \in \supp \, \mu$, the horizontal data diagram $\Xi(M)$
contains two saddle  
connections, joining distinct singularities, whose union divides $M$ into
two isogenous tori glued along a slit. In this case 
$\mu$ is the image of the $\SL_2(\R)$-invariant measure on a quotient
$\SL_2(\R)/\Gamma$ for some lattice 
$\Gamma$, via a Borel $U$-equivariant map. 

\item
For every $M \in \supp \, \mu$, $\Xi(M)$ contains one saddle connection
joining distinct singularities,  and $\mu$ is the image of the
$\widehat{G}$-invariant measure on a quotient 
$\widehat{G}/\Gamma$ for some lattice 
$\Gamma$ in the 3-fold connected cover $\widehat{G}$ of $\SL_2(\R)$, via a
Borel $U$-equivariant map.

\item
The set of surfaces with horizontal saddle connections has $\mu$ measure zero
 and $\mu$ is the
image of the $\SL_2(\R)$-invariant measure on a closed $\SL_2(\R)$-orbit in $\HH(1,1)$ via a
Borel 
$U$-equivariant map. In this case $D$ is either a square or is equal
to 5. 
\item
For every $M \in \supp \, \mu$, $\Xi(M)$ contains two saddle
connections joining distinct singularities, whose complement in $M$ is
a torus with two parallel slits of equal length, which are images of
each other under a translation by an exactly $d$-torsion element of the
torus. In this case $D=d^2$ is a square and $\mu$ is
the image, via a Borel $U$-equivariant map, of the $\SL_2(\R)$-invariant
measure on the space of tori. 
\item
$\mu$ is the canonical flat
measure on $\EE_D(1,1)$ obtained from period coordinates. 
\end{enumerate}
\end{thm}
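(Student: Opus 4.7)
The plan is to proceed by case analysis on the horizontal data diagram $\Xi(\mu)$, which by the $U$-invariance of $\Xi$ and $U$-ergodicity of $\mu$ is $\mu$-almost everywhere equal to a fixed diagram $\Xi_0$, together with the stabilizer subgroups $Z_\mu, N_\mu$ from \S\ref{subsec: group of a measure}. Since the surfaces in $\HH(1,1)$ have genus two with two singularities, the horizontal configurations fall into three alternatives: (a) every $M \in \supp \, \mu$ admits a horizontal cylinder decomposition into two or three cylinders; (b) every $M \in \supp \, \mu$ has at least one horizontal saddle connection but no full cylinder decomposition; or (c) $\mu$-a.e.\ $M$ has no horizontal saddle connection. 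These produce respectively the conclusions (1)--(2), (3)--(4) together with (6), and (5) together with (7).

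In case (a) with two cylinders, the $U$-action restricted to $\supp \, \mu$ reduces, up to a finite cover, to a linear flow on the core curves, and $U$-ergodicity concentrates $\mu$ on a single periodic $U$-orbit, giving (1). With three cylinders there is an additional real Rel direction which commutes with $U$ and preserves $\Xi_0$, and is defined on all of $\supp \, \mu$ by the domain-of-definition analysis of \S\ref{sec: explicit}. Extending the arguments of \cite{CW} via the invariance $Z_\mu \supseteq U$, one concludes that $\mu$ is Rel-invariant and equals the area measure on a $2$-dimensional minimal set, giving (2).

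In case (b), cutting each surface along its horizontal saddle connections produces components which must be tori with slits, and the eigenform condition for discriminant $D$ forces isogeny between the tori. The diagram $\Xi_0$ then distinguishes sub-cases: two saddle connections at distinct singularities bounding two isogenous tori produces the $G/\Gamma$ picture of (3); a single saddle connection joining distinct singularities forces, by a prong-marking monodromy argument using the finite-cover framework of \S\ref{subsec: action}, passage to the $3$-fold cover $\widehat{G}$ and yields (4); two parallel slits in a single torus related by a $d$-torsion translation yields (6), which is possible only when $D = d^2$. In each sub-case the normalizer analysis upgrades $U$-invariance of $\mu$ to $N_\mu$-invariance, producing a transitive $G$ or $\widehat{G}$-action on the appropriate homogeneous quotient, after which Ratner's theorem identifies $\mu$ as the image of Haar measure under a $U$-equivariant Borel map.

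In case (c) the real Rel flow is defined throughout $\supp \, \mu$, since its obstructions correspond to shrinking horizontal saddle connections. The generic-point analysis of \S\ref{subsec: generic points} together with the identification of $N^{(\mu)}$ in \S\ref{subsec: none of the above} upgrade $U$-invariance of $\mu$ first to Rel-invariance and then, by combining with the geodesic flow, to full $G$-invariance. McMullen's classification of $G$-invariant measures in $\HH(1,1) \cap \EE_D(1,1)$ then identifies $\mu$ as either the flat measure on $\EE_D(1,1)$, giving (7), or the push-forward of Haar measure on a closed $G$-orbit in $\HH(1,1)$; the latter meets $\EE_D(1,1)$ precisely when $D$ is a square or $D = 5$, giving (5). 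The main obstacle is case (2): showing that a horocycle-ergodic measure supported on three-cylinder surfaces is necessarily Rel-invariant and two-dimensional, rather than concentrating on a lower-dimensional or fractal subset. This step relies essentially on the new invariants $Z_\mu, N_\mu$ and on the explicit Rel-domain analysis from the first half of the paper, since polynomial-divergence techniques alone do not exclude pathological horocycle-minimal sets inside the three-cylinder locus.
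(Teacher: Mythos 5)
Your case split on $\Xi(\mu)$ mirrors the paper's, and the handling of cases (3), (4), (6) via unique ergodicity of $U$ on the complement of closed horocycles in a homogeneous space (\`a la Dani) is essentially the paper's Proposition \ref{prop: tear}. But there are two substantive problems.

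First, you misidentify where the difficulty lies. You say the main obstacle is case (2) (three cylinders), but in the paper that case is dispatched quickly: Proposition \ref{prop: minimal} shows the $U$-orbit closure is a compact torus on which $U$ acts by a linear flow, Proposition \ref{prop: configurations HD} forces at least one rational relation among moduli (hence dimension $\leq 2$), and Corollary \ref{cor: minimal} gives unique ergodicity of that linear flow. There is no danger of "pathological horocycle-minimal sets" or "fractal subsets" in this locus; unique ergodicity of linear flows on tori rules them out. (Incidentally, $Z_\mu \supseteq U$ is a type error; $Z_\mu$ sits inside $Z$, not $N$.)

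Second, and more seriously, your treatment of the case $\Xi(\mu) = \varnothing$ has a gap. You assert that the generic-point analysis and the identification of $N^{(\mu)}$ "upgrade $U$-invariance of $\mu$ first to Rel-invariance and then, by combining with the geodesic flow, to full $G$-invariance." This conflates the subcases and skips the key step. The actual argument must first establish $\dim N_\mu^\circ \geq 2$, which is precisely the polynomial-divergence / R-property argument (Proposition \ref{prop: CW main}, the analogue of Ratner's shearing lemma); the generic-point and $N^{(\mu)}$ machinery are inputs to that argument, not a substitute for it. Having $\dim N_\mu^\circ \geq 2$, one then splits: if $Z \subset N_\mu^\circ$, then $\mu$ is $UZ$-invariant and the linchpin claim (the Lindenstrauss--Mirzakhani adaptation, Claim \ref{claim: linchpin}) gives the flat measure; otherwise $N_\mu^\circ$ contains a conjugate of $A$, so some $\rel_{-T*}\mu$ is $B$-invariant, and then the Eskin--Mirzakhani--Mohammadi theorem (not merely "combining with the geodesic flow") upgrades $B$-invariance to $G$-invariance, after which McMullen's classification finishes. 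As written, your case (c) never produces the extra invariance, so the proof does not close. The remark at the end that polynomial divergence is not what's needed for the three-cylinder case is correct, but for the wrong reason: polynomial divergence is needed exactly in the $\Xi(\mu)=\varnothing$ case, which you treat as if it were routine.
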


We describe these measures 
 in detail in \S \ref{section: construction}. 
The construction of most of the measures in this list involves the
real $\rel$ operation on $\HH(1,1)$ described earlier.
Any $U$-invariant measure $\mu$, which is not preserved by real Rel,
gives rise to a one-parameter family of $U$-invariant translates of
$\mu$ by real Rel. This observation of Calta \cite{Calta, CW} is
crucial to our analysis, and the measures (3)--(6)  
all arise in this way as real Rel translates of $\SL_2(\R)$-invariant measures. The measures (5) are the
real Rel translates of the natural measures on closed $\SL_2(\R)$-orbits in
$\HH(1,1)$. Loosely speaking, the measures (3),(4), and (6) are all 
pushforwards of measures on closed $\SL_2(\R)$-orbits in a suitable boundary
component in a bordification of
$\HH(1,1)$, where the maps pushing the measure consist of the
composition of real Rel with a map passing from the
boundary component to $\HH(1,1)$. In (4), the closed $\SL_2(\R)$-orbit belongs to the
stratum $\HH(2)$, while in (3) it belongs to $\HH(0)\times\HH(0)$, and
in (6) it belongs to $\HH(0,0)$ (where $\HH(0)$ and $\HH(0,0)$ denote
respectively the moduli spaces of genus one 
translation surfaces with one or two marked points).

 The equivariant maps in cases (3)--(6) arise from the real Rel operation and are not defined for
every point in $\SL_2(\R)/\Gamma$. They define Borel isomorphisms between
the supports of these measures and homogeneous spaces (that is,
quotients $\SL_2(\R)/\Gamma$ for lattices $\Gamma \subset  \SL_2(\R)$). We emphasize
however that these maps are not everywhere 
defined, but rather on a dense open set of full measure, and thus
their existence does not imply the existence of 
homeomorphisms of the supports of 
these measures with homogeneous spaces of $\SL_2(\R)$.
In fact such 
homeomorphisms do not exist in general; 
in a forthcoming paper \cite{examples}, we show that the supports of these measures
are not homeomorphic to  homogeneous spaces. In particular the
supports of the measures appearing in case (5) above are
manifolds with nonempty boundaries and infinitely generated fundamental
groups.

Theorem \ref{thm: Calta Wortman revisited} extends a theorem of Calta and Wortman \cite{CW}. In
\cite{CW} it was assumed that $D$ is not a square, and thus case (6)
did not arise. Also \cite{CW} assumed that $\mu$ is not invariant
under the real Rel operation, and case (2) did not arise. Theorem
\ref{thm: Calta Wortman revisited} is proved in a more general form in
\S \ref{sec: classification}. The statement is inspired by Ratner's
measure classification theorem \cite[Thm. 3.3.2]{KSS handbook}, and
its proof is inspired by \cite{EMM, CW}, 
which in turn employs arguments of Ratner. We provide some
technical shortcuts and clarify some delicate 
steps. Our treatment
relies on the general results worked out in \S\ref{subsec: group of a
  measure}, \S\ref{subsec: generic points}
and \S\ref{subsec: explicit}. 

The measures in cases (1)--(6) of Theorem \ref{thm: Calta Wortman
  revisited} are naturally organized in beds. In case (5) the continuous parameter
involves a real Rel perturbation, and in case (2) there is a natural
two parameter family of perturbations.

Applying our measure classification, we classify all orbit-closures for the
$U$-action on $\EE_D(1,1)$. 
In fact we prove a stronger
statement. Recall that if $\mu$ is a $U$-invariant ergodic probability measure on 
a stratum $\HH$, then $M \in
\LL$ is said to be {\em generic for $\mu$}\index{generic for $\mu$} if for any continuous compactly
supported function $f$ on $\HH$, we have 
\label{eq: generic points}
\begin{equation}
\lim_{T\to\infty} \frac{1}{T} \int_0^T f(u_sM) \, ds =
\int f \, d\mu.
\end{equation}
The assertion that  $M$ is generic for $\mu$ constitutes a
quantitative strengthening of the assertion that $\overline{UM} =
\supp \, \mu$. The following statement is an analog of a
genericity theorem proved by Ratner \cite[Thm. 3.3.10]{KSS handbook}:

\begin{thm}\label{thm: orbit closures}
For  any $M \in \EE_D(1,1)$, there is a measure
$\mu$ described in  Theorem \ref{thm: Calta Wortman revisited} such that $M$
is generic for $\mu$, and belongs to the support of $\mu$. 
\end{thm}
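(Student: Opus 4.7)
The plan is to follow the Dani–Margulis template for genericity of unipotent orbits, using the measure classification in Theorem \ref{thm: Calta Wortman revisited} together with the linearization technique developed in \S \ref{sec: generic}. Fix $M \in \EE_D(1,1)$ and, for each $T > 0$, form the empirical measure
\[
\mu_T \df \frac{1}{T} \int_0^T \delta_{u_s M} \, ds.
\]
First I would verify non-divergence: by quantitative non-divergence estimates for the horocycle flow on strata (in the spirit of Minsky–Weiss, applied within the $G$-invariant subvariety $\EE_D(1,1)$), the family $\{\mu_T : T \geq 1\}$ is tight. Any weak-$*$ subsequential limit $\mu$ is then a $U$-invariant Borel probability measure, and $M \in \supp \, \mu$ automatically.

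The goal is to show that every such subsequential limit $\mu$ coincides with the same ergodic measure, determined by $M$. Ergodically decomposing $\mu$, Theorem \ref{thm: Calta Wortman revisited} tells us that almost every ergodic component belongs to one of the seven types (1)--(7). The candidate measure is read off from invariants of $M$ itself: its horizontal cylinder or saddle-connection decomposition, the horizontal data diagram $\Xi(M)$, and the linear/torsion data on the absolute periods. For instance, if $M$ is horizontally periodic then $UM$ is a periodic $U$-orbit and $\mu$ must be of type (1); if $\Xi(M)$ contains a single saddle connection between distinct singularities, the natural candidate is a case (4) measure; if $M$ has no horizontal saddle connections, then $\mu$ should be either the flat measure (case (7)) or a case (5) measure when the $G$-orbit of $M$ is closed. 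Identifying the candidate is a case analysis driven by these invariants together with the description of beds in \S \ref{section: construction}.

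The main engine is linearization. A priori the limit $\mu$ might charge a lower-dimensional bed even when $M$ has no combinatorial invariant forcing it there; the linearization technique of \S \ref{sec: generic} provides a quantitative bound on the proportion of time a $U$-orbit can spend in a small neighborhood of a bed, mirroring the Dani–Margulis linearization for unipotent flows on homogeneous spaces. Using this, one argues by a downward induction on the ``size'' of the bed: at each level, either the orbit of $M$ lies on that bed (in which case $\mu$ is identified with a measure supported on the bed) or the bed is $\mu$-negligible and the argument proceeds to the next, larger ambient locus. The main obstacle is that the beds arising in cases (3)--(5) are not closed, since the real Rel flow is only locally defined, and so the classical Dani–Margulis argument (which exploits algebraicity of closed orbits inside a linear representation) must be adapted. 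This is where the explicit description of the real Rel domain from \S \ref{sec: explicit}, the group-of-a-measure framework of \S \ref{subsec: group of a measure}, and the finite covers of \S \ref{sec:preliminaries} enter: they permit the construction of local linearizing maps in which the Dani–Margulis strategy can be carried out in the present non-homogeneous setting.
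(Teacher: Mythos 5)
Your proposal matches the paper's overall strategy, but it differs from the paper's proof in a couple of structural respects worth flagging. The paper does not run a uniform empirical-measure argument; it splits the proof of Theorem \ref{thm: generic in orbit closure} into a non-linearization part and a linearization part. When $\Xi(M) \neq \varnothing$ (cases 1--6) or when $M$ lies on a Rel-translate of a closed $G$-orbit (case 5), genericity of $M$ is deduced directly from genericity on a torus or on $G/\Gamma$, transported by the $U$-equivariant surgeries $\Psi_T$ of \S\ref{subsection: nonempty} and the Dani--Smillie theorem; no limiting argument is needed in these cases. The empirical measures $\nu(M,T)$ are studied only when $\Xi(M) = \varnothing$ and $M$ is not a Rel-translate of a lattice surface, where the target is the flat measure. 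There is also no downward induction over beds. Instead Proposition \ref{prop: abstract} dispatches all the beds at once, with a dichotomy keyed not to ``size'' but to whether $\Xi(\BB)$ is empty. When $\Xi(\BB) \neq \varnothing$, avoidance follows from Minsky--Weiss non-divergence after a $g_{t_0}$-renormalization converting the bounded-length horizontal saddle connections of $K_i$ into short ones. When $\Xi(\BB) = \varnothing$ (the type-(5) beds $\bigcup_t \rel_t(\LL)$), the mechanism is different and requires the injectivity of the Rel-parametrization $(M',x) \mapsto \rel_x M'$ on $\LL_\infty \times \R$ (Lemma \ref{lem: decagon injective}), which is not automatic and is itself a consequence of the measure classification, together with the refined non-divergence of Corollary \ref{cor: needed for linearization}; your proposal gestures at this but does not isolate the injectivity statement, which is the crux of the hardest case.

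One substantive imprecision: your description of case (5) is wrong as stated. The hypothesis is not that ``the $G$-orbit of $M$ is closed'' but that \emph{some} Rel-translate $\rel_s M$ is a lattice surface. For $s \neq 0$ such a surface does not lie on a closed $G$-orbit — Rel does not commute with $G$, and the support $\rel_s(\LL_\infty)$ of a type-(5) measure is not $G$-invariant. A proof organized around testing whether $GM$ is closed would therefore misclassify every surface of type (5) with $s \neq 0$. This should be corrected before the induction (or the paper's direct bed-by-bed argument) is carried out.
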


In \S \ref{sec: generic} we deduce Theorem \ref{thm: orbit closures} from a more
explicit result Theorem \ref{thm: generic in orbit closure} which
explains, given $M$, how to detect the measure $\mu$ for 
which $M$ is generic.

Continuing the analogy with related results for homogeneous flows, we prove several equidistribution 
results. We mention 
four results of this type here, and refer the reader to \S
\ref{sec: more} for more results along these lines.

\begin{thm}\label{thm: teaser1}
Let $\mu$ be the length measure on a periodic $U$-orbit in
$\EE_{D}(1,1)$, let $\{g_t\} $ denote the geodesic flow, and suppose $\supp
\, \mu$ is not contained in a closed $\SL_2(\R)$-orbit. Then the
measures $g_{t*} \mu$ converge to the flat measure on $\EE_D(1,1)$ as
$t \to \infty$. 
\end{thm}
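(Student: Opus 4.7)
The plan is to analyze weak-$*$ accumulation points of the family $\{g_{t*}\mu\}_{t\ge 0}$ and to use the measure classification in Theorem~\ref{thm: Calta Wortman revisited} to identify the only possible limit as the flat measure on $\EE_D(1,1)$; this, combined with compactness of probability measures, yields the claimed convergence. First I would establish tightness. Since $\mu$ is the normalized length measure on a periodic $U$-orbit of period $T$ through some $M_0 \in \EE_D(1,1)$, and since $u_s g_t = g_t u_{s e^{-2t}}$, the pushforward $g_{t*}\mu$ is the normalized length measure on the periodic $U$-orbit through $g_t M_0$, whose period is $e^{2t}T$. Non-divergence of the horocycle flow on strata (Minsky--Weiss, Eskin--Masur) applied uniformly to these increasingly long periodic orbits shows that no mass escapes to infinity, so every sequence $t_n\to\infty$ admits a subsequence along which $g_{t_n *}\mu$ converges weakly to a probability measure $\nu$ on $\EE_D(1,1)$.

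The commutation $u_{s*}g_{t*}\mu = g_{t*}u_{s e^{-2t}*}\mu = g_{t*}\mu$, combined with $U$-invariance of $\mu$, gives $U$-invariance of every such limit $\nu$. Decomposing $\nu$ ergodically, each component falls into one of the seven types listed in Theorem~\ref{thm: Calta Wortman revisited}, and it suffices to show that $\nu$-almost every component is of type~(7).

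To eliminate types~(1)--(4) and~(6), I would exploit the fact that in each of these cases the horizontal data diagram $\Xi$ of a generic surface carries at least one horizontal saddle connection of bounded length (in case~(1), the surface has a full horizontal cylinder decomposition). In contrast, every surface $g_t M$ with $M\in\supp\mu$ has horizontal saddle connections only of lengths $e^t\ell_1,\dots,e^t\ell_k$, where $\ell_1,\dots,\ell_k>0$ are the finitely many horizontal saddle connection lengths on the fixed surface $M_0$; these all diverge to infinity as $t\to\infty$. By the standard upper semicontinuity in the topology of the stratum of the function ``carries a horizontal saddle connection of length at most $L$'', the limit $\nu$ assigns mass zero to surfaces with any horizontal saddle connection of length at most $L$, for every fixed $L>0$. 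This rules out types~(1)--(4) and~(6).

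The remaining non-flat possibility is type~(5), which only arises when $D$ is a square or $D=5$. Here $\nu$ is obtained as the image, under a Borel $U$-equivariant real Rel map, of the $G$-invariant probability measure on a closed $G$-orbit $G M_\star \subset \HH(1,1)$; in particular $\supp\nu$ is a real-Rel translate of $GM_\star$. If $\nu$ were of this type, then $\{g_tM\}_{t\ge 0}$ would accumulate on $\supp\nu$. Combining this with the commutation $g_t\compose \rel_r = \rel_{e^t r}\compose g_t$ (on the domain of definition of real Rel) and the explicit description of this domain obtained in \S\ref{sec: explicit}, one should be able to pull back by $g_{-t}$ and conclude that $M$ itself lies in $GM_\star$, contradicting the standing hypothesis that $\supp\mu$ is not contained in a closed $G$-orbit. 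Hence every subsequential limit is the flat measure, and the theorem follows. The main obstacle is precisely this last step: ruling out case~(5) cleanly requires careful bookkeeping of where the real Rel surgery is defined, and I would rely on the structural results of \S\ref{sec: explicit} together with the genericity criteria of \S\ref{subsec: generic points} to execute it.
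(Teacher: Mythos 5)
Your overall blueprint matches the paper's: establish tightness via Minsky--Weiss nondivergence, pass to a $U$-invariant weak-$*$ accumulation point, and invoke the classification of Theorem~\ref{thm: Calta Wortman revisited} to force the limit to be the flat measure. This is exactly the scheme packaged as Proposition~\ref{prop: abstract}. However, two of your elimination steps have genuine gaps.

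Your argument for ruling out types~(1)--(4) and~(6) does not work. You argue that because every surface in $\supp(g_{t*}\mu)$ has all horizontal saddle connections of length at least $e^{t/2}\ell_{\min}\to\infty$, the limit $\nu$ must assign mass zero to the closed set $F_L$ of surfaces with a horizontal saddle connection of length $\le L$. But the portmanteau inequalities go the wrong way: for a closed set $F_L$ one only has $\limsup_t\,(g_{t*}\mu)(F_L)\le\nu(F_L)$, so $(g_{t*}\mu)(F_L)=0$ gives \emph{no} upper bound on $\nu(F_L)$. The point is that non-horizontal short saddle connections on $u_sg_tM$ can become horizontal in the limit. (A toy model: $\delta_{1/n}\to\delta_0$ on $\R$ even though every $\delta_{1/n}$ is supported away from $0$.) What is actually required is a quantitative statement, namely that the \emph{proportion} of the horocycle orbit of $g_tM$ landing in a fixed neighborhood of $K_i$ tends to zero. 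This is what the paper establishes in case~(c1) of its proof, via Theorem~\ref{thm: from MW}: one checks that $g_tM$ has no horizontal saddle connection shorter than $1$ for $t$ large, applies the quantitative nondivergence bound to get $\mu_t(C(\delta))<\vre$, and pushes $K_i$ inside $g_{-t_0}(C(\delta))$ via the geodesic flow.

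For type~(5) you correctly sense that $g_t\circ\rel_r=\rel_{e^tr}\circ g_t$ and the hypothesis that $M$ is not on a closed $G$-orbit should clash, and the paper does prove exactly the \emph{exact-level} statement you are after: if $u_{s_0}g_tM=F(M',x,y)$ with $M'\in\mathcal{W}\subset\LL$ then $y\ne 0$, proved by pulling back by $g_{-t}$, using the closedness of $\LL$ and the finiteness of period-$p$ horocycles on $\LL$. But this only shows that the orbit $Ug_tM$ never lands \emph{on} the bed $\bigcup_r\rel_r(\LL)$. For the same portmanteau reason as above, this alone cannot rule out the limit measure charging the bed. The paper converts the statement $y\ne 0$ into a measure bound via a genuine linearization argument: using the injectivity of the $F$-coordinates near compact slabs of $\LL_\infty$ (Lemma~\ref{lem: decagon injective}, Corollary~\ref{cor: needed for linearization}), the definite nonvanishing of $y$, a nested-interval estimate~\equ{eq: relative lengths}, and a Vitali-type covering argument to bound $|\widehat{\mathcal I}\cap[0,t]|/t$. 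Your proposal stops short of this and so leaves the crucial step unexecuted; the appeal to \S\ref{sec: explicit} and \S\ref{subsec: generic points} would need to be replaced by the linearization machinery of \S\ref{sec: injectivity} and \S\ref{sec: generic}.
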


This could be viewed as a non-homogeneous counterpart of a theorem that Shah 
proves in a homogeneous setting \cite[Thm. 3.7.6]{KSS handbook}. The following result  also has an
analog in the homogeneous setting (see \cite[\S 3.7]{KSS handbook}):
 
\begin{thm}\label{thm: teaser2}
Let $\mu$ be the $\SL_2(\R)$-invariant measure on a closed $\SL_2(\R)$-orbit in
$\EE_D(1,1)$. Let $t \in \R$ and let $\mu_t$ be the 
measure obtained by applying the real Rel operation to surfaces in
$\supp \, \mu$, with rel parameter $t$. Then as $t \to \infty$ or $t \to -\infty$,
$\mu_t$ converges to the flat measure on $\EE_D(1,1)$. 
\end{thm}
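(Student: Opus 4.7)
Let $m$ denote the flat measure on $\EE_D(1,1)$. The strategy is to combine weak-$*$ compactness with the measure classification of Theorem \ref{thm: Calta Wortman revisited}, exploiting the cocycle identity $g_r \circ \rel_s = \rel_{e^r s} \circ g_r$. Combined with $G$-invariance of $\mu$, this yields $g_{r*}\mu_t = \mu_{e^r t}$, so $\mu_t = g_{(\log t)*}\mu_1$ for $t > 0$; it therefore suffices to show $g_{r*}\mu_1 \to m$ as $r \to \infty$, the case $t \to -\infty$ being symmetric. The plan is to extract a weak-$*$ subsequential limit $\nu$ of $\mu_{t_n}$ along $t_n \to \infty$, and to show that every such $\nu$ equals $m$.

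The first step is tightness of $\{\mu_t\}$, which ensures that every subsequential limit is a probability measure. Since $\supp\mu_t = \rel_t(\mathcal{O})$ with $\mathcal{O} = \supp\mu$ a closed $G$-orbit, this is a non-divergence statement for real Rel surgeries, to be established using the explicit description of the domain of real Rel from Section \ref{sec: explicit} together with the bounded geometry on $\mathcal{O}$ coming from $G$-invariance of $\mu$; this is expected to be the main technical hurdle. Given tightness, $\nu$ is automatically a $U$-invariant probability measure. For rel-invariance of $\nu$, the identity $\rel_{s*}\mu_t = \mu_{t+s}$ combined with $\log(e^{r_n}+s) - r_n \to 0$ and joint continuity of the geodesic flow identifies, up to further subsequencing, any weak-$*$ limit of $\mu_{t_n+s}$ both with $\rel_{s*}\nu$ and with $\nu$ itself, giving $\rel_{s*}\nu = \nu$ for every $s\in\R$. (Alternatively, an averaging argument on $\bar\mu_R = \tfrac{1}{R}\int_0^R g_{r*}\mu_1\,dr$ produces an $A$- and $U$-invariant, hence by a Mautner-type argument $G$-invariant, limit that McMullen's classification forces to be $m$.)

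Finally, apply Theorem \ref{thm: Calta Wortman revisited} to the $U$-ergodic decomposition $\nu = \int \mu_\alpha\,d\lambda(\alpha)$: rel-invariance of $\nu$ forces $\lambda$ to be invariant under the induced rel-action on the space of $U$-ergodic measures. Only cases (2) and (7) of the classification consist of individually rel-invariant measures; in the other cases the rel-orbit of each ergodic component is a properly embedded copy of $\R$, carrying no invariant probability measure, so $\lambda$ places no mass there. To rule out case (2), note that real Rel preserves absolute periods, so $\pi_*\mu_t = \pi_*\mu$ is independent of $t$ and equals the $G$-invariant measure on the $G$-orbit $\pi(\mathcal{O})$; by uniqueness of this $G$-invariant measure it coincides with $\pi_*m$. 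A case (2) ergodic component is supported on a 2-dimensional $U$- and rel-invariant minimal set whose $\pi$-image collapses the rel direction and is of strictly smaller dimension than $\pi(\mathcal{O})$, hence $\pi_*m$-null. Consequently $\lambda$ places no mass on case (2); only case (7) remains, giving $\nu = m$ and hence $\mu_t \to m$.
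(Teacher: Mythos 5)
Your cocycle reduction $g_{r*}\mu_t = \mu_{e^r t}$ is a clean organizing observation, and the overall strategy — tightness, weak-$*$ limit $\nu$, $\rel$-invariance, classification — has the right shape. But there are genuine gaps.

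The main one: you never establish that a subsequential limit $\nu$ gives full mass to $\HH_\infty$, the locus with no horizontal saddle connections. This is needed twice. For $\rel$-invariance, your passage from $\lim\rel_{s*}\mu_{t_n}$ to $\rel_{s*}\nu$ is not automatic: $\rel_s$ is not a globally defined continuous transformation, so pushing a weak-$*$ limit forward by $\rel_s$ requires knowing that the limit is carried on the domain of $\rel_s$ (compare the care taken in Proposition \ref{prop: Katok Spatzier}). And for the classification step, once $\nu(\HH_\infty)=1$ is known, $\lambda$-a.e.\ ergodic component has $\Xi(\mu_\alpha)=\varnothing$, so only types (5) and (7) can occur; your attempt to directly rule out types (1)--(4) and (6) does not work as written — the ``properly embedded copy of $\R$'' claim fails for type (1), where rel-orbits through type (A) cylinder decompositions can be periodic (see Figure \ref{fig: tdec1}) or not defined for all $s$, and your type (2) argument is too sketchy (the map $\pi$ is undefined, and the uniqueness statement invoked is not in the paper). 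Relatedly, your tightness step is not merely a technicality you may postpone: the paper obtains both tightness and $\nu(\HH_\infty)=1$ from the quantitative nondivergence Theorem \ref{thm: from MW} applied along generic $U$-orbits of $\mu_t$ (using that such generic points are obtained from generic points of $\mu$ by $\rel_t$ and so have no horizontal saddle connections), not from ``bounded geometry on $\mathcal{O}$'' — the geometry of $\rel_t\mathcal{O}$ degenerates as $t\to\infty$, so that route does not work.

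Once $\nu(\HH_\infty)=1$ is supplied, your final step via Theorem \ref{thm: Calta Wortman revisited} together with Lemma \ref{lem: decagon injective} (which makes the rel-orbit through a type-(5) measure a genuine embedded $\R$ carrying no invariant probability) is a valid alternative to the paper's route. The paper instead invokes Claim \ref{claim: linchpin}, a Lindenstrauss--Mirzakhani-type rigidity statement for measures invariant under the strong-stable foliation, which avoids the ergodic decomposition entirely and also avoids the Eskin--Mirzakhani--Mohammadi input that is implicit in your classification route. Your parenthetical Cesàro-averaging alternative does not, as stated, establish convergence of $g_{r*}\mu_1$ itself, only of its averages.
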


In response to a question of Giovanni Forni, we prove: 
\begin{thm}\label{cor: for Giovanni}
Let $\mu$ be any ergodic $U$-invariant measure on
$\EE_D(1,1)$, then as $t \to +\infty$,  $g_{t*}\mu$
converges to a $\SL_2(\R)$-invariant measure, and as $t \to -\infty$, either $g_{t*}\mu$
converges to a $\SL_2(\R)$-invariant measure or $g_{t*}\mu$ is divergent in
the space of probability measures on $\HH(1,1)$. 
\end{thm}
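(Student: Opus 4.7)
The plan is to apply the measure classification Theorem~\ref{thm: Calta Wortman revisited}, which lists seven types of ergodic $U$-invariant probability measures on $\EE_D(1,1)$, and to analyze each type in turn. Throughout, the decisive structural identity is the commutation
\[
g_t \circ \rel_\tau = \rel_{e^t \tau} \circ g_t,
\]
valid wherever both sides are defined; it implies that applying $g_t$ to a Rel translate of a $G$-invariant measure merely rescales the Rel parameter by the factor $e^t$. A useful soft observation is that if $g_{t*}\mu$ admits a weak-$*$ limit as $t \to +\infty$, that limit is automatically $U$-invariant and $g_s$-invariant for every $s$ (since the net $t \mapsto g_{t*}\mu$ is unaffected by the shift $t \mapsto t+s$), so it is invariant under the parabolic subgroup $P \subset G$; for all types on the list this will suffice to identify the limit as a $G$-invariant measure.

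In case (7) we have $g_{t*}\mu = \mu$, which is already $G$-invariant. In cases (3), (4), (5) and (6), Theorem~\ref{thm: Calta Wortman revisited} writes $\mu = \rel_{\tau_0 *}\nu$, where $\nu$ is the $G$- (or $\widehat G$-)invariant measure on a closed orbit inside $\HH(1,1)$ for case (5), and on a closed orbit in a boundary stratum ($\HH(2)$, $\HH(0)\times\HH(0)$, or $\HH(0,0)$) for cases (4), (3), and (6) respectively. Combining the commutation above with the $G$-invariance of $\nu$ yields $g_{t*}\mu = \rel_{e^t \tau_0 *}\nu$. As $t \to +\infty$ the Rel parameter tends to $\pm\infty$, and Theorem~\ref{thm: teaser2}, together with the analogous equidistribution statements for boundary input developed in \S\ref{sec: more}, identifies the limit as the flat measure. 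As $t \to -\infty$ the parameter tends to $0$: in case (5) this directly gives $g_{t*}\mu \to \nu$ since $\rel_0$ is the identity, while in cases (3), (4), (6) the underlying surfaces acquire arbitrarily short horizontal saddle connections (the slit being closed up) and therefore leave every compact subset of $\HH(1,1)$, producing the claimed divergence.

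Case (1), a periodic $U$-orbit of period $L$, is handled via Theorem~\ref{thm: teaser1} in the forward direction: the period of the $U$-orbit of $g_t M$ is $e^{2t}L \to +\infty$, and the theorem gives convergence to the flat measure when $\supp \mu$ is not contained in a closed $G$-orbit; the remaining closed-orbit situation reduces to the classical equidistribution of expanding closed horocycles in $G/\Gamma$ (Dani--Smillie, Sarnak). For $t \to -\infty$ the period $e^{2t}L$ and the horizontal saddle connection lengths contract to $0$, so the Mumford--Mahler-type compactness criterion for $\HH(1,1)$ forces divergence. Case (2) is the most subtle: $g_{t*}\mu$ remains $U$-ergodic and real-Rel-invariant, hence by the classification is again of type (2) or (7); the equidistribution machinery of \S\ref{sec: more} then pushes $g_{t*}\mu$ to the flat measure as $t \to +\infty$, while for $t \to -\infty$ a direct analysis of how horizontal saddle connection lengths behave on $\supp \mu$ produces either a $G$-invariant limit or divergence, according to whether these lengths stay bounded away from $0$.

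The main technical obstacle is the $t \to +\infty$ analysis in cases (3), (4) and (6), where one must equidistribute Rel-translates of $G$-invariant measures supported on \emph{boundary} strata of $\HH(1,1)$ rather than on $\HH(1,1)$ itself; Theorem~\ref{thm: teaser2} as stated covers only the case-(5) situation. The required extension relies on the explicit description of the domain of the real Rel surgery from \S\ref{sec: explicit} and on controlling the behaviour of the Borel $U$-equivariant maps of items (3), (4), (6) in Theorem~\ref{thm: Calta Wortman revisited} as the Rel parameter degenerates across the boundary; once that extension is in hand, the case-by-case assembly above is essentially bookkeeping.
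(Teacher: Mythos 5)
Your proposal is correct and follows essentially the same route as the paper: a case-by-case analysis through the measure classification, using the commutation $g_t \circ \rel_\tau = \rel_{e^t\tau}\circ g_t$ (or its $\Psi_T$-variant $g_t\Psi_T = \Psi_{e^tT}g_t$), Theorem~\ref{thm: teaser1} for type~(1), Theorem~\ref{thm: limits of UZ} for type~(2), and divergence via shrinking horizontal saddle connections when $t\to-\infty$. The ``main technical obstacle'' you flag at the end --- equidistributing Rel-translates of $G$-invariant measures on the boundary loci for types (3), (4), (6) --- is not actually a gap: Theorem~\ref{thm: and more} is precisely the required extension of Theorem~\ref{thm: teaser2} and is supplied in the paper, so your bookkeeping closes.
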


We also prove an equidistribution result for large circles:

\begin{thm}\label{thm: circle averages}
For any $M \in \EE_D(1,1)$ which is not a lattice surface, 
let $\mu_{t}$ \index{m@$\mu_t$} be the measure on $\EE_D(1,1)$ defined by 
\begin{equation}\label{eq: defn circle averages}{
\int
\varphi \, d\mu_{t}=
\frac1{2\pi}\int_0^{2\pi} \varphi(g_tr_\theta M)\, d\theta, \text{ for
  all } \varphi \in C_c \left( \EE_D(1,1) \right). 
}
\end{equation}
Then $\mu_{t}$ converges to the flat measure on
$\EE_D(1,1)$ as $t \to \infty$. 
\end{thm}

Following a strategy of Eskin and Masur \cite{eskinmasur}, we use this result to
solve a counting problem. 
We obtain: 
\begin{thm}\label{thm: Matt absolved}
For any $M \in \EE_D(1,1)$, the limit 
$$C_D = \lim_{T \to \infty} \frac{\# \{\text{saddle connections on } M
  \text{ of length } \leq T\}}{T^2}$$
exists. 
\end{thm}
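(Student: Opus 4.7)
The plan is to adapt the Eskin--Masur strategy, which relates counting of saddle connections to equidistribution of circular averages in the stratum. The key dynamical input is Theorem \ref{thm: circle averages}.

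For a translation surface $M$, let $V(M) \subset \R^2$ denote the discrete set of holonomies of saddle connections, so $N(M,T) = \#\{v \in V(M) : |v| \leq T\}$. For compactly supported $f : \R^2 \to \R$, define the Siegel--Veech transform $\widehat{f}(M) \df \sum_{v \in V(M)} f(v)$. Using $V(gM) = gV(M)$ for $g \in G$, a polar-coordinate decomposition of the disk of radius $e^t$ gives
$$\frac{N(M, e^t)}{\pi e^{2t}} = \frac{1}{2\pi}\int_0^{2\pi} \widehat{f_0}(r_\theta g_t M) \, d\theta + o(1)$$
for a suitable compactly supported bump $f_0$ on $\R^2$ of total mass $\pi$. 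It thus suffices to show that this circle average converges as $t \to \infty$ to a constant depending only on $D$.

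The next step is to apply Theorem \ref{thm: circle averages} to the measures $\nu_{t, M} \df \frac{1}{2\pi}\int_0^{2\pi} (r_\theta g_t)_* \delta_M \, d\theta$, deducing that they converge weak-$*$ to a $G$-invariant probability measure $\nu_M^\infty$ on $\EE_D(1,1)$. By the measure classification of Theorem \ref{thm: Calta Wortman revisited}, $\nu_M^\infty$ is either the flat measure on $\EE_D(1,1)$ or the $G$-invariant measure on one of the finitely many closed $G$-orbits therein (which occur only when $D$ is a square or $D=5$). The Siegel--Veech formula yields $\int \widehat{f_0} \, d\nu = c(\nu) \int f_0 \, dx$ for a constant $c(\nu)$ depending only on $\nu$. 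To obtain $M$-independence of $C_D$, one must verify that $c(\nu)$ is the same across all $G$-invariant measures $\nu$ arising as $\nu_M^\infty$. This is automatic when $\nu_M^\infty$ is the flat measure; in the exceptional cases it follows from Theorem \ref{thm: teaser2}, since real Rel translates of closed $G$-orbits converge to the flat measure, and the Siegel--Veech constants pass to the limit under such weak-$*$ convergence when uniform integrability holds.

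The main obstacle is that $\widehat{f_0}$ is neither bounded nor continuous on $\EE_D(1,1)$: it blows up on sequences of surfaces with arbitrarily short saddle connections. Weak-$*$ convergence against bounded continuous test functions does not immediately imply convergence against $\widehat{f_0}$, so one must establish uniform integrability of $\widehat{f_0}$ along the family $\{r_\theta g_t M : \theta \in [0,2\pi), \, t \geq 0\}$. The Eskin--Masur method accomplishes this through a uniform $L^{1+\delta}$ estimate combining the quadratic volume decay of the thin part with non-divergence of horocycles in the spirit of Minsky--Weiss. Adapting these bounds to $\EE_D(1,1)$, using its concrete structure related to a Hilbert modular variety with well-understood cusps, is the heart of the proof; once in hand, the convergence of the circle averages together with the uniformity of the Siegel--Veech constants yields the theorem.
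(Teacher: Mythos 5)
Your overall strategy matches the paper's: the proof is a one-liner in the text, citing Theorem \ref{thm: circle averages} together with the counting machinery of Eskin and Masur, and you have correctly identified both ingredients and the uniform-integrability issue that is the technical heart of the Eskin--Masur method. Two points need correction.

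First, the reduction formula is written with the wrong ordering of the group elements: you wrote $\widehat{f_0}(r_\theta g_t M)$, but the Eskin--Masur identity (and Theorem \ref{thm: circle averages}) uses $g_t r_\theta M$. The distinction matters. For a fixed holonomy vector $v$, the set $\{g_t r_\theta v : \theta \in [0,2\pi]\}$ is a very eccentric ellipse, and the fraction of $\theta$ for which $g_t r_\theta v$ lands in a fixed annulus is comparable to $e^{-t}/|v|^2$ (supported on $|v| \lesssim e^{t/2}$), which after summing over $v$ produces the renormalized count. By contrast, $\{r_\theta g_t v : \theta\}$ is a round circle of radius $|g_t v|$, and the corresponding angular measure is either zero or comparable to $1$, so the polar-coordinate counting argument does not go through with that ordering. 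The correct starting point is $\frac{1}{2\pi}\int_0^{2\pi}\widehat{f}(g_t r_\theta M)\,d\theta$, which is precisely the quantity controlled by Theorem \ref{thm: circle averages}.

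Second, the detour through the measure classification and Theorem \ref{thm: teaser2} is unnecessary and somewhat confused. For non-lattice $M \in \EE_D(1,1)$, Theorem \ref{thm: circle averages} already asserts that the circle averages converge to the \emph{flat} measure on $\EE_D(1,1)$---there is no case in which the limit is the Haar measure on a closed $G$-orbit. You do not need the full measure classification, nor Theorem \ref{thm: teaser2} (which concerns $\rel_t$-translates, not circle averages), to identify the limit or to get $M$-independence of the constant. What you do need to handle separately is the case that $M$ \emph{is} a lattice surface, which Theorem \ref{thm: circle averages} explicitly excludes; there, quadratic asymptotics for saddle connection counts are a classical theorem of Veech and no equidistribution in $\EE_D(1,1)$ is needed.
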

Theorem \ref{thm: Matt absolved} was proved in \cite{EMS} for the case
that $D$ is a square, and in \cite{Matt} for the case that $D$ is not
a square.   In both of these papers precise formulae for the constants $C_D$ were
given.  In fact, when $D$ is not square, $C_D = 4\pi$, which agrees with the value of this constant
for a generic surface in $\HH(1,1)$. The proof given in \cite{Matt} contained a gap in the case
$D=5$ which our results fill.

\subsection{Guide to the paper} \label{subsec: sections}
In section \S \ref{sec:strata} we define strata of translation surfaces and establish some of their
basic properties. 

In \S \ref{subsec: blowups} we give introduce `blowups of
translation
surfaces' along with their corresponding moduli spaces and mapping
class groups. 
These notions are useful throughout our discussion: for marking
singularities, marking distinguished horizontal prongs, distinguishing
horizontal data diagrams, resolving
orbifold issues for both the stratum and for the structure of
individual rel leaves, and for discussing surgeries involving a
stratum and nearby boundary strata obtained from it as limits of rel
operations. An interesting novelty of our approach is that certain
finite covers $\widehat{G}$ of $\SL_2(\R)$ 
appear as the groups which act
naturally on our covers (see \S \ref{subsec: action}). The finite covers
we consider have arisen in topological contexts (see
\cite{Boissy}), in the study of interval exchange
transformations (see \cite{Yoccoz survey}) and in computations of monodromy representations
(see e.g. \cite{MYZ}), as well as in our forthcoming
\cite{examples}. We believe that the terminology we introduce will be
useful in future work on translation surfaces. 

In the Sections \S\ref{section:
  rel} we describe certain natural vector fields on strata arising from relative cohomology which we
  call $\rel$ vector fields.
In subsection \S\ref{subsec: real rel} we describe certain subset of
Rel fields which we call real Rel vector fields 
and show that in the appropriate sense these commute with the horocycle flow. 
 In \S \ref{subsec: group of a measure} we make a connection between
 real Rel and stabilizers of measures. We also describe centralizers
 and normalizers of the horocycle flow in the context of vector
 fields. 
  
  The 
domains of well-defined motion in the $Z$ and $N$ directions are
invariants of a $U$-invariant ergodic measure $\mu$, which we denote by
$Z^{(\mu)}$ and $N^{(\mu)}$ (see \S \ref{subsec: group of a
  measure}). Moreover within these domains of well-defined motion, are
Lie groups 
$Z_\mu$  and $N_\mu$  which do act on $\supp \, \mu$ and are the
stabilizer subgroups of $\mu$ within the centralizer and normalizer 
(see \S \ref{subsec: group of a measure}). 
The analysis of generic points plays a
major role in the analysis of ergodic measures on homogeneous spaces. 
The Rel surgery depends on a parameter $T$ and 
as remarked above, the Rel surgery is not globally
defined. As a consequence, the set of surfaces for which it is
defined for all $T$ is not locally compact and it is not
a priori clear that the centralizer of the $U$-action should map
$U$-generic points to $U$-generic points. 
We clarify this in \S
\ref{subsec: generic points}.

In \S\ref{sec: separatrix diagram} we introduce invariants of beds
related to horizontal saddle connections. 
In \S \ref{subsec: explicit} we explicitly
identify the domain of definition of real Rel surgeries, continuing
work done in \cite{MW, McMullen-twists, Matt}.

In  \S\ref{section:
  eigenform locus}--\S\ref{sec: more}, we give a complete picture of the horocycle dynamics in the genus 2 eigenform loci.

Starting in \S \ref{section: eigenform locus}  we describe the eigenform
loci.

We describe ergodic invariant  measures for the horocycle flow
 in detail in \S \ref{section: construction}. 

In \S\ref{sec: classification} we revisit 
Ratner's argument for transverse divergence of nearby
horocycle orbits and apply it to eigenform loci to characterize invariant measures.

In \S\ref{sec: injectivity} we we develop a linearization technique for strata
to  analyze the behavior of $U$-orbits which are near beds. 
This is an analog of the `linearization technique' developed by Dani
and Margulis for homogeneous spaces.

In \S \ref{sec: generic},  we use ideas from the previous section to classify all orbit-closures for the
$U$-action on $\EE_D(1,1)$ and prove that every orbit is
equidistributed in its closure. 

In \S\ref{sec: more} we prove Theorem \ref{thm: circle averages} and equidistribution results
for several other naturally occurring sequences of measures  

The authors are grateful to Uri Bader, Elon Lindenstrauss, Yair Minsky and Saul Schleimer  for
useful conversations. 

\section{Strata}
\label{sec:strata}
In this section we define our objects of study: translation surfaces,
moduli spaces of translation surfaces and dynamics on moduli
spaces. For background and alternate treatments we
refer the reader to \cite{EMZ, Forni-Matheus, MT, MS,Wright_survey, zorich survey}. Our treatment
expands on previous work by discussing in detail
blowups of translation surfaces and associated orbifold covering spaces of
strata, which are needed in later sections. Our discussion also
has interesting consequences of independent interest about fundamental
groups of strata (see \S \ref{subsec: not simply connected}). 

\subsection{Translation surfaces}\label{subsec: translation surfaces}
A {\em translation surface,} or a {\em surface with translation
  structure} can be defined in several equivalent ways. 
We will
describe it by gluing polygons, in terms of an atlas using the
language of $(G,X)$ structures, or as a holomorphic differential. 

Let $M$ be a surface obtained from a finite collection of polygons in 
$\R^2$ which are glued together by isometries of the edges which are
restrictions of translations. Each polygon edge receives an
orientation when viewed as part of the boundary of a polygon and we
assume that the edges of the polygons are partitioned into pairs of
edges which are parallel, have the same length, and opposite
orientations, and these are identified by the gluing isometries.
This construction typically produces a finite set of {\em
  singular points} corresponding to the vertices of the polygons at which the cone
angle is larger than $2\pi$. The cone angle at a singularity is $2\pi(n+1)$ for some natural number $n$.
We call $n$ the order of the singularity. In addition to the singular
points that we have defined it is often useful to mark a finite set of
points at which the cone angles are $2\pi$. Let $\Sigma$ be a finite
subset of $M$ which contains all of the singular points and possibly
some additional `marked' points.

A translation structure on a surface determines an atlas of charts for
the surface $M \sm\Sigma$ taking values in $\R^2$ where the transition
maps are restrictions of translations.
  A translation structure can be determined by  
specifying this atlas. If we have a space $X$ equipped with an action
of a Lie group $G$ a {\em $(G,X)$-structure} \index{G,X@ $(G,X)$-structure}
is an
atlas of charts with overlap functions in $G$. Thus a translation
surface produces a $(G,X)=(\R^2,\R^2)$ structure on $M \sm  \Sigma$
where the first $\R^2$ represents a Lie group acting on the second $\R^2$ by
translation. See \cite{Thurston1} for more information about $(G,X)$-structures.

We can use the atlas of charts
on $M$ to define geometric structures on $M$ which are naturally
associated with the translation structure. 
 Since the one-forms $dx$ and $dy$ are translation invariant on $\R^2$ these 
charts allow us to build globally defined one-forms $dx$ and $dy$
on $M$. Similarly we can use the planar charts to define a metric, an area form, and an
orientation on $M$. A {\em saddle connection} on $M$ is a path with
endpoints in $\Sigma$, which is a straight line in each chart of this
atlas, and does not contain singularities in its interior. 
The one-forms $dx$ and $dy$ are closed and represent
cohomology classes in $H^1(M,\Sigma;\R)$. 
For an oriented path $\gamma$ connecting points in $\Sigma$  write
$\hol(M,\gamma)$\index{hol} \index{hol$(M, \gamma)$} for  
$\left(\int_\gamma dx, \int_\gamma dy \right)$. We can think of $\hol(M,\cdot)$ as
giving a homomorphism from 
$\pi_1(M \sm
  \Sigma)$ to $\R^2$ or as determining an element of 
$H^1(M,\Sigma;\R^2)$. This is the {\em holonomy homomorphism}.
If $\til M$ is the universal cover of $M \sm
  \Sigma$ then
a map from $\til M$ to $\R^2$ with derivative equal to the identity is
a {\em developing map}. Developing maps exist and are unique up to
translation. The developing map 
is equivariant with respect to the holonomy homomorphism. This is an
instance of the general principle that for any $(G,X)$-structure on a
manifold $M$, there is a developing map
$\dev:\til M\to X$ and a holonomy homomorphism $\pi_1(M)\to
G$.

The atlas for a translation surface gives a trivialization of the
tangent bundle at non-singular points.  Let us say that $f:M\to N$ is
a smooth map between translation  
surfaces if it 
preserves singular sets,
and is smooth away from the singular set of $M$.  
  Using the
trivialization of the tangent space, we can view the derivative of a smooth map $f$
as a $2\times 2$ real matrix-valued function.  We say that a smooth
map $f$ is a {\em translation 
  equivalence} if it is a homeomorphism and its derivative is the identity matrix. 
The notion of
translation equivalence gives a natural equivalence relation on
translation surfaces. 
We say that $M$ and $N$ are {\em affinely equivalent} if there is an
orientation-preserving 
smooth map $f$ between them which is a homeomorphism and for which
$Df$ is constant but not necessarily equal to 
the identity. The affine equivalence classes are orbits of a
$\GL^\circ_2(\R)$-action we will discuss shortly (where
$\GL^\circ_2(\R)$ is the group of orientation-preserving linear
automorphisms of $\R^2$).

If we identify $\R^2$ with $\C$ then the coordinate charts of a
translation structure induce a conformal 
structure on $M \sm \Sigma$. This conformal structure extends to $M$ so that the points
in $\Sigma$ correspond to punctures. We can define the complex-valued one-form
$dz=dx+i\, dy$. This is a holomorphic one-form or Abelian differential
with zeros at the singular points where the 
order of the zero is the order of the singular point. 
If we are given a surface with a conformal structure $X$ and we wish
to define a compatible translation surface then this is determined by
a nonzero holomorphic one-form $\omega$. We use the notation $(X,\omega)$ to
denote this pair. For the relations between these three points
of view, see \cite{zorich survey} and the references cited there.

\subsection{Strata as sets} \label{subsec: strata}
Strata are moduli spaces of translation
surfaces. In this section we describe the set of surfaces 
which comprise a given stratum and the relevant notion of
equivalence. In section \S\ref{subsection: structures} we will  
show that the set of these equivalence classes of surfaces can be given natural orbifold structures. Let $k \in \N$ and let $a_1,\dots, a_k$ be a sequence of
non-negative integers. 
Let $M$ be a translation surface and 
let $\Sigma$ be a finite subset of $M$ which consists of $k$ points
labeled $\xi_1,\dots,\xi_k$ and contains all singular points of
$M$. (Note that our conventions imply that $\Sigma$ has at least one point.)
We say that $M$ is a translation surface of type $(a_1,\dots,
a_k)$ if the cone angle  
at $\xi_j$ is $2\pi(a_j+1)$. We refer to $a_j$ as the order of $\xi_j$.
The points $\xi_j$ with order zero are not singular and we will refer to them as marked points. We want 
to construct a version of the stratum in 
which singular points have well-defined labels. 
To this end we say that two surfaces of type $(a_1,\dots, a_k)$ are label preserving
translation equivalent if 
they are translation equivalent by means of a translation equivalence
preserving the labelling of the singular points. (When no confusion will
result we will drop the expression `label preserving'.)

Let $\HH=\HH(a_1,\dots, a_k)$ \index{H@$\HH(a_1,\dots, a_k)$} denote the {\em stratum} of translation surfaces
of type $(a_1,\dots, a_k)$, considered up to label preserving translation
equivalence. The surfaces in $\HH$ have genus $g$ where 
$2g-2=\sum a_j$. It is sometimes convenient to denote this stratum as
 $\HH_g(a_1,\dots, a_k)$. In particular note that $g\ge 1$ since the $a_j$ are non-negative. 

\subsection{Strata of marked surfaces} \label{subsec: marked strata}
Let $S$ be an oriented surface of genus $g$ and let $\Sigma$ be a
subset consisting of $k$ points labeled $\xi_1,\dots,\xi_k$. The pair $(S,
\Sigma)$ will serve as a topological model for a translation surface. 
A {\em marked translation surface} 
is a translation surface $M$ of type $(a_1,\dots, a_k)$,
 equipped with 
an orientation-preserving homeomorphism $f: S \to M$ where $\Sigma$ maps to the
appropriate distinguished points of the translation structure, respecting
the labels. We will consider $S$ as fixed and sometimes write maps $f: S \to M$ as pairs
$(f,M)$. Two marked translation surfaces
$(f_1, M_1)$ and $(f_2 , M_2)$ are considered to be equivalent as marked translation surfaces if
there is a label preserving translation equivalence $\varphi: M_1\to M_2$ so that $\varphi \circ f_1$ and
$f_2$ are isotopic via an 
isotopy that fixes the points in $\Sigma$.

Let $\HH_{\mathrm{m}} =  \HH_{\mathrm{m}}(a_1, \ldots, a_k)$
\index{H@$\HH_{\mathrm{m}}(a_1, \ldots, a_k)$ } 
 denote the set of marked translation surfaces, up to
the equivalence relation described above.  As discussed above, a surface $M\in\HH$ determines a cohomology
class 
$\hol(M,\cdot)\in H^1(M,\Sigma;\R^2).$
A marked translation surface $f:S\to M$ in $\HH_{\mathrm{m}}$ determines a pullback cohomology class 
$f^*(\hol(M,\cdot))\in H^1(S,\Sigma;\R^2)$ where
$f^*(\hol(M,\gamma))=\hol(M,f_*(\gamma))$.
 It is not hard to check
that the pullback depends only on the equivalence class of $f$,
i.e. if $(f_1, M_1)$ and $(f_2, M_2)$ are equivalent then the
cohomology classes $f^*_i(\hol(M_i, \cdot)), \ i=1,2$ are the same. 
We denote by 
\begin{equation}\label{eq: dev}{
\dev: \HH_{\mathrm{m}} \to H^1(S, \Sigma ; \R^2)
}
\end{equation}
the map which takes a marked translation surface to the corresponding element of  
$H^1(S,\Sigma;\R^2)$.  This map  is often 
called the {\em period map}.

\subsection{Strata as spaces} 
\label{subsection: structures}
We now define a 
topology on $\HH_{\mathrm{m}}$ using an atlas
of charts which is defined via triangulations of
translation surfaces. 
Let $\tau$ be a triangulation of $S$ so that the vertices are points in $\Sigma$.
We do not require that edges have distinct endpoints in
$S$ though we do require that they have distinct endpoints in the
universal cover of $S$ or equivalently that no two edges are homotopic relative
to their endpoints.
This triangulation gives $S$ the structure of
a $\Delta$-complex (for a definition see \cite[p. 102]{Hatcher}). 
Let $\mathcal{U}_\tau \subset \HH_{\mathrm{m}}$ be the set of marked
translation surfaces 
containing a representative $f: S \to M$ which takes the edges of $\tau$ to saddle
connections in $M$. 

The function that maps an oriented edge to its holonomy vector is a
1-cochain in the cochain complex  
associated to the triangulation $\tau$. The condition that the sums
of vectors on the boundary of  
a triangle is zero means that this 1-cochain is a cocycle. Using the
fact that $\tau$ is a $\Delta$-complex  we can identify the space of 
such cocycles with $H^1(S,\Sigma;\R^2)$ (see \cite{Hatcher}).

Using the edges of the triangulation $\tau$ allows us to
define {\em comparison maps} between two marked translation
surfaces lying in a given chart $\mathcal{U}_{\tau}$. Namely, suppose
$(f,M)$ and $(f', M')$ are two representatives of marked translation 
surfaces in the same $\mathcal{U}_\tau$. Then $f' \circ f^{-1}: M \to
M'$ is a homeomorphism but it is only well defined up to isotopy. 
We define a map $F$ in this isotopy class by requiring that for each
triangle $\Delta$ in $\tau$, $F$ is an 
affine map when restricted to $f(\Delta)$. This requirement and the
triangulation $\tau$ determine 
$F=F(\tau, M, M')$, up to isotopy once isotopy classes of $f$ and $f'$ are fixed, and we call it the {\em comparison map} between $(f,M)$ and $(f',M')$.

\begin{prop} \label{prop: injective} The map 
$\dev|_{\mathcal{U}_{\tau}}:\mathcal{U}_\tau \to H^1(S,\Sigma;\R^2)$
is injective and its image is open.
\end{prop}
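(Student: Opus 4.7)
My plan is to prove injectivity and openness separately, with the main subtlety being the verification that the construction used in openness actually produces a translation surface in the prescribed stratum.

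For injectivity, I would take $(f_1, M_1), (f_2, M_2) \in \mathcal{U}_\tau$ with $\dev(f_1, M_1) = \dev(f_2, M_2)$, so that $\hol(M_1, (f_1)_* e) = \hol(M_2, (f_2)_* e)$ for every oriented edge $e$ of $\tau$. The comparison map $F = F(\tau, M_1, M_2)$ introduced above is, on each triangle $f_1(\Delta)$, the unique affine map sending vertices to the corresponding vertices of $f_2(\Delta)$. Because $f_1(\Delta)$ and $f_2(\Delta)$ share the same triple of edge vectors in their developed charts, they are translates, so $DF$ equals the identity on each triangle and $F$ is a label-preserving translation equivalence $M_1 \to M_2$. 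Since $F \circ f_1$ and $f_2$ agree on $\Sigma$ and both carry each edge of $\tau$ to the same saddle connection of $M_2$, a standard $\Delta$-complex isotopy argument (triangle-by-triangle, rel boundary) shows they are isotopic rel $\Sigma$, so $(f_1, M_1)$ and $(f_2, M_2)$ represent the same point of $\HH_{\mathrm{m}}$.

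For openness, I would fix $(f_0, M_0) \in \mathcal{U}_\tau$ with $\alpha_0 = \dev(f_0, M_0)$ and, for each $\alpha \in H^1(S,\Sigma;\R^2)$ close to $\alpha_0$, construct an explicit $(f', M') \in \mathcal{U}_\tau$ with $\dev(f', M') = \alpha$. I represent $\alpha$ by a cocycle $c$ on the cochain complex of $\tau$, chosen close to the cocycle $c_0$ of edge-holonomies of $(f_0, M_0)$. The cocycle condition says the three signed edge-vectors around each triangle $\Delta$ sum to zero, so for $\alpha$ close enough to $\alpha_0$ these vectors assemble into a non-degenerate, positively oriented Euclidean triangle $T_\Delta$. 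I then glue the $T_\Delta$ along edges exactly as $\tau$ prescribes: two edges identified by $\tau$ receive the same vector from $c$, hence are identified by a unique translation. The result is a flat surface $M'$ equipped with a marking $f' \colon S \to M'$ obtained by choosing an affine homeomorphism $\Delta \to T_\Delta$ on each triangle matching the vertex labels; by construction $(f', M') \in \mathcal{U}_\tau$ and $\dev(f', M') = \alpha$.

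The hard part is verifying that $M'$ lies in the prescribed stratum $\HH(a_1,\dots,a_k)$, i.e., that the cone angle $\theta_j(\alpha)$ at each $\xi_j$ remains $2\pi(a_j+1)$. Because every edge-gluing is a translation, the monodromy of any loop in $M' \smallsetminus \Sigma$ has trivial linear part, so $M'$ carries a genuine translation structure away from $\Sigma$. The sum of the interior Euclidean angles of the triangles $T_\Delta$ at $\xi_j$ equals the total cyclic angle swept out in the developed picture as one circuits the developed position of $\xi_j$, and since each gluing fixes $\xi_j$ in the developed chart, closing the cycle forces this total to be $2\pi k_j(\alpha)$ for some positive integer $k_j(\alpha)$. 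On the other hand, $\theta_j(\alpha)$ is manifestly a continuous function of $\alpha$ that equals $2\pi(a_j+1)$ at $\alpha = \alpha_0$. A continuous function on a connected neighborhood of $\alpha_0$ taking values in the discrete set $2\pi\Z_{>0}$ must be constant, so $\theta_j(\alpha) \equiv 2\pi(a_j+1)$, placing $M'$ in $\HH(a_1,\dots,a_k)$ and completing the openness argument.
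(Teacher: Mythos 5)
Your proof follows the same two-part strategy as the paper: for injectivity, you show the comparison map $F(\tau,M_1,M_2)$ is a translation equivalence because corresponding edges have equal holonomy, then conclude the markings agree up to isotopy; for openness, you glue Euclidean triangles with edge vectors prescribed by a nearby cocycle and note the non-degeneracy condition is open. The one place where you go beyond the paper's write-up is the verification that the glued surface lands in the correct stratum $\HH(a_1,\dots,a_k)$ rather than in a different stratum with the same triangulation combinatorics: the paper simply asserts that ``an appropriate translation surface can be built,'' whereas you observe that the cone angle at each $\xi_j$ is a continuous function of $\phi$ taking values in $2\pi\Z_{>0}$ (since all gluings are translations, forcing trivial linear holonomy), hence is locally constant and thus remains $2\pi(a_j+1)$ near $\dev(f_0,M_0)$. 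This is a genuine gap in the paper's exposition that your argument closes cleanly; it is worth emphasizing because without it, the non-degeneracy condition alone only shows the image is contained in a stratum-ambiguous open set, not that the image itself is open.
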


\begin{proof} Suppose $(f,M)$ and $(f', M')$ are in $\mathcal{U}_\tau$ and have the
same image under $\dev$. A priori the comparison map is affine on each triangle. In this case the comparison map $F(\tau, M, M'): M\to M'$ is a translation  
equivalence since the holonomy of corresponding edges are equal. 
 We conclude that $(f,M)$ and $(f', M')$ are 
equivalent and the developing map is injective on $\mathcal{U}_\tau$.

 A cohomology class $\phi$  is  in the image of
$\dev|_{\mathcal{U}_\tau}$ if the values of $\phi$ on the
sides of every triangle $\Delta$ in $\tau$ correspond to the
coordinates of a non-degenerate 
triangle in $\R^2$ with the appropriate orientation --- an open
condition in $H^1(S,\Sigma;\R^2)$. If this condition
is satisfied then an appropriate 
translation surface can be built by gluing together triangles in
$\R^2$ with edge coordinates given by $\phi$. In particular the
image 
$\dev(\mathcal{U}_\tau)$ is open in $H^1(S, \Sigma ; \R^2)$. 
\end{proof}

It was
shown in \cite{MS} that every translation surface $M$ with $\Sigma$ nonempty  admits a 
triangulation of this type. 
Thus the charts $\mathcal{U}_\tau$ cover
$\HH_{\mathrm{m}}$ as $\tau$ ranges over triangulations of $S$. 
The change of coordinate maps for this system of charts are 
linear. These charts on the sets $\mathcal{U}_{\tau}$ give
$\HH_{\mathrm{m}}$ an affine manifold structure. This affine manifold
structure can 
also be discussed using the terminology of
$(G,X)$-structures.  Specifically we can take $X$ to be $H^1(S, \Sigma; \R^2)$ and the
structure group $G$ can be taken to be the group of linear automorphisms of
$H^1(S, \Sigma; \R^2)$.  Our choice of dev as
notation for the map in \eqref{eq: dev} is motivated by the fact that
this map is the 
developing map for the affine structure. 

We have described triangulations of $(S,\Sigma)$ in terms of a homeomorphism
with a $\Delta$-complex. We note that such a homeomorphism is determined
up to isotopy by knowing the relative homotopy classes of the edges of the
triangulation (see \cite[Lemma 2.9]{FarbMargalit}). Furthermore distinct edges
are not homotopic to each other relative to their endpoints. In the sequel we
will use $\tau$ to denote the homotopy classes of edges of a triangulation.

We now make use of the marked stratum to put a topology on the stratum.
Consider the group of isotopy classes of orientation-preserving
homeomorphisms of $S$ fixing $\Sigma$ pointwise. This is sometimes
called the pure mapping class group (see 
e.g. \cite{Ivanov}). We will simply refer to it as the mapping class group and
we will denote it by $\Mod(S, \Sigma)$. Up to the action of
$\Mod(S, \Sigma)$ there are finitely many charts $\mathcal{U}_\tau$. 
It can be checked that the $\Mod(S, \Sigma)$-action on
$\HH_{\mathrm{m}}$ is properly discontinuous. This equips the quotient
$\HH = \HH_{\mathrm{m}} /\Mod(S, \Sigma)$ with the structure of an
affine orbifold, with respect to which the map $\HH_{\mathrm{m}} \to
\HH$ is an orbifold covering map (see \cite{Thurston1} for the definition and basic 
properties of properly discontinuous actions and  orbifolds). 

\subsection{The area form and the area one locus} 
Let $\til \HH^{(1)}$ and $\HH^{(1)}$
denote the subset of area-one surfaces in $\til\HH$ and $\HH$ respectively. With respect to the
charts afforded by the map $\dev$, $\til \HH^{(1)}$ is a
submanifold cut out by a quadratic equation. 

We can give these quadratic equations explicitly.
Identify the coefficients $\R^2$ with $\C$ and define a Hermitian form
on $H^1(S, \Sigma; \R^2)$ by 
\begin{equation}\label{eq: hermitian}{
\langle
\alpha,\beta\rangle=\frac{1}{2i}\int_M \alpha\wedge 
\bar\beta.}
\end{equation}
The area of $M$ is $\langle \omega,\omega\rangle$,
 where $\omega=\dev(M)$.

 We now explain how this Hermitian form can be obtained in 
 more topological terms, from the cup product and a
 particular choice of coefficient pairing. 
First note that if we take $z$
   and $w$ to represent two sides of a triangle, then the signed area
   of the triangle is equal to $\Re(z\bar w/2i)$. 
Now recall that the cup product of two
 $\R$-valued simplicial cochains is  
 defined on a simplex by taking the product of  
 values of the cochains on the simplex, and is then extended by
 linearity to chains. If we have coefficients which are not in $\R$,
 we can replace the operation of multiplying values of cochains by a
 bilinear pairing of the values of 
 cochains. Motivated by the above observation, we will use the
 coefficient pairing 
\begin{equation}\label{eq: specific pairing}{
\C\times\C \ni (z, w) \mapsto z\bar w/2i\in\C.}
\end{equation}
If we interpret the 1-forms in \eqref{eq: hermitian} as complex
 valued cohomology classes, interpret integration as
 evaluation on the fundamental class of $M$, and use evaluation on the
 fundamental class to identify $H^2(S, \Sigma; \C)$ with $\C$, then
 our Hermitian form 
is the cup product 
$$H^1(S, \Sigma; \C)\otimes H^1(S, \Sigma; \C)\to H^2(S, \Sigma; \C)
\cong \C,$$
and the particular choice of coefficient pairing 
\eqref{eq: specific pairing} is
 responsible for the connection with the
 area of translation surfaces. 

Since the Hermitian form on $\til\HH$ was defined purely
topologically, it is preserved by the $\Mod(S,\Sigma)$-action. 
Thus $\til \HH^{(1)}$ and $\HH^{(1)}$ have $(G,X)$-structures where
$X$ is a quadric in 
$H^1(S, \Sigma; \C)$ and $G$ can be taken to be the subgroup of the
general linear group which preserves this quadric.

\section{Blowups of translation surfaces}\label{subsec: blowups}
We now discuss a `blowup' construction that replaces a singularity 
$\xi$ by a  boundary circle. 
This is a special case of a more
general construction of a `real oriented blowup' (see e.g. \cite{HPV}).
The real oriented blowup of a point $\xi$ in $\R^2$ is a new space
$\Blo_\xi(\R^2)$ \index{Blo@$\Blo_\xi(\R^2)$} together with a collapsing map $c:
\Blo_\xi(\R^2)\to\R^2$ with the property that the inverse image of any
point other than $\xi$ is a single point while the inverse image of
$\xi$ is the circle of directions at $\xi$ which we can identify with
$(\R^2 \sm \{0\})/\R^+$ (or with the unit circle) and denote by $S^1$.
The space $\Blo_\xi(\R^2)$ has the property that a smooth path in
$\R^2$ landing at $\xi$ and with non-zero derivative at $\xi$ has a 
lift to the space $\Blo_\xi(\R^2)$ which takes
the endpoint of the path to a point in the circle of directions.

If we blow up the vertex $\psi$ of a polygon $P$ in $\R^2$ then we
obtain the space $\Blo_\psi(P)$ which is the result of replacing the
vertex $\psi$ in $P$ by an interval.  
We describe this construction explicitly. Applying a translation,
assume $\psi$ is at the origin. 
Say that the two edges of the polygon incident to $\psi$ are in directions $\theta_1,
\theta_2$ with $0< \theta_2 - \theta_1 <2\pi$, and there is $\vre>0$  such that  
$$\{r(\cos \theta, \sin \theta) : \theta \in [\theta_1, \theta_2], \, 
r \in [0, \vre)\}$$ 
parametrizes a neighborhood of $\psi$ in $P$. 
The blowup of this neighborhood in $\Blo_\psi(P)$ corresponds to the
rectangle 
$$\{(r,\theta): \theta \in [\theta_1, \theta_2], \,  
r \in [0, \vre]\}$$ 
and the collapsing map $c$ from $\Blo_\psi(P)$ to
$P$ takes $(r,\theta)$ to $r(\cos \theta, \sin \theta)$. In particular
the interval $\{(0,\theta): \theta \in [\theta_1, \theta_2]\}$ is
collapsed to the point $\psi$. Note that this interval has a natural `angular
coordinate' with values in the unit circle.

Given a translation surface $M$ with singular points $\Sigma$ we
construct a surface with boundary $\check{M}$ by blowing up all of the
points of $\Sigma$. We can do this as follows. Choose a triangulation
of $M$. Blow up each vertex of each triangle thereby creating a family
of hexagons where each hexagon contains edges of two types: those
corresponding to edges of triangles and those corresponding to
vertices of triangles.  Glue the (triangle edge) sides of the hexagons
together according to the gluing pattern of the original
triangles. The result is the surface $\check{M}$ which is in fact
independent of the particular choice of triangulation. At a singular
point $\xi_j$ of the surface the intervals mapping to $\xi_j$ glue
together to form a circle which we 
denote $\partial_j \check M$. The angular
coordinates glue together to give us a map $p_j:\partial_j \check M\to S^1$. 
The total angular measure of $\partial_j \check M$ is $2\pi(a_j+1)$ which is
the cone angle at $\xi_j$. We can choose an identification of the
circle $\partial_j \check M$ with the circle $\R/(2\pi(a_j+1))\Z$ so that the
angular coordinate of a point is equal to its circle coordinate modulo
$2\pi$.  This identification of $\partial_j \check M$ with the circle is well-defined
up to translation by $2\pi$ while the map $p_j$ is defined
independently of any choices.  

If we associate a point $\nu$ in the $j$-th boundary component
$\partial_j \check M$ with a
short ray heading away from $\xi_j$, then $p_j(\nu)$ is the direction
of the ray.  We define the {\em prongs} to be points on boundary circles corresponding
to  horizontal rays, i.e. points whose angular parameter is $\pi k$
with $k \in \Z$. We will call a prong {\em right-} or {\em
  left-pointing}, if $k$ is even (resp. odd), that is, 
according to the orientation that the prong
inherits from the plane. 
A particular choice of an identification of $\partial_j \check M$ with $\R/2\pi(a_j+1)\Z$ is
equivalent to the choice of a right-pointing prong. The boundary circles of $\check{M}$ inherit
boundary orientations as boundary components of the oriented manifold
$M$. 
With respect to the boundary orientation the maps $p_j$ are covering maps of degree 
$-(a_j+1)$ where the
negative sign reflects the fact that moving in the direction of the boundary
orientation corresponds to {\em decreasing} the angular coordinate.  

\subsection{Strata of boundary marked surfaces}\label{extended map}
In this subsection we define a notion of marked surface appropriate to
surfaces with boundary 
and a corresponding mapping class group.
To this end we 
 define a `model surface' $\check{S}$, \index{S@$\check{S}$} which will capture some of
 the structure common to the surfaces $\check{M}$  for
 $M\in\HH(a_1, \ldots,  a_k)$. Let $\check{S}$ be a surface with boundary
 which has genus $g$ where $2g-2=\sum a_j$ and $k$ boundary components
 labeled $\partial_1\check{S}, \ldots, \partial_k\check{S}$.  
 The circles inherit a boundary
orientation from $S$. 
We equip each boundary circle $\partial_j\check{S}$ with orientation reversing
homeomorphisms $q_j:\partial_j\check{S}\to\R/(2\pi(a_j+1)\Z)$ which
give angular coordinates on the boundaries. 
 A {\em marked
  translation surface rel boundary} is a surface $\check{M}$ which is
a blowup of a translation surface $M$ of
type $(a_1, \ldots, a_k)$, equipped with an orientation preserving homeomorphism $\check{f}:
\check{S} \to \check{M}$ respecting the labels, and such that on each boundary
circle $\partial_j\check{S}$ we have $p_j \circ
\check{f} \equiv  q_j\bmod 2\pi$. 
Note that a boundary marking of $M$ induces an explicit coordinate on
$\partial\check{M}$ 
and an explicit choice of a prong, namely the image under $\check f$
of the prong on $\partial_j \check S$ corresponding to angular
parameter zero.

We say that two boundary marked translation surfaces
rel boundary $(\check{f}_1, \check{M}_1)$ and $(\check{f}_2,
\check{M}_2)$ are  {\em equivalent} if there 
is a translation equivalence $g: M_1 \to M_2$ such that $\check{g} \circ \check{f}_1$ 
and $\check{f}_2$ are equal on $\partial\check{S}$ and are isotopic via an isotopy 
that fixes the boundary. Let $\til \HH=\til \HH(a_1,\dots, a_k)$ \index{H@$\til\HH$}  denote the
set of boundary marked translation surfaces, up to 
equivalence. We call $\til \HH$ a stratum of boundary marked translation surfaces.

There is a natural collapsing map $c:\check{M}\to M$ which collapses each boundary
component to a single point.
A map $\check{f}: \check{S} \to \check{M}$ induces a map $f: S\to M$,
where $c\circ\check{f}=f\circ c$.
If $\check{f}_1$ and $\check{f}_2$ are equivalent (in the sense of \S
\ref{extended map}), then so are $f_1$ and $f_2$ (in the sense of \S
\ref{subsec: strata}). We
say that $f$ is {\em obtained from $\check{f}$ by projection.}
The forgetful map which takes $(\check{f},\check{M})$ to the pair $(f,M)$ where $f$
is obtained by projection gives a map $\pr:\til\HH\to
\HH_{\mathrm{m}}$. 

We now define a {\em  mapping class group rel boundary}. \index{mapping class group rel boundary}
We say a homeomorphism
$\check f:\check{S}\to\check{S}$ is {\em admissible} if $\check f$ takes each boundary
circle $\partial_j\check{S}$ to itself and the restriction of $\check f$ to
$\partial_j\check{S}$ is a  
rotation by a multiple of $2\pi$ with respect to the circle coordinate
given by $q_j$. We say that two  
admissible homeomorphisms are {\em isotopic rel boundary} if they agree on
$\partial\check{S}$ and if they are isotopic 
by an isotopy which is the identity when restricted to $\partial\check{S}$. 
We denote  by $\Mod(\check{S}, a_1,\dots, a_k)$  the group of 
isotopy classes rel boundary of admissible homeomorphisms of $\check{S}$. When
there is no chance of confusion we will abbreviate this to 
$\Mod(\check{S}, \partial \check{S})$. \index{Mod(S)@$\Mod(\check{S}, \partial \check{S})$}
There is a natural right action of the mapping class group
$\Mod(\check{S}, a_1,\dots, a_k)$ on the stratum $\tilde\HH(
a_1,\dots, a_k)$ by precomposition.  

\subsection{Relative homotopy classes of paths}\label{subsec: paths}
A useful tool in the study of mapping class groups is the study of
their action on curves. 
In order to analyze $\til\HH$ and $\Mod(\check{S}, \partial
\check{S})$ we will consider the action on homotopy classes of  
paths with endpoints in the boundary.
We consider paths in $\check S$ with endpoints in
$\partial \check S$. We say that two paths are {\em homotopic} if there is a
homotopy between them that keeps the endpoints in $\partial \check S$. We say
that two homotopic paths are {\em relatively homotopic} if the homotopy can
be chosen to fix the endpoints of the path. The collection of relative
homotopy classes of paths is a natural set to consider but, unlike
homotopy classes of paths in $(M,\Sigma)$, it is not discrete. We
analyze its topology below. 
There is a well-defined action of $\Mod(\check{S}, \partial \check{S})$
on relative homotopy classes of paths.
We say that a path $\alpha$ is {\em peripheral} if both endpoints lie
in the same boundary component 
 and $\alpha$ is homotopic
to a path contained in that boundary component.  Let $\sigma: [0,1]\to\check{S}$ be a
non-peripheral oriented path with $\sigma(0)\in\partial \check S_j$ 
and $\sigma(1)\in\partial\check S_k$. Let
$\mathcal{C}_\sigma(\check{S})$ be the set of relative homotopy
classes of oriented paths in $\check{S}$ which are homotopic to $\sigma$. 

\begin{dfn}\label{def: epsilon close}
Let $\check{\sigma}$ and $\check{\sigma}'$ be elements of
$\mathcal{C}_\sigma(\check{S})$ going from 
$\partial_j\check{S}$ to $\partial_k\check{S}$, and let  $\vre>0$. 
We say that
$\check{\sigma}'$ is 
{\em $\varepsilon$-close to $\check{\sigma}$}\index{e@$\varepsilon$-close} if there are intervals 
$I_1\subset \partial_j\check{S}$ and 
$I_2\subset \partial_k\check{S}$ of length 
$\varepsilon$, containing the endpoints of $\check\sigma$ and
$\check\sigma'$, such that $\check{\sigma}'$ 
is homotopic to $\check{\sigma}$ through a family  
of paths each of which has one endpoint in $I_1$ and one endpoint in
$I_2$.  
\end{dfn}

Sets of $\varepsilon$-close homotopy classes of curves for given
intervals $I_1$ and $I_2$ form the basis for a topology on
$\mathcal{C}_\sigma(\check{S})$. 
We have an endpoint map  $\epsilon:
\mathcal{C}_\sigma(\check{S})\to\partial_j \check
S\times\partial_k\check S$ which takes a relative homotopy class of
curves to its endpoints. With respect to the topology on
$\mathcal{C}_\sigma(\check{S})$ the endpoint map is continuous. 

\begin{lem} \label{lem: twisting} Say that $S$ is a  surface with
  boundary with negative Euler characteristic.   
The endpoint map  $\epsilon:
\mathcal{C}_\sigma(\check{S})\to\partial_j \check
S\times\partial_k\check S$ is a covering map and
$\mathcal{C}_\sigma(\check{S})$ is the universal cover of $\partial_j
\check S\times\partial_k\check S$. 
It follows that the space of relative homotopy classes of paths in
this homotopy class is homeomorphic to the product of the universal
covers $\widetilde{\partial_j\check S}$ and
$\widetilde{\partial_k\check S}$.  
\end{lem}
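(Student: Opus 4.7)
The plan is to identify $\mathcal{C}_\sigma(\check S)$ concretely with a product of universal covers via the universal cover of $\check S$, and then verify that the $\varepsilon$-close topology matches the product topology.

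First I would set up the covering theory. Let $\pi\colon \widetilde{\check S}\to \check S$ be the universal cover. Because $\check S$ has negative Euler characteristic, each boundary component is $\pi_1$-injective, so every connected component of $\pi^{-1}(\partial_j\check S)$ is a properly embedded copy of $\R$. Fix a lift $\tilde\sigma$ of $\sigma$; its endpoints lie on definite components $L_j\subset\pi^{-1}(\partial_j\check S)$ and $L_k\subset\pi^{-1}(\partial_k\check S)$. Non-peripherality ensures $L_j\neq L_k$ in the case $j=k$. The restrictions $\pi|_{L_j}$ and $\pi|_{L_k}$ are precisely the universal covers $\widetilde{\partial_j\check S}\to\partial_j\check S$ and $\widetilde{\partial_k\check S}\to\partial_k\check S$.

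Next I would construct a map $\Phi\colon L_j\times L_k\to \mathcal{C}_\sigma(\check S)$ by sending $(\tilde p,\tilde q)$ to the relative homotopy class of $\pi\circ\tilde\alpha$ for any path $\tilde\alpha$ in $\widetilde{\check S}$ from $\tilde p$ to $\tilde q$. This is well defined since $\widetilde{\check S}$ is simply connected. Bijectivity follows from path and homotopy lifting: injectivity by lifting any relative homotopy starting from $\tilde p$, surjectivity because any representative path of class $\sigma$ with initial endpoint $p\in\partial_j\check S$ admits a unique lift starting at a chosen preimage $\tilde p\in L_j$, and the terminal endpoint of that lift lies in $L_k$ (the free homotopy class determines the boundary component one lands on). Under $\Phi$, the endpoint map becomes $\epsilon=(\pi|_{L_j}\times \pi|_{L_k})\circ\Phi^{-1}$, which is manifestly a product of universal covering maps of circles, hence a covering map whose total space is simply connected.

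The main step, and main obstacle, is checking that $\Phi$ is a homeomorphism. Given a basic open neighborhood $V$ of $\check\sigma$ consisting of classes $\check\sigma'$ homotopic to $\check\sigma$ through a family of paths with endpoints in intervals $I_1\subset\partial_j\check S$ and $I_2\subset\partial_k\check S$, one lifts such a family, starting from $\tilde p$, to a continuous family in $\widetilde{\check S}$ whose initial endpoints sweep out an interval $\tilde I_1\subset L_j$ mapping homeomorphically to $I_1$, and similarly for terminal endpoints on $L_k$. Thus $\Phi^{-1}(V)=\tilde I_1\times \tilde I_2$. Conversely, a product neighborhood $\tilde I_1\times \tilde I_2$ in $L_j\times L_k$ is clearly the image under $\Phi$ of such a $V$: straight-line interpolations in $L_j\times L_k$ project to the required homotopy of paths with endpoints in $I_1,I_2$. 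This matching of basic neighborhoods realizes $\Phi$ as a homeomorphism.

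Combining these, $\epsilon$ becomes identified with the product of universal covers $\widetilde{\partial_j\check S}\times\widetilde{\partial_k\check S}\to\partial_j\check S\times\partial_k\check S$, which is a covering map whose total space is simply connected, hence the universal cover. The delicate point throughout is the interplay between the $\varepsilon$-close definition and the homotopy lifting property; once one is careful that the family of paths in the definition projects from a genuine continuous path in $L_j\times L_k$ under $\Phi$, the remainder of the argument is formal.
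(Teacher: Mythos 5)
Your overall strategy matches the paper's: pass to the universal cover, lift $\sigma$ once and for all, identify the components $L_j, L_k$ of the preimages of the two boundary circles on which a lift's endpoints live, and set up a bijection between $\mathcal{C}_\sigma(\check S)$ and $L_j\times L_k$. The paper does exactly this, except it equips $\check S$ with a hyperbolic metric with geodesic boundary, so that $L_j, L_k$ become geodesics $B_1, B_2$ in a convex subset of $\HHH^2$, and the inverse map is ``take the unique geodesic joining the two lifted endpoints.'' So this is the same route, not a different one; but there is a genuine gap in your bijectivity argument.

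The gap is in the injectivity of $\Phi$ (equivalently, in the well-definedness of the inverse map). Given $[\alpha]\in\mathcal{C}_\sigma(\check S)$, there do exist lifts of $\alpha$ with one endpoint on $L_j$ and the other on $L_k$ (produced by lifting the homotopy to $\sigma$), but you must show such a lift is \emph{unique}. Two such lifts differ by a deck transformation $\gamma$ that preserves both $L_j$ and $L_k$, i.e. $\gamma\in\operatorname{Stab}(L_j)\cap\operatorname{Stab}(L_k)$, and nothing in ``path and homotopy lifting'' forces this intersection to be trivial; a priori a relative homotopy class could have a $\Z$'s worth of pairs $(\tilde p,\tilde q)$ mapping to it under $\Phi$. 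Your phrasing ``injectivity by lifting any relative homotopy starting from $\tilde p$'' only handles the case where the two lifts share the same initial endpoint, and ``the terminal endpoint of that lift lies in $L_k$'' is in fact false for an arbitrary preimage $\tilde p\in L_j$ of $\alpha(0)$ — only for one preimage in each $\operatorname{Stab}(L_j)$-orbit. This is precisely the point the paper addresses by hyperbolic geometry: a nontrivial deck transformation is a hyperbolic isometry, and if it preserved the two distinct geodesics $B_1\neq B_2$ it would fix four boundary points, forcing it to be the identity. Since you have chosen not to impose a hyperbolic structure, you would need to supply an algebraic substitute (e.g.\ that boundary subgroups of the free group $\pi_1(\check S)$ are malnormal, or that two distinct maximal cyclic subgroups of a free group intersect trivially) — without some such input, the claimed bijectivity does not follow and the subsequent ``main step'' about matching basic neighborhoods rests on an unproved assertion.
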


We will apply this result to the surfaces with boundary $\check S$
arising as `model surfaces' corresponding to some family of translation
surfaces. According to our conventions these surfaces always have
negative Euler characteristic. This result captures the idea that the
distinction between homotopy and relative homotopy for paths is
measured by 
the amount of twisting around each boundary component. 

\begin{proof} Since the Euler characteristic of $\check S$ is negative
  we can give $\check S$ a hyperbolic structure so that the boundaries
  are geodesics. The universal
  cover $\widetilde{\check{S}}$ is 
  isometric to a convex subset of the hyperbolic plane with geodesic
  boundary.   Choose a boundary component $B_1$ of
  $\widetilde{\check{S}}$ corresponding to $\partial_j\check
  S$. Identify $B_1$ with $\widetilde{\partial_j\check S}$.  Choose a
  lift of $\sigma$ to a path $\tilde\sigma$ in
  $\widetilde{\check{S}}$ starting in $B_1$. The other endpoint of
  $\tilde \sigma$ lands in a component $B_2$ which maps to
  $\partial_k\check S$. Identify $B_2$ with
  $\widetilde{\partial_k\check S}$. Since the path is non-peripheral,
  $B_1$ and $B_2$ are distinct. Any path 
homotopic to
  $\sigma$ has a lift to a path from $B_1$ to $B_2$ and this lift is
  unique since the subgroup of the deck group that stabilizes $B_1$ and
  $B_2$ is trivial. This follows from the fact that a hyperbolic
  isometry that fixes four points is the identity.

We get a map from the space of relative homotopy classes of paths
homotopic to $\sigma$ to $B_1\times B_2=\widetilde{\partial_j\check
  S}\times\widetilde{\partial_k\check S}$ as follows. Given a path
homotopic to $\sigma$ we lift to a path from $B_1\times B_2$ and we
associate this path to its endpoints. Given a pair of points
$(p_1,p_2)\in B_1\times B_2$ we associate the projection to
$\check{S}$ of the unique geodesic from $p_1$ to $p_2$. The fact that
any path between $\widetilde{\partial_j\check S}$ and
$\widetilde{\partial_k\check S}$ is relatively homotopic to a unique
geodesic implies that these maps are inverses. 
This map is a continuous bijection with respect to the natural
topology on $\mathcal{C}_\sigma(\check{S})$. 
\end{proof}

We now describe explicitly some elements of $\Mod(\check{S}, \partial \check{S})$
which correspond to partial Dehn twists around boundary components.
Let $A_j$ be an annular neighborhood of $\partial_j\check{S}$ where
we choose coordinates $\{(t,\theta) : t \in [0,1], \,
\theta\in\R/2\pi(a_j+1)\Z \}$. Here $t=0$ corresponds to the boundary circle
$\partial_j\check{S}$, and the 
$\theta$ coordinate of $A_j$ is compatible at $t=0$ with the $\theta$
coordinate of the boundary circle.  
We will define a particular  homeomorphism
$\tau_j\in\Mod(\check{S}, \partial \check{S})$ as follows. 
On $A_j$ we define 
$\tau_j(t,\theta)=(t,\theta+2\pi(1-t ))$ so that $\tau_j$ rotates
$\partial_j\check{S}$ by $2\pi$ and is the identity on the other
boundary of $A_j$.  
We extend $\tau_j$ to a map $\tau_j:\check S\to\check S$ by setting it
to be the identity outside of  $A_j$. The map $\tau_j$ represents 
an element of $\Mod(\check{S}, \partial \check{S})$ which we call a
{\em fractional Dehn twist} by angle $2\pi$. In 
particular $\tau_j^{a_j+1}$ is a full Dehn twist around the boundary curve
$\partial_j\check{S}$. Note that $\tau_j^{a_j+1}$  is a right Dehn
twist and that it also makes 
sense to describe $\tau_j$ as a right fractional Dehn twist (see \cite{FarbMargalit}).
The collapsing map $c: \check{S}
\to S$ induces a map $c_*: \Mod(\check S, \partial\check{S}) \to
\Mod(S, \Sigma)$. Denote by $FT$ the group generated 
by the fractional Dehn twists $\tau_1, \dots, \tau_k$. 
Note that $FT$ depends on $(a_1, \dots , a_k)$.

\begin{lem}
\label{lem: boundary twist seq}
We have a  
short exact sequence
\begin{equation}\label{eq: new se sequence}{
1 \to FT \to \Mod(\check{S}, \partial\check{S}) \xrightarrow{c_*} \Mod(S, \Sigma)
\to 1,
}
\end{equation}
where the group $FT$ is the free Abelian group generated by the $\tau_j$ and is central in
$\Mod(\check{S}, \partial\check{S})$.  
\end{lem}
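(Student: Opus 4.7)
The plan is to prove the four assertions of the lemma in sequence: (i) $c_*$ is a well-defined homomorphism, (ii) $c_*$ is surjective, (iii) $\ker c_*$ coincides with the subgroup $FT$ generated by the fractional twists $\tau_j$, and (iv) $FT$ is free abelian and central in $\Mod(\check S,\partial\check S)$. Well-definedness is immediate: if $\check f$ is admissible then $c\circ\check f\circ c^{-1}$ is a homeomorphism of $(S,\Sigma)$ fixing $\Sigma$ pointwise, and an isotopy rel $\partial\check S$ descends to an isotopy rel $\Sigma$.

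For surjectivity, given $[f]\in\Mod(S,\Sigma)$, choose a representative $f$ and, for each $j$, a disk neighborhood $U_j$ of $\xi_j$ whose preimage under the collapsing map is a closed collar of $\partial_j\check S$. The restriction of $f$ to the punctured disk $U_j\smallsetminus\{\xi_j\}$ extends over the boundary circle of the blow-up; the extension is unique only up to a choice of rotation of $\partial_j\check S$, and the freedom available lets us make this rotation an integer multiple of $2\pi$ with respect to $q_j$. Gluing the admissible local extensions to $f$ on the complement produces a lift $\check f\in\Mod(\check S,\partial\check S)$ with $c_*[\check f]=[f]$. For the kernel computation, assume $c_*[\check f]=1$, so $c\circ\check f$ is isotopic to $c$ rel $\Sigma$. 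Lifting this isotopy to $\check S$ (which is possible since $c$ is a homeomorphism away from $\partial\check S$ and the isotopy can be shrunk to fix a neighborhood of $\Sigma$), we may replace $\check f$ by a representative equal to the identity outside collars $A_j$ of the boundary components. On each $A_j$, $\check f$ is then an admissible self-homeomorphism of an annulus which is the identity on the inner boundary and a rotation by $2\pi n_j$ on $\partial_j\check S$; the isotopy classification of such twists rel boundary shows $\check f|_{A_j}$ is isotopic to $\tau_j^{n_j}$, whence $\check f=\prod\tau_j^{n_j}\in FT$.

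The fractional twists $\tau_j$ commute pairwise because they can be supported in disjoint collars of the boundary components, so $FT$ is abelian. For freeness, assuming $\prod\tau_j^{n_j}=1$, apply the element to a non-peripheral arc $\sigma$ with one endpoint on $\partial_j\check S$ and the other on some $\partial_l\check S$, $l\neq j$ (such an arc exists since $\check S$ has negative Euler characteristic). By Lemma \ref{lem: twisting}, relative homotopy classes of such arcs are parametrized by the universal cover $\widetilde{\partial_j\check S}\times\widetilde{\partial_l\check S}\cong\R^2$, and $\tau_j$ acts by translation by $2\pi$ on the $\widetilde{\partial_j\check S}$ factor and trivially on the other, so $\prod\tau_j^{n_j}$ translates the first coordinate by $2\pi n_j$; fixing the arc forces $n_j=0$, and varying $j$ gives freeness. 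For centrality, any $g\in\Mod(\check S,\partial\check S)$ can, after isotopy rel boundary, be assumed to be a rigid rotation by $2\pi m_j$ in a small collar $A'_j\subset A_j$ of $\partial_j\check S$; since $\tau_j$ is isotopic rel $\partial$ to a twist $\tau'_j$ supported in $A'_j$, and both maps are rotations there depending only on the radial coordinate, $g$ commutes with $\tau'_j$ on the nose in $A'_j$ and trivially outside, hence $[g]$ commutes with $[\tau_j]$.

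The main obstacle is the kernel computation, specifically the step of isotoping $\check f$ (rel $\partial\check S$) to something that is the identity away from the collars. The projected isotopy lives on $(S,\Sigma)$, and lifting it compatibly with the admissibility condition on the boundary requires care: one must keep the boundary rotations locked throughout the lifted isotopy while the interior moves, which is why the isotopy must be engineered to be stationary on neighborhoods of $\Sigma$ before lifting. Once this reduction is made, the remaining annular classification is standard.
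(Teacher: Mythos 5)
Your overall architecture mirrors the paper's: prove well-definedness, then surjectivity by local lifting near the singularities, then compute the kernel via a normal-form/collar argument, and finally establish freeness and centrality using the action on relative homotopy classes of arcs (Lemma~\ref{lem: twisting}). The paper itself outsources the precise kernel computation to \cite[Prop.~3.20]{FarbMargalit} (the Alexander method), whereas you attempt to lift the isotopy by hand and honestly flag the delicate step of keeping the boundary rotations locked; that is a defensible alternative, though as you note it is exactly where the technical weight sits. Your centrality argument via rigid-rotation collar models is a self-contained variant of the paper's disjoint-support reasoning and works.

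There is, however, a genuine error in the freeness step. You ask for a non-peripheral arc $\sigma$ from $\partial_j\check S$ to some $\partial_l\check S$ with $l\neq j$, and justify its existence by negative Euler characteristic. But negative Euler characteristic does not provide a second boundary component: for a stratum with a single singularity such as $\HH(2)$, $\check S$ has genus~2 and exactly one boundary circle ($\chi=-3<0$ yet $k=1$), so no such arc exists and the argument cannot be run. The correct choice, which the paper makes, is a non-peripheral arc from $\partial_j\check S$ back to \emph{itself}; such an arc always exists once $\chi(\check S)<0$, and Lemma~\ref{lem: twisting} still applies since its proof only requires the two lifted boundary components $B_1,B_2$ in the universal cover to be distinct (they may cover the same $\partial_j\check S$ downstairs). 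With that arc, the element $\tau_j$ translates both coordinates in $\widetilde{\partial_j\check S}\times\widetilde{\partial_j\check S}$ by $2\pi$, so $\prod\tau_i^{n_i}$ acting trivially forces $n_j=0$, and varying $j$ gives freeness. Replace the $l\neq j$ arc with this self-arc and the proof goes through.
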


\begin{proof} We can see from the definition of a fractional
Dehn  twist that an element of $FT$ is isotopic to the identity by an isotopy which
moves points in the boundary of $S$. These isotopies descend to
isotopies of $(S,\Sigma)$. It follows that $FT$ belongs to the
kernel of $c_*$. The fact that the kernel of $c_*$ is exactly $FT$
follows from the arguments used in \cite[Prop. 3.20]{FarbMargalit}.

To see the surjectivity of $c_*$, let $h$ be a homeomorphism of $S$
fixing points of $\Sigma$. The homeomorphism $h$ is isotopic to a
diffeomorphism (see \cite[Thm. 1.13]{FarbMargalit}) 
which, using the properties of the real blowup, has a lift to a
homeomorphism $h'$ from $\check S$ to itself.  
This homeomorphism induces a homeomorphism of $\partial_j \check S$
for each $j$.  By 
applying an isotopy in the annular neighborhood $A_j$ of each $\partial_j S$ as
above, we can replace $h'$ by a map 
$h''$, which is the identity on $A_j$. 
 In particular $h$ and $h''$
represent the same element of $\Mod(S, 
\Sigma)$, and  $h''$ fixes each point of $\partial \check S$. This
shows that the equivalence class of $h$ contains a representative
which is in the image of $c_*$, proving surjectivity.

Since $h''$
is the identity on each $A_j$ and $\tau_j$ is the identity on the
complement of $A_j$ we see that $h''$ and  $\tau_j$ have disjoint support
so they commute which implies that
$FT$ is central. In particular $FT$ is abelian. 

We now show that there are no additional relations between elements of
$FT$. Consider a word in the  
collection of twists  that represents a relation. Since $FT$ is
abelian we can write it as 
$w=\tau_1^{m_1}\cdots \tau_j^{m_j}\cdots \tau_k^{m_k}$. Now since 
$\check S$ has negative Euler characteristic we can find a
non-peripheral path from a boundary component  
$\partial_j\check S$ to itself. The effect of $w$ on this path is to
shift both endpoints by $2\pi m_j$. 
By Lemma \ref{lem: twisting}, since $w$ acts trivially, $m_j=0$. Since
$j$ was arbitrary, $w=1$. 
\end{proof}

\subsection{Strata of boundary marked surfaces as spaces}
\label{subsec: marked blownups} 
In this subsection we define a topology on $\til\HH$ in a manner
somewhat analogous to the  
method used for defining the topology on $\HH_{\mathrm{m}}$. We construct
a cover of $\til\HH$ by sets on which the developing map is an
injection into $H^1(S,\Sigma;\R^2)$  and then  
we use these maps to endow each such set with the topology induced by
the developing map. 
We show that with respect to this topology the map $\pr$ is a covering
map, thus we conclude 
that these charts give not only a topology on $\til\HH$ but a
compatible affine structure. 

To construct these charts fix a point
$(\check{f},\check{M})\in\til\HH$ and a geodesic 
triangulation $\tau$ of $M$. We caution the reader that in contrast to
\S\ref{subsection: structures},  here $\tau$ denotes a geodesic
triangulation of $M$ rather than  
a topological triangulation of $S$. We have canonical lifts of the edges
$\sigma$ of the triangulation $\tau$ 
 to edges $\check{\sigma}$ in $\check{M}$ so that the endpoints of
 $\check{\sigma}$ lie in $\partial\check{M}$.  
Let $\check{\tau}$ be the collection of
paths in $\check{S}$  of the form $\check{f}^{-1}(\check{\sigma})$ for
$\sigma$ an edge of $\tau$.  These paths are embedded and we refer to them as arcs.
These arcs of $\check{\tau}$ decompose $\check{S}$ 
into hexagons where edges of the hexagons consist alternately of arcs
and intervals in boundary circles.

We now define what it means for two hexagon
decompositions to be {\em $\varepsilon$-close}. Firstly we require that the
arcs in the two decompositions are pairwise homotopic. Secondly we require that
these pairs of homotopic arcs are 
$\vre$-close in the sense of Definition \ref{def: epsilon close}. Note
that any triangulation 
of a translation surface has the property that distinct edges lie in
distinct homotopy classes, thus there is no ambiguity in comparing 
homotopy classes of arcs in the two decompositions. 

Given a geodesic
triangulation $\tau$ of $M$, let $\check\tau $ be its pullback under a
marking of blown up translation surfaces $\check S \to \check M$, and 
let $\mathcal{U}_{\check{\tau},\varepsilon}$ consist of $(\check{f},\check{M'})\in\til\HH$
for which there is a geodesic triangulation $\sigma$ of $M'$ which lifts to a hexagon decomposition 
of  $\check\sigma$ of $\check{M}'$ so that the pullback of $\check\sigma$ under $\check{f}'$ is 
$\varepsilon$-close to $\check\tau$.

\begin{lem} \label{lem: injectivity redux} The developing map is
  injective on the set $\mathcal{U}_{\check{\tau},\pi/2}$. 
\end{lem}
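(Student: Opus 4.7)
My plan is to reduce to Proposition \ref{prop: injective} via the projection $\pr\colon \til\HH \to \HH_{\mathrm{m}}$, and then use the $\pi/2$-tolerance to control the remaining boundary-marking ambiguity through the sequence of Lemma \ref{lem: boundary twist seq}. Suppose $(\check f_1, \check M_1), (\check f_2, \check M_2) \in \mathcal U_{\check\tau,\pi/2}$ have equal developing image. Since $\dev$ factors through $\pr$, and since the witness hexagon decompositions collapse via $c$ to topological triangulations of $(S,\Sigma)$ in a common homotopy class $\tau_0$, the projections $(f_i, M_i)$ both lie in the chart $\mathcal U_{\tau_0}$ of $\HH_{\mathrm m}$ and share a developing image. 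Proposition \ref{prop: injective} then yields a translation equivalence $g\colon M_1 \to M_2$ with $g \circ f_1 \simeq f_2$ rel $\Sigma$. Lifting $g$ to the comparison map $\check g\colon \check M_1 \to \check M_2$ of blow-ups, the composition $\alpha = \check g \circ \check f_1$ becomes a boundary marking of $\check M_2$ lying in $\mathcal U_{\check\tau,\pi/2}$ (with witness $\check g(\check\sigma_1)$) and projecting to $(f_2, M_2)$. So injectivity reduces to the uniqueness statement: for each underlying $(f, M)$, at most one boundary marking in $\mathcal U_{\check\tau,\pi/2}$ projects to it.

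To prove this uniqueness, suppose $(\check f, \check M)$ and $(\check f', \check M)$ are two such boundary markings. They differ by an admissible self-homeomorphism of $\check S$ projecting to the identity in $\Mod(S,\Sigma)$, so Lemma \ref{lem: boundary twist seq} produces $\psi = \tau_1^{n_1} \cdots \tau_k^{n_k} \in FT$ with $\check f'$ isotopic rel boundary to $\check f \circ \psi$. Let $\check\sigma, \check\sigma'$ denote the witness geodesic triangulations on $\check M$ for the two markings; both project to geodesic triangulations $\sigma, \sigma'$ of $M$ whose $f$-pullbacks lie in the common homotopy class $\tau_0$. By uniqueness of the geodesic representative of a relative homotopy class in a translation surface, $\sigma = \sigma'$, and since the lift of a saddle connection to $\check M$ is determined by its directions of departure, $\check\sigma = \check\sigma'$. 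Setting $A = \check f^{-1}(\check\sigma)$, the $\pi/2$-close assumption now says both $A$ and $\check f'^{-1}(\check\sigma) = \psi^{-1}(A)$ are $\pi/2$-close to $\check\tau$. For each arc of $\check\tau$ ending on $\partial_j \check S$ at angular coordinate $\theta_0$, the corresponding endpoints of $A$ and of $\psi^{-1}(A)$ therefore both lie in $[\theta_0 - \pi/2,\, \theta_0 + \pi/2]$, while $\psi^{-1}$ shifts the endpoint on $\partial_j \check S$ by $-2\pi n_j$. This forces $2\pi |n_j| \leq \pi$, hence $n_j = 0$; since each $\partial_j \check S$ is a terminus of some arc of the triangulation $\check\tau$, we conclude $\psi = \mathrm{id}$.

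The main obstacle is the uniqueness of geodesic representatives invoked above. While the uniqueness of a saddle connection in its homotopy class rel endpoints is classical for translation surfaces, some care is needed to transfer it to geodesic arcs on $\check M$ whose endpoints lie on the boundary circles of the blow-up: the key point is that a point on $\partial_j \check M$ records the direction of departure from $\xi_j$, and specifying the directions at both endpoints together with the relative homotopy class uniquely determines the saddle connection. Once this uniqueness is granted, the role of the numerical constant $\pi/2$ in the definition of $\mathcal U_{\check\tau, \pi/2}$ becomes transparent: two $\pi/2$-neighborhoods of the same endpoint can overlap only through a shift of magnitude strictly less than $2\pi$, and $2\pi$ is precisely the angular period of a single fractional Dehn twist $\tau_j$, so no nontrivial element of $FT$ can be absorbed within the tolerance of a single chart.
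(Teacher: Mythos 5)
Your high-level strategy mirrors the paper's (reduce via the comparison map to two boundary markings of the same blown-up surface, identify the discrepancy as an element $\psi = \tau_1^{n_1}\cdots\tau_k^{n_k}$ of $FT$ by Lemma \ref{lem: boundary twist seq}, then use the $\pi/2$-tolerance to kill it), and your intermediate step differs in a genuine and reasonable way: you first pin down the witness geodesic triangulation by uniqueness of geodesic representatives, whereas the paper compares the two pullback hexagon decompositions arc-by-arc without ever arguing that the witness triangulations on $M$ coincide. That reduction is fine.

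However, the final step has a real gap. You argue that the endpoints of $A$ and $\psi^{-1}(A)$ on $\partial_j\check S$ both lie in $[\theta_0-\pi/2,\theta_0+\pi/2]$, and that $\psi^{-1}$ shifts the endpoint by $-2\pi n_j$, so $2\pi|n_j|\le\pi$ forces $n_j=0$. But the endpoint lives on the circle $\partial_j\check S\cong \R/2\pi(a_j+1)\Z$, where a shift by $2\pi n_j$ is only well-defined modulo $2\pi(a_j+1)$. Your comparison therefore only yields $n_j\equiv 0 \pmod{a_j+1}$, not $n_j=0$. In particular, a full Dehn twist $\tau_j^{a_j+1}$ is a nontrivial element of $FT$ that moves no point of $\partial_j\check S$, and your argument does not see it; for a marked point ($a_j=0$) the comparison gives no information whatsoever. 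What actually rules out the full twist is not the position of the endpoint on the circle but the \emph{relative homotopy class} of the arc, which is a point of $\mathcal C_\sigma(\check S)$ — and Lemma \ref{lem: twisting} says $\mathcal C_\sigma(\check S)$ is the universal cover $\widetilde{\partial_i\check S}\times\widetilde{\partial_j\check S}$, so the datum to compare is the lift in $\R$, not the circle coordinate. The homotopy clause in Definition \ref{def: epsilon close} (not merely containment in a short interval) is what pins down the lift: the homotopy yields a path in $\mathcal C_\sigma(\check S)$ whose $\epsilon$-image stays in an interval shorter than the circumference, hence a well-defined lift within $\pi/2$ of the lift of $\theta_0$. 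With that in hand, $[\psi^{-1}(A)]$ and $[A]$ genuinely differ by $-2\pi n_j$ in $\R$ and your triangle-inequality argument does give $n_j=0$. So your route can be made correct, but only by invoking the covering structure of $\mathcal C_\sigma(\check S)$ that the paper uses via Lemma \ref{lem: twisting}; as written, your argument silently conflates the circle coordinate with its universal cover, and the closing remark that ``$2\pi$ is precisely the angular period of a single fractional Dehn twist'' obscures exactly this point, since the relevant period on $\partial_j\check S$ is $2\pi(a_j+1)$, not $2\pi$.
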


\begin{remark}
The developing map for $\til\HH$ is most naturally
defined to take values in $H^1(S, \partial\check{S}; \R^2)$ but the the
collapsing map $c:\check{S}\to S$ induces  a map $c^*:
H^1(S,\Sigma;\R^2)\to H^1(\check{S}, \partial\check{S}; \R^2)$  
which is an isomorphism. In the sequel we will make use of this
isomorphism to identify the two spaces. 
\end{remark}

\begin{proof}
Assume that $(\check{f}_1,\check{M}_1)$ and
$(\check{f}_2,\check{M}_2)$ 
map to the same point in $H^1(S,\Sigma;\R^2)$ and both lie in 
$\mathcal{U}_{\check{\tau},\pi/2}$. Let $\tau_1$ and $\tau_2$ be
triangulations of $M_1$ and 
$M_2$ so that $\check{\tau}_1= f_1^*(\check{\tau_1})$ and
$\check{\tau}_2= {f_2}^*(\check{\tau_2})$ are
$\pi/2$-close to $\check \tau$, and hence $\pi$-close to each
other. Since $M_1$ and  
$M_2$ have geodesic triangulations such that corresponding edges have
the same image in $\R^2$, the comparison 
map $F(\tau, M_1, M_2)$ is a translation equivalence, and we denote it
by $g:M_1\to M_2$. 
In order to show that $(\check{f}_1,\check{M}_1)$ and
$(\check{f}_2,\check{M}_2)$ represent the same element of $\til\HH$
we need to show that $\check{g}\circ \check{f}_1$ and $\check{f}_2$ agree on
$\partial \check S$ and that  they are isotopic via an isotopy which
fixes  $\partial \check S$. 
Both $\check{\tau}_1$ and $\check{\tau}_2$ produce collections
of arcs in $\check{S}$. There is a unique correspondence between these
collections of arcs so that corresponding arcs are homotopic.
We want to show that corresponding arcs are not just homotopic but in
fact relatively homotopic. 

We begin by working with a single edge.
 Let $\sigma$ be an edge of $\check \tau$ and let $\sigma_1$ and $\sigma_2$
 be corresponding oriented edges of $\check{\tau}_1$ and 
$ \check{\tau}_2$. Our first objective is to show that  
$\sigma_1$ and $\sigma_2$ have the same endpoints and are relatively homotopic.
 The relative homotopy classes of $\sigma_1$ and $\sigma_2$ determine
 points $[\sigma_1]$ and $[\sigma_2]$ in
 $\mathcal{C}_\sigma(\check{S})$. It suffices to show that these two
 points are the same. 

Say that $\sigma_1$ and $\sigma_2$ run from $\partial_i\check{S}$
to $\partial_j\check{S}$.
We have the endpoint map $\epsilon:
\mathcal{C}_\sigma(\check{S})\to\partial_i \check
S\times\partial_j\check S$. 
According to Lemma \ref{lem: twisting} this is a covering map.
We also have projection maps $p_k:\partial_k \check S\to
\R/2\pi\Z$  which are covering maps (in either $S$ or $M$). Consider the composition
$\Pi=(p_i\times p_j)\circ\epsilon:\mathcal{C}_\sigma(\check{S})\to
\R/2\pi\Z\times  \R/2\pi\Z$. This is a covering map. 
The oriented segments $\sigma_1$ and $\sigma_2$ have the same
holonomy  so they point in the same direction in $\R^2$ hence
$\Pi([\sigma_1])=\Pi([\sigma_2])$.
 Since both triangulations lie in $\mathcal{U}_{\check{\tau},\pi/2}$
 there are intervals 
 $I\subset \partial_i \check S$ and $J\subset \partial_j \check S$ of
 length $\pi$ and a homotopy 
 from  $\sigma_1$ to $\sigma_2$ for which the endpoints of the paths
 remain in $I$ and $J$. 
 This homotopy gives a path $\rho$ from $[\sigma_1]$ to $[\sigma_2]$ in $
 \mathcal{C}_\sigma(\check{S})$ which projects to  
 $I\times J\subset  \R/2\pi\Z\times  \R/2\pi\Z$ under the covering
 map $\Pi$. 
The image of $\rho$ is a loop, which is contractible
since it is contained in the contractible 
set $I\times J$. Covering theory implies that $\rho$ itself must
be a loop so $[\sigma_1]=[\sigma_2]$.  
It follows that $\check{g}\circ \check{f}_1(\sigma_1)$ and $\check{f}_2(\sigma_2)$
are relatively homotopic. 

Proposition \ref{prop: injective} shows that $g\circ \check{f}_1$ and  
$\check{f}_2$ induce 
isotopic maps on $(S,\Sigma)$.
It remains to show that they agree as maps on $(\check{S},\partial\check{S})$.
According to Lemma \ref{lem: boundary twist seq} this means that there is no boundary twisting but
boundary twisting is detected by the effect on relative homotopy classes of the paths $\sigma$.
We conclude that $g\circ \check{f}_1$ and  $\check{f}_2$ are isotopic relative to their boundaries.
\end{proof}

We can use the injectivity of the developing maps on the sets
$\mathcal{U}_{\check{\tau},\pi/2}$ to define a topology on $\tilde\HH$. 
With respect to this topology the developing map becomes a local
homeomorphism. The next proposition refers to this topology.

\begin{prop} The projection map $\pr$\index{pr@$pr$} from $\til\HH$ to
  $\HH_{\mathrm{m}}$ has the path 
lifting property. \label{prop: lifting}
\end{prop}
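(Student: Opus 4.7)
The plan is to show that $\pr$ is a local homeomorphism and then apply the standard covering-space style path-lifting argument using compactness of $[0,1]$.

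\textbf{Step 1 (local homeomorphism).} Given any $\tilde x = (\check f, \check M) \in \til\HH$, choose a geodesic triangulation $\tau$ of $M$ and form the chart $U = \mathcal{U}_{\check\tau, \pi/2}$ around $\tilde x$. Its image $\pr(U)$ lies inside the chart $\mathcal{U}_\tau \subset \HH_{\mathrm{m}}$. Under the canonical isomorphism $H^1(\check{S}, \partial\check{S}; \R^2) \cong H^1(S, \Sigma; \R^2)$ induced by the collapsing map, the two developing maps satisfy $\dev_{\til\HH} = \dev_{\HH_{\mathrm{m}}} \circ \pr$. By Lemma \ref{lem: injectivity redux} and Proposition \ref{prop: injective}, both developing maps are injective on their respective charts. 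Hence $\pr|_U$ is injective, and $\pr(U) = \dev_{\HH_{\mathrm{m}}}^{-1}(\dev_{\til\HH}(U))$ is an open neighborhood of $\pr(\tilde x)$ in $\mathcal{U}_\tau$ on which $\pr|_U$ is a homeomorphism.

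\textbf{Step 2 (extension argument).} Given a continuous $\gamma: [0,1] \to \HH_{\mathrm{m}}$ and a lift $\tilde\gamma(0) \in \pr^{-1}(\gamma(0))$, set $T = \sup\{t \in [0,1] : \gamma|_{[0,t]}$ admits a continuous lift extending $\tilde\gamma(0)\}$. The local section from Step 1 shows $T > 0$. Suppose a continuous lift $\tilde\gamma$ is defined on $[0,T)$; pick any $\tilde y \in \pr^{-1}(\gamma(T))$ and a chart $\mathcal{U}_{\check\tau_T, \pi/2}$ containing it. Then $V := \pr(\mathcal{U}_{\check\tau_T, \pi/2})$ is an open neighborhood of $\gamma(T)$, so by continuity $\gamma((T-\eta, T]) \subset V$ for some $\eta > 0$, and the inverse of $\pr|_{\mathcal{U}_{\check\tau_T, \pi/2}}$ produces a continuous lift of $\gamma|_{(T-\eta, T]}$ ending at $\tilde y$.

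\textbf{Step 3 (matching, main obstacle).} The essential difficulty is reconciling this new partial lift with the existing $\tilde\gamma|_{(T-\eta, T)}$. The fibers of $\pr$ are orbits of the centrally-acting free abelian group $FT$ from Lemma \ref{lem: boundary twist seq}; one checks that the $FT$-action on $\til\HH$ by precomposition is free and properly discontinuous, so that the preimage decomposes as a disjoint union $\pr^{-1}(V) = \bigsqcup_{\phi \in FT} \phi \cdot \mathcal{U}_{\check\tau_T, \pi/2}$. For $t$ sufficiently close to $T$, the existing lift $\tilde\gamma(t)$ therefore lies in a single translate $\phi_0 \cdot \mathcal{U}_{\check\tau_T, \pi/2}$; after replacing $\tilde y$ by $\phi_0 \cdot \tilde y$, injectivity of $\pr$ on the shifted chart forces the two lifts to agree on the overlap, producing a continuous extension of $\tilde\gamma$ past $T$ (or reaching $T=1$ if $T$ was already $1$). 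This contradicts the maximality of $T$ unless $T = 1$ and the lift is defined at $T$, completing the proof. The main obstacle is thus the verification that the $FT$-action on $\til\HH$ is free and properly discontinuous with respect to the topology defined in \S\ref{subsec: marked blownups}, which rests on the short exact sequence of Lemma \ref{lem: boundary twist seq} together with the $FT$-equivariance of the developing map $\dev_{\til\HH}$ when $FT$ is identified with its image in the deck group of $\dev_{\til\HH}$.
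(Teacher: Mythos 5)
Your argument takes a genuinely different route from the paper's, and there is a real gap at exactly the step you flag as the ``main obstacle.'' The paper's proof is constructive: it builds the lift explicitly by extending the comparison maps $F(\tau,M_0,M_t)$ to the blow-ups and then correcting the boundary behavior by solving a homotopy-lifting problem for the circle covers $p_j\colon\partial_j\check S\to S^1$, using the classical Homotopy Lifting Theorem. No global statement about the structure of $\pr^{-1}$ is invoked. Your Steps 1 and 2 are fine (and Step~1 matches content implicit in Lemma~\ref{lem: injectivity redux} and Proposition~\ref{prop: injective}), but Step~3 reduces everything to the claim that $FT$ acts freely on $\til\HH$ (proper discontinuity is actually superfluous here: freeness together with injectivity of $\pr$ on the chart already forces the translates $\phi\cdot\mathcal U_{\check\tau_T,\pi/2}$ to be pairwise disjoint), and you do not prove freeness.

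This is not a routine check. Unwinding the equivalence relation on $\til\HH$, if $\phi\in FT$ fixes $(\check f,\check M)$ then there is a label-preserving translation automorphism $g$ of $M$ with $\check f^{-1}\check g\check f=\phi^{-1}$ in $\Mod(\check S,\partial\check S)$; in particular $g$ is isotopic to the identity rel $\Sigma$ while $\check g$ rotates some $\partial_j\check M$ by a nontrivial fractional twist. Ruling out such a $g$ requires an argument (for instance, that $g$ would carry a saddle connection at $\xi_j$ to a distinct geodesic arc with the same endpoints in the same relative homotopy class, contradicting uniqueness of geodesic representatives on a flat surface), and no such fact is established in the paper prior to this proposition. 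Indeed, the assertion that $FT$ acts simply transitively on the fibers of $\pr$ appears only \emph{after} the Corollary, as a consequence of the covering-map property rather than as an ingredient in its proof. So as written, your Step~3 is either incomplete or circular. The paper's explicit construction sidesteps the issue entirely, and has the extra benefit of producing the specific lift that is reused in the proof of Proposition~\ref{prop: two pi}.
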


Recall that the path lifting property for the map $\pr$ means that if we are given a path $f:I\to \HH_{\mathrm{m}}$ and a lift $\tilde x_0$ of the endpoint 
$f(0)=x_0$ then there is a unique path $\tilde f:I\to \tilde X$  with
$\tilde f(0)=\tilde x_0$ which satisfies $\pr\circ \tilde f=f$ (see
\cite [p. 60]{Hatcher} for more information).

\begin{proof}
It suffices to construct lifts of paths locally. Consider an open set
$\mathcal{U}_{\tau} \subset\HH_{\mathrm{m}}$ corresponding to a triangulation $\tau$ as in \S \ref{subsection:
  structures}.  Let $\phi_t=(f_t,M_t)$ for $t \in [0,1]$ be a path taking
values in $\mathcal{U}_\tau$ and let 
$(\check{f_0},\check{M_0})$ represent a point in $\til \HH$ such that $\pr (\check{f_0},\check{M_0}) =
(f_0, M_0)$. 
We define a path $\check \phi_t = (\check f_t, \check M_t)$ for  $t \in
[0,1]$, satisfying 
\begin{equation}\label{eq: this is what we satisfy}{
\check \phi_0=(\check{f}_0,\check{M}_0) \ \ \mathrm{and} \ 
\pr(\check \phi_t)=\phi_t,} 
\end{equation}
as follows. 

Let  $F(\tau,M_0,M_t):M_0\to M_t$ be the comparison maps defined in
\S\ref{subsection: structures}. These maps are
piecewise linear and hence  they have unique extensions
 to the blowups, that is there are homeomorphisms
$\check{F}(\tau,M_0,M_t):\check{M}_0\to \check{M}_t$ of the 
blown up surfaces,  satisfying 
$$
c \circ \check{F}(\tau,M_0,M_t) = F(\tau,M_0,M_t) \circ c.
$$
We define $\check F_t =
\check{F}(\tau,M_0,M_t)\circ\check{f_0}:\check{S}\to\check{M}_t$, and
denote the restriction of $\check F_t$ to $\partial_j \check S$ by
$\partial_j \check F_t$. 
We would like to set
$\check \phi_t=(\check{F}_t,\check{M_t})$ but there is a
problem in that the maps $\check{F}_t$ need not preserve
the boundary coordinates given by the maps $p_j:\partial_j
\check{M}_t\to S^1$. In other words, it will not generally be the case that
$p_j\circ \partial_j\check{F}_t=p_j$. We claim that there is a unique
(up to isotopy) way to modify 
the maps $\check{F}_t$ by precomposition with a continuous family of
maps of $\check S$, so that they do
satisfy the boundary coordinate condition, and so that \eqref{eq: this
  is what we satisfy} holds. We will do this by precomposing
the maps $\check{F}_t$ with homeomorphisms $H_t:\check{S}\to\check{S}$
supported in a neighborhood of the boundary and then set
$\check{f}_t=\check{F}_t\circ H_t$. 

In order to prove the existence of the required
homeomorphisms $H_t$, we consider the condition that they will
need to satisfy. For each boundary component $\partial_j\check{S}$ the
restriction $\partial_j H_t$ should satisfy
\begin{equation}\label{eq: better commute}{p_j\circ\partial_j\check{F}_t\circ \partial_j
  H_t=p_j {\ \ \rm and\ \ } \partial_j H_0=\mathrm{Id}.} 
\end{equation}
We rewrite this as 
\begin{equation}\label{eq: better commute2}{p_j\circ\partial_j\check{F}_t=p_j\circ (\partial_j H_t)^{-1}.}
\end{equation}
Setting $\ell_{j,t}=(\partial_j H_t)^{-1}$, we see  that $\ell_{j,t} :
\partial_j \check S \to \partial_j \check S$
is a solution to the homotopy lifting problem
$$
p_j\circ\ell_{j,t}=p_j\circ\partial_j\check{F}_t
\ \ \ \mathrm{and} \ \ 
\ell_{j,0}=\mathrm{Id}
,$$
see Figure \ref{fig: lifting problem}. 

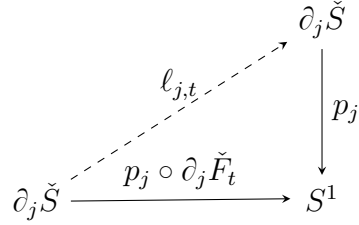
\begin{figure}[h]
\begin{tikzpicture}
  \matrix (m) [matrix of math nodes,row sep=4em,column sep=7em,minimum width=2em] {
      &  \partial_j \check{S}\\
\partial_j \check{S}  &S^1 \\
};
  \path[-stealth][dashed]
    (m-2-1) edge node [above] {$\ell_{j,t}$} (m-1-2);
  \path[-stealth]
    (m-1-2) edge node [right] {$p_j$} (m-2-2)
    (m-2-1) edge node [above] {$p_j\circ\partial_j\check{F}_t$} (m-2-2);
\end{tikzpicture}
\caption[The boundary map]{The boundary map $\ell_{j,t} = (\partial_j H_t)^{-1}$
  rectifies the discrepancies of $\partial_j \check F_t$ along boundary circles.
  } 
\label{fig: lifting problem}
\end{figure}    

Since $p_j$ is a covering map the Homotopy Lifting
Theorem \cite[Prop 1.30]{Hatcher} asserts that lifts $\ell_{j,t}$
exist and are unique. Thus $H_t=(\ell_{j,t})^{-1}$ is defined on the
boundary of $\check{S}$. It remains to extend $H_t$ to annular
neighborhoods of the boundaries. As in \S \ref{extended map}
we have a family of disjoint annuli $A_j$ in $\check{S}$ parametrized
by $\{(r,\theta): r\in [0,1], \, \theta \in \R/2\pi(a_j+1)\Z \}$ where
in these coordinates, $\{r=0\}$ is the $j$-th boundary component of
$\check{S}$. Then we define $H_t$ to be the identity outside the union
of annuli, to be equal to the prescribed map $\partial_j H_t$ on
$\{r=0\}$, and be given by the formula
$H_t(r,\theta)=(r,\psi_{(1-r)t}(\theta))$ for $0\le r\le 1$.

It is clear that with these definitions, the path $\check \phi_t = (\check
f_t, \check M_t)$ with $\check f_t = \check F_t
\circ H_t$ satisfies \eqref{eq: this is what we satisfy}.

It is not hard
to verify the 
homotopy invariance of the lift, that is, if 
we have two homotopic paths $\phi_t$ and
$\phi'_t$ (for $t \in [0,1]$), 
one can 
lift the homotopy to obtain 
a one-parameter family of maps
$\check{f_1}^s:\check{S}\to\check{M_1}$, which 
gives us
an isotopy from $\check{f_1^0}$ to $\check{f_1^1}$, fixing 
the boundary.  That is, the lifts of homotopic paths have the same
endpoint in $\til \HH$. 

The uniqueness (up to isotopy) of the maps $H_t$ (where $H_0$ is the
identity and \eqref{eq: better commute} holds), follows from the
uniqueness of the lifts $\ell_{j,t}$ and standard properties of annuli, and is left as an exercise.
\end{proof}

\begin{cor} The map $\pr:\til\HH\to\HH_{\mathrm{m}}$ is a covering map and $\til\HH$ has an affine structure
given by period coordinates.
\end{cor}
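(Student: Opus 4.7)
The plan is to derive both conclusions from Proposition \ref{prop: lifting} together with the local injectivity of the developing map. First, I would note that Lemma \ref{lem: injectivity redux}, combined with our definition of the topology on $\til\HH$, makes the developing map a local homeomorphism from $\til\HH$ into $H^1(\check{S}, \partial\check{S}; \R^2) \cong H^1(S, \Sigma; \R^2)$. Since the transitions between any two charts of the form $\mathcal{U}_{\check\tau, \pi/2}$ are restrictions of the identity of $H^1(S,\Sigma;\R^2)$, these charts assemble into a well-defined affine atlas giving $\til\HH$ a $(G,X)$-structure with the same $X$ and structure group as $\HH_{\mathrm{m}}$. Moreover, the identity $\dev_{\til\HH} = c^* \circ \dev_{\HH_{\mathrm{m}}} \circ \pr$ shows that $\pr$ is an affine local homeomorphism in these coordinates, which will yield the second assertion once the covering property is established.

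To show $\pr$ is a covering map, I would fix $(f,M) \in \HH_{\mathrm{m}}$ and pick a chart $\mathcal{U}_\tau$ containing it, shrunk if necessary so that $\dev(\mathcal{U}_\tau)$ is a convex open subset of $H^1(S,\Sigma;\R^2)$; in particular $\mathcal{U}_\tau$ is simply connected. The fiber $\pr^{-1}(f,M)$ is nonempty (boundary markings exist by the construction in \S\ref{extended map}) and is a torsor under the discrete abelian group $FT$ by Lemma \ref{lem: boundary twist seq}. For each $(\check f_0,\check M) \in \pr^{-1}(f,M)$, Proposition \ref{prop: lifting} produces a unique lift of every path starting at $(f,M)$; by the homotopy invariance of lifts recorded in the proof of that proposition, together with simple connectedness of $\mathcal{U}_\tau$, the endpoint of the lift depends only on the endpoint of the path. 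This assembles into a continuous section $s_{\check f_0}: \mathcal{U}_\tau \to \til\HH$ with $s_{\check f_0}(f,M) = (\check f_0,\check M)$ and $\pr \circ s_{\check f_0} = \mathrm{id}$.

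The images of these sections, as $(\check f_0,\check M)$ ranges over $\pr^{-1}(f,M)$, are pairwise disjoint by the uniqueness part of path lifting, and they exhaust $\pr^{-1}(\mathcal{U}_\tau)$ because any point there lies on the section passing through it, obtained by lifting the straight-line path in period coordinates back to $(f,M)$. This exhibits $\mathcal{U}_\tau$ as an evenly covered neighborhood, so $\pr$ is a covering map. The existence of an affine structure on $\til\HH$ given by period coordinates is then automatic, since the charts $\mathcal{U}_{\check\tau, \pi/2}$ are affine and their images under $\dev$ are open. The main delicacy I anticipate is ensuring that the local sections $s_{\check f_0}$ are actually continuous, rather than merely set-theoretically well defined; this is where simple connectedness of $\mathcal{U}_\tau$ and the homotopy invariance of lifting (guaranteed by the explicit construction of lifts in the preceding proposition via unique solutions to the homotopy lifting problem along boundary circles) are essential.
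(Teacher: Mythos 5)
Your argument is correct and amounts to unpacking the one-line proof the paper gives: the paper simply cites the general topological fact that a local homeomorphism with the path-lifting property onto a semilocally simply connected space is a covering map, whereas you prove this fact from scratch, using convexity in period coordinates to produce simply connected base neighborhoods and the homotopy invariance recorded in the proof of Proposition~\ref{prop: lifting} to make the endpoint of each lift (hence each local section) well defined. One small remark: the $FT$-torsor structure of the fibers, while true and asserted by the paper just after this corollary, is not logically needed for your even-covering argument, since discreteness of the fiber already follows from uniqueness of path lifting together with the local injectivity of $\dev$ established in Lemma~\ref{lem: injectivity redux}, so the appeal to Lemma~\ref{lem: boundary twist seq} could be dropped.
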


\begin{proof} A local homeomorphism to a sufficiently nice space
which has the path lifting property is a covering map. 
See \cite[p. 383]{do Carmo} for more details and a proof. \end{proof}

The group $\Mod(\check S, \partial \check S)$ acts on $\til \HH$ by
precomposition. It acts
continuously and properly discontinuously on 
$\til \HH$, the quotient is $\HH$ and according to Lemma \ref{lem: boundary twist seq} the
subgroup $FT$ acts simply transitively on each  
fiber of $\pr$. We warn the reader that the spaces $\tilde\HH$ and
$\HH_{\mathrm{m}}$ are not in general connected. 
Typically they have infinitely many components.

\subsection{The space of framed surfaces}\label{framing}
\label{subsec: framings} 

Our next objective is to define and analyze the covering space of framed
surfaces.
Since $\HH_{\mathrm{m}}$ is an affine manifold and $\pr: \til \HH \to
\HH_{\mathrm{m}}$ is a covering map, we have equipped $\til \HH$ with
the structure of an affine manifold. Since the action of  $\Mod(\check
S, \partial \check S)$ is properly discontinuous, for each
subgroup $\Gamma$ of $\Mod(\check S, \partial \check S)$, we can form
the quotient $\til \HH /\Gamma$.
By Lemma \ref{lem: boundary twist 
  seq} we have $\HH =\til \HH/\Mod(\check S, \partial \check S )$ 
and $\HH_{\mathrm{m}} = \HH/FT$. Moreover each $\til \HH /\Gamma$
is an orbifold cover of $\HH$. Note
that this is
not  the Galois correspondence relating connected covers to subgroups
of the fundamental group, since 
$\til \HH$ is not connected. Nevertheless we can define the space of
framed surfaces via a group-theoretic approach.

We start with a discussion of subgroups of
$\Mod(\check{S}, \partial\check{S})$. 
While an element of the group $\Mod(\check{S}, \partial\check{S})$ is only defined up 
to isotopy on the interior of $\check{S}$, it is well-defined on the boundary circles and acts 
on each circle $\partial_j \check S$ by rotations which are multiples
of $2\pi$. Let $\Mod(\check{S})$ be the subgroup of
$\Mod(\check{S}, \partial\check{S})$ 
represented by homeomorphisms that fix the boundary pointwise.
Let $PR$ be the {\em prong rotation group},
consisting of homeomorphisms of the boundary $\partial\check{S}$ which, on each boundary component $\partial_j\check{S}$, 
are rotations by an integral multiple of $2 \pi$. Since $\partial_j
\check S$ is
parametrized by a circle of length $2\pi (a_j+1)$, as a group $PR$ is
isomorphic to $\prod_{j=1}^{k} \Z/(a_j+1)\Z$. 
We have a short exact sequence
\begin{equation}\label{eq: another se sequence}{
1 \to \Mod(\check{S}) \to 
\Mod(\check{S}, \partial\check{S}) \to PR
\to 1. 
}\end{equation}
Surjectivity in \eqref{eq:
  another se sequence} follows from the fact that the fractional
twists 
$\tau_j\in \Mod(\check{S}, \partial\check{S})$ map to a
collection of generators for $PR$.

A {\em framed} translation surface is a translation surface $M$
equipped with a right-pointing horizontal prong at each singular
point. We will call this prong (considered as an element of
$\partial_j \check M$) the {\em selected 
  prong}. Equivalently a framed surface is equipped with a choice of
boundary coordinate on $C_j$ taking values in the circle $\R/2\pi
(a_j+1)\Z$ so that the map 
$p_j$ is reduction modulo $2\pi$ and the selected prong corresponds to the angle 0.

The space of framed translation surfaces is naturally a finite
cover of $\HH$ and we will denote it by $\HH_{\mathrm{f}}$. The
connected components of $\HH_{\mathrm{f}}$ were classified by Boissy
\cite{Boissy}. We recover $\HH_{\mathrm{f}}$
as the quotient of $\til \HH$ by the group $\Mod(\check{S})$ in \eqref{eq: another se
  sequence}:

\begin{prop} \label{prop: recovering framed} 
We have $\til \HH/\Mod(\check S) = \HH_{\mathrm{f}}$.
\end{prop}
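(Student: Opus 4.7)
The plan is to define a natural forgetful map $\Phi\colon \tilde\HH \to \HH_{\mathrm{f}}$ which records only the translation surface and its selected prongs, verify that it descends to a bijection on the quotient by $\Mod(\check S)$, and check that this bijection is compatible with the orbifold covering structures.

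First I would define $\Phi$ as follows. Given a representative $(\check f, \check M)$ of a class in $\tilde\HH$, the boundary compatibility condition $p_j\circ\check f\equiv q_j\bmod 2\pi$ ensures that the image of the angle-$0$ point of $\partial_j\check S$ is a right-pointing horizontal prong in $\partial_j\check M$. Declaring this prong to be the selected prong at the $j$-th singularity of $M$ turns $M$ into a framed surface. One checks that this framing is independent of the representative in the equivalence class of $(\check f,\check M)$, since equivalences are required to agree on $\partial \check S$. Hence $\Phi$ is well-defined. Moreover, an element of $\Mod(\check S)$ is represented by a homeomorphism fixing $\partial\check S$ pointwise, so precomposition does not move the angle-$0$ point; thus $\Phi$ is invariant under $\Mod(\check S)$, inducing $\bar\Phi\colon \tilde\HH/\Mod(\check S)\to\HH_{\mathrm{f}}$.

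Next I would check surjectivity and injectivity of $\bar\Phi$. For surjectivity, given a framed surface $M$, choose any orientation-preserving homeomorphism $f\colon S\to M$ respecting labels; lifting to a homeomorphism $\check f\colon \check S\to\check M$ and then adjusting by a rotation on each annular collar neighborhood $A_j$ (using the family of homeomorphisms introduced for the fractional twists in \S\ref{extended map}), one can arrange that the angle-$0$ prong of $\partial_j\check S$ maps to the selected prong of $M$, and that $p_j\circ\check f\equiv q_j\bmod 2\pi$. This produces a preimage in $\tilde\HH$.

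For injectivity, suppose $\Phi(\check f_1,\check M_1)=\Phi(\check f_2,\check M_2)$. Then there is a translation equivalence $g\colon M_1\to M_2$ carrying selected prongs to selected prongs; let $\check g\colon \check M_1\to\check M_2$ be its canonical extension. Both $\check g\circ\check f_1$ and $\check f_2$ are orientation-preserving boundary homeomorphisms $\partial_j\check S\to\partial_j\check M_2$ satisfying $p_j\circ(\,\cdot\,)\equiv q_j\bmod 2\pi$ and carrying the angle-$0$ point to the selected prong. Since a lift of an $S^1$-map under the covering $p_j$ is uniquely determined by its value at one point, these two maps agree on all of $\partial\check S$. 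Consequently $\check f_2^{-1}\circ\check g\circ\check f_1$ represents an element of $\Mod(\check S,\partial\check S)$ which fixes $\partial\check S$ pointwise, i.e.\ an element of $\Mod(\check S)$. Hence $(\check f_1,\check M_1)$ and $(\check f_2,\check M_2)$ represent the same point of $\tilde\HH/\Mod(\check S)$.

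The main obstacle is the bookkeeping in the injectivity step: one must correctly exploit the boundary compatibility $p_j\circ\check f\equiv q_j\bmod 2\pi$ to promote agreement at a single prong to agreement on the whole boundary. Once this is in place, continuity and compatibility with the affine/orbifold structures follow because $\Phi$ is evidently local with respect to the period coordinates, so $\bar\Phi$ is an isomorphism of orbifold coverings of $\HH$.
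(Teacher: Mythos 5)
Your proposal is correct and follows essentially the same route as the paper: both define the map by sending the angle-$0$ point of each boundary circle to the selected prong, and both reduce injectivity to showing that the induced boundary homeomorphism of $\partial_j\check S$ must be the identity. The only cosmetic difference is that you derive the boundary agreement from unique path-lifting under $p_j$, whereas the paper observes directly that the composite is a rotation of $\partial_j\check S$ with a fixed point, hence trivial; you also spell out surjectivity and compatibility with the affine structure, which the paper leaves implicit.
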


\begin{proof} We define a map from $\til\HH$ to $\HH_{\mathrm{f}}$ as
  follows. For each boundary component $\partial_j \check S$, let $0_j$ denote the point in
  $\partial_j \check S$ with angular coordinate $0$. Given a marked blown-up
  surface  $(\check f,\check{M})$, define a framed surface by letting
  the point $\check f(0_j) \in \partial_j \check M$ be the selected
  prong. Say that two surfaces 
$(\check f_1,\check{M_1})$ and $(\check f_2,\check{M_2})$  map to the
same surface in $\HH_{\mathrm{f}}$. Thus there is a translation
equivalence $h:M_1\to M_2$ so that $\check h$ takes selected prongs in
each $\partial_j \check M_1$ to
selected prongs in $\partial_j \check M_2$. This
means that the restriction of the map $(\check f_2 )^{-1}\circ \check
h \circ \check f_1$ to $\partial_j \check{S}$ fixes $0_j$. Since this map is a rotation of the
circle with a fixed point it is the identity map. Thus $(\check
f_2)^{-1} \circ 
\check h \circ \check f_1\in \Mod(\check{S})$ so $(\check f_1, \check M_1)$
and $(\check f_2, \check M_2)$ are $\Mod(\check{S})$-equivalent.  
\end{proof}

 We summarize our constructions in Figure \ref{diagram: spaces}. 
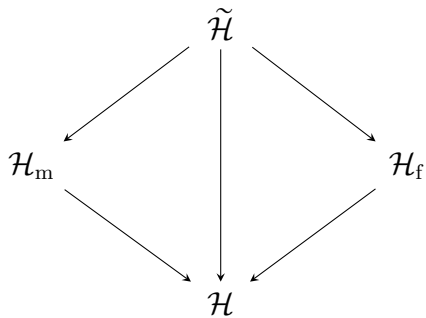
\begin{figure}[h]
\begin{tikzpicture}
  \matrix (m) [matrix of math nodes,row sep=3em,column sep=4em,minimum width=2em] {
    & \til{\HH } &  \\
\HH_{\mathrm{m}} & & \HH_{\mathrm{f}} \\
& \HH & \\};
  \path[-stealth]
    (m-1-2) edge node [left] {} (m-2-1)
            edge node [right] {} (m-2-3)
edge node [right] {} (m-3-2)
    (m-2-1) edge node [below] {} (m-3-2)
    (m-2-3) edge node [right] {} (m-3-2); 
\end{tikzpicture}
\caption[The relationship between covering spaces of a stratum]{The vertical arrow corresponds to factoring by the action of
  $\Mod(\check{S}, \partial \check{S}) $, and the pair of arrows on
  the left and right correspond to factoring by the groups
  appearing in the sequences \eqref{eq: new se sequence} and \eqref{eq:
    another se sequence} respectively. }
\label{diagram: spaces}
\end{figure}

\subsection[Action of $\SL_2(\R)$ on covers]{Action of $G=\SL_2(\R)$ and its covers on 
  $\til\HH$}\label{subsec: action}
 The affine equivalence classes of translation structures are orbits of a group
actions which we now define. 
Recall that $\GL^\circ_2(\R)$\index{GL@$\GL^\circ_2(\R)$} and
$\til{\GL^\circ_2(\R)}$\index{GL2@$\til{\GL^\circ_2(\R)}$} denote respectively the group of orientation
preserving invertible $2 \times 2$ real matrices, and its universal
cover group.  
Given $g\in \GL^\circ_2(\R)$ and a translation surface $M$ we
construct a new surface $gM$ as follows. As discussed in \S\ref{subsec:
  translation surfaces}, in the language of $(G,X)$-structures, a
translation surface can be given by an atlas of charts 
on $M$ with overlap functions taking values in the group of
translations $\R^2$. The element $g$ is a linear map 
$\R^2 \to \R^2$, and postcomposing each chart in an
atlas, we obtain a new translation atlas.  Let $gM$ be this new 
translation surface and let $\phi_g$ be the
identity map from $M$ to $gM$ (the underlying surfaces are the same). The map $\phi_g$ is an affine map with 
derivative $g$.

 The group $g\in \GL^\circ_2(\R)$ acts on $\til\HH_{\mathrm{m}}$ as follows. 
If $(f,M)\in\til\HH_{\mathrm{m}}$ then define $g(f,M)$ to be
$(\phi_g\circ f, gM)$. The condition 
that $g\in \GL^\circ_2(\R)$ insures that $\phi_g\circ f$ is orientation preserving.
Since the action of $\Mod(S,
\Sigma)$ on marked surfaces is by pre-composition, this action induces
a well-defined action on $\HH_{\mathrm{m}}$. 
If we let
$\GL^\circ_2(\R)$ act on $H^1(S, \Sigma ; \R^2) $ by acting on the
coefficients via the linear action on the plane, then
the map $\dev: \HH_{\mathrm{m}} \to H^1(S, \Sigma ; \R^2)$ will be
equivariant.

In the remainder of the paper we write $G$ for $\SL_2(\R)$.\index{G}

It is a general principle that if a  connected topological group acts on a
topological space $X$, then its universal cover acts
on any cover of $X$. Since $\GL_2^\circ(\R)$ and its subgroup $G$ act on
$\HH_{\mathrm{m}}$, and $\til \HH \to \HH_{\mathrm{m}}$ is a covering
map, we conclude that their universal covering groups
$\til{\GL^\circ_2}(\R)$ and $\til G$\index{G@$\til G$} act on $\til 
\HH$. For related discussions in the case of strata of meromorphic
quadratic differentials see \cite{BS} and \cite{HKK}. 

It will be useful for us to not only know that this action exists but
to have an explicit description of  
the action. 
An element of $\til g\in\til{\GL^\circ_2}(\R)$ can be represented by a
pair $(\rho,g)$ where $g\in\GL^\circ_2(\R)$ and 
 $\rho:[0,1]\to\GL^\circ_2(\R)$ is a path with $\rho(0)=\mathrm{Id}$ and
 $\rho(1)=g$. Two such representations 
are equivalent if the corresponding paths are homotopic relative to
their endpoints. Given an element 
of $\til\HH$, $(\check{f},\check{M})$ and an element $\til g=(\rho,g)$
of $\til{\GL^\circ_2}(\R)$ we have a 
path $\alpha(t)=\rho(t)(f,M)$ in $\HH_{\mathrm{m}}$ and a lift of the initial
point of this path $(f,M)$ to $(\check{f},\check{M})\in\til\HH$. 
According to Proposition \ref{prop: lifting} this path lifts uniquely
to a path $\tilde\alpha(t)\in\til\HH$ and we set 
$\tilde g (\check{f},\check{M})=\tilde\alpha(1)$. The resulting element is independent
of the choice of path $\rho$.

Let
\begin{equation}\label{eq: sect2 group elements}
{u_s = \begin{pmatrix}
1&s\\
0&1
\end{pmatrix},
\ 
g_t = \begin{pmatrix}
e^{t/2}&0\\
0&e^{-t/2}
\end{pmatrix} \text{ and }
r_\theta = \begin{pmatrix}
\cos \theta & -\sin \theta \\
\sin \theta & \cos \theta
\end{pmatrix}.
}\end{equation} 
\index{u@$u_s$} \index{g@$g_t$} \index{r@$r_\theta$}
We will refer to the action of $u_s$ as the {\em horocycle flow} and $g_t$ as 
the {\em geodesic} flow and write
$$
U = \{u_s: s \in \R\}, \ \ A = \{g_t: t \in \R\}, \ \SO_2(\R) =
\{r_{\theta} : \theta \in \R\}, \ B=AU.
$$
\index{U@$U$} \index{A@$A$} \index{B@$B$} 
 Note that $B$ is the connected component of the identity in the group of
upper-triangular matrices and it normalizes $U$. 
Since $U$
is simply connected, the connected component of the identity in the
pre-image of $U$ in $\til G$  is a subgroup isomorphic to $U$. Thus we can identify
$U$ with a subgroup of $\til G$, and the same is true for the groups $B$
and $A$. 
The groups $G$ and $\GL_2^\circ(\R)$ are
both homotopy equivalent to $\SO_2(\R) \cong S^1$, and $\til G$ and
$\til{\GL}_2^\circ(\R)$ are both homotopy equivalent to
$\til{\SO}_2(\R) \cong \R$.

As we have seen the fundamental groups of both groups $\til G$ and $\til{\GL}_2^\circ(\R)$ are isomorphic to
$\Z$. Thus for any positive integer $n$, there is a unique connected $n$-fold cover of $G$. Also, since $\SO_2(\R) \subset G \subset \GL_2^\circ(\R)$ and the induced maps on fundamental
groups are isomorphisms it will
cause no confusion to identify the fundamental groups of these three
groups. This group is infinite cyclic and we denote it by
$C$. We have three short exact
sequences: 
\begin{align}\label{eq: short sequence 2}
\notag
1 \to C \to \til{\GL^\circ_2} (\R) & \to \GL_2^\circ(\R) \to 1 \\
1 \to C \to \til G & \to G \to 1 \\
\notag
1 \to C \to \til{\SO_2}(\R) & \to \SO_2(\R) \to 1 
. 
\end{align}

We will write the element of $\til{\SO_2}(\R)$ corresponding to
$\theta \in \R$ as $\tilde{r}_\theta$, so that $\tilde{r}_\theta
\mapsto r_{\theta\bmod 2\pi}$ is  the projection $\til{\SO_2}(\R) \to \SO_2(\R)$. 
The group $C$ is central in  $\til{\GL^\circ_2} (\R)$ and in these
coordinates it is identified with
$\{\tilde{r}_{2\pi n}:n\in\Z\}$. 
 We will explicitly describe the action of
$\til{\SO}_2(\R)$ on $\til \HH$. 
  
\begin{prop}\label{prop: two pi} The left action of $\tilde{r}_{2\pi}$
  on $\til\HH$ is equal to the right action of  $\tau^{-1}$ where
  $\tau=\tau_1\cdots\tau_k\in FT.$
That is, for $(\check{f},\check{M})\in\til\HH$ we have
$\tilde{r}_{2\pi}
(\check{f},\check{M})=(\check{f} \circ \tau^{-1} ,\check{M}).$ 
\end{prop}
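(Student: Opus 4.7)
The idea is to apply the path-lifting construction from Proposition \ref{prop: lifting} directly to the loop $\gamma(t) = r_{2\pi t}(f,M)$ in $\HH_{\mathrm{m}}$, $t \in [0,1]$. Since $r_{2\pi} = \mathrm{Id}$, this is indeed a loop based at $(f,M)$, and by definition of the action of $\til{\SO}_2(\R)$ on $\til\HH$, the endpoint of its unique lift starting at $(\check f,\check M)$ is $\tilde r_{2\pi}(\check f,\check M)$. Since this endpoint projects to $(f,M)$, it differs from $(\check f,\check M)$ by the action of an element of $FT$, and our task is to identify that element by inspecting the explicit lift.

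First, I would choose a geodesic triangulation $\tau$ of $M$ and note that $\tau$ remains a geodesic triangulation of $M_t = r_{2\pi t}M$ for all $t$, since the rotated chart system gives the same underlying set of points. The comparison map $F(\tau, M, M_t)$ is precisely the map $\phi_{r_{2\pi t}}$ whose derivative is $r_{2\pi t}$, hence its blow-up $\check F(\tau, M, M_t)$ acts on each boundary circle $\partial_j\check{M}$ by the rotation that adds $2\pi t$ to the angular coordinate (viewed in $\R/2\pi(a_j+1)\Z$ via a continuous lift in $t$). Setting $\check F_t = \check F(\tau,M,M_t)\circ\check f_0$ as in the proof of Proposition \ref{prop: lifting}, we see that on $\partial_j\check S$ the map $p_j\circ \partial_j\check F_t$ equals $p_j\circ\check f_0 + 2\pi t$, measured via the continuous lift.

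Next, following the proof of Proposition \ref{prop: lifting}, the correction $H_t$ is uniquely determined (up to isotopy) by the requirement that $p_j\circ\partial_j\check F_t\circ\partial_j H_t = p_j\circ\check f_0$ on each boundary component, together with $H_0 = \mathrm{Id}$. The previous step shows that $\partial_j H_t$ is the rotation of $\partial_j \check S$ (in its $\R/2\pi(a_j+1)\Z$-coordinate) by $-2\pi t$; this is obtained by applying the Homotopy Lifting Theorem exactly as in the cited proof. Extending as in Proposition \ref{prop: lifting} by the formula $H_t(r,\theta) = (r,\theta - 2\pi(1-r)t)$ on the annular neighborhood $A_j = \{(r,\theta) : 0\le r \le 1\}$ and by the identity outside, we obtain at $t=1$ the map $H_1$ which on $A_j$ reads $(r,\theta)\mapsto (r,\theta - 2\pi(1-r))$, i.e.\ exactly $\tau_j^{-1}$; since the $A_j$ are disjoint and the $\tau_j$ commute, $H_1 = \tau_1^{-1}\cdots\tau_k^{-1} = \tau^{-1}$.

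Finally, at $t=1$ we have $M_1 = M$ and $r_{2\pi} = \mathrm{Id}$, so $\check F_1 = \check f_0$. Thus the endpoint of the lifted path is $\check f_1 = \check F_1\circ H_1 = \check f_0\circ\tau^{-1}$ together with $\check M_1 = \check M_0$, proving $\tilde r_{2\pi}(\check f,\check M) = (\check f\circ\tau^{-1},\check M)$. The main subtlety, and the one point to check with care, is the sign/orientation bookkeeping on the boundary: one must verify that the rotation of the circle of directions induced by $r_{2\pi t}$ is indeed $+2\pi t$ in the angular coordinate that defines the $p_j$, and that the fractional twist $\tau_j$ is $+2\pi$ on $\partial_j\check S$ in the same coordinate, so that $H_1$ matches $\tau^{-1}$ and not $\tau$. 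This is purely a matter of comparing the conventions in \S\ref{subsec: blowups} and \S\ref{extended map} with the action of $\phi_{r_{2\pi t}}$.
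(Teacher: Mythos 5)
Your proposal is correct and follows essentially the same route as the paper's proof: both lift the loop $\theta\mapsto r_\theta(f,M)$ via the comparison maps, observe that the blown-up comparison map rotates each boundary circle $\partial_j\check M$ by $+\theta$ in its angular coordinate, apply the uniquely determined boundary correction (your $H_t$, the paper's $D_{-\theta}$), and read off $\tau^{-1}$ at the endpoint. If anything your write-up is a bit more careful on two points that the paper compresses. First, you note that a fixed geodesic triangulation $\tau$ of $M$ is automatically a geodesic triangulation of $M_t=r_{2\pi t}M$ for every $t$, which sidesteps the compactness-based chart-patching step in the paper; the paper instead relies on the observation that $F(\tau,M,r_\theta M)$ is independent of $\tau$. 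Second, you spell out the sign bookkeeping: since $DF(\tau,M,M_t)=r_{2\pi t}$, the blow-up adds $+2\pi t$ to the $q_j$-coordinate, so the Homotopy Lifting Theorem forces $\partial_j H_t$ to be rotation by $-2\pi t$, and the explicit interpolation $H_t(r,\theta)=(r,\theta-2\pi(1-r)t)$ gives $H_1=\tau_j^{-1}$ on $A_j$ by direct comparison with the formula $\tau_j(r,\theta)=(r,\theta+2\pi(1-r))$ in \S\ref{subsec: paths}. This verification is correct and is precisely the content that the paper abbreviates as "$D_{-2\pi}=\tau^{-1}$."
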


\begin{proof} Given a marked translation surface rel boundary
$\check{f}: \check{S} \to \check{M}$ we follow the definition
of the action of $\til{\GL}_2^\circ(\R)$ on $\til\HH$ given above. Let $D_\theta$ be the map 
supported in annuli around boundary components of $\check{S}$ that
performs a Dehn twist by $\theta\in\R$ on each boundary
component. 
Let $\theta_0 \in \R$, and assume for
concreteness that $\theta_0>0$. We will describe the 
action of $\tilde{r}_{\theta_0}$ by explicitly lifting the action of
$\{r_\theta : \theta \in [0, \theta_0]\}$ using the procedure
described in Proposition \ref{prop: lifting}. 

Let $(f, M) \in \HH_{\mathrm{m}}$ be a marked translation surface, and
let $(\check f, \check M)$ be an element of $\til \HH$ projecting to
$(f,M)$. We want to lift the path $\theta \mapsto r_\theta (M, f) =
(f, r_\theta M)$ to $\til \HH$. As in \S\ref{subsec:
  marked blownups}, fix a triangulation $\tau$ of $S$. By a
compactness argument it suffices 
to analyze the lift of the path $\theta \mapsto
r_\theta (M, f),$ for the subset 
$$
\{\theta \in [0, \theta_0]: r_\theta (M,
f) \in \mathcal{U}_{\tau} \},
$$
and we can assume with no loss of generality that $(f,M) \in
\mathcal{U}_\tau$. 

Let $F(\tau, M, r_\theta M)$ be the comparison map defined in \S
\ref{subsection: structures}. An important observation, which follows
immediately from the definition of the comparison maps, is that the
derivative of $F(\tau, M, r_\theta M)$  is everywhere equal to the
matrix $r_\theta$, and hence the comparison map is in fact independent of the
triangulation $\tau$.
Let $\check F(\tau, M, r_\theta M)$ denote the extension of $F(\tau,
M, r_\theta M)$ to blown-up surfaces, and let $\check F_\theta =
\check F(\tau, M, r_\theta M) \circ f$. Then as discussed in \S
\ref{subsec: marked blownups}, $\check F_\theta$ does not preserve
boundary coordinates, but the composition $D_{-\theta} \circ \check
F_\theta$ does. Thus the path $\theta \mapsto (\check{F}_\theta\circ D_{-\theta}
, r_\theta\check{ M})$ satisfies \eqref{eq: this is what we
  satisfy}, so by uniqueness, is the desired lift of the path $(f,
r_\theta M)$ to $\til \HH$. 

In particular, setting $\theta_0=2\pi$ and using the fact that
$r_{\theta_0} = \mathrm{Id}$, we get
$(\check{f} \circ D_{-2\pi}, \check{ M})$ and $D_{-2\pi} = \tau^{-1}$ with $\tau=\tau_1\cdots\tau_k$. 
\end{proof}

\subsection{Fundamental groups of covers of strata} \label{subsec: not simply connected}
We note some consequences of the above discussion. 

\begin{cor}\label{cor: not simply connected} Every path component of
  $\HH_{\mathrm{m}}$ has an infinite  fundamental group. In particular it is never the case
  that the space of marked surfaces is  the universal cover of the stratum.
\end{cor}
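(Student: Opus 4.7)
The plan is to exhibit an explicit loop of infinite order in any path component using the action of $\SO_2(\R)$ and Proposition \ref{prop: two pi}.

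Fix a path component $\mathcal{C} \subset \HH_{\mathrm{m}}$ and a point $(f,M) \in \mathcal{C}$. First I would define the loop $\gamma : [0, 2\pi] \to \HH_{\mathrm{m}}$ by $\gamma(\theta) = r_\theta \cdot (f, M)$. Since $r_0 = r_{2\pi} = \mathrm{Id}$, this is a loop based at $(f, M)$, and since $\SO_2(\R)$ is connected it is contained in $\mathcal{C}$. Next, choose any lift $(\check f, \check M) \in \til\HH$ of $(f, M)$. By the path lifting property (Proposition \ref{prop: lifting}), $\gamma$ lifts uniquely to a path $\tilde\gamma : [0,2\pi] \to \til\HH$ starting at $(\check f, \check M)$. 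By the very construction used in Proposition \ref{prop: two pi}, the endpoint of $\tilde\gamma$ is the result of applying $\tilde r_{2\pi}$ to $(\check f, \check M)$, namely $(\check f \circ \tau^{-1}, \check M)$ with $\tau = \tau_1 \cdots \tau_k \in FT$.

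To upgrade this to infinite order, I would consider the concatenations $\gamma^n$ for $n \in \Z$: each such loop lifts to a path ending at $(\check f \circ \tau^{-n}, \check M)$. By Lemma \ref{lem: boundary twist seq} the group $FT$ is free abelian on $\tau_1, \ldots, \tau_k$, so the element $\tau = \tau_1 \cdots \tau_k$ corresponds to $(1,\ldots,1) \in \Z^k$ and satisfies $\tau^n \neq 1$ for every $n \neq 0$ (note $k \geq 1$ since $\Sigma$ is nonempty by convention). Consequently the endpoint of the lift of $\gamma^n$ differs from the starting point $(\check f, \check M)$ whenever $n \neq 0$. Since $\pr : \til\HH \to \HH_{\mathrm{m}}$ is a covering map, homotopic loops in $\HH_{\mathrm{m}}$ have lifts with the same endpoint; therefore no nonzero power of $\gamma$ is null-homotopic in $\mathcal{C}$. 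This gives an injection $\Z \hookrightarrow \pi_1(\mathcal{C}, (f,M))$ sending $n \mapsto [\gamma^n]$, and in particular $\pi_1(\mathcal{C})$ is infinite.

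The only step requiring a little care is the identification of the endpoint of the lift of $\gamma$ with the right action of $\tau^{-1}$, but that is precisely the content of Proposition \ref{prop: two pi}, so no new obstacle appears. The rest is a formal application of covering space theory together with the freeness statement in Lemma \ref{lem: boundary twist seq}.
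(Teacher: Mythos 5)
Your proof is correct and follows essentially the same route as the paper's: lift the $\SO_2(\R)$-loop $\{r_\theta M\}$ to $\til\HH$, use Proposition \ref{prop: two pi} to identify the endpoint discrepancy with $\tau^{-1}$, and use the infinite order of $\tau$ in the free abelian group $FT$. The only difference is that you spell out the resulting injection $\Z \hookrightarrow \pi_1(\mathcal{C})$ via $n \mapsto [\gamma^n]$ slightly more explicitly than the paper, which just observes that since $FT$ acts freely on the fiber, none of the iterated lifts close up.
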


\begin{proof} 
Let $\mathcal{C}$ be a path component of $\HH_{\mathrm{m}}$ and let $M
\in \mathcal{C}$. 
Proposition \ref{prop: two pi} shows that lifting the closed path
$\{r_\theta M : \theta \in [0, 2\pi]\}$ to $\til \HH$, we get a
non-closed path whose endpoints differ by an application of the element
  $\tau^{-1}\in FT$. This element has infinite order in $FT$. Since
  the group $FT$ acts freely on the fiber 
  $\pr^{-1}(M)$, none of the lifted paths are closed.  It
  follows that ${\mathcal C}$ has an infinite fundamental group. 
\end{proof}

An instructive example is the case $\HH = \HH(0)$ (where the model surface $S$ is the torus with one
marked point and $\check S$ is the torus with an open disk
removed). The covering space $\HH_{\mathrm{m}}$ can be 
identified with $\GL^\circ_2(\R)$ (the connected component of the
identity in $\GL_2(\R)$), and is not simply connected,
whereas $\til\HH$ is identified with its universal cover group
$\til{\GL^\circ_2}(\R)$ and is simply connected. A generator of the fundamental group of
$\GL^\circ_2(\R)$ acts by a boundary Dehn twist on $\check S$.

We present another useful consequence of our lifting construction. 
Let
$\Gamma_0$ be a subgroup of $\Mod(\check S, \partial \check S)$, 
let $\HH_{\Gamma_0}$ denote the quotient $\til \HH /\Gamma_0$, let
$\mathcal C$ be a path component of $\HH_{\Gamma_0}$ and 
let $p\in\mathcal C$ and $q \in \til \HH$ such that $q$ projects to
$p$. We define a homomorphism $\rho_q: \pi_1(\mathcal
C,p)\to \Gamma_0$ as follows. Let
  $\phi$ be a loop based at $p$. Let $[\phi]$ be the element of
  $\pi_1$ that it represents. We can lift $\phi$ to a path starting at
  $q$. The endpoint of this lifted path maps to $p$ so it has the form
  $p\gamma$ for some $\gamma\in\Gamma_0$ (where our notation reflects
  the fact that $\Mod(\check
  S, \partial \check S)$ acts by precomposition, so defines a
  right-action). Define $\rho_q([\phi])$ to
  be $\gamma$. The homotopy lifting property shows that
  $\rho_q([\phi])$ depends only on $q$ and on the homotopy class $[\phi]$, and not the particular
  loop $\phi$ chosen to represent it. 

The construction of $\rho_q$
  depends on the choice of the point $q$. If we were to choose a
  different point $q'$ mapping to $p$ then $q'=q\alpha$ for some
  $\alpha\in\Gamma_0$. In this case the lift of $\phi$ starting at
  $p\alpha$ is the path $\phi\alpha$ and the other endpoint is
  $p\gamma\alpha=p\alpha(\alpha^{-1}\gamma\alpha)$. Thus
  $\rho_{q'}(\phi)=\alpha^{-1}\rho_q\alpha(\phi)$ so $\rho_{q'}$
  differs from $\rho_q$ by an inner automorphism of $\Gamma_0$. In
  other words we have constructed a preferred homomorphism $\rho :
  \pi_1(\mathcal{C}) \to \Gamma_0$, well-defined up to a choice of an
  inner automorphism of $\Gamma_0$.

Let $n \in \N$ and let 
$C_n$ denote the subgroup of $C$ generated by $r_{2\pi n}$. Then
$C_n$ is central in $G$ and $\widehat{G}_n= \til G/C_n$ is the unique
connected $n$-fold
cover of $G$. 

\begin{cor}
Let $\Gamma_0$ be a subgroup of $\Mod(\check{S}, \partial \check{S})$.
 Then $\widehat{G}_n$ acts on $\til \HH /\Gamma_0$
if and only if $\tau^n\in\Gamma_0$. 
In particular, suppose 
$n$ is the least common multiple of the numbers $\{a_i+1 :  i=1,
\ldots, k\}$. Then $\widehat{G}_n$ acts on $\HH_{\mathrm{f}}$, but
$\widehat{G}_m$ does not act when $m<n$.
\end{cor}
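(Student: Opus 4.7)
The plan is to use Proposition \ref{prop: two pi}, which identifies the left action of $\tilde{r}_{2\pi}$ on $\til\HH$ with the right action of the element $\tau^{-1}\in FT$, where $\tau=\tau_1\cdots\tau_k$. Iterating, the element $\tilde{r}_{2\pi n}$ acts on $\til\HH$ by right multiplication by $\tau^{-n}$. Since the subgroup $C_n\subset\til G$ is central and generated by $\tilde{r}_{2\pi n}$, the quotient $\hat{G}_n=\til G/C_n$ acts on $\til\HH/\Gamma_0$ if and only if the left $C_n$-action descends to the trivial action on this quotient, i.e.\ iff for every $\til x\in\til\HH$ we have $\tilde{r}_{2\pi n}\cdot\til x \in \til x\cdot \Gamma_0$.

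By Proposition \ref{prop: two pi} this condition becomes $\til x\cdot\tau^{-n}\in \til x\cdot \Gamma_0$. Clearly $\tau^n\in\Gamma_0$ is sufficient. For necessity, since the right $\Mod(\check S,\partial\check S)$-action on $\til\HH$ is properly discontinuous and is free at any marked surface whose underlying translation surface has trivial translation automorphism group (a dense open subset), one can pick $\til x$ with trivial $\Mod(\check S,\partial\check S)$-stabilizer. Then the equation $\til x\cdot\tau^{-n}=\til x\cdot\gamma$ for some $\gamma\in\Gamma_0$ forces $\tau^{-n}=\gamma\in\Gamma_0$, and hence $\tau^n\in\Gamma_0$. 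This proves the first statement.

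For the second statement, Proposition \ref{prop: recovering framed} gives $\HH_{\mathrm{f}}=\til\HH/\Mod(\check S)$, so the question reduces to determining when $\tau^n\in\Mod(\check S)$. The short exact sequence \equ{eq: another se sequence} identifies $\Mod(\check S,\partial\check S)/\Mod(\check S)$ with $PR\cong\prod_{j=1}^k\Z/(a_j+1)\Z$, where $\tau_j$ maps to the generator of the $j$-th factor: by definition, $\tau_j$ rotates $\partial_j\check S$ by $2\pi$, while this circle has circumference $2\pi(a_j+1)$. Hence the image of $\tau$ in $PR$ is $(1,1,\dots,1)$, and $\tau^n$ lies in $\Mod(\check S)$ iff $(a_j+1)\mid n$ for every $j$, i.e.\ iff $\mathrm{lcm}\{a_j+1\}\mid n$. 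Combining this with the first part yields that $\hat{G}_n$ acts on $\HH_{\mathrm{f}}$ exactly when $n=\mathrm{lcm}\{a_j+1\}$ is the minimal such integer, and $\hat{G}_m$ fails to act for any smaller $m$.

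The only slightly subtle point is handling the potentially non-free $\Mod(\check S,\partial\check S)$-action at surfaces with nontrivial translation automorphisms; this is circumvented by restricting the equivalence $\til x\cdot\tau^{-n}\in\til x\cdot\Gamma_0$ to a generic $\til x$ with trivial stabilizer, so no serious obstacle arises.
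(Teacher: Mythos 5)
Your proof is correct and follows essentially the same route as the paper's: both hinge entirely on Proposition~\ref{prop: two pi} to translate the left $C_n$-action into the right $FT$-action by $\tau^{-n}$, then test whether $\tau^{-n}$ lies in $\Gamma_0$. The paper's version of the necessity direction simply writes $(\check{f},\check{M})\tau^{-n}=(\check{f},\check{M})\gamma$ and concludes $\tau^{-n}=\gamma$ in $\Mod(\check{S},\partial\check{S})$; this step tacitly assumes the marked surface is chosen so that the underlying translation surface has trivial automorphism group (so that the translation equivalence witnessing the identification is the identity). You make this hypothesis explicit by restricting to a point with trivial $\Mod(\check{S},\partial\check{S})$-stabilizer, which is available since the action is properly discontinuous and generically free. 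This is a small but genuine tightening of the argument. Your treatment of the \emph{in particular} clause, computing the image of $\tau$ in $PR$ via the sequence \equ{eq: another se sequence} and reducing to the divisibility condition $\mathrm{lcm}\{a_j+1\}\mid n$, is exactly what the paper leaves implicit, so this too is a useful expansion rather than a different approach.
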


\begin{proof}
Let $\HH_{\Gamma_0} = \til \HH/\Gamma_0$, and let
$(\check{f},\check{M}) \in \til\HH$. 
 If the action of $\tilde r_{2\pi n}$ is well-defined on 
$\HH_{\Gamma_0}$ then $\tilde r_{2\pi n}(\check{f},\check{M})$ and
$(\check{f},\check{M})$ are equivalent in $\HH_{\Gamma_0}$ so by
Proposition \ref{prop: two pi}, 
$$(\check{f}, \check{M} ) \tau^{-n} = \tilde r_{2\pi n}(\check{f},\check{M})=(\check{f},\check{M})\gamma$$
for some $\gamma\in\Gamma_0$. That is, 
$\check{f}\circ \gamma$ and $\check{f}\circ\tau^{-n}$ are isotopic via
an isotopy fixing $\partial \check{S}$. In particular they represent
the same elements in $\Mod(\check S, \partial \check S)$ and 
$\tau^{-n}\in
\Gamma_0$. Conversely if $\tau^{-n}\in \Gamma_0$ then  
$(\check{f},\check{M})$ and  $\tilde r_{2\pi n}(\check{f},\check{M})$
represent the same surface in $\HH_{\Gamma_0}$.
 \end{proof}

\section{The Rel foliation and Rel vectorfields}\label{section: rel}
It is a general principle that geometric structures on $H^1(S,\Sigma;\R^2)$
which are invariant under the action of the mapping class group induce geometric
structures on the stratum. We will now see an example of this
principle. Consider the following  
exact sequence:
\begin{multline}
  \label{eq: exact}
 0 \longrightarrow H^0(S 
) \longrightarrow H^0(\Sigma 
)
 \longrightarrow H^1(S, \Sigma 
) 
  \stackrel{\mathrm{Res}} \longrightarrow H^1(S 
)\longrightarrow 0.
\end{multline}
Let us take the coefficients in cohomology groups to be $\R^2$. 

Let $\mathfrak{R}$\index{R@$\mathfrak{R}$} denote the image of $H^0(\Sigma;\R^2)$ in $H^1(S,\Sigma;\R^2)$. 
We call $\mathfrak{R}$ the rel subspace.
We can identify $\mathfrak{R}$
with the kernel of the restriction map $\Res: H^1(S,\Sigma;\R^2)\to
H^1(S;\R^2)$. We can identify  $H^0(\Sigma;\R^2)$ with the set of
functions from $\Sigma$ to $\R^2$ and we can identify the image 
of $H^0(S; \R^2)$ in $H^0(\Sigma;\R^2)$ with the subspace of constant
functions. Thus $\mathfrak{R}$ can be seen as $\R^2$-valued functions
on $\Sigma$ modulo constant functions. 
If $k$ is the cardinality of $\Sigma$ then the real dimension of $\mathfrak{R}$ is $2(k-1)$. 
Beginning with  \S \ref{section: eigenform locus}  we will work in a stratum for which $\Sigma$
consists of two points, so that $\dim_{\R}\mathfrak{R} =2$.

We will explicitly describe the action of $\mathfrak{R}$ on the period coordinate space
$H^1(S,\Sigma;\R^2)$.  Pick $v\in \mathfrak{R}$ and let $\gamma\in
H^1(S,\Sigma;\R^2)$. We will define $\gamma + v\in
H^1(S,\Sigma;\R^2)$. 
Explicitly, 
the elements $\gamma$ and $\gamma+v$ are determined by their values on oriented 
paths in $S$ with endpoints in $\Sigma$. Let $\sigma$ be one such
oriented path starting at $\xi_i$ and ending at 
$\xi_j$. Since $\mathfrak{R} \cong H^0(\Sigma; \R^2) /H^0(S; \R^2)$,
$v$ is an equivalence class of functions $\til v: \Sigma \to \R^2$,
where functions are equivalent if they differ by a constant. We define $(\gamma + 
v)(\sigma)=\gamma(\sigma)+\til v(\xi_j)-\til v(\xi_i)$. Since
representatives of $v$ differ by constants, the preceding formula does not depend on
the choice of $\til v$. Also $v(\sigma)$ gives the same value for any
$\sigma$ from $\xi_i$ to $\xi_j$. 

The group $G$ acts equivariantly on the terms of the exact sequence \eqref{eq: exact}.
If we think of the terms as vector spaces of $\R^2$-valued
functions then $G$ acts on these functions by acting on their
values. In particular there is a
natural action of $G$ on $\mathfrak{R}$ since it is the quotient of the first
two terms.  

A subspace $W$ of a vector space $V$ defines a linear foliation of $V$ where the leaves are 
the translates of $W$. In this way the subspace $\mathfrak{R}$ defines a foliation of $H^1(S,\Sigma)$. 
Since the mapping class group $\Mod(S, \Sigma)$ preserves the short
exact sequence it preserves this foliation and thus the foliation
descends to a well-defined 
foliation on $\HH$. We call this the $\rel$ foliation\index{rel@$\rel$ foliation}. The names
`kernel foliation' and `absolute period foliation' have also been used
in the study of this foliation, see \cite{zorich survey, schmoll, McMullen-leaves}.

\begin{prop}\label{prop: same area}
Two surfaces in the same {\rm Rel} leaf have the same area.
\end{prop}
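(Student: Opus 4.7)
The plan is to show that the Hermitian form $\langle\omega,\omega\rangle$ of \eqref{eq: hermitian} depends only on the absolute periods $\Res(\omega)\in H^1(S;\C)$. Since the Rel leaves are by definition translates of $\mathfrak{R}=\ker\Res$ (where $\Res\colon H^1(S,\Sigma;\C)\to H^1(S;\C)$ is the restriction map), all surfaces in a single leaf share the same absolute periods, and hence the same area.

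To carry this out, I would recall from the discussion around \eqref{eq: specific pairing} that the Hermitian form arises topologically from a pairing
$$H^1(S,\Sigma;\C)\otimes H^1(S,\Sigma;\C)\longrightarrow H^2(S,\Sigma;\C)\cong\C,$$
given by the cup product with the coefficient pairing \eqref{eq: specific pairing}, followed by evaluation on the fundamental class $[M]\in H_2(S,\Sigma;\Z)$. Because $\Sigma$ is a finite set of points, $H^i(\Sigma;\C)=0$ for $i\ge 1$, so the long exact sequence of the pair $(S,\Sigma)$ shows that $\Res\colon H^2(S,\Sigma;\C)\to H^2(S;\C)$ is an isomorphism. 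Naturality of cup product with respect to the restriction maps then yields $\Res(\alpha\cup\beta)=\Res(\alpha)\cup\Res(\beta)$, so the class $\alpha\cup\beta\in H^2(S,\Sigma;\C)$ is completely determined by $\Res(\alpha)$ and $\Res(\beta)$. Consequently $\langle\alpha,\beta\rangle$ depends only on the restrictions of $\alpha$ and $\beta$ to $H^1(S;\C)$.

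To conclude, let $M_1$ and $M_2$ lie in the same Rel leaf and write $\omega_j=\dev(M_j)$. Then $\omega_2-\omega_1\in\mathfrak{R}=\ker\Res$, so $\Res(\omega_2)=\Res(\omega_1)$. By the previous paragraph, $\langle\omega_2,\omega_2\rangle=\langle\omega_1,\omega_1\rangle$, and the areas agree. The only substantive ingredient is the reduction of the pairing to an absolute-cohomology computation, which is a formal consequence of the vanishing of $H^i(\Sigma;\C)$ for $i\ge 1$; no analytic argument is required, and I expect no real obstacle.
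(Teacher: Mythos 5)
Your argument is correct and follows essentially the same route as the paper: both reduce the Hermitian form to a cup product in relative cohomology, invoke naturality of the cup product under the inclusion $(S,\emptyset)\to(S,\Sigma)$, and use that $H^2(S,\Sigma;\C)\to H^2(S;\C)$ is an isomorphism to conclude the pairing only sees absolute periods. The only cosmetic difference is that you justify the $H^2$-isomorphism via the long exact sequence and the vanishing of $H^{\ge 1}(\Sigma)$, whereas the paper records the same fact as the right-hand vertical arrow in its commuting diagram \eqref{eq: cup} and cites Hatcher for naturality.
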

\begin{proof}  
As we have seen in \eqref{eq: hermitian} the area of a surface $M$ can
be written as $\langle \omega,
\omega \rangle$ where $\omega = \dev(M)$ and the bilinear form is the
cup product with a certain choice of coefficient pairing.  
So it suffices to show that the cup product of two classes in $H^1(S,\Sigma)$
depends only on the image
of the classes in absolute cohomology $H^1(S)$; the latter statement
follows from the fact that the cup product is natural with respect
to the inclusion $(M,\emptyset)\to (M,\Sigma)$, i.e.  diagram
\eqref{eq: cup} below commutes.
We refer to \cite{Hatcher} for the definition of the cup product in
the relative case, and to \cite[
Prop. 3.10]{Hatcher} for a proof of naturality.
\begin{equation}\label{eq: cup}
\xymatrix{ 
 H^1(S,\Sigma)\ar[d] \times  H^1(S,\Sigma)\ar[r]^-\cup &H^2(S,\Sigma)
 \ar[d]^\simeq \\ 
  H^1(S)\times  H^1(S)\ar[r]^-\cup &H^2(S) 
} 
\end{equation}
\end{proof}

We will define a notion of parallel translation on the leaves
of the Rel foliation.
We can identify the elements of $\mathfrak{R}$ with constant vector
fields on the vector space $H^1(S,\Sigma; \R^2)$. 
Recall that we have insisted on labeling the points in $\Sigma$, and
that the mapping class group fixes $\Sigma$ pointwise. With these
conventions, $\Mod(S, \Sigma)$ acts trivially on $\mathfrak{R}$.
Thus the vector fields corresponding to $\mathfrak{R}$ are invariant
under $\Mod(S, \Sigma)$ and induce well-defined vector fields on 
$\HH_{\mathrm{m}}$ and $\HH$. The leaves of the $\rel$ foliation have natural 
translation structures and these are the coordinate vector fields.  

\subsection{Extending Rel paths}\label{subsec: paths}
The constant vector field associated with $v\in\mathfrak{R}$ can be integrated
on the vector space $H^1(S,\Sigma ; \R^2)$  to give a one-parameter
flow.  Our next 
objective is to lift this flow, to the extent possible, to $\HH_{\mathrm{m}}$.

\begin{dfn} \label{def: rel}
Let $M_0$ be point in $\HH$ and let $v\in\mathfrak{R}$. Let $\vec v$\index{vec@$\vec v$}
denote the rel vector field on $\HH$ corresponding to $v$. We say that
$\rel_v(M_0)$ \index{rel@$\rel_v$} is defined \index{rel is defined} and equal to $M_1$ if there is a smooth path $\phi(t)$
in $\HH$ with $\phi(0)=M_0$, $\frac{d}{dt}\phi(t)=\vec
v(\phi(t))$ and $\phi(1)=M_1$. 
\end{dfn}

 The translation
structures on the leaves of the $\rel$ foliation are not complete in
general and this means that the trajectories of the 
vector fields cannot always be defined for all time.

\begin{prop}\label{prop: continuous} Let $\Omega\subset
  \HH\times\mathfrak{R}$ be the set of pairs $(M,v)$ for which 
$\rel_v(M)$ is defined. Then $\Omega$ is open and the map
$(M,v)\mapsto \rel_v(M)$ is continuous when viewed as a map from
$\Omega$ to $\HH$.
\end{prop}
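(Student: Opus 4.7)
The plan is to lift the problem to period coordinates via the developing map, where Rel flow is simply translation by a constant vector, and then use a compactness argument along a fixed Rel path.

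First, I would lift everything to $\HH_{\mathrm{m}}$. Since $\HH_{\mathrm{m}} \to \HH$ is an orbifold covering map, it suffices to prove that the analogous set $\widetilde\Omega \subset \HH_{\mathrm{m}} \times \mathfrak{R}$ is open and that $\rel_v$ is continuous on it; the statement on $\HH$ follows because the covering map is a local homeomorphism and the $\rel$ vector fields are $\Mod(S,\Sigma)$-invariant. Given $(M_0,v) \in \widetilde\Omega$, by Definition \ref{def: rel} there is a smooth path $\phi \colon [0,1] \to \HH_{\mathrm{m}}$ with $\phi(0)=M_0$, $\phi(1) = \rel_v(M_0)$, and $\frac{d}{dt}\phi = \vec v$. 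In period coordinates, $\vec v$ is the constant vector field $v$, so any chart $\mathcal{U}_\tau$ sends the restriction of $\phi$ to a straight-line segment in direction $v$ inside $\dev(\mathcal{U}_\tau) \subset H^1(S,\Sigma;\R^2)$.

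Next, I would use compactness of $\phi([0,1])$ and Proposition \ref{prop: injective} to choose a finite sequence of triangulations $\tau_1,\ldots,\tau_n$ and a partition $0 = t_0 < t_1 < \cdots < t_n = 1$ such that $\phi([t_{i-1},t_i]) \subset \mathcal{U}_{\tau_i}$ and such that the entire segment $\dev(\phi(t_{i-1})) + [0,t_i-t_{i-1}]\cdot v$ lies in the open set $\dev(\mathcal{U}_{\tau_i}) \subset H^1(S,\Sigma;\R^2)$. Now given $(M'_0, v')$ near $(M_0,v)$, I define $\phi'$ inductively chart by chart: set $\phi'(t_0) = M'_0$ in the chart $\mathcal{U}_{\tau_1}$ containing $M_0$ (it lies there for $M'_0$ close enough to $M_0$), and use $\dev$ to produce $\phi'(t)$ for $t \in [t_0,t_1]$ as the unique point with $\dev(\phi'(t)) = \dev(M'_0) + (t-t_0)v'$. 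Since straight-line segments in $H^1(S,\Sigma;\R^2)$ depend continuously on endpoints and direction, and since $\dev(\mathcal{U}_{\tau_1})$ is open, this segment stays inside the chart for $(M'_0,v')$ sufficiently close to $(M_0,v)$, and its terminal point $\phi'(t_1)$ lies close to $\phi(t_1)$. Iterating this construction through charts $\mathcal{U}_{\tau_2}, \ldots, \mathcal{U}_{\tau_n}$, using openness of each $\dev(\mathcal{U}_{\tau_i})$ at the finitely many transition points, yields a full Rel trajectory $\phi' \colon [0,1] \to \HH_{\mathrm{m}}$ whose endpoint $\rel_{v'}(M'_0) = \phi'(1)$ exists and depends continuously on $(M'_0,v')$.

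This argument simultaneously establishes that $(M'_0,v') \in \widetilde\Omega$ for all $(M'_0,v')$ in a neighborhood of $(M_0,v)$, proving openness of $\widetilde\Omega$, and that $\rel_{v'}(M'_0)$ is close to $\rel_v(M_0)$, proving continuity. The main point of care is the choice of partition with closed-segment containment in each chart, so that openness of the charts provides the needed room under perturbation; the finite-cover step of the iteration is where the compactness of $[0,1]$ is essential. No genuine obstruction arises beyond bookkeeping between the covers $\HH_{\mathrm{m}}$ and $\HH$ and the invariance of $\vec v$ under $\Mod(S,\Sigma)$.
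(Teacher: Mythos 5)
Your proof is correct, but it takes a genuinely different route from the paper's: the paper dispenses with this proposition in one line, invoking the standard facts from ODE theory that the domain of a flow is open and that solutions depend continuously on initial data. You instead unwind those facts concretely in period coordinates, using the key observation that the Rel vector fields are \emph{constant} in the affine charts, so Rel trajectories are straight-line segments, and then chaining through finitely many charts via a Lebesgue-number argument on $[0,1]$. Your version is longer but more self-contained and more transparent about why the statement holds in this geometric setting; the paper's version is shorter but leaves the real work to a citation. Both are valid.

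One small imprecision in your reduction step: you justify passing from $\HH_{\mathrm{m}}$ to $\HH$ by saying that the covering map is a local homeomorphism. That is not quite right at orbifold points of $\HH$ (translation surfaces with nontrivial translation automorphisms), where the map $\HH_{\mathrm{m}}\to\HH$ is branched. The reduction is still valid, but the justification should go via the quotient-topology mechanism instead: the quotient map $q\colon\HH_{\mathrm{m}}\to\HH$ is an open map (since $\Mod(S,\Sigma)$ acts by homeomorphisms, $q^{-1}(q(U))=\bigcup_\gamma\gamma U$ is open for open $U$), the $\Mod(S,\Sigma)$-equivariance of the Rel vector field shows $\widetilde\Omega$ is exactly the preimage of $\Omega$ so openness descends, and a $\Mod(S,\Sigma)$-equivariant continuous map $\widetilde\Omega\to\HH_{\mathrm{m}}$ descends to a continuous map $\Omega\to\HH$ by the universal property of the quotient topology. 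Replacing the ``local homeomorphism'' phrase with this argument closes the gap.
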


\begin{proof}This follows from properties of solutions of first order ordinary differential equations.
\end{proof}

 Our next result deals with the interaction between the natural actions
 of $G$ on $\HH$ and $\mathfrak{R}$, and the partially defined maps $\rel_v$.

\begin{prop}\label{prop: distributive} Let $M\in\HH$, $v\in \mathfrak{R}$ and $g\in G$.  If
 $\rel_v(M)$ is defined then $\rel_{gv}(gM)$ is defined and $g(\rel_v(M))=\rel_{gv}(gM)$. 
\end{prop}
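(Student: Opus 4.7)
The plan is to work in period coordinates, where the Rel foliation becomes the affine foliation by translates of $\mathfrak{R}$ and the $G$-action is linear, so that the claim reduces to the distributivity of linear maps over vector addition. A small preliminary step is to note that $\mathfrak{R}$ is $G$-invariant, so that $gv\in\mathfrak{R}$ and the statement $\rel_{gv}(gM)$ even makes sense; this follows from the $G$-equivariance of the exact sequence \eqref{eq: exact} and is already observed in the text just after that sequence.

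I would first lift to the marked stratum. Choose a lift $\til M\in\HH_{\mathrm{m}}$ of $M$. Definition \ref{def: rel} gives a smooth path $\phi\colon[0,1]\to\HH$ with $\phi(0)=M$, $\phi(1)=\rel_v(M)$, and $\phi'(t)=\vec v(\phi(t))$. Using Proposition~\ref{prop: lifting} (applied at the level of $\HH_{\mathrm{m}}$ rather than $\til\HH$; the same path-lifting argument works for the covering $\HH_{\mathrm{m}}\to\HH$), lift $\phi$ uniquely to a path $\til\phi\colon[0,1]\to\HH_{\mathrm{m}}$ with $\til\phi(0)=\til M$. Because $\vec v$ is the constant coordinate vector field $v$ in the affine chart provided by $\dev$, we have
\[
\dev(\til\phi(t))=\dev(\til M)+tv\qquad\text{for all } t\in[0,1].
\]

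Now push this path forward by $g$. Since $\dev\colon\HH_{\mathrm{m}}\to H^1(S,\Sigma;\R^2)$ is $G$-equivariant (the action on the target is the linear action on coefficients), the path $t\mapsto g\til\phi(t)$ lies in $\HH_{\mathrm{m}}$, starts at $g\til M$, and satisfies
\[
\dev(g\til\phi(t))=g\dev(\til\phi(t))=g\dev(\til M)+t(gv)=\dev(g\til M)+t(gv).
\]
Differentiating, the tangent vector of $g\til\phi$ at every time is the constant vector $gv\in\mathfrak{R}$, which in the affine chart is precisely the Rel vector field $\vec{gv}$. Projecting back to $\HH$, the path $t\mapsto g\phi(t)$ is a smooth path from $gM$ to $g\phi(1)=g\rel_v(M)$ whose tangent is $\vec{gv}$, so by Definition~\ref{def: rel}, $\rel_{gv}(gM)$ is defined and equals $g\rel_v(M)$.

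I do not anticipate any serious obstacle: the subtle point with Rel is that the flow is only partially defined, but here the $g$-translate of the integral curve is produced explicitly on all of $[0,1]$, so the domain-of-definition issue addressed in Proposition~\ref{prop: continuous} does not intervene. The only care needed is in the initial step of verifying $G$-invariance of $\mathfrak{R}$ and in the path-lifting to $\HH_{\mathrm{m}}$, both of which are already established.
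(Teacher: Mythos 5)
Your proof is correct and uses the same key idea as the paper: push the integral curve $\phi(t)$ forward by $g$ and check that $g\phi(t)$ solves the ODE for $\vec{gv}$. The paper does this directly in $\HH$ (computing $\tfrac{d}{dt}g(\phi(t))=g\vec v(\phi(t))=\vec{gv}(g\phi(t))$ using that $g$ acts affinely in period charts), so your detour through $\HH_{\mathrm{m}}$ is unnecessary, though harmless and arguably clarifying; note also that Proposition~\ref{prop: lifting} as stated concerns the cover $\til\HH\to\HH_{\mathrm{m}}$, not $\HH_{\mathrm{m}}\to\HH$, so the citation is only by analogy.
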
 

\begin{proof} Let $\vec v$ denote the vector field corresponding to
  $v \in \mathfrak{R}$. To say that $\rel_v(M)$ is defined means that
  there is a smooth path $\phi(t)$ 
with $\phi(0)=M$, $\frac{d}{dt}\phi(t)=\vec v (\phi(t))$, and in this
case $\phi(1)=\rel_v(M)$. Consider the path $t\mapsto g(\phi(t))$.
It has the property that $g(\phi(0))=gM$,
$\frac{d}{dt}g(\phi(t))=g(\frac{d}{dt}\phi(t))=g\vec{v}(\phi(t)) =
\vec{(gv)}(\phi(t))$ and $g(\phi(1))=g(\rel_v(M))$. 
The existence of this path shows that $\rel_{gv}(gM)$ is defined
and $g(\rel_v(M))=\rel_{gv}(gM)$.
\end{proof}

We can think of $\mathfrak{R}$ as a Lie group acting on $H^1(S,
\Sigma; \R^2)$.   The fact that 
we can lift elements of the Lie group action  on
$H^1(S,\Sigma;\R^2)$ to $\HH_{\mathrm{m}}$ does not imply that the relations
in the Lie group necessarily lift.  For example the transformations
$\rel_v$ and $\rel_w$ acting on $H^1(S,\Sigma;\R^2)$ commute but the
corresponding lifted transformations of $\HH_{\mathrm{m}}$ need not commute
where they are defined. 
The following result gives criteria for a composition law and for commutation.

\begin{prop} \label{prop: commuting} Let 
$$\Box = [0,1]^2 \text{ and } \triangle = \{(s,t) \in \R^2: 0\leq  s \leq t
\leq 1\}, $$ and let $v,w\in\mathfrak{R}.$ \index{$\Box$} \index{$\triangle$} Then:
\begin{itemize}
\item[(i)]
If $\rel_{sv+tw}(M)$ is defined 
for all $(s,t) \in \triangle$ then 
\begin{equation}\label{eq: what we want0}{
\rel_v \circ \rel_w(M) = \rel_{v+w}(M).
}\end{equation}
\item[(ii)]
 If
  $\rel_{sv+tw}(M)$ is defined for all $(s, t) \in \Box$ then  
\begin{equation}\label{eq: what we want}{
\rel_v\circ \rel_w(M)=\rel_w\circ \rel_v(M)=\rel_{v+w}(M).
}\end{equation}
\end{itemize}
\end{prop}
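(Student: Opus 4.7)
The plan is to use Proposition \ref{prop: continuous} to produce a continuous map $\Psi : \triangle \to \HH$ defined by $\Psi(s,t) = \rel_{sv+tw}(M)$, and then to lift $\Psi$ to the marked stratum $\HH_{\mathrm{m}}$ where the rel vector fields become constant vector fields in period coordinates. Since $\triangle$ is simply connected, after choosing a lift $\tilde{M} \in \HH_{\mathrm{m}}$ of $M$ one obtains a continuous lift $\tilde{\Psi} : \triangle \to \HH_{\mathrm{m}}$ with $\tilde{\Psi}(0,0) = \tilde{M}$.

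The crux of the argument is to establish the affine identity
\eq{eq: dev Psi}{
\dev(\tilde{\Psi}(s,t)) \;=\; \dev(\tilde{M}) + s v + t w \qquad \text{for all } (s,t) \in \triangle.
}
Each $(s,t) \in \triangle$ is connected to the origin by the radial segment $\{(rs,rt) : r \in [0,1]\}$, which lies entirely in $\triangle$; by Definition \ref{def: rel}, the restriction of $\tilde{\Psi}$ to this segment is the unique lift starting at $\tilde{M}$ of the trajectory of the rel vector field $\overrightarrow{sv+tw}$. Because this vector field is, in period coordinates, the constant vector field $sv+tw$, the developing image of the lifted trajectory is the straight segment from $\dev(\tilde{M})$ to $\dev(\tilde{M}) + sv + tw$, yielding \equ{eq: dev Psi}.

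Assuming \equ{eq: dev Psi}, part (i) follows at once: the curve $s \mapsto \tilde{\Psi}(s,1)$ starts at a lift of $\rel_w(M)$ and has developing image $\dev(\tilde{M}) + sv + w$, so by uniqueness of solutions of the ODE for $\vec{v}$ it coincides with the trajectory of $\vec{v}$ starting at $\rel_w(M)$; evaluating at $s=1$ gives $\rel_v(\rel_w(M)) = \Psi(1,1) = \rel_{v+w}(M)$, which is \equ{eq: what we want0}. For part (ii), the set $\Box$ is the union of $\triangle$ and its reflection $\triangle' = \{(s,t): 0 \leq t \leq s \leq 1\}$, and the same argument applied to $\triangle'$---using the L-path through $(1,0)$ in place of the one through $(0,1)$---gives $\rel_w(\rel_v(M)) = \rel_{v+w}(M)$, which together with (i) yields \equ{eq: what we want}.

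The main technical obstacle is producing the continuous lift $\tilde{\Psi}$ and upgrading the \emph{pointwise} existence of $\rel_{sv+tw}(M)$ (which is all the hypothesis directly provides) to the \emph{global} affine identity \equ{eq: dev Psi}. Simple-connectedness of $\triangle$ permits the construction of $\tilde{\Psi}$ via path lifting from the covering $\HH_{\mathrm{m}} \to \HH$; the affine formula is then forced on each radial segment by the rel ODE in period coordinates, and so on all of $\triangle$ since every point is reached by such a segment.
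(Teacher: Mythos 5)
Your proof is correct and takes essentially the same approach as the paper: both construct a two‑parameter family $\tilde{\Psi}(s,t)$ in the marked stratum whose image under $\dev$ is the affine map $\dev(\tilde M)+sv+tw$ (established via the radial‑segment lifting argument), and both then read off part (i) from the top edge $s\mapsto\tilde{\Psi}(s,1)$. The only cosmetic differences are that the paper works directly in $\HH_{\mathrm{m}}$ from the start by invoking $\Mod(S,\Sigma)$‑equivariance (and cites Ebeling for continuity of the lift), whereas you build $\Psi$ downstairs and lift it through the covering using simple‑connectedness of $\triangle$; and the paper derives (ii) by swapping $v$ and $w$ in (i), which is the same as your reflection of $\triangle$ across the diagonal.
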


\begin{proof} 
It suffices  to prove the result in $\HH_{\mathrm{m}}$, since the maps $\rel_u$
are $\Mod(S, \Sigma)$-equivariant. Note that (ii) follows immediately
from (i), so we prove (i). 
Define 
$$\sigma : \triangle \to H^1(S,
\Sigma; \R^2) \ \text{ by } \ \sigma(s,t)=\dev(M)+sv+tw.$$ 
Recall
  that we have a developing map $\dev:\HH_{\mathrm{m}}\to H^1(S,\Sigma;\R^2)$.
  The developing map is a local homeomorphism. This does not imply
  that paths can be lifted but it does mean that when paths can be
  lifted the lifts are unique (see \cite{Ebeling} for more
  information). The hypothesis that $\rel_{sv+tw}(M)$ is defined for
  all $(s, t) \in \triangle$ 
  means that every path $r\mapsto \sigma(rs,rt)$ for $0\le r\le 1$
  lifts to $ \HH_{\mathrm{m}}$. Let
  $\tilde\sigma(s,t)=\rel_{sv+tw}(M)$ be the lift of $\sigma$ to $
  \HH_{\mathrm{m}}$. Arguing as in Proposition 1.11 in \cite{Ebeling} we see that $\tilde\sigma$ is a continuous map.
By construction 
  $\tilde\sigma(0,1)=\rel_w(M)$ and $\tilde\sigma(1,1)=\rel_{v+w}(M)$. The path $\rho_0(r)=
  \tilde\sigma(r,1)$ satisfies $\rho_0(0)=\rel_w(M)$ and $\rho'_0=v$, so by the definition of Rel,
  $\rho_0(1)=\rel_v(\rel_w(M))$. Also 
  $\rho_0(1)=\tilde\sigma(1,1)=\rel_{v+w}(M)$, and \eqref{eq: what
    we want0} follows.
\end{proof}

In the application of Ratner's techniques to strata in Proposition \ref{prop: CW main} we need to deal with the following situation. We have $M_j\to M^{(1)}$ in $\HH$ and 
$\rel_{v_j}(g_jM_j)\to M^{(2)}$. We know that $g_j\to g_{\infty}$ and that $v_j\to v_{\infty}$ and we would like to conclude that 
$\rel_{v_{\infty}}(g_{\infty}M^{(1)})=M^{(2)}$. This assertion does not hold in general but it does follow with one additional assumption.

\begin{prop}\label{prop: limit} Say that $M_j\to M^{(1)}$ in $\HH$, $v_j\to v_{\infty}$ in $\mathfrak{R}$, $g_j\to g_{\infty}$ in $G$ and $\rel_{v_j}(g_jM_j)\to M^{(2)}$.
If $\rel_{v_{\infty}}(g_{\infty}M^{(1)})$ exists then $\rel_{v_{\infty}}(g_{\infty}M^{(1)})=M^{(2)}$.
\end{prop}

\begin{proof}
The continuity of the $G$ action as a map from $G\times\HH$ to $\HH$ implies that $g_jM_j\to g_{\infty}M^{(1)}$. If $\rel_{v_{\infty}}(g_{\infty}M^{(1)})$ exists then according to Proposition \ref{prop: continuous}, there are neighborhoods of $U$ of $g_{\infty}M^{(1)}$ and $V$ of $v_{\infty}$ for which $\rel$ is defined and continuous as a function 
on $U \times V$. It follows that $\rel_{v_j}(g_jM_j)$ converges to $\rel_{v_{\infty}}(gM^{(1)})$. Since $\rel_{v_j}(g_jM_j)$ converges to $M^{(2)}$ by hypothesis, we have 
$\rel_{v_{\infty}}(g_{\infty}M^{(1)})=M^{(2)}$.
\end{proof}

\subsection{Real Rel}\label{subsec: real rel}
Let us write $\R^2$ as $\R_x\oplus\R_y$. We then write
\begin{equation}\label{eq: splitting}{
H^1(S, \Sigma; \R^2) \cong
H^1(S, \Sigma; \R_x) \oplus H^1(S, \Sigma; \R_y).
}\end{equation}
and we refer to $H^1(S, \Sigma; \R_x)$ \index{H1@$H^1(S, \Sigma; \R_x)$} as the {\em horizontal
  space}. \index{horizontal space}
Let $Z$ \index{Z@$Z$} denote the intersection of $\mathfrak{R}$ and the horizontal
  space.  We will refer to $Z$ as {\em real
   Rel}. Since the subgroup $B$ of  $\SL_2(\R)$ 
preserves the horizontal
directions $\R_x$, its action on $\mathfrak{R}$ leaves $Z$ invariant.

A special case which will concern us here are
strata with two singularities. In this case $\mathfrak{R}$ can be
identified with $\R^2$, and we make
the identification explicit. Label the singularities of the model surface $S$ by
$\xi_1$ and $\xi_2$, we will identify $\mathfrak{R}$ with $\R^2$ as follows: a
cochain $v:H_1(S, \Sigma) \to \R^2$ which vanishes on cycles
represented by closed curves is identified with the vector $v(\delta)$
for some (any) directed path $\delta$ from $\xi_1$ to $\xi_2$. 
In this case $Z$ is one dimensional, and we write
$\rel_t(M)$ for $\rel_v(M)$, where $v = (t,0) \in \R^2 \cong
\mathfrak{R}$ via the identification above.

Figures \ref{fig: dec1} and \ref{fig: tdec1} show the effect of
flowing along the real $\rel$
 vector field on a decagon with opposite sides identified. In Figure
 \ref{fig: dec1} the flow has the 
effect of shortening the top saddle connection. The flow cannot be 
continued past the point at which the length of the top saddle
connection shrinks to zero. In Figure 
\ref{fig: tdec1} the lengths of saddle connections are preserved since
they connect vertices of the 
same color, and hence represent a saddle connection from a singularity 
to itself. In this case the flow can be continued for all time.

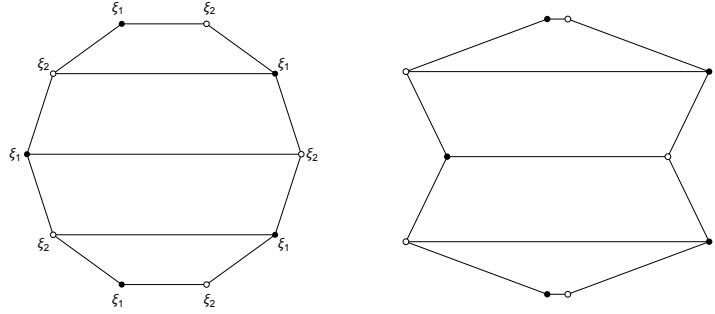
\begin{figure}[h]
\begin{tabular}{l r}
\begin{tikzpicture}[scale=2.0]
\def\a{0.809017};
\def\b{0.587785};
\def\c{0.309017};
\def\d{0.951057};

\node (A0) at (1,0) [circle,draw,inner sep=0pt,minimum size=1.0mm] {};
\node (A1) at (\a,\b) [circle,draw,fill=black,inner sep=0pt,minimum size=1.0mm] {};
\node (A2) at (\c,\d) [circle,draw,inner sep=0pt,minimum size=1.0mm] {};
\node (A3) at (-\c,\d) [circle,draw,fill=black,inner sep=0pt,minimum size=1.0mm] {};
\node (A4) at (-\a,\b) [circle,draw,inner sep=0pt,minimum size=1.0mm] {};
\node (A5) at (-1,0) [circle,draw,fill=black,inner sep=0pt,minimum size=1.0mm] {};
\node (A6) at (-\a,-\b)[circle,draw,inner sep=0pt,minimum size=1.0mm] {};
\node (A7) at (-\c,-\d) [circle,draw,fill=black,inner sep=0pt,minimum size=1.0mm] {};
\node (A8) at (\c,-\d) [circle,draw,inner sep=0pt,minimum size=1.0mm] {};
\node (A9) at (\a,-\b)[circle,draw,fill=black,inner sep=0pt,minimum size=1.0mm] {};

\draw (A0) -- (A1) -- (A2) -- (A3)--(A4) --(A5) -- (A6) -- (A7) -- (A8) --(A9) -- (A0);
\draw (A1) -- (A4); 
\draw (A0) -- (A5);
\draw (A9) -- (A6);
\end{tikzpicture}\ \ \ \ \ \ \ \ \
&
\begin{tikzpicture}[scale=2.0]
\def\a{0.809017};
\def\b{0.587785};
\def\c{0.309017};
\def\d{0.951057};
\def\rel{0.23}

\node (A0) at (1-\rel,0) [circle,draw,inner sep=0pt,minimum size=1.0mm] {};
\node (A1) at (\a+\rel,\b) [circle,draw,fill=black,inner sep=0pt,minimum size=1.0mm] {};
\node (A2) at (\c-\rel,\d) [circle,draw,inner sep=0pt,minimum size=1.0mm] {};
\node (A3) at (-\c+\rel,\d) [circle,draw,fill=black,inner sep=0pt,minimum size=1.0mm] {};
\node (A4) at (-\a-\rel,\b) [circle,draw,inner sep=0pt,minimum size=1.0mm] {};
\node (A5) at (-1+\rel,0) [circle,draw,fill=black,inner sep=0pt,minimum size=1.0mm] {};
\node (A6) at (-\a-\rel,-\b)[circle,draw,inner sep=0pt,minimum size=1.0mm] {};
\node (A7) at (-\c+\rel,-\d) [circle,draw,fill=black,inner sep=0pt,minimum size=1.0mm] {};
\node (A8) at (\c-\rel,-\d) [circle,draw,inner sep=0pt,minimum size=1.0mm] {};
\node (A9) at (\a+\rel,-\b)[circle,draw,fill=black,inner sep=0pt,minimum size=1.0mm] {};

\draw (A0) -- (A1) -- (A2) -- (A3)--(A4) --(A5) -- (A6) -- (A7) -- (A8) --(A9) -- (A0);
\draw (A1) -- (A4); 
\draw (A0) -- (A5);
\draw (A9) -- (A6);
\end{tikzpicture}
\end{tabular}
\caption[Action of Rel on a decagon]{Applying $\mathrm{Rel}_t$ (with $t<0$) to the
  decagon.  When $t<-a$ or $t>b$ then $\rm Rel_t$ fails to be defined,
  where $a$ is the 
  length of the top segment and $b$ is the length of the second
  segment from the top.}  
\label{fig: dec1} 
\end{figure}

\begin{figure}[h]
\begin{tabular}{l c}

\begin{tikzpicture}[scale=2.0]
\def\a{0.809017};
\def\b{0.587785};
\def\c{0.309017};
\def\d{0.951057};
\def\rel{0.1}

\node (A0) at (\d,\c) [circle,draw,inner sep=0pt,minimum size=1.0mm] {};
\node (A1) at (\b,\a) [circle,draw,fill=black,inner sep=0pt,minimum size=1.0mm] {};
\node (A2) at (0,1.0) [circle,draw,inner sep=0pt,minimum size=1.0mm] {};
\node (A3) at (-\b,\a) [circle,draw,fill=black,inner sep=0pt,minimum size=1.0mm] {};
\node (A4) at (-\d,\c) [circle,draw,inner sep=0pt,minimum size=1.0mm] {};
\node (A5) at (-\d,-\c) [circle,draw,fill=black,inner sep=0pt,minimum size=1.0mm] {};
\node (A6) at (-\b,-\a)[circle,draw,inner sep=0pt,minimum size=1.0mm] {};
\node (A7) at (0,-1.0) [circle,draw,fill=black,inner sep=0pt,minimum size=1.0mm] {};
\node (A8) at (\b,-\a) [circle,draw,inner sep=0pt,minimum size=1.0mm] {};
\node (A9) at (\d,-\c)[circle,draw,fill=black,inner sep=0pt,minimum size=1.0mm] {};

\draw (A0) -- (A1) -- (A2) -- (A3)--(A4) --(A5) -- (A6) -- (A7) -- (A8) --(A9) -- (A0);
\draw (A1) -- (A3); 
\draw (A0) -- (A4);
\draw (A9) -- (A5);
\draw (A8) -- (A6);
\end{tikzpicture}
&\ \ \ \ \ \ \ \
\begin{tikzpicture}[scale=2.0]
\def\a{0.809017};
\def\b{0.587785};
\def\c{0.309017};
\def\d{0.951057};
\def\rel{-0.2}

\node (A0) at (\d-\rel,\c) [circle,draw,inner sep=0pt,minimum size=1.0mm] {};
\node (A1) at (\b+\rel,\a) [circle,draw,fill=black,inner sep=0pt,minimum size=1.0mm] {};
\node (A2) at (0-\rel,1.0) [circle,draw,inner sep=0pt,minimum size=1.0mm] {};
\node (A3) at (-\b+\rel,\a) [circle,draw,fill=black,inner sep=0pt,minimum size=1.0mm] {};
\node (A4) at (-\d-\rel,\c) [circle,draw,inner sep=0pt,minimum size=1.0mm] {};
\node (A5) at (-\d+\rel,-\c) [circle,draw,fill=black,inner sep=0pt,minimum size=1.0mm] {};
\node (A6) at (-\b-\rel,-\a)[circle,draw,inner sep=0pt,minimum size=1.0mm] {};
\node (A7) at (0+\rel,-1.0) [circle,draw,fill=black,inner sep=0pt,minimum size=1.0mm] {};
\node (A8) at (\b-\rel,-\a) [circle,draw,inner sep=0pt,minimum size=1.0mm] {};
\node (A9) at (\d+\rel,-\c)[circle,draw,fill=black,inner sep=0pt,minimum size=1.0mm] {};

\draw (A0) -- (A1) -- (A2) -- (A3)--(A4) --(A5) -- (A6) -- (A7) -- (A8) --(A9) -- (A0);
\draw (A1) -- (A3); 
\draw (A0) -- (A4);
\draw (A9) -- (A5);
\draw (A8) -- (A6);
\end{tikzpicture}
\end{tabular}
\caption[Action of Rel on a tipped decagon]{Applying $\rel_t$ (with $t>0$) to the tipped
  decagon.}
\label{fig: tdec1} 
\end{figure}
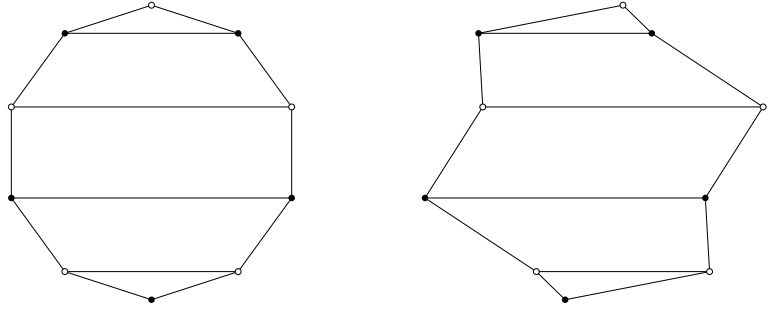

\begin{dfn}\label{dfn: domain of rel}
 Let $v \in \mathfrak{R}$. We denote by $\HH'_v$ \index{H'@$\HH'_v$}
the set
  of $M\in\HH$ for which $\rel_v(M)$ is defined.  
\end{dfn}
Proposition \ref{prop: commuting}(i), with $w =-v$, implies $\rel_{-v}
\circ \rel_v (M) =M$ for $M \in \HH'_v$, and this yields a useful equivariance property:
\begin{equation}\label{eq: equivariance property}{
\rel_{v} (\HH'_v) = \HH'_{-v}.
}\end{equation}
In the case of two singularities keeping in mind our convention
regarding  the identification
$\mathfrak{R} \cong \R^2$, we will
continue to use the notation $\HH'_v$ for $v \in \R^2$. 
Then Proposition~\ref{prop: distributive} implies 
that for $v\in\R^2$ and $g\in G$ we have $g(\HH'_v)=\HH'_{gv}$, where
$gv$ is the image of $v$ under the linear action of $g$ on
$\R^2$. 
We now introduce some more notation for discussing directions belonging to $Z$. 

\begin{dfn} 
We define $\HH'_\infty$ \index{H'infinity@$\HH'_\infty$} to be $\bigcap_{z \in Z}\HH'_z$ (i.e. the set of $M\in\HH$  on which
$\rel_z$ is defined for all $z \in Z$). 
\label{dfn: aitch prime sub infinity}
\end{dfn}

When $\HH$ is a stratum with two singularities
and $t$ is a real number  let $\HH'_t$ \index{H@$\HH'_t$} denote the set $\HH'_v$
  with $v=(t,0)$ (i.e. the subset of $M\in\HH$  on which $\rel_t$ is defined).

\begin{dfn}
For a fixed $M \in \HH$, let 
$$
Z^{(M)} = \{z \in Z: M \in \HH'_z\}
$$ 
\index{Z@$Z^{(M)}$}
(i.e., the subset of $Z$ corresponding to surgeries which are defined on $M$). 
Thus $\HH'_\infty = \{M \in \HH: Z^{(M)}=Z\}.$ 
\end{dfn}

Recall that if $V$ is a vector space and $V_0 \subset V$, we will say
that $V_0$ is a {\em star  body} if 
$$
v \in V_0, s \in [0,1] \implies sv \in V_0.
$$
We denote the convex hull of a subset $W \subset V$ by $\conv \, W$.

\begin{prop}\label{prop: star}
The set $Z^{(M)}$ has the following properties:
\begin{itemize}\item[(i)]
It is an open star body in $Z$.
\item[(ii)] If $b \in B$ then $Z^{(bM)}=b(Z^{(M)} )$. 
\item[(iii)]
If $z \in Z^{(M)}$ then $-z \in Z^{(\rel_z(M))}.$ 
\item[(iv)]
If $z,
z' \in Z$ and $\conv \{0, z, z+z'\} \subset Z^{(M)}$, then $\rel_z(M)$ and
$\rel_{z'}(\rel_z(M))$ are defined and $\rel_{z'}(\rel_z(M)) =  
\rel_{z+z'}(M)$.   
\end{itemize}
\end{prop}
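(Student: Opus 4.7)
The plan is to verify the four clauses in turn, in each case reducing to the preceding propositions of this subsection.

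For (i) I would prove openness and the star-body property separately. Openness of $Z^{(M)}$ is immediate from Proposition \ref{prop: continuous}: the domain of definition $\Omega \subset \HH \times \mathfrak{R}$ of the map $(M,v) \mapsto \rel_v(M)$ is open, and $Z^{(M)}$ is the slice $\{z \in Z : (M,z) \in \Omega\}$, which is therefore open in $Z$. For the star-body property, take $z \in Z^{(M)}$, so by Definition \ref{def: rel} we have a smooth path $\phi : [0,1] \to \HH$ with $\phi(0) = M$, $\phi'(t) = \vec{z}(\phi(t))$, and $\phi(1) = \rel_z(M)$. For $s \in [0,1]$, the reparametrized path $t \mapsto \phi(st)$ starts at $M$ and has velocity $s \phi'(st) = s \vec{z} = \vec{sz}$, hence witnesses that $sz \in Z^{(M)}$.

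For (ii), I would first observe that $B = AU$ preserves $Z$: elements of $B$ are upper-triangular, so they preserve the $x$-axis in $\R^2$ and hence preserve $H^1(S,\Sigma;\R_x)$ in the splitting \equ{eq: splitting}; since $B \subset G$ also preserves $\mathfrak R$, it preserves $Z = \mathfrak{R} \cap H^1(S,\Sigma;\R_x)$. Now apply Proposition \ref{prop: distributive}: for $b \in B$ and $z \in Z$, $\rel_z(M)$ is defined iff $\rel_{bz}(bM)$ is defined, which gives the equality $Z^{(bM)} = b(Z^{(M)})$.

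For (iii), I would simply reverse the defining path. Given $z \in Z^{(M)}$ with associated path $\phi$ as above, the reversed path $\psi(t) = \phi(1-t)$ satisfies $\psi(0) = \rel_z(M)$, $\psi(1) = M$, and $\psi'(t) = -\phi'(1-t) = -\vec{z}(\psi(t)) = \vec{(-z)}(\psi(t))$, so $-z \in Z^{(\rel_z(M))}$.

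For (iv), I would apply Proposition \ref{prop: commuting}(i) with $v = z'$ and $w = z$. The linear map $(s,t) \mapsto sz' + tz$ sends the three vertices $(0,0), (0,1), (1,1)$ of $\triangle$ to $0, z, z+z'$ respectively, hence sends all of $\triangle$ to $\conv\{0, z, z+z'\}$. The hypothesis $\conv\{0, z, z+z'\} \subset Z^{(M)}$ is therefore exactly the hypothesis of Proposition \ref{prop: commuting}(i), which yields $\rel_{z'}(\rel_z(M)) = \rel_{z+z'}(M)$ (and implicitly that $\rel_z(M)$ and $\rel_{z'}(\rel_z(M))$ are defined, since these are intermediate points along the lifted path). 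Of the four clauses, none is really an obstacle; the subtlest point is keeping track of which vertex of $\triangle$ hits which vertex of $\conv\{0, z, z+z'\}$ so as to invoke Proposition \ref{prop: commuting}(i) with the correct orientation.
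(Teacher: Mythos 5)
Your proposal is correct and follows essentially the same route as the paper: openness from Proposition \ref{prop: continuous} and the star property from reparametrizing the integral curve for (i), Proposition \ref{prop: distributive} (applied with $b$ and $b^{-1}$) for (ii), and Proposition \ref{prop: commuting}(i) with the correct identification of vertices for (iv). The only cosmetic difference is in (iii), where you directly reverse the integral curve rather than citing the equivariance identity \eqref{eq: equivariance property} (itself derived from Proposition \ref{prop: commuting}(i)); both give the same conclusion with comparable effort.
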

\begin{proof}
The fact that $Z^{(M)}$ is open follows from Proposition \ref{prop:
  continuous}. The
fact that it is a star body is immediate from Definition \ref{def:
  rel}. This proves (i). Assertions (ii), (iii), (iv) follow respectively from Propositions \ref{prop:
  distributive}, \eqref{eq: equivariance property}, and \ref{prop: commuting}(i). 
\end{proof}

We will need a significant strengthening of Proposition \ref{prop:
  star}:
\begin{prop}\label{prop: convex}
For any $M$, the set $Z^{(M)}$ is convex. 
\end{prop}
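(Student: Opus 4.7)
The plan is to combine the abstract star-body structure of $Z^{(M)}$ (Proposition \ref{prop: star}) with an explicit criterion for membership in $Z^{(M)}$ phrased in terms of horizontal saddle connections. Since an arbitrary intersection of open convex sets is convex, if $Z^{(M)}$ can be realised as an intersection of open half-spaces in the vector space $Z$, convexity follows at once.

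The defining feature of real Rel to be exploited is that elements of $Z\subset\mathfrak{R}$ modify only the horizontal components of relative periods, while preserving all absolute periods; in particular, the vertical coordinates of all saddle connection holonomies are unchanged, and by Proposition \ref{prop: same area} the area is preserved. For a horizontal saddle connection $\sigma$ on $M$ joining distinct singularities $\xi_i$ and $\xi_j$, with horizontal length $h_\sigma>0$ when oriented from $\xi_i$ to $\xi_j$, a direct computation on the developing map shows that, for as long as $\rel_{tv}(M)$ is defined, the horizontal length of the corresponding saddle connection evolves linearly as $h_\sigma+t\bigl(v(\xi_j)-v(\xi_i)\bigr)$. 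Positivity at the endpoints $t=0$ and $t=1$, combined with linearity, forces positivity throughout $[0,1]$.

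Using this observation, I would establish the criterion: $v\in Z^{(M)}$ if and only if $h_\sigma+v(\xi_j)-v(\xi_i)>0$ for every horizontal saddle connection $\sigma$ of $M$ joining distinct singularities $\xi_i\to\xi_j$. The ``only if'' direction is immediate from the linear evolution above. For the ``if'' direction, let $t^*\in(0,1]$ be the supremum of $t$ such that $\rel_{sv}(M)$ is defined for all $s\in[0,t)$. If $t^*\le 1$ and $\rel_{t^*v}(M)$ were not defined, then by Proposition \ref{prop: continuous} the path must fail to converge in $\HH$ as $t\uparrow t^*$. Since real Rel preserves area and vertical holonomies, the sole mechanism for such a failure is the collapse of a horizontal saddle connection between distinct singularities at time $t^*$. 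One must verify that any such collapsing saddle connection can be tracked continuously back to a horizontal saddle connection on $M$ itself, so that the hypothesised inequality at that $\sigma$ is violated: this is precisely the content of the detailed analysis of the horizontal separatrix diagram carried out in \S\ref{sec: explicit}. Granting the criterion, we obtain
\[
Z^{(M)}=\bigcap_{\sigma}\bigl\{v\in Z : h_\sigma+v(\xi_j)-v(\xi_i)>0\bigr\},
\]
where $\sigma$ ranges over horizontal saddle connections of $M$ joining distinct singularities. Each set on the right is an open half-space in $Z$, so $Z^{(M)}$ is convex.

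The main obstacle is the ``if'' direction of the criterion. The linear evolution of existing horizontal saddle connections is straightforward, but one must rule out the possibility that a new horizontal saddle connection appears and collapses before any of the original ones does, which would produce an obstruction unrecorded by the inequalities from $M$. Equivalently, one has to show that the horizontal separatrix structure is stable, in a suitable sense, throughout the open interval on which the Rel flow is defined — the technical heart of the explicit description in \S\ref{sec: explicit}, which rests on the work of \cite{MW,McMullen-twists,Matt}.
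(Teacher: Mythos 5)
Your proposal is essentially the paper's own argument: the paper records the identity $Z^{(M)}=\bigcap_{\delta\in\Xi(M)}Z^{(M,\delta)}$ (equation \equ{eq: ZM} of Theorem \ref{thm: Bainbridge-Smillie surgery}), where each $Z^{(M,\delta)}$ is the open half-space cut out by exactly the linear condition you write down, and Proposition \ref{prop: convex} is then immediate. You correctly flag that the substantive step is the ``if'' direction of this criterion; the paper settles it by the constructive rectangle-thickening surgery of \S\ref{sec: explicit} rather than your contradiction sketch (which, as you note, needs precisely that analysis to rule out newly created collapsing saddle connections), but the reduction to an intersection of half-spaces and the ensuing one-line convexity deduction coincide with the paper's.
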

Whereas the proof of Proposition \ref{prop:
  star} relies only on general principles, Proposition \ref{prop:
  convex} relies on additional information about Rel and will be
proved in \S \ref{subsec: explicit}.

Recall that $G$ acts on $H^1(S, \Sigma; \R^2)$ via its linear action
on the coefficients $\R^2$. Since the linear action of $U$ on $\R^2$ preserves
horizontal vectors, it fixes  elements of $\R_x$. This implies that
real Rel  commutes with the horocycle
flow. 
Namely, by
 Proposition \ref{prop: distributive},  if $z\in Z$ and $u \in U$ then
 $uz=z$, and hence $u(\HH'_z)
=\HH'_z$, and 
$$u(\rel_z(M))=\rel_{u(z)}(uM)=\rel_z(uM), \ \text{ for }  M \in \HH'_z.$$
\begin{dfn}
Now define $$N = \{(b, z): b \in B, z \in Z\}\index{N} \ \text{ and } L =
\{(g,v) : g \in G, v \in \mathfrak{R}\}.\index{L}$$ 
\end{dfn}

We equip $N$ and $L$ with the natural group structures as semi-direct
products $N = B \ltimes Z$ and  $L = G \ltimes \mathfrak{R}$, which
are compatible with their actions on period coordinates. For  the action of the first
factor on the second in this semi-direct product $L$, we take the natural action
of $G$ on $\mathfrak{R}$, and its restriction to the $B$-invariant
subspace $Z$. 
In particular in the case of two singularities,
$G$ acts on $\mathfrak{R} \cong \R^2$ via its standard linear
action, and $B$ acts on $Z \cong \R_x$ via the restriction of its
linear action on $\R^2$, to the horizontal axis. 
We will write this semidirect product group law explicitly as  
$$
(g_1, v_1) \cdot (g_2, v_2) = (g_1 g_2, v_1+ g_1 v_2) \text{ for }
\ell_i = (g_i, v_i) \in L, \,  i=1,2, 
$$
where in the expression $g_1 v_2$ we mean the action of $G$ on
$\mathfrak{R}$ described above.
We can associate a partially defined transformation of $\HH$ to
each element of $L$ as follows:
\begin{equation}\label{eq: on L}{
gM \pluscirc
v = \rel_v(gM) \ \text{ when  } (g,v) \in L \ \text{ and } gM \in
\HH'_v.
}\end{equation}
\index{gM@$gM \pluscirc v$}

We will use a different notation for the restriction to $N$. Namely we
write 
\begin{equation}\label{eq: on N}{
n M =\rel_z(bM) \ \text{ when  } n = (b,z) \in N \ \text{ and
} bM \in \HH'_z.}\end{equation}
\index{nM@$nM$}
 
Note that \eqref{eq: on L} and \eqref{eq: on N} give two different
notations for the same transformations. The reason for this is a
fundamental difference between the behavior of the same operation on
$N$ and on $L$. 
When dealing with all of $L$, these operations need not obey a 
group action law. Indeed, it may happen that  $(M\pluscirc
v)\pluscirc w \neq (M\pluscirc w)\pluscirc v$. However, for $N$ we have the
following weak form of a group action law:

\begin{prop} \label{prop: normalizer} 
Let $n_1 =(b_1,z_1)$ and $n_2 =(b_2,z_2)$ be two elements of  $N$.
Suppose that $n_2M$ and $n_1(n_2M)$ are defined. 
Then 
$(n_1
n_2) M $ is defined, and $n_1(n_2 M) = (n_1 n_2) M$. 
\end{prop}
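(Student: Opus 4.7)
The plan is to unpack the definitions and apply Proposition~\ref{prop: commuting}(i) at a carefully chosen base point. Writing $n_i=(b_i,z_i)$, by definition $n_2M=\rel_{z_2}(b_2M)$ and $n_1(n_2M)=\rel_{z_1}(b_1\rel_{z_2}(b_2M))$. Using the $G$-equivariance of $\rel$ given by Proposition~\ref{prop: distributive}, this equals $\rel_{z_1}(\rel_{b_1z_2}(b_1b_2M))$. On the other hand the semidirect product law in $N=B\ltimes Z$ gives $n_1n_2=(b_1b_2,\,z_1+b_1z_2)$, so $(n_1n_2)M=\rel_{z_1+b_1z_2}(b_1b_2M)$. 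Setting $M'=b_1b_2M$, $v=z_1$, $w=b_1z_2$, the proposition reduces to the following statement: given $w\in Z^{(M')}$ and $v\in Z^{(\rel_w(M'))}$, show that $v+w\in Z^{(M')}$ and $\rel_{v+w}(M')=\rel_v(\rel_w(M'))$.

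A direct application of Proposition~\ref{prop: commuting}(i) with base point $M'$ and vector pair $(v,w)$ is circular: the triangle $\conv\{0,w,v+w\}$ that would need to lie in $Z^{(M')}$ contains the vertex $v+w$, which is exactly what we want to conclude. The key idea is to instead apply Proposition~\ref{prop: commuting}(i) with base point $\rel_w(M')$ and vector pair $(V,W)=(v+w,\,-w)$, so that the relevant triangle $\conv\{0,W,V+W\}$ becomes $\conv\{0,-w,v\}$, and all three of its vertices are already known to sit in the appropriate star body.

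Verifying the hypothesis, the three vertices all belong to $Z^{(\rel_w(M'))}$: the origin trivially; $-w$ by Proposition~\ref{prop: star}(iii) applied to $w\in Z^{(M')}$; and $v$ by our assumption that $n_1(n_2M)$ is defined. To pass from the vertices to their convex hull I invoke Proposition~\ref{prop: convex}, which asserts convexity of $Z^{(\rel_w(M'))}$. This convexity is the main technical input of the proof, and is the step where the argument relies on results that are proved only later in \S\ref{sec: explicit}; without it, one cannot fill in the interior of the triangle and the lifting argument behind Proposition~\ref{prop: commuting} does not apply.

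With the triangle hypothesis in hand, Proposition~\ref{prop: commuting}(i) yields
\[
\rel_{v+w}\bigl(\rel_{-w}(\rel_w(M'))\bigr)=\rel_{(v+w)+(-w)}(\rel_w(M'))=\rel_v(\rel_w(M')),
\]
where both sides are defined. Since $\rel_{-w}(\rel_w(M'))=M'$ by the identity noted just after Definition~\ref{dfn: domain of rel} (itself a special case of Proposition~\ref{prop: commuting}(i)), we conclude that $v+w\in Z^{(M')}$ and $\rel_{v+w}(M')=\rel_v(\rel_w(M'))$. Retracing the identifications $M'=b_1b_2M$, $v=z_1$, $w=b_1z_2$ gives $(n_1n_2)M=\rel_{z_1+b_1z_2}(b_1b_2M)=\rel_{z_1}(\rel_{b_1z_2}(b_1b_2M))=n_1(n_2M)$, as required.
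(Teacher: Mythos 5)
Your proof is correct and uses essentially the same mechanism as the paper's: identify the three vertices $0$, $-z_2$-like, and $z_1$-like as lying in the domain set, invoke Proposition~\ref{prop: convex} to fill in the triangle, and then apply the composition law (Proposition~\ref{prop: commuting}(i), equivalently Proposition~\ref{prop: star}(iv)) in the ``backward'' direction that avoids circularity. The only cosmetic difference is the choice of base point: you conjugate everything by $b_1$ once at the start and work at $b_1(n_2M)=\rel_w(M')$ with triangle $\conv\{0,-b_1z_2,z_1\}$, whereas the paper works at $n_2M$ with the $b_1^{-1}$-translate $\conv\{0,-z_2,b_1^{-1}z_1\}$ and then pays for it with a couple of extra invocations of Proposition~\ref{prop: distributive} in its chain of equalities. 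Your version is marginally tidier, and your explicit remark about why the na\"ive application of Proposition~\ref{prop: commuting}(i) with the pair $(v,w)$ would be circular is a helpful clarification that the paper leaves implicit.
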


\begin{proof}
Let $n_1=(b_1,z_1)$ and $n_2=(b_2,z_2)$. Since $n_2M = \rel_{z_2}(b_2M)$ is defined we
have $z_2 \in Z^{(b_2M)}$, and since $n_1(n_2M)$ is defined, we have $z_1
\in Z^{(b_1n_2M)}$. Using
Proposition \ref{prop: star}  we find that $Z^{(n_2M)}$
contains $-z_2$ and  $b_1^{-1} z_1$.
By Proposition \ref{prop: convex} we see that 
\begin{equation}\label{eq: third triangle}{
\conv\{0, -z_2, b_1^{-1} z_1 \} \subset Z^{(n_2M)}.
}\end{equation}

 Then
\begin{align}
n_1(n_2(M)) = & \rel_{z_1}(b_1 n_2(M)) \label{eqq-1}\\
= & \rel_{b_1^{-1}z_1}(n_2(M)) \label{eqq0} \\ 
= &\rel_{b_1^{-1}z_1 +z_2} \circ \rel_{-z_2} (n_2(M)) \label{eqq1} \\ 
= & \rel_{b_1^{-1}z_1 +z_2} (b_2(M)) \label{eqq1.5}\\
=& \rel_{z_1+b_1z_2}(b_1b_2(M)) \label{eqq2.5}\\ 
= & (n_1n_2)(M). \notag
\end{align}
Here Proposition
\ref{prop: distributive} is used to derive 
\eqref{eqq0} from \eqref{eqq-1} and to derive \eqref{eqq2.5} from
\eqref{eqq1.5}, and \eqref{eqq1} is obtained from \eqref{eqq0} by Proposition
\ref{prop: star} and \eqref{eq: third triangle}.
\end{proof}

The following
is immediate from Propositions~\ref{prop: continuous}, \ref{prop: star} and
\ref{prop: normalizer} (and was proved previously in \cite{MW}): 

\begin{cor}\label{cor: real rel via group}
The set $\HH'_\infty$ is $N$-invariant. The map $N\times \HH'_\infty \to \HH'_\infty$ defined by 
$(\ell, M) \mapsto \ell M$ 
defines a continuous action of
$N$ on $\HH'_\infty$. The $Z$-orbits in this action are the real Rel leaves in
$\HH'_\infty$.  
\end{cor}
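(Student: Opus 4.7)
The plan is to verify each assertion by bundling together the cited propositions. First I would establish $N$-invariance of $\HH'_\infty$. Fix $M \in \HH'_\infty$ and $n = (b, z) \in N = B \ltimes Z$. Proposition~\ref{prop: star}(ii) gives $Z^{(bM)} = b(Z^{(M)}) = b(Z) = Z$, using that $B$ preserves $Z$ (since $u_s$ fixes horizontal vectors and $g_t$ scales them) and that $Z^{(M)} = Z$ by assumption. Hence $z \in Z^{(bM)}$, so $nM = \rel_z(bM)$ is defined. To show moreover that $nM \in \HH'_\infty$, I pick any $z' \in Z$; the hypothesis $\conv\{0, z, z+z'\} \subset Z = Z^{(bM)}$ of Proposition~\ref{prop: star}(iv) is automatic, and its conclusion asserts that $\rel_{z'}(\rel_z(bM)) = \rel_{z+z'}(bM)$ is defined. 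Thus $z' \in Z^{(nM)}$ for every $z' \in Z$, giving $nM \in \HH'_\infty$.

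Once $N$-invariance is in hand, the action axiom is delivered by Proposition~\ref{prop: normalizer}: for $n_1, n_2 \in N$ and $M \in \HH'_\infty$, the previous paragraph shows that $n_2 M$ and $n_1(n_2 M)$ both lie in $\HH'_\infty$, in particular are defined, so the proposition yields $n_1(n_2 M) = (n_1 n_2) M$. The identity element of $N$ manifestly acts as the identity.

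For continuity I factor the action map as $(b, z, M) \mapsto (bM, z) \mapsto \rel_z(bM)$. The first arrow is continuous because $G$ acts continuously on $\HH$, and the second is continuous on its domain $\Omega \subset \HH \times \mathfrak{R}$ by Proposition~\ref{prop: continuous}. Since $bM \in \HH$ satisfies $Z^{(bM)} = Z$ whenever $M \in \HH'_\infty$, every pair $(bM, z)$ with $z \in Z$ lies in $\Omega$, so the composition is continuous on all of $N \times \HH'_\infty$.

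Finally, restricting the $N$-action to the subgroup $Z \subset N$ (embedded as $(\mathrm{Id}, z)$) gives $z \cdot M = \rel_z(M)$, so each $Z$-orbit equals $\{\rel_z(M) : z \in Z\}$, which is by definition the real Rel leaf through $M$; by $N$-invariance this leaf lies entirely in $\HH'_\infty$. There is no real obstacle here beyond the bookkeeping to verify that each $\rel$-expression written down is defined, and this bookkeeping is entirely streamlined by the convexity of $Z^{(bM)}$ packaged into Proposition~\ref{prop: star}(iv)--which in turn rests on the deeper Proposition~\ref{prop: convex} to be proved later.
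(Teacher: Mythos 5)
Your proof is correct and takes exactly the approach the paper has in mind: the paper simply asserts the corollary is ``immediate from Propositions~\ref{prop: continuous}, \ref{prop: star} and \ref{prop: normalizer},'' and you have supplied the routine verification. One small inaccuracy in your closing parenthetical: Proposition~\ref{prop: star}(iv) does not itself rest on Proposition~\ref{prop: convex} (it is derived from Proposition~\ref{prop: commuting}(i), and its convex-hull condition appears as a \emph{hypothesis}, automatically satisfied when $Z^{(bM)}=Z$); the place where Proposition~\ref{prop: convex} genuinely enters is inside the proof of Proposition~\ref{prop: normalizer}, which you invoke as a black box. This does not affect the validity of your argument.
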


The following will be useful. For $u \in U$ and $n = (b_n,z_n) \in
N$, write $u^{(b_n)} = bub^{-1}$, so that $n  u = u^{(b_n)}n$ as elements of
$N$. Then we have:

\begin{cor}\label{cor: special}
For any $u \in U$, $n \in N$ and $M \in \HH$, if $n(uM)$ is defined then
$n M$ is defined and $n(uM) = u^{(b_n)} (nM)$. 
\end{cor}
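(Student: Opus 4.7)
The plan is to reduce the corollary to the two core facts that have already been established: the semidirect product law in $N=B\ltimes Z$, and the fact that $U$ centralizes real Rel. Unpacking the definition \equ{eq: on N}, with $n=(b_n,z_n)$, the statement $n(uM)$ is defined means precisely that $b_nuM\in\HH'_{z_n}$, and similarly $nM$ defined means $b_nM\in\HH'_{z_n}$. So the task is to compare $b_nuM$ and $b_nM$ as candidates for inputs to $\rel_{z_n}$.

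First I would rewrite $b_nu = (b_nub_n^{-1})b_n = u^{(b_n)}b_n$ in $B$, which uses only the definition of $u^{(b_n)}$ and the fact that $U$ is normal in $B$; hence
\[
b_nuM \;=\; u^{(b_n)}(b_nM).
\]
Next I would invoke the key observation made at the beginning of \S\ref{subsec: none of the above}, namely that since $U$ fixes horizontal vectors pointwise under the $G$-action on $\R^2$, Proposition \ref{prop: distributive} gives $u'\cdot\HH'_z = \HH'_{z}$ and $u'\rel_z(N) = \rel_z(u'N)$ for every $u'\in U$ and $z\in Z$. Applying this to $u' = u^{(b_n)}\in U$ yields the equivalence
\[
u^{(b_n)}(b_nM)\in\HH'_{z_n} \;\iff\; b_nM\in\HH'_{z_n}.
\]

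The hypothesis that $n(uM)$ is defined is exactly the left-hand side, so the right-hand side holds, i.e.\ $nM$ is defined. Finally, commuting $u^{(b_n)}$ past $\rel_{z_n}$ gives
\[
n(uM)=\rel_{z_n}(b_nuM)=\rel_{z_n}\bigl(u^{(b_n)}(b_nM)\bigr)=u^{(b_n)}\rel_{z_n}(b_nM)=u^{(b_n)}(nM),
\]
which is the desired identity. There is no real obstacle here; the content is entirely in the centralizer property of $U$ with respect to $Z$, and the proof is just bookkeeping once one notices that $b_nu$ can be rewritten with $u^{(b_n)}$ on the left. One could alternatively deduce this from Proposition \ref{prop: normalizer} by realizing $u$ as the element $(u,0)\in N$, but the direct argument above is shorter and avoids having to separately verify the definedness hypotheses required there.
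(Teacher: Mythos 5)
Your proof is correct, and it takes a genuinely different route from the paper. The paper's own proof is a one-liner: apply Proposition \ref{prop: normalizer} with $n_1 = n$ and $n_2 = (u,0)$, i.e.\ $b_2 = u$, $z_2 = 0$. You instead give a direct computation from the two primitive facts used throughout \S\ref{subsec: none of the above}: that $U$ is normal in $B$, so $b_n u = u^{(b_n)} b_n$, and that every $u' \in U$ satisfies $u'(\HH'_z) = \HH'_z$ and $u'\circ\rel_z = \rel_z\circ u'$, which follows from Proposition \ref{prop: distributive} together with the triviality of the $U$-action on $Z$. Your reduction to the equivalence $u^{(b_n)}(b_nM)\in\HH'_{z_n} \iff b_nM\in\HH'_{z_n}$ is valid (it is Proposition \ref{prop: star}(ii) applied to $u^{(b_n)}$), and commuting $u^{(b_n)}$ past $\rel_{z_n}$ then finishes the identity.

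The trade-off is worth noting. The paper's route is terser, but unwinding it invokes Proposition \ref{prop: normalizer}, whose general proof rests on the convexity of $Z^{(M)}$ (Proposition \ref{prop: convex}), a substantive fact established only later via the rectangle surgery in \S\ref{sec: explicit}. In the special case $z_2 = 0$ that convex hull degenerates to a segment, so one doesn't truly need convexity, but the citation as written carries that overhead. Your direct argument bypasses that machinery entirely and is logically lighter. On the other hand, the paper's phrasing makes it transparent that this corollary is a specialization of a uniform composition law in $N$, which is structurally informative. Both are correct; your version is the more elementary of the two.
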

\begin{proof}
We apply Proposition \ref{prop: normalizer} with $b_2=u$ and $z_2=0$.
\end{proof}

\subsection{The stabilizer group of a measure}\label{subsec: group of a measure}
Proposition \ref{prop: star} implies the invariance property $Z^{(uM)} =
Z^{(M)}$. We now extend this to semi-direct products and use it to
define the stabilizer of a measure, within a collection of
partially defined transformations. 

For fixed $M \in \HH$ we write
$$
N^{(M)} = \left\{(b,z) \in N: z \in
Z^{(bM)} \right\}. 
$$
(i.e. the set of $n \in N$ for which $nM$ is defined). Then it follows
from Proposition \ref{prop: continuous} that $N^{(M)}$ is open for
each $M$, and Corollary \ref{cor: special} implies $N^{(uM)} =
N^{(M)}$.  

\begin{prop}\label{prop: Z continuous dependence}
Suppose $M$ is in the $U$ orbit-closure of $M'$. Then $N^{(M)} \subset
N^{(M')}$.  
\end{prop}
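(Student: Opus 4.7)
The plan is to unpack the definition of $N^{(M)}$ and translate the hypothesis that $M \in \overline{UM'}$ into a statement about points in the open set $\HH'_z$.

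First, fix $n = (b,z) \in N^{(M)}$. By definition this means $z \in Z^{(bM)}$, i.e. $bM \in \HH'_z$. The goal is to show $bM' \in \HH'_z$. Choose a sequence $u_{s_k} \in U$ with $u_{s_k} M' \to M$, so that $b u_{s_k} M' \to bM$. Since $bM \in \HH'_z$ and $\HH'_z$ is open (by Proposition \ref{prop: continuous}), we have $bu_{s_k} M' \in \HH'_z$ for all sufficiently large $k$.

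Next I would exploit the fact that $B$ normalizes $U$: write $b u_{s_k} = u_{s_k'}\, b$ for some $s_k' \in \R$, so the previous containment becomes $u_{s_k'}(bM') \in \HH'_z$. The key observation is that $\HH'_z$ is $U$-invariant, which follows from Proposition \ref{prop: distributive} applied to the fact that $u \cdot z = z$ for every $u \in U$ and $z \in Z$ (horizontal Rel vectors are fixed by the horocycle flow); this gives $u \HH'_z = \HH'_{uz} = \HH'_z$. Applying $u_{s_k'}^{-1}$ to the containment then yields $bM' \in \HH'_z$, so $n \in N^{(M')}$.

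There is no real obstacle: the proof is an open-set argument combined with the commutation of $U$ with real Rel, both of which are already established earlier in the paper. The only point requiring mild care is to replace $b u_{s_k}$ by $u_{s_k'} b$ before invoking $U$-invariance of $\HH'_z$, so that the approximating element $u_{s_k'}$ lies in $U$ and not merely in $B$.
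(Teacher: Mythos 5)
Your proof is correct and takes essentially the same route as the paper: both rely on the openness of $\HH'_z$ (via Proposition \ref{prop: continuous}) together with the $U$-invariance of the domain. The paper is slightly more compressed, invoking the already-established identity $N^{(uM')} = N^{(M')}$ (a consequence of Corollary \ref{cor: special}) rather than re-deriving it from the commutation $bu = u'b$ and $u\HH'_z = \HH'_z$ as you do; your unpacking is a valid expansion of the same argument.
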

\begin{proof}
If $n \in N^{(M)}$ then by Proposition
\ref{prop: continuous} there is a neighborhood $\mathcal{W}$ of $M$ in
$\HH$ such that $n\in N^{(M_1)}$ for any $M_1 \in \mathcal{W}$. Let $u
\in U$ such that $uM' \in \mathcal{W}$. Then $z \in N^{(uM')} = N^{(M')}$.
\end{proof}

We also need the following:
\begin{prop} \label{prop: critical time} Given an ergodic $U$-invariant
  measure $\mu$ there is a subset $\Omega \subset \HH$ such that
  $\mu(\Omega)=1$ and for any $M_1, M_2 \in \Omega, \, Z^{(M_1)} =
  Z^{(M_2)}$ and $N^{(M_1)} = N^{(M_2)}$. 
\end{prop}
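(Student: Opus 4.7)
The plan is to use ergodicity of $\mu$ together with second countability of $Z$ to show that $Z^{(M)}$ is $\mu$-a.e.\ constant, and then invoke the identity $Z^{(bM)}=b\,Z^{(M)}$ from Proposition \ref{prop: star}(ii) to see that $N^{(M)}$ depends only on $Z^{(M)}$ and is hence likewise a.e.\ constant.

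For the first step, I would for each $z\in Z$ consider $F_z\df\{M:z\in Z^{(M)}\}$, which is open by Proposition \ref{prop: continuous}. Since $U\subset B$ acts trivially on $Z$, Proposition \ref{prop: star}(ii) gives $Z^{(uM)}=Z^{(M)}$, so $F_z$ is $U$-invariant; ergodicity then forces $\mu(F_z)\in\{0,1\}$. I would fix a countable dense $D\subset Z$, let $D_1\df\{z\in D:\mu(F_z)=1\}$, and define
$$\Omega\df\Big(\bigcap_{z\in D_1}F_z\Big)\sm\Big(\bigcup_{z\in D\sm D_1}F_z\Big),$$
so that $\mu(\Omega)=1$ and $Z^{(M)}\cap D=D_1$ for every $M\in\Omega$. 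To promote this to equality of the open sets $Z^{(M)}$ themselves, I would use that $Z^{(M)}$ is a nonempty open convex subset of $Z$ by Propositions \ref{prop: star} and \ref{prop: convex}: density of $D$ together with openness of $Z^{(M)}$ forces $\overline{Z^{(M)}}=\overline{D_1}$, and since any nonempty open convex subset of a finite-dimensional vector space equals the interior of its closure, $Z^{(M)}=\mathrm{int}(\overline{D_1})$ is a common value $Z^{\ast}$ for every $M\in\Omega$. Proposition \ref{prop: star}(ii) then gives
$$N^{(M)}=\{(b,z)\in N:z\in Z^{(bM)}\}=\{(b,z)\in N:z\in b\,Z^{\ast}\},$$
independent of $M\in\Omega$.

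The real obstacle is Proposition \ref{prop: convex} itself: without convexity of $Z^{(M)}$, distinct open star bodies could share identical dense traces $\cap D$, so the ergodicity argument alone would not pin down $Z^{(M)}$. Proposition \ref{prop: convex} requires nontrivial geometric input and is deferred to \S\ref{sec: explicit}; granted it, the rest is a soft combination of ergodicity, second countability, and the semidirect-product structure on $N$.
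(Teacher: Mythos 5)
Your proof is correct and follows essentially the same ergodicity-plus-countable-dense-set argument as the paper: the paper likewise chooses a countable dense $Z_0\subset Z$, observes that each $\Omega_z=\{M:z\in Z^{(M)}\}$ is invariant and so has measure $0$ or $1$, and recovers $Z^{(M)}$ from its trace on $Z_0$ for $M$ in a full-measure intersection; it then treats $N^{(M)}$ by running the same argument again. The two places where you are more explicit are both worthwhile: invoking Proposition \ref{prop: convex} to justify that $Z^{(M)}=\mathrm{int}(\overline{Z^{(M)}\cap D})$ repairs the paper's bare assertion that ``an open set is the interior of the closure of its intersection with a dense set'' (false without convexity or some such hypothesis, although available here), and deducing $N^{(M)}=\{(b,z):z\in bZ^{\ast}\}$ directly from Proposition \ref{prop: star}(ii) rather than rerunning the argument for $N^{(M)}$ is a modest streamlining.
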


\begin{proof} To explain the idea, we first prove the assertion for
  $Z^{(M)}$ in case $\dim Z=1$. In this case, $Z^{(M)}$ is an open interval for every $M \in
  \HH$, which we can write as 
$$
Z^{(M)} = (a_M, b_M), \text{ for some } -\infty \leq a_M < 0 < b_M \leq \infty.
$$
The maps $M \mapsto a_M, M \mapsto b_M$ are measurable maps with
values in the extended real line so by ergodicity, are constant
$\mu$-a.e., and the statement follows. 

In the general case the proofs for $Z^{(M)}$ and $N^{(M)}$ are
identical, so we discuss the case of $Z^{(M)}$. The map $M \mapsto Z^{(M)}$ is a map from $\HH$ to the
collection of open subsets of $Z$. 
Rather than worry about
measurability issues, we proceed as follows. Let $Z_0$ be a dense
countable subset of $Z$. For each $z \in Z_0$, the set 
$$
\Omega_{z} = \left\{M \in \HH: z \in Z^{(M)} \right\}
$$
is a measurable invariant set, so has measure 0 or 1 by ergodicity. We define 
$$\Omega = \bigcap_{\mu(\Omega_{z})=1} \Omega_{z} \sm
\bigcup_{\mu(\Omega_{z'})=0} \Omega_{z'},$$
where $z, z'$ range over the countable set $Z_0$. 
Then $\mu(\Omega)=1$, and for any $M \in \Omega$, 
$$
Z^{(M)} \cap Z_0 = \{z \in Z_0: \mu(\Omega_{z})=1\},
$$
i.e., does not depend on $M$. 
Since an open set is the interior of the closure of its
intersection with a dense set, we see that $Z^{(M_1)} = Z^{(M_2)}$ for any
$M_1, M_2 \in \Omega$. 
\end{proof}

We will denote by $Z^{(\mu)}$ and $N^{(\mu)}$ the sets $Z^{(M)},
N^{(M)}$ which appear in Proposition \ref{prop: critical time} for $
M \in \Omega$. If $z \in Z^{(\mu)}$ we define a pushforward
$$
\rel_{z*} \mu(X) = \mu(\rel_{-z}(X \cap \HH'_{-z})), \text{ for all measurable
} X \subset \HH.  
$$
Note that
$
\mu(\HH'_z)=1, 
$
and now it follows from \eqref{eq: equivariance property} that
$\rel_{z*} \mu$ is a probability measure. 
Moreover, the partially defined map $\rel_{z}$ is a measurable conjugacy between
$(\HH,\mu)$ and $(\HH, \rel_{z*}\mu)$, thought of as dynamical systems
for the $U$-action. Thus $\rel_{z*}\mu$ is again an
ergodic $U$-invariant measure. The same observations are valid for $n
\in N^{(\mu)}$. Namely, if we denote 
\begin{equation}\label{eq: defn H'n}{
\HH'_{n} = \{M \in \HH: n(M) \text{ is defined} \}  = \left\{M \in \HH : z
\in Z^{(bM)} \right\} 
}\end{equation}
(where $n = (b,z)$),
then we have an equivariance property 
\begin{equation}\label{eq: another equivariance}{
n(\HH'_{n}) = \HH'_{n^{-1}}
}\end{equation}
 and we can define an ergodic $U$-invariant measure 
$$
n_* \mu(X) = \mu(n^{-1}(X \cap \HH'_{n^{-1}})), \text{ for all measurable
} X \subset \HH. 
$$
Corollary \ref{cor: special} now implies that $n_*\mu$ is a $U$-invariant
measure, and the partially defined map $M \mapsto n(M)$ is equivariant
for the action of $U$ on $(\HH, \mu)$ and the ``time-changed'' action
of $U$ on $(\HH, n_* \mu)$ via 
$$
u\cdot M = u^{(n)}M, \text{ for } n_*\mu \text{ a.e. } M
$$
(where $u^{(n)} = nun^{-1}$).

The collection of Borel probability measures on a locally compact
space $X$ can be given the weak-$*$ topology by embedding 
it in the dual space of the space $C_c(X)$ of continuous functions
with compact support. 

\begin{prop}
 Let $\mu$ be an ergodic $U$-invariant probability measure. The map
 which takes 
$
n \in N^{(\mu)} 
$ to $n_* \mu$ is continuous with respect to
  the weak-$*$ topology. 
\label{prop: continuity}
\end{prop}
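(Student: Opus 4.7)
The plan is to reduce the weak-$\ast$ convergence to a pointwise convergence statement and apply the dominated convergence theorem. The continuity of the partially defined action of $N$ on $\HH$, which we have already established, will do the essential work.

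First I would fix an ergodic $U$-invariant probability measure $\mu$ and, by Proposition~\ref{prop: critical time}, choose a $\mu$-conull set $\Omega \subset \HH$ on which $N^{(M)}$ is the constant set $N^{(\mu)}$. Suppose $n_k \to n$ in $N^{(\mu)}$, write $n_k = (b_k, z_k)$ and $n = (b,z)$, so that $b_k \to b$ in $B$ and $z_k \to z$ in $Z$. For each $M \in \Omega$ and each $k$, the element $n_k$ lies in $N^{(\mu)} = N^{(M)}$, which means that $z_k \in Z^{(b_k M)}$ and hence $n_k M = \rel_{z_k}(b_k M)$ is defined; similarly $nM$ is defined. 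Applying Proposition~\ref{prop: continuous} to the pair $(b_k M, z_k) \to (bM, z)$, which eventually lies in the open set $\Omega$ (in the notation of that proposition), we conclude that $n_k M \to nM$ in $\HH$ for every $M \in \Omega$.

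Now fix $f \in C_c(\HH)$. Pointwise convergence on the conull set $\Omega$ gives $f(n_k M) \to f(nM)$ for $\mu$-a.e.\ $M$. Since $|f(n_k M)| \le \|f\|_\infty$ and $\mu$ is a probability measure, the dominated convergence theorem yields
\eq{eq: dominated}{
\int f(n_k M) \, d\mu(M) \longrightarrow \int f(nM) \, d\mu(M).
}
By the change of variables formula built into the definition of $n_*\mu$ (and using \equ{eq: another equivariance} so that $\mu$-a.e.\ $M$ lies in the domain of definition), the left-hand side of \equ{eq: dominated} equals $\int f \, d(n_k)_*\mu$ and the right-hand side equals $\int f \, dn_*\mu$. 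Since $f \in C_c(\HH)$ was arbitrary, this proves $(n_k)_*\mu \to n_*\mu$ in the weak-$\ast$ topology, which is the desired continuity.

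The only potential subtlety worth flagging is the need to know that \emph{every} $M$ in a full-measure set is in the common domain of all the $n_k$; this is exactly what Proposition~\ref{prop: critical time} guarantees, and without it one would have to juggle an $M$-dependent tail of indices. Beyond that, the argument is essentially dominated convergence applied to the continuous partial action on $\Omega$.
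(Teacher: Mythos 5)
Your proof is correct and follows essentially the same route as the paper's own proof: establish pointwise convergence $n_k M \to nM$ on a conull set using the continuity of the partial Rel flow (Proposition~\ref{prop: continuous}), then pass to the limit under the integral sign via dominated convergence, and finish with the change-of-variables identity defining $n_*\mu$. The only cosmetic difference is how the conull common domain is produced: you invoke Proposition~\ref{prop: critical time} to get a single set $\Omega$ working for all of $N^{(\mu)}$ at once, whereas the paper builds the sequence-specific set $\HH'_0 = \bigcap_{1\le j\le\infty}\HH'_{n_j}$ as a countable intersection of conull open sets. Both choices serve the same purpose and the rest of the argument is identical.
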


\begin{proof} Let $n_j$ be a sequence of
  elements of $N^{(\mu)}$ converging to $n_\infty \in N^{(\mu)}$. In order to show that
  $n_{j*}\mu$ converges to $n_{\infty*}\mu$ we need to show that
  for any continuous compactly supported function $\varphi$ on $\HH$,
  we have 
$$
\lim_{j\to\infty}\int_\HH \varphi\, d(n_{j*}\mu)=\int_\HH \varphi\, d(n_{\infty*}\mu).
$$
For each $j$, the set $\HH'_{n_j}$ has full $\mu$-measure, and hence
so does $\HH'_0 = \bigcap_{1 \leq j \leq \infty} \HH'_{n_j}.$ Now we
have: 
\begin{align}
\lim_{j\to\infty}\int_\HH \varphi \,d(n_{j*}\mu)&=\lim_{j\to\infty}\int_{n_j(\HH'_0)} \varphi \,d(n_{j*}\mu)\label{eqa}\\
                                                               &=\lim_{j\to\infty}\int_{\HH'_0}
                                                               \varphi\circ
                                                               n_{j}\,d\mu\label{eqb}\\
                                                               &=\int_{\HH'_0}\lim_{j\to\infty}
                                                               \varphi\circ
                                                               n_{j}\,d\mu\label{eqc}\\
                                                               &=\int_{\HH'_0}
                                                               \varphi\circ
                                                               n_{\infty}\,d\mu\label{eqd}\\
                                                               &=\int_{n_\infty(\HH'_0)} \varphi \,d(n_{\infty*}\mu)\label{eqe}\\
                                                               &=\int_\HH \varphi \,d(n_{\infty*}\mu).\label{eqf}
\end{align}                                                               
The equalities \eqref{eqa} and \eqref{eqf} follow from the fact that each
$n_{j*} \mu$ assigns full
measure to $n_j(\HH'_{0})$ respectively. The lines \eqref{eqb} and
\eqref{eqe} follow from the definition of the pushforward of a
measure. Line
\eqref{eqc} follows from Lebesgue's Dominated Convergence Theorem
using the fact that, since $f$ has compact support, it is bounded and 
hence the family of functions $f\circ n_{j}$ is uniformly bounded
and also using the fact that 
constant functions are in $L^1(\mu)$ since $\mu$ is a probability measure.  Line \eqref{eqd} follows from
Proposition \ref{prop: continuous} and the continuity of $\varphi$. 
\end{proof}

\begin{dfn}\label{dfn: stabilizer}
For any ergodic $U$-invariant measure $\mu$ we define 
$$N_\mu = \left\{n \in N^{(\mu)}: n_*\mu=\mu \right\}.$$
\index{Nm@$N_\mu$}
\end{dfn}

\begin{cor}\label{cor: closed}
 $N_\mu$ is a closed subgroup of $N$. 
\end{cor}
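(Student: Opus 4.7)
Proof plan. The claim has two parts: that $N_\mu$ is a subgroup, and that it is closed.

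For the subgroup structure, the identity plainly lies in $N_\mu$. For closure under products, take $n_1,n_2\in N_\mu$. Since $n_2M$ is defined for $\mu$-a.e.\ $M$ and $n_{2*}\mu=\mu$, the element $n_1$ is defined on $n_2M$ for $\mu$-a.e.\ $M$; Proposition \ref{prop: normalizer} then shows $n_1n_2\in N^{(\mu)}$ with $(n_1n_2)M=n_1(n_2M)$, and consequently $(n_1n_2)_*\mu=n_{1*}(n_{2*}\mu)=\mu$. For closure under inverses, use the equivariance \equ{eq: another equivariance} together with invariance:
\[
\mu(\HH'_{n^{-1}})=(n_*\mu)(\HH'_{n^{-1}})=\mu\bigl(n^{-1}(\HH'_{n^{-1}})\bigr)=\mu(\HH'_n)=1,
\]
so $n^{-1}\in N^{(\mu)}$; then $(n^{-1})_*\mu=\mu$ follows from another application of Proposition \ref{prop: normalizer} to $n\cdot n^{-1}=e$.

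For closedness, let $n_j\in N_\mu$ with $n_j\to n_\infty$ in $N$. The key reduction is to show $n_\infty\in N^{(\mu)}$; once that is known, Proposition \ref{prop: continuity} immediately gives $n_{\infty*}\mu=\lim_j n_{j*}\mu=\mu$, so $n_\infty\in N_\mu$. To show $n_\infty\in N^{(\mu)}$, let $A_\infty=\{M\in\HH : n_\infty M \text{ is undefined}\}$. By Corollary \ref{cor: special} the set $A_\infty$ is $U$-invariant, so by ergodicity $\mu(A_\infty)\in\{0,1\}$. Suppose for contradiction $\mu(A_\infty)=1$. By tightness of the probability measure $\mu$, for each $\vre>0$ there is a compact $K\subset\HH$ with $\mu(K)>1-\vre$, and by the $n_j$-invariance of $\mu$,
\[
\mu\bigl(\{M : n_jM\in K\}\bigr)=\mu(n_j^{-1}K)>1-\vre
\]
for every $j$. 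By Fatou's lemma (reverse form) applied to indicator functions, the set of $M$ for which $n_jM\in K$ holds for infinitely many $j$ has $\mu$-measure at least $1-\vre$, so this set meets $A_\infty$ in positive measure. Pick $M\in A_\infty$ with a subsequence $n_{j_k}M\to M^*\in K$. The contradiction arises from the following claim, which would complete the proof: if $n_{j_k}\to n_\infty$ and $n_{j_k}M\to M^*\in\HH$, then $n_\infty M$ is defined and equals $M^*$.

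The main obstacle is precisely establishing this last claim, i.e.\ that the partial $N$-action on $\HH$ cannot acquire a "new limit point" without actually being defined at the limit. Concretely, writing $n_j=(b_j,z_j)$ one has $b_jM\to b_\infty M$ continuously, so the question reduces to rel: if $\rel_{z_{j_k}}(b_{j_k}M)\to M^*\in\HH$ and $z_{j_k}\to z_\infty$, is $\rel_{z_\infty}(b_\infty M)=M^*$? The affirmative answer requires the explicit description of the boundary of $Z^{(\cdot)}$ given in \S \ref{sec: explicit} (obstructions come from degenerating horizontal saddle connections, which force divergence in $\HH$); together with the convexity and continuous dependence of $Z^{(M)}$ from Propositions \ref{prop: continuous} and \ref{prop: convex}, this rules out the existence of a limit in $\HH$ when $\rel_{z_\infty}(b_\infty M)$ is undefined. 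Granting this, the contradiction is immediate, forcing $\mu(A_\infty)=0$ and hence $n_\infty\in N^{(\mu)}$.
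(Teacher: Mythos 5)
Your split into (i) group structure and (ii) closedness is sound, and the group-structure half is correct: closure under products via Proposition \ref{prop: normalizer}, and closure under inverses via \equ{eq: another equivariance} together with one more application of Proposition \ref{prop: normalizer}. In fact, establishing the group structure \emph{before} closedness is the right logical order — the paper states them in the opposite order but, as you observed, the closedness claim cannot be deduced from Proposition \ref{prop: continuity} alone, since that proposition only speaks to continuity \emph{on} $N^{(\mu)}$ and one first needs $n_\infty\in N^{(\mu)}$.

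Where the proof goes wrong is in the route you propose for getting $n_\infty\in N^{(\mu)}$. The tightness argument plus the unproved "no spontaneous limit'' claim — that $n_{j_k}\to n_\infty$ and $n_{j_k}M\to M^*\in\HH$ force $n_\infty M$ to be defined — imports the whole machinery of the explicit rel surgery from \S\ref{sec: explicit}, and even then you have not closed the argument; you only sketch why it should follow. That is a genuine gap, and it is far more work than the problem requires.

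The intended completion is elementary and uses exactly the subgroup structure you already established. Recall $N^{(\mu)}$ is \emph{open} in $N$ (it equals $N^{(M)}$ for a fixed $M$ in the conull set of Proposition \ref{prop: critical time}, and $N^{(M)}$ is open by Proposition \ref{prop: continuous}) and it contains the identity. Given $n_j\in N_\mu$ with $n_j\to n_\infty$, fix $j_0$ large enough that $n_{j_0}^{-1}n_\infty$ lies in a neighborhood of the identity contained in $N^{(\mu)}$. Since $n_{j_0}^{-1}n_j\in N_\mu$ for every $j$ and $n_{j_0}^{-1}n_j\to n_{j_0}^{-1}n_\infty\in N^{(\mu)}$, Proposition \ref{prop: continuity} applies directly and gives $\bigl(n_{j_0}^{-1}n_\infty\bigr)_*\mu=\lim_j\bigl(n_{j_0}^{-1}n_j\bigr)_*\mu=\mu$, so $n_{j_0}^{-1}n_\infty\in N_\mu$ and hence $n_\infty=n_{j_0}\bigl(n_{j_0}^{-1}n_\infty\bigr)\in N_\mu$. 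No appeal to the surgery of \S\ref{sec: explicit}, to tightness, or to the unproved claim is needed.
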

\begin{proof}
The fact that $N_\mu$ is closed follows from Proposition \ref{prop:
  continuity},  and in order to prove that $N_\mu$ is closed under
compositions, it suffices to show that for $\mu$-almost every $M$,
$n_1(n_2M) = (n_1 n_2)M$. 
This follows from
Proposition \ref{prop: normalizer}. 
\end{proof}

\begin{prop}\label{prop: stabilizer connected}
If $N_{\mu}$ contains a non-unipotent element
(i.e.\ an element in  $N  \sm UZ$) and $\dim Z =1$ then $N_{\mu} \cap
Z$ is connected.  In particular, if $N_{\mu}$ contains a non-unipotent element 
of $N$ and a nontrivial element of $Z$ then it contains all of $Z$.
\end{prop}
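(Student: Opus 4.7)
The plan is to exploit the fact, from Corollary \ref{cor: closed}, that $N_\mu$ is a closed subgroup of the Lie group $N = B \ltimes Z$. Since $Z$ is a closed normal subgroup of $N$, the intersection $H \df N_\mu \cap Z$ is a closed subgroup of $Z \cong \R$. By the classification of closed subgroups of $\R$, either $H = \{0\}$, $H = c\Z$ for some $c > 0$, or $H = \R$; in the first and third cases $H$ is connected, so the plan is to rule out the discrete case $H = c\Z$ using the non-unipotent element.

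To do this, let $n_0 \in N_\mu \sm UZ$, and write $n_0 = (b_0, z_0)$. Since $n_0 \notin UZ$, the $A$-component of $b_0$ is nontrivial, so $b_0 = g_t u_s$ for some $t \neq 0$. From the semidirect-product law recorded in \S \ref{subsec: none of the above}, conjugation by $n_0$ fixes $B$ pointwise modulo $Z$ and acts on $Z$ by $(e,w) \mapsto (e, b_0 w)$. Since $u_s$ fixes horizontal vectors and $g_t$ scales them by $e^{t/2}$, conjugation by $n_0$ acts on $Z \cong \R$ by multiplication by the scalar $\lambda \df e^{t/2}$, which satisfies $\lambda > 0$ and $\lambda \neq 1$.

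Because $N_\mu$ is a subgroup of $N$ containing $n_0$, and $Z$ is normal in $N$, conjugation by $n_0$ preserves $H = N_\mu \cap Z$. Thus $H$ is a closed subgroup of $\R$ invariant under multiplication by both $\lambda$ and $\lambda^{-1}$. If $H$ were equal to $c\Z$ for some $c > 0$, then $\lambda c \in c\Z$ and $\lambda^{-1} c \in c\Z$ would force $\lambda$ and $\lambda^{-1}$ both to be positive integers, hence $\lambda = 1$, contradicting $t \neq 0$. Therefore $H = \{0\}$ or $H = \R$, both of which are connected. The ``in particular'' statement then follows immediately: if $H$ also contains a nontrivial element of $Z$, then $H \neq \{0\}$, and hence $H = Z$.

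The computation is essentially algebraic once the conjugation action has been identified correctly; the only real subtlety is ensuring closedness of $N_\mu \cap Z$ as a subgroup of $\R$, which is where Corollary \ref{cor: closed} (closedness in $N$, not merely in $N^{(\mu)}$) is crucial and the hypothesis $\dim Z = 1$ is used to invoke the classification of closed subgroups of $\R$.
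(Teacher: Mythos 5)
Your proof is correct and uses the same core idea as the paper: $N_\mu$ is a closed subgroup of $N$ (Corollary \ref{cor: closed}), and conjugation by a non-unipotent element acts on $Z$ as a nontrivial dilation, which is incompatible with a discrete (nontrivial, proper) subgroup.  The paper's proof works with $N_1 = N_\mu \cap (UZ)$ as a closed subgroup of $\R^2$ and uses the ``minimal positive distance between cosets'' to derive the contradiction, then implicitly passes to $N_\mu \cap Z$; you work directly with $H = N_\mu\cap Z$ as a closed subgroup of $\R$ and rule out $c\Z$ via integrality of $\lambda$ and $\lambda^{-1}$, which is slightly more streamlined but amounts to the same argument.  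One small inaccuracy: your side remark that conjugation by $n_0$ ``fixes $B$ pointwise modulo $Z$'' is not correct (the induced map on $N/Z \cong B$ is conjugation by $b_0$, which is nontrivial since $B$ is nonabelian), but this plays no role in the argument, which only requires the action of conjugation on $Z$.
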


\begin{proof}
Write  $Z_1 = U Z \cong \R^2$ and $N_1 =N_{\mu} \cap  Z_1, $ and let $a \in
N_\mu \sm N_1.$ By Corollary \ref{cor: closed}, $N_\mu$ is a closed subgroup of 
$N$, and hence $N_1$  is
a closed subgroup of $Z_1$ containing $U$.  
If it is not connected then 
there is a minimal positive distance between two distinct cosets 
for $N_1$ in $Z_1$. However $N_1$ is invariant under conjugation by the elements $a$ and $a^{-1}$, 
and one of these acts on $Z_1$ by contractions. This contradiction
proves the claim. 
\end{proof}

\subsection{Generic points and real Rel}\label{subsec: generic points}
Recall that if $\mu$ is a $U$-invariant ergodic probability measure on 
a closed subset $\LL$ of a stratum $\HH$, then $M \in
\LL$ is said to be {\em generic for $\mu$}\index{generic for $\mu$} if for any continuous compactly
supported function $f$ on $\LL$, we have 
$$
\lim_{T\to\infty} \frac{1}{T} \int_0^T f(u_sM) \, ds =
\int f \, d\mu.
$$
\begin{dfn} Let $\mu$ be an invariant $U$-ergodic measure on $\HH$. Then
$\Omega_\mu$ \index{O@$\Omega_\mu$} denotes the set of generic points for
$\mu$.
\end{dfn}

We collect some well-known facts about generic points.

\begin{prop}\label{prop: well known}
If $\mu$ is ergodic then $\Omega_\mu$ has full $\mu$ measure. If
$\mu\ne\nu$ then $\Omega_\mu$ and $\Omega_\nu$ are disjoint. If
$M\in\Omega_\mu$ then the support of $\mu$ is contained in the orbit
closure of $M$. 
\end{prop}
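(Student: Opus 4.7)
The plan is to treat each of the three assertions in turn, using standard tools: the Birkhoff pointwise ergodic theorem, separability of $C_c(\LL)$, the Riesz representation theorem, and the definition of support.

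First, for the full measure statement, I would apply Birkhoff's pointwise ergodic theorem to the $U$-flow. For a single $f \in C_c(\LL)$, ergodicity of $\mu$ gives a $\mu$-conull set on which \equ{eq: generic points} holds. To upgrade from a single $f$ to all continuous compactly supported $f$ simultaneously, I would use separability of $C_c(\LL)$ in the sup norm on compact sets (so that a countable dense family $\{f_n\}$ suffices): intersect the countably many $\mu$-conull sets on which the Birkhoff averages of each $f_n$ converge to $\int f_n\, d\mu$ to obtain a $\mu$-conull set $\Omega'$. For arbitrary $f \in C_c(\LL)$, approximate uniformly by the $f_n$ and use the fact that the $U$-invariant averages in \equ{eq: generic points} are contractions on the uniform norm; this shows $\Omega' \subset \Omega_\mu$, so $\mu(\Omega_\mu)=1$.

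Next, for disjointness, suppose $M \in \Omega_\mu \cap \Omega_\nu$. Then for every $f \in C_c(\LL)$ the two-sided limit in \equ{eq: generic points} computes both $\int f\, d\mu$ and $\int f\, d\nu$, forcing these integrals to agree. By the Riesz representation theorem the positive linear functional on $C_c(\LL)$ determines the Borel probability measure uniquely, hence $\mu=\nu$, contradicting $\mu \ne \nu$.

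Finally, for the support assertion, fix $M \in \Omega_\mu$ and $p \in \supp\,\mu$. Let $\mathcal{V}$ be any open neighborhood of $p$; by definition of support, $\mu(\mathcal{V})>0$. Choose a nonnegative $f \in C_c(\LL)$ supported in $\mathcal{V}$ with $\int f\, d\mu > 0$ (using inner regularity to pick a compact subset of $\mathcal{V}$ of positive measure and Urysohn's lemma). By \equ{eq: generic points}, the time averages $\frac{1}{T}\int_0^T f(u_s M)\, ds$ tend to a positive limit, so in particular $f(u_s M) > 0$ for some $s$, i.e.\ $u_s M \in \mathcal{V}$. Since $\mathcal{V}$ was arbitrary, $p$ lies in the closure of $\{u_s M : s \in \R\}$, i.e.\ in the $U$-orbit closure of $M$.

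The arguments are essentially formal; the only mild subtlety is the passage from countably many $f_n$ to all $f \in C_c(\LL)$, which requires noting that $\LL$ is second countable (hence $C_c(\LL)$ is separable in the relevant topology) so that the approximation step works uniformly on the compact set supporting $f$.
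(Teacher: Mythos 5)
The paper does not supply a proof of this proposition; it is presented as a list of ``well-known facts about generic points.'' Your argument fills in that gap with the standard proof, and it is correct. The first part is Birkhoff's pointwise ergodic theorem for the flow $\{u_s\}$ together with the separability of $C_c(\LL)$ in the uniform norm (which holds since $\LL$, being a closed subset of a stratum, is locally compact, Hausdorff and second countable); the passage from the countable dense family $\{f_n\}$ to an arbitrary $f\in C_c(\LL)$ does work because the Birkhoff averaging operators are uniform-norm contractions and $\mu$ is a probability measure. The disjointness is the observation that a finite Radon measure on a locally compact Hausdorff space is determined by its integrals against $C_c$ functions (Riesz representation / uniqueness). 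The support statement is the usual regularity-plus-Urysohn argument: a positive time average of a nonnegative function supported in an open set $\mathcal{V}$ forces the orbit of $M$ to enter $\mathcal{V}$. One minor wording point: ``separability of $C_c(\LL)$ in the sup norm on compact sets'' should simply read ``separability of $C_c(\LL)$ in the sup norm''; the subsequent approximation argument you give is the right one and does not actually use any inductive-limit structure.
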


In light of Proposition \ref{prop: Z continuous dependence}, this implies:

\begin{cor}\label{cor: give a name}
If $M \in \Omega_\mu$ then $N^{(\mu)} \subset N^{(M)}$. 
\end{cor}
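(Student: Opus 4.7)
The plan is to combine the three ingredients already in place: the definition of $N^{(\mu)}$ via Proposition \ref{prop: critical time}, the orbit-closure property of generic points from Proposition \ref{prop: well known}, and the semicontinuity of $N^{(\cdot)}$ under $U$-orbit-closure inclusion given by Proposition \ref{prop: Z continuous dependence}. The whole argument should be a short chain of inclusions with essentially no new ideas; the work has already been done in the preceding propositions.

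First, I would invoke Proposition \ref{prop: critical time} to choose a full-measure set $\Omega \subset \HH$ on which $N^{(M')} = N^{(\mu)}$ for every $M' \in \Omega$. Since $\mu(\Omega) = 1$ and $\mu$ is supported on $\supp \mu$, the intersection $\Omega \cap \supp \mu$ is non-empty (in fact of full $\mu$-measure), so I can fix a point $M' \in \Omega \cap \supp \mu$ satisfying $N^{(M')} = N^{(\mu)}$.

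Next, because $M \in \Omega_\mu$ is generic for $\mu$, Proposition \ref{prop: well known} gives $\supp \, \mu \subset \overline{UM}$. In particular, the chosen $M'$ lies in the $U$-orbit-closure of $M$. Proposition \ref{prop: Z continuous dependence}, applied with the roles of the two points being $M'$ in the orbit-closure of $M$, then yields $N^{(M')} \subset N^{(M)}$. Chaining the equalities and the inclusion,
\[
N^{(\mu)} \; = \; N^{(M')} \; \subset \; N^{(M)},
\]
which is the desired conclusion.

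There is no real obstacle here; the only thing one must be careful about is the direction of inclusion in Proposition \ref{prop: Z continuous dependence}, which goes from the point to a point in whose $U$-orbit-closure it lies (smaller domain of definition above, larger below). Provided this is applied with $M'$ as the ``point'' and $M$ as the witness whose orbit-closure contains $M'$, the inclusion goes the right way. No measurability or $N$-action issues arise, since we are only asserting set-theoretic containment of domains of definition.
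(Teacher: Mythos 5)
Your proof is correct and follows exactly the chain of reasoning the paper intends: genericity gives $\supp\,\mu \subset \overline{UM}$ (Proposition \ref{prop: well known}), Proposition \ref{prop: critical time} supplies a full-measure set where $N^{(M')} = N^{(\mu)}$, and Proposition \ref{prop: Z continuous dependence} applied with $M'$ in the orbit-closure of $M$ gives $N^{(M')} \subset N^{(M)}$. Your care about the direction of inclusion is well placed and you applied it correctly.
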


It will prove useful to reformulate
the notion of genericity in terms of convergence of measures. It is
important to keep in mind that we are dealing with spaces which are
locally compact but not compact. The weak-$*$ topology on the space of
Borel measures is defined in terms of integrals of continuous
functions of compact support. In this topology the total
mass of a measure need not be a continuous function on the space of
measures since the constant function 1 need not have compact support. 

\begin{dfn}
Let $\nu(M,T)$ \index{n@$\nu(M,T)$} be the probability measure defined by 
$$ \int \varphi\,d\nu(M,T)=\frac{1}{T}\int_0^T 
\varphi(u_sM) \, ds, \ \ \text{ for all } \varphi \in C_c(\HH).$$
\label{dfn: orbit measure}
\end{dfn}

 Then $M$ is generic for $\mu$ if 
\begin{equation}\label{eq: reformulate}
{\lim_{T \to \infty} \nu(M,T)=\mu
}\end{equation}
where the limit is taken with respect to the weak-$*$ topology on measures on $\HH$.

In preparation for Proposition \ref{prop: Katok Spatzier} below, we
collect some results 
about integrals of continuous bounded functions along
orbits. 
The space of probability measures on a locally compact space can be given another topology
by embedding it in the dual space of $C_b(X)$, the space of continuous
bounded functions on $X$. This topology is called the strict topology
(see \cite{Bour} for more information). Clearly convergence in the
strict topology implies convergence in the weak-$*$ topology. The following result gives a
simple criterion for showing that weak-$*$ convergence implies strict
convergence. 

\begin{prop}[\cite{Bour} Prop. 9, p. 61] \label{prop: convergence
    criterion} Suppose $X$ is a locally compact space, $\mu_j$ is a sequence of
  probability measures on $X$, and $\mu$ is also a measure on $X$. If
  $\mu_j \to \mu $ with respect to the weak-$*$ topology, and if $\mu$
  is a probability measure, then $\mu_j \to \mu$ with respect to the
  strict topology. 
\end{prop}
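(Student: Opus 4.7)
The plan is to upgrade weak-$*$ convergence to convergence against bounded continuous functions by a tightness argument. The key point is that since $\mu$ is a Radon probability measure on a locally compact space, no mass can ``escape to infinity'' in the limit, and weak-$*$ convergence of probability measures prevents the approximating measures from losing mass either.

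First I would exploit inner regularity of $\mu$ to choose, for each $\varepsilon>0$, a compact set $K_\varepsilon\subset X$ with $\mu(K_\varepsilon)>1-\varepsilon$, and then use Urysohn's lemma for locally compact spaces to produce a function $\psi_\varepsilon\in C_c(X)$ with $0\le\psi_\varepsilon\le 1$ and $\psi_\varepsilon\equiv 1$ on $K_\varepsilon$. Applying the weak-$*$ hypothesis to $\psi_\varepsilon\in C_c(X)$ yields $\int\psi_\varepsilon\,d\mu_j\to\int\psi_\varepsilon\,d\mu\ge \mu(K_\varepsilon)>1-\varepsilon$, so that for all sufficiently large $j$ we have $\int\psi_\varepsilon\,d\mu_j>1-2\varepsilon$. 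Since each $\mu_j$ is a probability measure and $0\le\psi_\varepsilon\le 1$, this rearranges to $\int(1-\psi_\varepsilon)\,d\mu_j<2\varepsilon$, and the analogous estimate $\int(1-\psi_\varepsilon)\,d\mu<\varepsilon$ holds for $\mu$ itself.

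Next, given $f\in C_b(X)$ with $\|f\|_\infty=:M$, I would split $f=f\psi_\varepsilon+f(1-\psi_\varepsilon)$. The first summand lies in $C_c(X)$, so weak-$*$ convergence gives $\int f\psi_\varepsilon\,d\mu_j\to \int f\psi_\varepsilon\,d\mu$ as $j\to\infty$. The second summand is controlled uniformly in $j$ by
\[
\Bigl|\int f(1-\psi_\varepsilon)\,d\mu_j\Bigr|\le M\int(1-\psi_\varepsilon)\,d\mu_j<2M\varepsilon,
\]
and similarly $|\int f(1-\psi_\varepsilon)\,d\mu|<M\varepsilon$. Combining these estimates, $\limsup_j|\int f\,d\mu_j-\int f\,d\mu|\le 3M\varepsilon$; letting $\varepsilon\to 0$ yields convergence for every $f\in C_b(X)$, i.e.\ convergence in the strict topology.

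The only step requiring a nontrivial input beyond bookkeeping is the tightness step, and even this is automatic in the Bourbaki framework where all measures under consideration are Radon and hence inner regular by compact sets on all Borel sets of finite measure. The rest of the argument is the standard ``uniform tightness plus weak-$*$ convergence implies convergence against $C_b$'' mechanism (a form of the Portmanteau theorem), so I do not anticipate a substantive obstacle.
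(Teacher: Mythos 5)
Your argument is correct, and it is the standard one. Note, though, that the paper does not supply a proof at all here: Proposition~\ref{prop: convergence criterion} is quoted as a cited result from Bourbaki (\cite{Bour}, Prop.~9, p.~61), so there is no ``paper's proof'' to compare against. What you have written is essentially the canonical derivation: inner regularity of the limit measure furnishes a compact exhaustion, Urysohn produces a cutoff $\psi_\varepsilon\in C_c(X)$, the weak-$*$ hypothesis transfers tightness from $\mu$ to the tail of the sequence $(\mu_j)$ (this is the step where the hypothesis that $\mu$ be a \emph{probability} measure is genuinely used, ruling out escape of mass), and then the $f = f\psi_\varepsilon + f(1-\psi_\varepsilon)$ decomposition upgrades convergence from $C_c(X)$ to $C_b(X)$. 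The bookkeeping ($1-2\varepsilon$ for large $j$, the $3M\varepsilon$ bound on the $\limsup$) is all in order. The one hypothesis worth flagging explicitly is that $X$ should be locally compact \emph{Hausdorff} for Urysohn's lemma and for the identification of Radon measures with functionals on $C_c(X)$; this is implicit in the Bourbaki framework that the paper invokes, so it is not a gap, but it is worth stating when you present the argument self-contained.
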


If the limit measure $\mu$ is not a probability measure
then it has total mass less than one. This phenomenon is referred to
as `loss of mass'. In \S \ref{sec: injectivity} we will give conditions that
establish that loss of mass does not occur for measures $\nu(M,T)$. 

We will repeatedly use the following:
\begin{prop}\label{prop: Katok Spatzier}
Let $\mu$ be an ergodic $U$-invariant measure.  If $n\in N^{(\mu)}$ then $n(\Omega_\mu)$ is the 
set of generic points for $n_*\mu$. 
\end{prop}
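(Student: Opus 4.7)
The plan is to prove the forward inclusion $n(\Omega_\mu) \subset \Omega_{n_*\mu}$; the reverse inclusion then follows by symmetry, applying the forward statement to the pair $(n_*\mu, n^{-1})$, noting that $n^{-1} \in N^{(n_*\mu)}$ by the equivariance \equ{eq: another equivariance} together with $\mu(\HH'_n)=1$, and that $n^{-1}_*(n_*\mu) = \mu$.

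For the forward inclusion, my first step will be to compute the commutation relation $n u_s n^{-1}$ in $N = B \ltimes Z$. Writing $n = (b_n, z_n)$, the key observation is that $U \subset B$ acts trivially on $Z$, since elements of $U$ fix horizontal vectors in $\R^2$; the semidirect product computation then yields $n u_s n^{-1} = (b_n u_s b_n^{-1}, 0) \in U$, which is simply a time-reparameterization $u_{s e^t}$ where $e^t$ is determined by the $A$-component of $b_n$. Combined with Corollary \ref{cor: special}, this gives $u_{s e^t}\, n(M) = n(u_s M)$ whenever both sides are defined. A change of variable in $s$ then transforms the ergodic average $\nu(n(M), T)(h)$, for $h \in C_c(\HH)$, into $\nu(M, T e^{-t})(\tilde h)$, where $\tilde h \df h \circ n$ is continuous on $\HH'_n$ and extended by $0$ elsewhere. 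Since $\mu(\HH'_n) = 1$, one has $n_*\mu(h) = \int \tilde h\, d\mu$, so proving that $n(M) \in \Omega_{n_*\mu}$ reduces to showing that $\nu(M, T')(\tilde h) \to \int \tilde h\, d\mu$ as $T' \to \infty$.

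The main obstacle is that $\tilde h$ is typically discontinuous along $\partial \HH'_n$, so the genericity hypothesis for $M$, which only supplies convergence against $C_c(\HH)$ functions, does not apply directly. To get around this I will invoke Proposition \ref{prop: convergence criterion}: since $\mu$ is a probability measure, the weak-$*$ convergence $\nu(M, T) \to \mu$ automatically upgrades to convergence in the strict topology, i.e.\ against every $h' \in C_b(\HH)$. Given $\epsilon > 0$, inner regularity of $\mu$ furnishes a compact $K_\epsilon \subset \HH'_n$ with $\mu(K_\epsilon) > 1 - \epsilon$, and Urysohn's lemma produces $\chi_\epsilon \in C_c(\HH)$ with $0 \le \chi_\epsilon \le 1$, $\chi_\epsilon \equiv 1$ on $K_\epsilon$, and $\supp \chi_\epsilon \subset \HH'_n$. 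Then $\chi_\epsilon \tilde h$, extended by $0$ off $\supp \chi_\epsilon$, lies in $C_c(\HH)$, so genericity gives $\nu(M, T')(\chi_\epsilon \tilde h) \to \mu(\chi_\epsilon \tilde h)$. The remainders obey the pointwise bound $|\tilde h - \chi_\epsilon \tilde h| \le \|h\|_\infty (1 - \chi_\epsilon)$ on all of $\HH$ (using $\supp \chi_\epsilon \subset \HH'_n$), and strict convergence against the $C_b(\HH)$ function $1 - \chi_\epsilon$ bounds the error in the limit by $\|h\|_\infty \mu(1-\chi_\epsilon) < \|h\|_\infty \epsilon$. A standard $\epsilon$-argument then closes the forward inclusion.
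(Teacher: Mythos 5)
Your proof is correct and rests on the same two pillars as the paper's argument: the normalizer commutation relation (via Corollary \ref{cor: special}), which reduces the genericity claim to a weak-$*$ convergence statement for pushforward (or pullback) objects; and Proposition \ref{prop: convergence criterion}, which upgrades weak-$*$ convergence of $\nu(M,T)\to\mu$ to strict convergence so that the non-compactly-supported test functions can be handled.

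The implementation differs in one respect. The paper pushes the measures forward to the locally compact open set $\HH'_2 = n(\HH'_n)$: since $n$ is a homeomorphism $\HH'_1\to\HH'_2$, one gets $n_*\nu(M,T)\to n_*\mu$ weak-$*$ there, upgrades to strict on $\HH'_2$ via the Bourbaki criterion, and then observes that any $\varphi\in C_c(\HH)$ restricts to a $C_b(\HH'_2)$ function, closing the argument. You instead pull the test function back to $\HH'_1$ as $\tilde h = h\circ n$ (extended by zero), and then deal with its discontinuity along $\partial\HH'_n$ by a Urysohn/inner-regularity truncation, using strict convergence against $1-\chi_\epsilon\in C_b(\HH)$ to control the tail. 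Both arguments are valid; the paper's is slightly cleaner because working in $\HH'_2$ avoids the explicit $\epsilon$-truncation, while your version is more self-contained in that it never needs to regard the $\nu(M,T)$ as measures on a subspace. One point in your favour: the paper only writes out the inclusion $n(\Omega_\mu)\subset\Omega_{n_*\mu}$ and leaves the reverse inclusion implicit, whereas you give the short symmetry argument (applying the forward direction to the pair $(n_*\mu, n^{-1})$, using $n^{-1}\in N^{(n_*\mu)}$ and $n^{-1}_*(n_*\mu)=\mu$) explicitly.
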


\begin{proof} Let $M\in \Omega_\mu$. According to Corollary \ref{cor:
    give a name}, $nM$ is defined. We use the reformulation of genericity in terms of weak-$*$
  convergence of measures given above. Thus we are given that
  $\lim_{T\to\infty}\nu(M,T)=\mu$ with respect to the weak-$*$
  topology on measures on $\HH$ and we want to show that
  $\lim_{T\to\infty}\nu(n M,T)=n_*\mu$ with respect to the same
  topology. 

Since $N$ normalizes $U$, there is $c>0$ such that 
$n u_s = u_{cs}n.$ We have $n_*\nu(M,T)=\nu(n M,cT)$. Thus it suffices to show:
$$
\lim_{T\to\infty} n_*\nu(M,T)=n_*\mu.\label{eq: line1}
$$

Let $\HH'_1 = \HH'_n$ as in \eqref{eq: defn H'n} and let $\HH'_2 =
n(\HH'_1) = \HH'_{n^{-1}}$. 
Let $\mathfrak{M}$ denote the set of
probability measures on $\HH$ which assign mass 1 to $\HH'_1$, and 
let $\mathfrak{N}$ denote the set of
probability measures on $\HH$ which assign mass 1 to $\HH'_2$.    
By Proposition \ref{prop:
  continuous}, $\HH'_1$ and $\HH'_2$ are open subsets of $\HH$, and in
particular are locally compact. 
Moreover any continuous compactly supported function on
$\HH'_1$ extends to a continuous function on $\HH$ by setting it
equal to zero on $\HH \sm \HH'_1$. It follows that
$\lim_{T\to\infty}\nu(M,T)=\mu$ with respect to the weak-$*$ topology
on $\mathfrak{M}$. 

Since $n$ is a homeomorphism from $\HH'_1$ to
$\HH'_2 $, the map $n_*: \mathfrak{M} \to \mathfrak{N}$ 
is continuous and $\lim_{T\to\infty}
n_*\nu(M,T)=n_*\mu$ with respect to the weak-$*$ topology on $\mathfrak{N}$.  
Since $n_*\mu$ is a probability measure 
which assigns full
measure to $\HH'_2$, and since 
$\HH'_2$ is locally compact,  we can use 
Proposition \ref{prop: convergence criterion} to conclude that 
$\lim_{T\to\infty} n_*\nu(M,T)=n_*\mu$ in the strict topology on
measures on $\HH'_2$.  

We need to show that convergence also holds with respect to the weak-$*$
topology on measures on $\HH$. Given a function $\varphi$ on $\HH$
which is  continuous and compactly supported,  its restriction  to
$\HH'_2$ is a bounded continuous function. Therefore
$$
\lim_{T\to\infty} \int_{\HH'_2} \varphi\, dn_*\nu(M,T)=\int_{\HH'_2} \varphi\, dn_*\mu
$$
by strict convergence which gives
 $$
\lim_{T\to\infty} \int_{\HH} \varphi\, dn_*\nu(M,T)=\int_{\HH} \varphi\, dn_*\mu,
$$
and establishes convergence with respect to the weak-$*$ topology on measures on $\HH$.
\end{proof}

\begin{cor}\label{cor: Katok Spatzier}
  If $\mu$ is an ergodic $U$-invariant probability measure and if
  $M_1$ and $M_2$ are in $\Omega_\mu$ and there is an element $n \in N^{(\mu)}$ 
of $N$ such that $M_2 = n M_1$, then $n \in N_\mu$. 
\end{cor}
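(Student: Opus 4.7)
The plan is to combine Proposition \ref{prop: Katok Spatzier} with the disjointness of generic sets for distinct ergodic measures (Proposition \ref{prop: well known}). The argument should be essentially one line once the pieces are assembled.

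First, I would apply Proposition \ref{prop: Katok Spatzier} to $M_1$. Since $M_1 \in \Omega_\mu$ and $n \in N^{(\mu)}$, that proposition tells us that $nM_1$ is a generic point for the pushforward measure $n_*\mu$. But $nM_1 = M_2$ by hypothesis, so $M_2 \in \Omega_{n_*\mu}$.

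On the other hand, we are given that $M_2 \in \Omega_\mu$. Hence $M_2 \in \Omega_\mu \cap \Omega_{n_*\mu}$. By Proposition \ref{prop: well known}, the sets of generic points for two distinct ergodic $U$-invariant probability measures are disjoint; both $\mu$ and $n_*\mu$ are ergodic $U$-invariant probability measures (the latter by the discussion following Proposition \ref{prop: critical time}, where it is noted that $n$ is a measurable conjugacy between the $U$-actions, up to a time change, and so $n_*\mu$ is ergodic for the time-changed $U$-action, hence for $U$ itself since the time change by a constant does not affect ergodicity). The intersection being nonempty therefore forces $n_*\mu = \mu$, which by Definition \ref{dfn: stabilizer} means precisely $n \in N_\mu$.

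There is no real obstacle here; the work was done in setting up Proposition \ref{prop: Katok Spatzier}. The only point that requires a moment's care is verifying that $n_*\mu$ is genuinely a $U$-ergodic probability measure (not just a measure invariant under the reparametrized flow $u \mapsto u^{(n)}$), which is immediate because $n$ conjugates the $U$-action on $(\HH,\mu)$ to the $U$-action on $(\HH, n_*\mu)$ up to the time change $u_s \mapsto u_{cs}$ (with $c > 0$), and this time change preserves both $U$-invariance and $U$-ergodicity.
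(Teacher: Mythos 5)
Your proof is correct and follows exactly the same route as the paper: apply Proposition \ref{prop: Katok Spatzier} to conclude $M_2$ is generic for $n_*\mu$, then invoke the disjointness of generic sets from Proposition \ref{prop: well known} to force $n_*\mu = \mu$. The extra care you take in checking that $n_*\mu$ is genuinely $U$-ergodic (and not merely ergodic for the time-changed flow) is a worthwhile observation, already implicit in the discussion following Proposition \ref{prop: critical time}.
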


\begin{proof}
Let $M_1$ and $M_2$ be generic for $\mu$ and $M_2 = n M_1$ where $n
\in N$. According to Proposition~\ref{prop: Katok Spatzier}, $\mu$ and
$n_* \mu$ share a generic point and hence, by Proposition
\ref{prop: well known}, they  must coincide. 
\end{proof}

\section{Horizontal equivalence of surfaces}\label{sec: separatrix diagram}
Given $M \in\HH$ with singularity set $\Sigma$,  we denote by $\Xi(M)$ \index{X@$\Xi(M)$} the set of
horizontal saddle connections for $M$. 
We would like to use $\Xi(M)$ to define two equivalence relations on
surfaces: {\em topological horizontal equivalence}  and {\em
  geometrical horizontal equivalence}.  These
will serve as invariants of ergodic $U$-invariant measures in the same way that the sets $Z^{(\mu)}$ and 
$N^{(\mu)}$ appearing in Proposition \ref{prop: critical time} do.
 Let $\check{M}$ be the blowup
of $M$ and let $(\check{f} , \check{M}) \in \til \HH$ be a marking
of $\check{M}$ rel boundary. Then $\check{f}^{-1}(\Xi(M))$ is a subset of
$\check{S}$, which we denote by $\check{f}^*(\Xi)$. We take $\Xi(M)$ to include all of the 
points of $\Sigma$ and hence $\check{f}^*(\Xi)$ contains  the boundary
$\partial \check{S}$. In addition, for each edge of $\Xi(M)$,
$\check{f}^*(\Xi)$ contains an edge in the interior of $S$, which
intersects the boundary  $\partial \check{S}$ at points with angular
parameters which are multiples of $\pi$, even or odd according as the
point is the initial or terminal point of the edge. 

\begin{dfn} We will say that $M_1$ and $M_2$ are {\em topologically
 horizontally equivalent} \index{topological horizontal equivalence} if there are markings rel boundary
$(\check{f}_i, \check{M}_i)$ such that
$\check{f}_2^*(\Xi)$ and $\check{f}_1^*(\Xi)$ can be obtained from
each other by an isotopy of $\check{S}$ that does not move points of
$\partial \check{S}$. We will say that $M_1$ and $M_2$ are
{\em geometrically horizontally equivalent} \index{geometrical horizontal equivalence} if they are topologically
horizontally equivalent, and if for any edge $\delta$ in $\check{f}_i^*(\Xi(M_i))$,
$\hol(M_1, \check{f}_1(\delta)) = \hol(M_2, \check{f}_2 
(\delta))$, where $\check{f}_1$ and $\check{f}_2$ are as in the definition
of topological equivalence. 
\end{dfn}

\begin{prop}
\label{prop: horizontal relations}
If $M \in \HH,$ $b \in B$ and $u \in U$ then
$M$ and $bM $ are  topologically  horizontally equivalent, and 
$M$ and  $ u M$ are geometrically horizontally 
equivalent. 
\end{prop}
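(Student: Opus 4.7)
The plan is to exploit the fact that every $b\in B$ preserves the horizontal axis of $\R^2$ as a subspace (since $B$ consists of upper triangular matrices, $b(x,0)=(e^{t/2}x,0)$ when $b=g_tu_s$), and that $U$ fixes horizontal vectors pointwise. Concretely, for $g\in\GL_2^\circ(\R)$ the affine homeomorphism $\phi_g:M\to gM$ from \S\ref{subsec: action} has derivative $g$, hence bijects saddle connections with saddle connections and maps a saddle connection in direction $\theta$ to one in direction $g\cdot\theta$. So for $b\in B$, $\phi_b$ restricts to a bijection $\Xi(M)\to\Xi(bM)$, and for $u\in U$, $\phi_u$ additionally preserves the holonomy of each horizontal saddle connection.

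To turn the bijection $\phi_b:\Xi(M)\to\Xi(bM)$ into an isotopy of pullbacks, I would lift $b$ to the action of $\til{G}$ on $\til\HH$ using a path $\rho:[0,1]\to B$ with $\rho(0)=\mathrm{Id}$, $\rho(1)=b$. Fix a marking $(\check f_0,\check M_0)\in\til\HH$ projecting to $M$, and let $(\check f_t,\check M_t)\in\til\HH$ be the lift of the path $t\mapsto\rho(t)M\in\HH_{\mathrm{m}}$ produced by Proposition~\ref{prop: lifting}. This is a valid one-parameter family of markings rel boundary, with $\check M_t=\rho(t)M$ and $\check f_1$ a marking of $\check{bM}$.

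The key claim is that $\check f_t^{-1}(\Xi(\rho(t)M))\subset\check S$ is, as a subset containing $\partial\check S$, constant up to isotopy rel $\partial\check S$ as $t$ varies. Each horizontal saddle connection $\sigma$ on $M$ gives a continuous family of horizontal saddle connections $\phi_{\rho(t)}(\sigma)$ on $\rho(t)M$; its lift to $\check M_t$ is an arc landing on the boundary circles at prong points with angular parameters that are multiples of $\pi$. Since $\rho(t)$ fixes the horizontal directions in $\R^2$, these landing angular parameters do not move along the path, so under the valid marking $\check f_t$ (which by construction satisfies $p_j\circ\check f_t=q_j\bmod2\pi$) the pullback endpoints on $\partial\check S$ sit at fixed prong points. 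The pullback arcs then form a continuous family of arcs in $\check S$ with fixed endpoints, hence a relative isotopy; the union over all horizontal saddle connections gives the required ambient isotopy rel $\partial\check S$ between $\check f_0^*\Xi(M)$ and $\check f_1^*\Xi(bM)$. This proves topological horizontal equivalence of $M$ and $bM$.

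For the second assertion, set $b=u\in U$ and use the same markings $(\check f_0,\check M_0)$ and $(\check f_1,\check M_1=\check{uM})$. Along the path, $\phi_{\rho(t)}$ is an affine map with derivative $\rho(t)\in U$, which fixes every horizontal vector. Therefore, for each edge $\delta$ of $\check f_0^*\Xi(M)$, the arc $\check f_1(\delta)$ is (up to the chosen isotopy) the image under $\phi_u$ of $\check f_0(\delta)$, and $\hol(uM,\check f_1(\delta))=u\cdot\hol(M,\check f_0(\delta))=\hol(M,\check f_0(\delta))$ since the holonomy is horizontal. This yields geometric horizontal equivalence. The only real obstacle is the careful bookkeeping for the boundary coordinates along the path lift --- the argument of Proposition~\ref{prop: two pi} shows that such adjustments are supported in annular neighborhoods of $\partial\check S$ and do not affect relative homotopy classes of the interior arcs, which is exactly what is needed here.
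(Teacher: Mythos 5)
Your proof is correct and rests on the same two observations as the paper's: $B$ preserves the horizontal direction in $\R^2$, so $\phi_b$ carries $\Xi(M)$ bijectively to $\Xi(bM)$, and $u\in U$ additionally fixes horizontal vectors, so the holonomies of the corresponding edges are unchanged. The paper compresses this into three sentences without spelling out why the isotopy-rel-$\partial\check S$ clause in the definition of topological horizontal equivalence is satisfied; you take the longer route of lifting a path $\rho(t)$ in $B$ to $\til\HH$ via Proposition~\ref{prop: lifting} and tracking the endpoints of the pullback arcs along the lift, which buys you an explicit verification that the pullbacks $\check f_t^{-1}(\Xi(\rho(t)M))$ have constant endpoints (because prong points are discrete and the boundary-correcting homeomorphisms $H_t$ must fix them along a connected path in $B$). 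This is a sound way to fill the gap, and the invocation of the $H_t$-construction from Proposition~\ref{prop: lifting} does the work rather than Proposition~\ref{prop: two pi}, which as stated concerns full $2\pi$-rotations where the adjustments are \emph{not} isotopically trivial; a cleaner phrasing of your last sentence would be that since $\rho(t)\in B$ fixes the horizontal prongs at every time, the unique continuous lifts $\ell_{j,t}$ fix those prong points and hence $H_t$ does not perturb the endpoints of the arcs, so the boundary adjustment is harmless here.
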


\begin{proof}
Since $B$ acts by postcomposition on the charts, and
preserves the horizontal direction, it preserves the set $\Xi(M)$
(possibly changing the lengths of edges), and hence $M$ and $bM$ are
topologically equivalent. 
Moreover if $b = u \in U$ then the lengths
of edges are unaffected and so $M$ and $uM$ are geometrically
horizontally equivalent. 
\end{proof}

\begin{cor}\label{cor: horizontal relations}
If $\mu$ is an ergodic $U$-invariant measure on $\HH$, then there is a
subset $X \subset \HH$ of full $\mu$-measure such that for any $M_1,
M_2 \in X,$ $M_1$ and $M_2$ are geometrically horizontally equivalent.   
\end{cor}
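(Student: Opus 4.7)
The plan is to use ergodicity of $\mu$ to upgrade Proposition \ref{prop: horizontal relations} from a per-orbit statement to a statement holding on a set of full measure. There are two layers to the argument: first we get topological horizontal equivalence almost everywhere, and then we promote it to geometric equivalence edge by edge.

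First I would observe that topological horizontal equivalence partitions $\HH$ into countably many classes. Indeed, such a class is determined by the $\Mod(\check S, \partial \check S)$-orbit of the isotopy class (rel $\partial \check S$) of $\check f^*(\Xi(M))$ for any marking $\check f$, and this orbit can be encoded by finite combinatorial data (the isomorphism type of the ribbon graph of horizontal saddle connections together with its incidence pattern at the singularities, as recorded on $\partial \check S$). Using the hexagon-decomposition charts on $\til\HH$ from \S\ref{subsec: marked blownups}, one checks that for each such class $T$ the set $\HH_T \df \{M \in \HH : [\Xi(M)]_{\mathrm{top}} = T\}$ is Borel. By the first assertion of Proposition \ref{prop: horizontal relations}, the map $M \mapsto [\Xi(M)]_{\mathrm{top}}$ is $U$-invariant (in fact $B$-invariant), so each $\HH_T$ is $U$-invariant. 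By $U$-ergodicity of $\mu$ and the fact that there are only countably many $T$, there is a distinguished class $T_0$ with $\mu(\HH_{T_0}) = 1$. Set $X_1 \df \HH_{T_0}$.

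Next I would promote topological to geometric equivalence. Fix representative arcs $\delta_1, \ldots, \delta_r$ on $\check S$ corresponding to the finitely many edges of $T_0$. For each $M \in X_1$, any marking $\check f$ of $M$ witnessing $[\Xi(M)]_{\mathrm{top}} = T_0$ sends $\delta_i$ to a horizontal saddle connection on $M$, and the holonomy $\hol(M, \check f(\delta_i)) \in \R \times \{0\}$ is independent of the choice of such $\check f$ (any two markings differ by an element of $\Mod(\check S, \partial \check S)$ which fixes the isotopy class of $\delta_i$, and integrals of $dx$ and $dy$ depend only on the homology class). This defines a measurable function $\varphi_i : X_1 \to \R$, and by the second assertion of Proposition \ref{prop: horizontal relations} it is $U$-invariant. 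By ergodicity, $\varphi_i$ is constant on a full-measure subset $X_i \subset X_1$. The finite intersection $X \df \bigcap_{i=1}^r X_i$ has full $\mu$-measure, and by construction any two $M_1, M_2 \in X$ are geometrically horizontally equivalent.

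The only real obstacle is a bookkeeping one: verifying that $M \mapsto [\Xi(M)]_{\mathrm{top}}$ is Borel and that each $\varphi_i$ is well-defined and measurable. The first is handled by the local charts from \S\ref{subsec: marked blownups}, together with the fact that the length of the shortest horizontal saddle connection is lower semicontinuous; the second follows from the fact that on a small chart around $M \in \HH_{T_0}$ one may take a single continuously varying marking for all nearby points of $\HH_{T_0}$, so $\varphi_i$ is locally continuous on $X_1$.
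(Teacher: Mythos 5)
Your two-layer strategy (ergodicity on countably many topological classes, then ergodicity on edge holonomies) is the right one, and the paper itself supplies no explicit proof, so you are filling in an argument the authors left implicit. The first layer is fine. But there is a genuine gap in the second layer, in the claim that each $\varphi_i$ is well-defined.

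You write that any two markings $\check f$ witnessing $[\Xi(M)]_{\mathrm{top}} = T_0$ differ by an element of $\Mod(\check S, \partial \check S)$ which fixes the isotopy class of $\delta_i$. That is not true in general: two such markings differ by an element of the stabilizer of (the isotopy class rel $\partial\check S$ of) $\Xi_0$ in $\Mod(\check S, \partial \check S)$, and this stabilizer can act on the set of edges $\{\delta_1,\ldots,\delta_r\}$ by a nontrivial permutation. This happens precisely when the horizontal data diagram admits a nontrivial combinatorial automorphism (preserving labels of vertices, orientation of edges, and cyclic/angular data), which is a real possibility — several of the diagrams in Figures \ref{fig: maximal} and \ref{fig: nonmaximal} have edges with identical endpoints and identical angular data. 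When this happens, $\hol(M, \check f(\delta_i))$ depends on the choice of $\check f$, so $\varphi_i$ is not a function on $X_1$, and neither $U$-invariance nor constancy a.e.\ can even be stated.

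The repair is easy. Let $\mathrm{Aut}(T_0)$ be the finite group of permutations of the edge set $\{\delta_1,\ldots,\delta_r\}$ induced by the stabilizer of $\Xi_0$. Then the map
$$ \Phi \colon X_1 \longrightarrow \R^r / \mathrm{Aut}(T_0), \qquad \Phi(M) = \bigl[\,(\hol(M, \check f(\delta_1)),\ldots,\hol(M,\check f(\delta_r)))\,\bigr] $$
is genuinely independent of the choice of marking $\check f$, hence a well-defined measurable map into a standard Borel space, and it is $U$-invariant by the second assertion of Proposition \ref{prop: horizontal relations} (one can choose compatible markings for $M$ and $uM$). Ergodicity makes $\Phi$ constant on a full-measure set $X \subset X_1$, and $\Phi(M_1) = \Phi(M_2)$ is exactly the condition that there exist markings $\check f_1, \check f_2$ realizing geometric horizontal equivalence, since the definition of geometric equivalence is existential in the pair of markings. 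With this modification your proof is correct.
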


We will define a combinatorial invariant of 
topological horizontal equivalence -- the {\em horizontal data
  diagram}. \index{horizontal data
  diagram} This diagram is an analog of the separatrix diagram
introduced by Kontsevich and Zorich \cite{KZ} and captures some of the
properties of $\Xi(M)$ which depend only on the class of $M$. Note
that a graph embedded on a translation surface has a ribbon graph structure,
namely at each vertex it inherits from the surface a cyclic order of
edges incident to the vertex. Its vertices are labeled by the labels
$\xi_1, \ldots, \xi_k$ of the corresponding singularities.  If the graph consists of
horizontal saddle connection, each edge inherits an orientation from
the translation structure and we take these orientations as part of the horizontal data diagram structure.

For any $(\check{f}, \check{M}) \in \til \HH$,
 $\check{f}^*(\Xi)$ is a graph embedded in $\check{S}$. By projecting it to $S$
we thus obtain a ribbon graph with labeled singularities and oriented edges. The graphs
$\check{f}^*(\Xi)$ carry additional information, namely the angular distance separating
consecutive 
edges incident at a vertex. This angle is an integer multiple of $\pi$, and
the orientations of consecutive edges agree if and only if this
distance is an even multiple of $\pi$.
It is clear that all of this information is an
invariant of geometric horizontal equivalence. 

The 
horizontal data
  diagram of $M$ 
records the graph
structure induced by $\check{f}^*(\Xi)$, as well as the orientation of
edges, labeling of vertices, 
and cyclic structure at each singularity. In order to
record the information of angular separation of edges at vertices, we
indicate as dashed lines additional left and right pointing
horizontals. Thus at the singularity $\xi_i$ there are $2(a_i+1)$
prongs, some of which correspond to edges of the
graph. Figures~\ref{fig: blown up}, \ref{fig: maximal} and \ref{fig: nonmaximal} 
depict graphs which can occur as $\Xi(M)$ for
some $M$ in $\HH(1,1)$.

  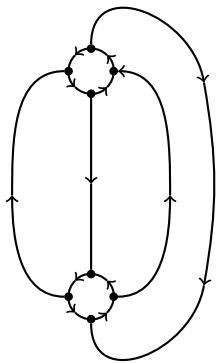
\begin{figure}[h]
  \begin{tikzpicture}[scale=1.5]
\coordinate (A) at (-.7,-.1);
\coordinate (B) at (.7,-.1);
\coordinate (C) at (1,.9);
\coordinate (D) at (1,-.9);
\coordinate (E) at (0,0);
   \node (black) at (0,1)  {};
  \node (white) at (0,-1) {};
   
\node (blacku) at (0,1.2) [circle,draw,fill=black,inner sep=0pt,minimum size=1mm] {};
\node (blackd) at (0,0.8) [circle,draw,fill=black,inner sep=0pt,minimum size=1mm]{};
\node (blackr) at (0.2,1) [circle,draw,fill=black,inner sep=0pt,minimum size=1mm]{};
\node (blackl) at (-0.2,1)[circle,draw,fill=black,inner sep=0pt,minimum size=1mm]{};

\node (whiteu) at (0,-.8)[circle,draw,fill=black,inner sep=0pt,minimum size=1mm]{};
\node (whited) at (0,-1.2)[circle,draw,fill=black,inner sep=0pt,minimum size=1mm]{};
\node (whiter) at (0.2,-1)[circle,draw,fill=black,inner sep=0pt,minimum size=1mm]{};
\node (whitel) at (-0.2,-1)[circle,draw,fill=black,inner sep=0pt,minimum size=1mm]{};

\draw[->,thick] (whitel) to [out=180,in=270,looseness=1.0] (A);
\draw[thick] (A) to [out=90,in=180,looseness=1.0] (blackl);
\draw[->,thick] (whiter) to [out=0,in=270,looseness=1.0] (B);
\draw[->,thick] (B) to [out=90,in=0,looseness=1.0] (blackr);
\draw[->,thick] (blackd) to [out=270,in=90,looseness=1.5] (E);
\draw[thick] (E) to [out=270,in=90,looseness=1.5] (whiteu);
\draw[->,thick] (blacku) to [out=90,in=100,looseness=1.5] (C);
\draw[->,thick] (C) to [out=280,in=80,looseness=1.0] (D);
\draw[thick] (D) to [out=260,in=270,looseness=1.5] (whited);

\draw[->,thick] (0,1)++(45:2mm) arc(45:135:2mm);
\draw[->,thick] (0,1)++(135:2mm)  arc(135:225:2mm);
\draw[->,thick] (0,1)++(225:2mm)  arc(225:315:2mm);
 \draw[->,thick] (0,1)++(315:2mm)  arc(315:405:2mm);


\draw[->,thick] (0,-1)++(45:2mm) arc(45:135:2mm);
\draw[->,thick] (0,-1)++(135:2mm)  arc(135:225:2mm);
\draw[->,thick] (0,-1)++(225:2mm)  arc(225:315:2mm);
 \draw[->,thick] (0,-1)++(315:2mm)  arc(315:405:2mm);

\end{tikzpicture}
\hspace{8 mm}
\caption{A horizontal data diagram on a blown up surface.}
\label{fig: blown up}
\end{figure}

In Figure~\ref{fig: maximal} the
angle between any two adjacent ends is exactly $\pi$, but this is not
the case in Figure~\ref{fig:
   nonmaximal}.  
 We capture this angle information in the diagrams in Figure~\ref{fig:
   nonmaximal} by inserting additional dotted lines at each vertex
 indicating the ends of horizontal separatrices which are not saddle
 connections. We
 can determine the angle between two prongs by counting the number of
 ends of separatrices between them. Note that it makes sense to give orientations to
 separatrices so that the orientations of ends at a given vertex
 alternate with respect to the cyclic ordering.

It follows from Corollary \ref{cor: horizontal relations} that for any
ergodic $U$-invariant measure $\mu$, there is a  subset $X$ of $\HH$
such that $\mu(X)=1$ and every  $M \in X$ has the same horizontal data
diagram. We call it the {\em horizontal data diagram of $\mu$} and
denote it by $\Xi(\mu)$. \index{X@$\Xi(\mu)$} \index{horizontal data diagram of $\mu$}

If a horizontal data diagram is {\em maximal}\index{maximal horizontal data diagram}, i.e. at each vertex,
all prongs are initial or terminal prongs of edges, then the
horizontal data diagram coincides with the separatrix diagram of
\cite{KZ}.

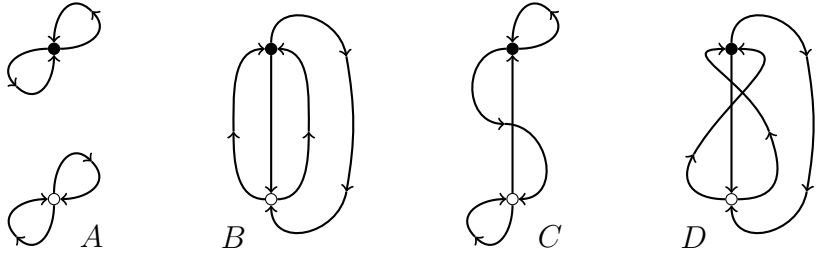
\begin{figure}[h]
\begin{tabular}{l c c c r}
\begin{tikzpicture}
\coordinate (A) at (.5,1.5);
\coordinate (B) at (-.5,-1.5);
\coordinate (AA) at (-.5,.5);
\coordinate (BB) at (.5,-.5);
 \node (black) at (0,1) [circle,draw,fill=black,inner sep=0pt,minimum size=1.5mm] {};
   \node (white) at (0,-1) [circle,draw,inner sep=0pt,minimum size=1.5mm] {};
\draw[->,thick] (black) to [out=0,in=315,looseness=1.5] (A);
\draw[->,thick] (A) to [out=135,in=90,looseness=1.5] (black);
\draw[->,thick] (AA) to [out=315,in=270,looseness=1.5] (black);
\draw[->,thick] (black) to [out=180,in=135,looseness=1.5] (AA);
\draw[->,thick] (white) to [out=270, in=315,looseness=1.5] (B);
\draw[->,thick] (B) to [out=135,in=180,looseness=1.5] (white);
\draw[->,thick] (BB) to [out=315, in=0,looseness=1.5] (white);
\draw[->,thick] (white) to [out=90,in=135,looseness=1.5] (BB);
\node (label) at ($ (white)+(.5,-.5) $) {$A$};
\end{tikzpicture}
\hspace{8 mm}
&
\begin{tikzpicture}
\coordinate (A) at (-.5,-.1);
\coordinate (B) at (.5,-.1);
\coordinate (C) at (1,.9);
\coordinate (D) at (1,-.9);
 \node (black) at (0,1) [circle,draw,fill=black,inner sep=0pt,minimum size=1.5mm] {};
   \node (white) at (0,-1) [circle,draw,inner sep=0pt,minimum size=1.5mm] {};
\draw[->,thick] (white) to [out=180,in=270,looseness=1.0] (A);
\draw[->,thick] (A) to [out=90,in=180,looseness=1.0] (black);
\draw[->,thick] (white) to [out=0,in=270,looseness=1.0] (B);
\draw[->,thick] (B) to [out=90,in=0,looseness=1.0] (black);
\draw[->,thick] (black) to [out=270,in=90,looseness=1.5] (white);
\draw[->,thick] (black) to [out=90,in=100,looseness=1.5] (C);
\draw[->,thick] (C) to [out=280,in=80,looseness=1.0] (D);
\draw[->,thick] (D) to [out=260,in=270,looseness=1.5] (white);
\node (label) at ($ (white)+(-.5,-.5) $) {$B$};
\end{tikzpicture}
\hspace{8 mm}
&
\begin{tikzpicture}
\coordinate (A) at (.5,1.5);
\coordinate (B) at (-.5,-1.5);
\coordinate (C) at (-.1,0);
 \node (black) at (0,1) [circle,draw,fill=black,inner sep=0pt,minimum size=1.5mm] {};
   \node (white) at (0,-1) [circle,draw,inner sep=0pt,minimum size=1.5mm] {};
\draw[->,thick] (black) to [out=0,in=315,looseness=1.5] (A);
\draw[->,thick] (A) to [out=135,in=90,looseness=1.5] (black);
\draw[->,thick] (white) to [out=270, in=315,looseness=1.5] (B);
\draw[->,thick] (B) to [out=135,in=180,looseness=1.5] (white);
\draw[->,thick] (white) -- (black);
\draw[->,thick] (C) to [out=0, in=0,looseness=1.5] (white);
\draw[->,thick] (black) to [out=180, in=180,looseness=1.5] (C);
\node (label) at ($ (white)+(.5,-.5) $) {$C$};
\end{tikzpicture}
\hspace{8 mm}
&
\begin{tikzpicture}
\coordinate (A) at (-.5,-.4);
\coordinate (B) at (.5,-.1);
\coordinate (C) at (1,.9);
\coordinate (D) at (1,-.9);
 \node (black) at (0,1) [circle,draw,fill=black,inner sep=0pt,minimum size=1.5mm] {};
   \node (white) at (0,-1) [circle,draw,inner sep=0pt,minimum size=1.5mm] {};
\draw[->,thick] (white) to [out=180,in=240,looseness=1.5] (A);
\draw[->,thick] (A) to [out=60,in=0,looseness=1.5] (black);
\draw[->,thick] (white) to [out=0,in=290,looseness=1.5] (B);
\draw[->,thick] (B) to [out=110,in=180,looseness=1.5] (black);
\draw[->,thick] (black) to [out=270,in=90,looseness=1.5] (white);
\draw[->,thick] (black) to [out=90,in=100,looseness=1.5] (C);
\draw[->,thick] (C) to [out=280,in=80,looseness=1.0] (D);
\draw[->,thick] (D) to [out=260,in=270,looseness=1.5] (white);
\node (label) at ($ (white)+(-.5,-.5) $) {$D$};
\end{tikzpicture}
  \\
\end{tabular}
\caption[Maximal horizontal data diagrams]{List of all maximal horizontal data diagrams
  in $\HH(1,1)$ up to 
  switching labels and orientations. See Figure \ref{fig: tdec1} for  a
  polygonal presentation of a cylinder decomposition of type A.
  \index{Types of horizontal data diagrams}
  }
 \label{fig: maximal}
\end{figure}

\begin{figure}[h]
\begin{tabular}{l c c c c r}
\begin{tikzpicture}
\coordinate (A) at (.5,1.5);
\coordinate (B) at (-.5,-1.5);
 \node (black) at (0,1) [circle,draw,fill=black,inner sep=0pt,minimum size=1.5mm] {};
   \node (white) at (0,-1) [circle,draw,inner sep=0pt,minimum size=1.5mm] {};
\draw[->,thick] (black) to [out=0,in=315,looseness=1.5] (A);
\draw[->,thick] (A) to [out=135,in=90,looseness=1.5] (black);
\draw[->,thick] (white) to [out=270, in=315,looseness=1.5] (B);
\draw[->,thick] (B) to [out=135,in=180,looseness=1.5] (white);
\draw[->,thick, densely dotted]  ($ (black)-(0,.5) $)--(black);
\draw[->,thick, densely dotted]   (black)--($ (black)-(.5,0) $);
\draw[->,thick, densely dotted]  (white)--($ (white)+(0,.5) $);
\draw[->,thick, densely dotted]   ($ (white)+(.5,0) $)--(white);
\node (label) at ($ (white)+(.5,-.5) $) {$1$};
\end{tikzpicture}
\hspace{2 mm}
&
\begin{tikzpicture}
 \node (black) at (0,1) [circle,draw,fill=black,inner sep=0pt,minimum size=1.5mm] {};
   \node (white) at (0,-1) [circle,draw,inner sep=0pt,minimum size=1.5mm] {};
\draw[->,thick] (black) to [out=0,in=315,looseness=1.5] ($ (black)+(.5,.5) $);
\draw[->,thick] ($ (black)+(.5,.5) $) to [out=135,in=90,looseness=1.5] (black);
\draw[->,thick] (white) to [out=270, in=315,looseness=1.5] ($ (white)-(.5,.5) $);
\draw[->,thick] ($ (white)-(.5,.5) $) to [out=135,in=180,looseness=1.5] (white);
\draw[->,thick] (white) -- (black);
\draw[<-,thick, densely dotted]   (black)--($ (black)-(.5,0) $);
\draw[->,thick, densely dotted]   ($ (white)+(.5,0) $)--(white);
\node (label) at ($ (white)+(.5,-.5) $) {$2$};
\end{tikzpicture}
\hspace{3 mm}
&
\begin{tikzpicture}
\coordinate (A) at (.5,1.5);
\coordinate (B) at (-.5,-1.5);
\coordinate (C) at (-.1,0);
 \node (black) at (0,1) [circle,draw,fill=black,inner sep=0pt,minimum size=1.5mm] {};
 \node (white) at (0,-1) [circle,draw,inner sep=0pt,minimum size=1.5mm] {};
\draw[->,thick] (white) -- (black);
\draw[->,thick] (C) to [out=0, in=0,looseness=1.5] (white);
\draw[->,thick] (black) to [out=180, in=180,looseness=1.5] (C);
\draw[->,thick, densely dotted]  ($ (black)+(0,.5) $)--(black);
\draw[->,thick, densely dotted]   (black)--($ (black)+(.5,0) $);
\draw[->,thick, densely dotted]  (white)--($ (white)-(0,.5) $);
\draw[->,thick, densely dotted]   ($ (white)-(.5,0) $)--(white);
\node (label) at ($ (white)+(.5,-.5) $) {$3$};
\end{tikzpicture}
\hspace{3 mm}
&
\begin{tikzpicture}
\coordinate (A) at (.5,1.5);
\coordinate (B) at (-.5,-1.5);
\coordinate (C) at (-.1,0);
 \node (black) at (0,1) [circle,draw,fill=black,inner sep=0pt,minimum size=1.5mm] {};
   \node (white) at (0,-1) [circle,draw,inner sep=0pt,minimum size=1.5mm] {};
\draw[->,thick] (white) -- (black);
\draw[->,thick, densely dotted]  ($ (black)+(0,.5) $)--(black);
\draw[->,thick, densely dotted]   (black)--($ (black)+(.5,0) $);
\draw[->,thick, densely dotted]  (white)--($ (white)-(0,.5) $);
\draw[->,thick, densely dotted]   ($ (white)-(.5,0) $)--(white);
\draw[->,thick, densely dotted]   (black)--($ (black)-(.5,0) $);
\draw[->,thick, densely dotted]   ($ (white)+(.5,0) $)--(white);
\node (label) at ($ (white)+(.5,-.5) $) {$4$};
\end{tikzpicture}
\hspace{3 mm}
&
\begin{tikzpicture}
\coordinate (A) at (-.5,-.4);
\coordinate (B) at (.5,-.1);
\coordinate (C) at (1,.9);
\coordinate (D) at (1,-.9);
 \node (black) at (0,1) [circle,draw,fill=black,inner sep=0pt,minimum size=1.5mm] {};
   \node (white) at (0,-1) [circle,draw,inner sep=0pt,minimum size=1.5mm] {};
\draw[->,thick] (black) to [out=270,in=90,looseness=1.5] (white);
\draw[->,thick] (black) to [out=90,in=100,looseness=1.5] (C);
\draw[->,thick] (C) to [out=280,in=80,looseness=1.0] (D);
\draw[->,thick] (D) to [out=260,in=270,looseness=1.5] (white);
\draw[->,thick, densely dotted]  ($ (black)+(.5,0) $)--(black);
\draw[->,thick, densely dotted]  ($ (black)-(.5,0) $)--(black);
\draw[->,thick, densely dotted]  (white)--($ (white)+(.5,0) $);
\draw[->,thick, densely dotted]  (white)--($ (white)-(.5,0) $);
\node (label) at ($ (white)+(-.5,-.5) $) {$5$};
\end{tikzpicture}
\\
\end{tabular}
\caption[Some non-maximal horizontal data diagrams.]{These figures represent some non-maximal horizontal data diagrams in
  $\HH(1,1)$.}
\label{fig: nonmaximal}
\end{figure}
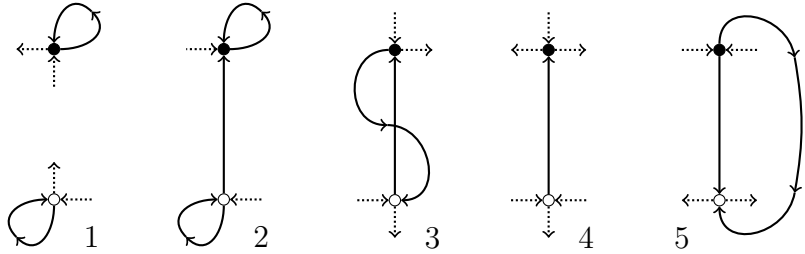

We will sometimes need an extension of $\Xi(M)$. Recall that at each 
$\xi_i \in \Sigma$ there are
$2(a_i+1)$ prongs. Some of these are part of saddle
connections in $\Xi(M)$, and we call the others the {\em unoccupied
  prongs} of $\Xi(M)$. Let $\Xi_-(M)$  denote
the graph which contains $\Xi(M)$ and has one additional {\em prong edge} for
every unoccupied prong, with one vertex at a singular point and one
vertex at a point of $M \sm \Sigma$, where the edge is realized by a
path in $M$ which lies on the horizontal separatrix issuing at the
corresponding prong. \index{X@$\Xi_-$}

We will sometimes need to be specific about the length of edges. In
that case, given $L>0$, we will let $\Xi_-(M, L)$
denote the graph $\Xi_-(M)$ described above, where the prong edges all
have length $L$. Note that since all of 
the prong edges are part of an infinite separatrix in the
horizontal direction, the graph $\Xi_-(M,L)$ can be
embedded in $M$ for any $L$. 

\section{An explicit surgery for real Rel}\label{subsec: explicit}
In this section, for fixed $M \in \HH$ and $z \in Z^{(M)}$, we will
present an explicit presentation of $\rel_zM$ in terms of glued
polygons. Our explicit surgery generalizes the special cases
treated  in \cite{McMullen-twists} and \cite[\S 3]{Matt}. 
See also the general result in \cite{MW}. 

As a
by-product, it will enable us to determine $Z^{(M)}$ explicitly from
the geometry of $M$. This analysis makes it possible to analyze limits $\lim_{j\to \infty}
\rel_{z_j} M$, for $z_j \in Z^{(M)}$ with $\lim_{j \to \infty} z_j \in \partial
Z^{(M)}$. Such limits do not exist as elements of $\HH$; loosely
speaking they belong to a bordification of $\HH$ obtained by adjoining boundary
strata. We will not construct this bordification in this paper but
hope to return to it in future work. A particularly simple case of this
bordification arises when one takes limits of surfaces in $\HH(1,1)$
for which a segment joining the two singularities collapses. Even
in this relatively simple case, which will arise in Theorem \ref{thm: criterion1},
the construction we use differs from earlier related work (see
\cite{KZ, EMZ}) and leads naturally to the use of framed
surfaces. 

We will establish some terminology and make a construction which will be used in stating Theorem 
\ref{thm: Bainbridge-Smillie surgery}. Let $L$ and $\vre$ be positive numbers. 
Given $M$, we let $\Xi(M), \, \Xi_-(M)$ and $ \Xi_-(M,L)$ 
be as above. We say that $\mathcal{N} = \mathcal{N}( L,
  \vre) \subset M$ is the {\em $(L, \vre)$-rectangle  thickening of
    $\Xi_-(M)$} if $\mathcal{N}$ is a union of
  rectangles $R_e^+$ and $R_e^-$ in $M$, with sides parallel to the
  coordinate axes, where $e$ ranges over the edges of
  $\Xi_-(M, L)$, $R_e$ has vertical sides of length $\vre$, and the
  edge $e$ runs along the bottom of $R_e^+$ and the top of
  $R_e^-$. \index{N@$\mathcal{N}$} Here the words `bottom' and `top' refer to the orientation
  provided by the translation surface structure and need not
  correspond to the directions shown in our figures.
  The edge identification  maps of the rectangles are
  inherited from $\Xi_-(M)$, namely they are as
  follows. Each $R_e^-$ is attached to $R_e^+$ along $e$, and the
  bottom of $R_e^-$ (resp. top of $R_e^+$) is unattached. 
If the right
  end of $e$ is not a singularity (i.e. $e$ is right-pointing prong
  edge) then the right hand boundaries of $R^\pm_e$ are unattached. If
  the right end of $e$ is the singularity $\xi$ then the right hand
  boundary of $R_e^-$ (resp. $R_e^+$) is attached to the left hand boundary of
  $R_f^-$, where $f$ is the edge
  of $\Xi_-(M)$ which is counterclockwise (resp. clockwise) from $e$
  at $\xi$. For the gluing rule for the left edges, replace left with
  right and clockwise with counterclockwise in the above description. 
See Figure \ref{fig:
  rectangle thickening}. 

Note that for any $L$ there is $\vre_0 = \vre_0(M,L)$ such
  that for all $\vre < \vre_0$,  the $(L, \vre)$-rectangle  thickening of
    $\Xi_-(M)$ exists and is embedded in $M$. 
Moreover the constant $\vre_0(M, L)$ can be
   chosen to be independent of $M$ and $L$, for $L$ in a
    bounded set of numbers   and 
    $M$ in a compact set of topologically horizontally equivalent surfaces.

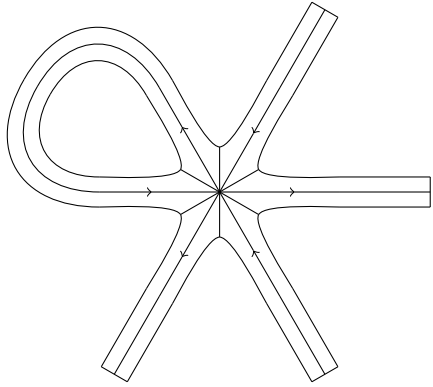
\begin{figure}[h]
\begin{tikzpicture}[scale=2]
\coordinate (center) at (0,0);
\coordinate (A0) at (0:0.5);
\coordinate (A1) at (60:0.45);
\coordinate (A2) at (120:0.5);
\coordinate (A3) at (180:0.45);
\coordinate (A4) at (240:0.5);
\coordinate (A5) at (300:0.45);

\coordinate (B0) at (0:0.8);
\coordinate (B1) at (60:0.8);
\coordinate (B2) at (120:0.8);
\coordinate (B3) at (180:0.8);
\coordinate (B4) at (240:0.8);
\coordinate (B5) at (300:0.8);

\coordinate (E0) at (0:1.4);
\coordinate (E1) at (60:1.4);
\coordinate (E2) at (120:0.8);
\coordinate (E3) at (180:0.8);
\coordinate (E4) at (240:1.4);
\coordinate (E5) at (300:1.4);

\coordinate (B0lft) at ($(B0)+(90:0.1)$);
\coordinate (B1lft) at ($(B1)+(150:0.1)$);
\coordinate (B2lft) at ($(B2)+(210:0.1)$);
\coordinate (B3lft) at ($(B3)+(270:0.1)$);
\coordinate (B4lft) at ($(B4)+(330:0.1)$);
\coordinate (B5lft) at ($(B5)+(390:0.1)$);

\coordinate (B0rt) at ($(B0)+(-90:0.1)$);
\coordinate (B1rt) at ($(B1)+(-30:0.1)$);
\coordinate (B2rt) at ($(B2)+(30:0.1)$);
\coordinate (B3rt) at ($(B3)+(90:0.1)$);
\coordinate (B4rt) at ($(B4)+(150:0.1)$);
\coordinate (B5rt) at ($(B5)+(210:0.1)$);

\coordinate (E0lft) at ($(E0)+(90:0.1)$);
\coordinate (E1lft) at ($(E1)+(150:0.1)$);
\coordinate (E2lft) at ($(E2)+(210:0.1)$);
\coordinate (E3lft) at ($(E3)+(270:0.1)$);
\coordinate (E4lft) at ($(E4)+(330:0.1)$);
\coordinate (E5lft) at ($(E5)+(390:0.1)$);

\coordinate (E0rt) at ($(E0)+(-90:0.1)$);
\coordinate (E1rt) at ($(E1)+(-30:0.1)$);
\coordinate (E2rt) at ($(E2)+(30:0.1)$);
\coordinate (E3rt) at ($(E3)+(90:0.1)$);
\coordinate (E4rt) at ($(E4)+(150:0.1)$);
\coordinate (E5rt) at ($(E5)+(210:0.1)$);

\draw (E0lft) to (E0rt);
\draw (E1lft) to (E1rt);
\draw (E4lft) to (E4rt);
\draw (E5lft) to (E5rt);

\draw (B0lft) to (E0lft);
\draw (B0rt) to (E0rt);
\draw (B0) to (E0);

\draw (B1lft) to (E1lft);
\draw (B1rt) to (E1rt);
\draw (B1) to (E1);

\draw (B4lft) to (E4lft);
\draw (B4rt) to (E4rt);
\draw (B4) to (E4);

\draw (B5lft) to (E5lft);
\draw (B5rt) to (E5rt);
\draw (B5) to (E5);

\draw[->] (center) to (A0);
\draw (A0) to (B0);

\draw (center) to (A1);
\draw[<-] (A1) to (B1);

\draw[->] (center) to (A2);
\draw (A2) to (B2);

\draw (center) to (A3);
\draw[<-] (A3) to (B3);

\draw[->] (center) to (A4);
\draw (A4) to (B4);

\draw (center) to (A5);
\draw[<-] (A5) to (B5);

\coordinate (C0) at (30:0.3);
\draw (center) to (C0);

\coordinate (C1) at (90:0.3);
\draw (center) to (C1);

\coordinate (C2) at (150:0.3);
\draw (center) to (C2);

\coordinate (C3) at (210:0.3);
\draw (center) to (C3);

\coordinate (C4) at (270:0.3);
\draw (center) to (C4);

\coordinate (C5) at (330:0.3);
\draw (center) to (C5);

\draw (B0lft) to [out=180,in=-60,looseness=0.4] (C0);
\draw (B1lft) to [out=240,in=0,looseness=0.4] (C1);
\draw (B2lft) to [out=300,in=60,looseness=0.4] (C2);
\draw (B3lft) to [out=0,in=120,looseness=0.4] (C3);
\draw (B4lft) to [out=60,in=180,looseness=0.4] (C4);
\draw (B5lft) to [out=120,in=240,looseness=0.4] (C5);

\draw (B1rt) to [out=240,in=120,looseness=0.4] (C0);
\draw (B2rt) to [out=300,in=180,looseness=0.4] (C1);
\draw (B3rt)to [out=0,in=240,looseness=0.4] (C2);
\draw (B4rt) to [out=60,in=300,looseness=0.4] (C3);
\draw (B5rt) to [out=120,in=0,looseness=0.4] (C4);
\draw (B0rt) to [out=180,in=60,looseness=0.4] (C5);

\coordinate (D3) at (150:1.4);
\coordinate (Dout) at ($(D3)+(150:0.1)$);
\coordinate (Din) at ($(D3)+(330:0.1)$);

\draw (B3rt) to [out=180,in=240,looseness=1.4] (Din);
\draw (B3lft) to [out=180,in=240,looseness=1.4] (Dout);
\draw (B2rt) to [out=120,in=60,looseness=1.4] (Dout);
\draw (B2lft) to [out=120,in=60,looseness=1.4] (Din);
\draw (B2) to [out=120,in=60,looseness=1.4] (D3);
\draw (B3) to [out=180,in=240,looseness=1.4](D3);

\end{tikzpicture}
\caption[A thickening of $\Xi_-(M)$]{A rectangle thickening of $\Xi_-(M)$ in $\HH(2)$ with 10 rectangles.}
\label{fig:
  rectangle thickening}
\end{figure}

Given translation surfaces $M, M'$, and a subset $M_0 \subset M$ we
say that {\em $M'$ can be obtained from $M$ by modifying 
  $M_0$} if there are polygon representations of $M$ and $M'$, and a
subset $M'_0 \subset M'$, such that $M_0, M'_0$ are 
unions of polygons, and there is a homeomorphism $f: (M \sm M_0)\to (M'
\sm M'_0)$, which is a translation in each coordinate chart of the
translation surface structures on $M$ and $M'$.

\begin{thm}\label{thm: Bainbridge-Smillie surgery}
Let $M \in \HH$. For each $\delta \in \Xi(M)$, let $Z^{(M, \delta)}$ denote
the connected component of $0$ in $\{z \in Z: \hol(M,
\delta)+z(\delta) \neq 0\}$.  
Then 
\begin{equation}\label{eq: ZM}{
Z^{(M)} = \bigcap_{\delta \in \Xi(M)} Z^{(M, \delta)}.
}\end{equation}
For any $z \in Z^{(M)}$ there is $L=L_z>0$, such that for any
$\vre < \vre_0(M, L)$, $\rel_zM$ can be obtained from $M$  by
modifying $\mathcal{N}(L, 
\vre)$. 
Moreover, the function $z \mapsto L_z$ can be taken to be bounded when 
$z$ varies in
a bounded subset of $Z^{(M)}$. 
\end{thm}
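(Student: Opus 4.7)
The plan is to prove the theorem in three stages: first establish the inclusion $Z^{(M)} \subset \bigcap_\delta Z^{(M,\delta)}$; then construct the explicit surgery, which will simultaneously yield the reverse inclusion and the rectangle description; finally observe the bounded dependence of $L_z$ on $z$.

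For the first inclusion, suppose $z \in Z^{(M)}$ and consider the path $t \mapsto \rel_{tz}(M)$ for $t \in [0,1]$. Fixing a marking of $M$, each horizontal saddle connection $\delta \in \Xi(M)$ determines a path in $S$, and along the Rel flow the Rel vector field acts on period coordinates by translation by $z$. Hence $\hol(\rel_{tz}(M), \delta) = \hol(M,\delta) + t\, z(\delta)$. Since $\rel_{tz}(M)$ is a translation surface, this holonomy must remain nonzero for all $t \in [0,1]$, which is exactly the statement that $z$ lies in the connected component of $0$ in $\{z' \in Z : \hol(M,\delta) + z'(\delta) \neq 0\}$, i.e.\ $z \in Z^{(M,\delta)}$.

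For the reverse inclusion and the surgery description, I will construct $\rel_z(M)$ directly by cutting and regluing. Choose a representative $\widetilde{z} : \Sigma \to \R$ of $z$, and pick $L > 0$ large enough that $L > \max_{i,j} |\widetilde{z}(\xi_i) - \widetilde{z}(\xi_j)|$; for $\vre < \vre_0(M,L)$ the rectangle thickening $\mathcal{N}(L,\vre)$ embeds in $M$ as described before the statement. Inside $\mathcal{N}(L,\vre)$ I replace the collection of rectangles $\{R_e^\pm\}$ by a new collection of the same combinatorial type, in which for each edge $e$ the rectangle $R_e^+$ (respectively $R_e^-$) is truncated or extended in such a way that the singular endpoint of $e$ is displaced horizontally by the value of $\widetilde{z}$ at that singularity, while keeping the outer (non-singular) boundary of $\mathcal{N}$ pointwise fixed. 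The gluing rule described in the definition of $\mathcal{N}$ is preserved. The hypothesis that $z \in \bigcap_\delta Z^{(M,\delta)}$ guarantees that no two singularities collide along any horizontal saddle connection during the deformation, so the new rectangles fit together into an honest translation surface $M_z$. Outside $\mathcal{N}$, $M_z$ is identified with $M$ by a translation in charts.

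It remains to check that $M_z = \rel_z(M)$. Since the surgery is local to $\mathcal{N}$, the absolute periods of $M_z$ agree with those of $M$, while for any path $\delta$ from $\xi_i$ to $\xi_j$ the relative period changes by $\widetilde{z}(\xi_j) - \widetilde{z}(\xi_i) = z(\delta)$; hence $\dev(M_z) = \dev(M) + z$. Repeating the construction with $tz$ in place of $z$ for $t \in [0,1]$ (which works with the same $L$, since $L$ was chosen larger than the maximum shift) gives a continuous path $t \mapsto M_{tz}$ whose image under $\dev$ is the affine segment $\dev(M) + t z$. Its tangent vector is therefore the Rel vector field $\vec{z}$, so by Definition~\ref{def: rel} we have $\rel_z(M) = M_z$. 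This establishes both the reverse inclusion in \equ{eq: ZM} and the rectangle-thickening description. The bound on $L_z$ is immediate: the construction only requires $L > \max_{i,j} |\widetilde{z}(\xi_i) - \widetilde{z}(\xi_j)|$, which depends linearly on $z$ and is uniformly bounded as $z$ ranges over any bounded subset of $Z^{(M)}$.

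The main obstacle I anticipate is the combinatorial bookkeeping in the surgery: verifying that the local rearrangement of the rectangles $R_e^\pm$ respects all the boundary identifications prescribed by $\Xi_-(M)$ and produces a surface homeomorphic to $M$. The non-vanishing conditions $\hol(M,\delta) + t\, z(\delta) \neq 0$ for all $t \in [0,1]$ and $\delta \in \Xi(M)$ are precisely what is needed to keep the rearrangement non-degenerate, which is why they characterize $Z^{(M)}$. A secondary technical point is justifying that the explicit polygonal construction indeed produces the Rel flow and not merely some path with the same period coordinates; here uniqueness of lifts under the local homeomorphism $\dev$ does the work.
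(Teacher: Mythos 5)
Your proof follows the same overall architecture as the paper's: the easy inclusion $Z^{(M)} \subset \bigcap_\delta Z^{(M,\delta)}$ via the embedded $\rel$-path, then a polygonal surgery inside a rectangle thickening, and finally an identification of the result with $\rel_z(M)$ via the developing map. The genuine gap is in the sentence ``Since the surgery is local to $\mathcal{N}$, the absolute periods of $M_z$ agree with those of $M$.'' Locality of a modification does not by itself preserve absolute periods (a Dehn twist supported in an annulus is also local but changes them), and the saddle connections in $\Xi(M)$ together with prong edges do not span $H_1(S,\Sigma)$, so you cannot infer the whole of $\dev(M_z) - \dev(M)$ just from the intended changes along edges of $\Xi_-(M,L)$. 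The substantive step the paper carries out, and that your argument needs, is the decomposition of an arbitrary relative cycle into subarcs inside and outside $\mathcal{N}_0$ (their $\sigma_1,\dots,\sigma_4$), followed by a \emph{cancellation argument}: a subarc crossing $\mathcal{N}_0$ can be homotoped to pass vertically through some pair $R_e^+ \cup R_e^-$, entering and exiting on the unmoved boundary $\partial\mathcal{N}_0$, and the holonomy change accrued in $R_e^+$ cancels the one accrued in $R_e^-$. Only after this does one conclude that the holonomy change is supported on the cochain $z$ and hence that $\dev(M_z) = \dev(M) + z$. You correctly flag the uniqueness-of-lifts step at the end, but the unproven premise here is upstream of that.

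A smaller point: your choice $L > \max_{i,j}|\widetilde z(\xi_i) - \widetilde z(\xi_j)|$ does not, as stated, guarantee that the prong edges survive the surgery — a prong edge at $\xi_i$ changes length by $\pm\widetilde z(\xi_i)$, so the constraint you need is $L > \max_i|\widetilde z(\xi_i)|$ for the chosen representative $\widetilde z$. Your bound suffices only after normalizing $\widetilde z$ so that $0$ lies in its range (e.g.\ pinning $\widetilde z(\xi_1)=0$); this normalization should be made explicit, both for correctness of the prong-edge estimate and to make the boundedness claim $z \mapsto L_z$ well posed.
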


\begin{proof}
For $\delta \in \Xi(M)$, we have $\hol(M,\delta) = (x,0)$ for some
nonzero $x
\in \R$, and in this proof we change our notation slightly and denote
this number  $x$ by
$\hol(M, \delta)$. 

First note that $Z^{(M)} \subset \bigcap_{\delta \in \Xi(M)} Z^{(M,
  \delta)}$. Indeed, if $z \in Z^{(M)}$ then the straight line path
$\{\rel_{tz}M : t \in [0,1]\}$ is embedded in $\HH$. In particular,
for any saddle connection $\delta \in \Xi(M)$ and any $t \in [0,1]$,
$$\hol(M, \delta) + t z(\delta) = \hol(\rel_{tz}M,
\delta) \neq 0.$$  
So the path $\{tz: t \in [0,1]\}$ is contained in $Z^{(M, \delta)}$ and
  hence $z \in Z^{(M, \delta)}$. 

Now let $z \in \bigcap_{\delta \in \Xi(M)} Z^{(M, \delta)}$. 
Recall that we may think of an element $z$ 
   of $Z \cong H^0(\Sigma;\R)/H^0(S; \R)$ as
  being represented by a function $\bar{z}: \Sigma \to \R$. All such
  representatives $\bar{z}$ differ by constants, and 
\begin{equation}\label{eq: differ by constants}{
z_{ij} = \bar{z}(\xi_j) -
  \bar{z}(\xi_i) = z(\delta_{ij}),
}\end{equation}
for any path $\delta_{ij}: [0,1] \to
  S$ with $ \delta_{ij}(0) = \xi_i$, $\delta_{ij}(1) = \xi_j.$ We fix
  one representative $\bar{z}$ of $z$, and 
let 
\begin{equation}\label{eq: cond on L}
{L> \max_{i=1, \ldots, k} |\bar{z}(\xi_{i})|.
}\end{equation}
Our
  assumption about $z$ implies that 
\begin{equation}\label{eq: cond on z}{
\text{if } \delta 
 \text{ is from } \xi_i
\text{  to } \xi_j \text{ and } \hol(M,
\delta) >0 \text{, then } \hol(M, \delta) +z_{ij}>0.
}\end{equation}

Let $\vre < \vre_0(M, L)$ and let $\mathcal{N}_0 = \mathcal{N}(L,
\vre)$. Choose a marking $(f,M) \in \HH_{\mathrm{m}}$ of $M$. For each $t \in [0,1]$, we
will define a translation surface $M_t$ and a marking $f_t: S \to
M_t$, as follows. To each rectangle $R=R^\pm_e$ in $\mathcal{N}_0$ we
define a trapezoid $R_t=R^{\pm}_{t,e}$, by choosing a plane
development $\bar{R}$ of $R$ and moving the vertices of $\bar{R}$ horizontally, where
points of $\bar{R}$ which come from $\partial \mathcal{N}_0$
are not moved, and points which correspond to the singularity $\xi_i$
are moved by adding $t\bar{z}(\xi_{i})$ to their horizontal
component. See Figures \ref{fig: top correct} and \ref{fig: deformations}.

 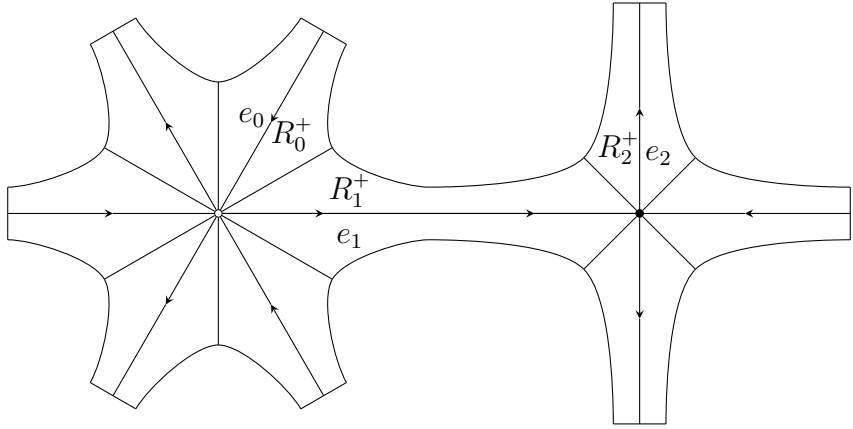
\begin{figure}[h]
\begin{tikzpicture}[scale=3.5,>=stealth]

\def\spread{0.8};

\node (lcenter) at (-\spread,0) [circle,draw,fill=white,inner sep=0pt,minimum size=1.0mm] {};

\node (rcenter) at (\spread,0) [circle,draw,fill=black,inner sep=0pt,minimum size=1.0mm] {};

\draw (lcenter) -- (rcenter);

\def\loose1{0.6}

\foreach \x in {0,60, ..., 300}
{
\draw (lcenter)-- +(\x+30:0.5);
\draw ($(lcenter)+(\x:0.8)+(\x+90:0.1)$) to [out=\x+180,in=\x-60,looseness=\loose1] ($(lcenter)+(\x+30:0.5)$);
\draw ($(lcenter)+(\x:0.8)+(\x-90:0.1)$) to [out=\x+180,in=\x+60,looseness=\loose1] ($(lcenter)+(\x-30:0.5)$);
}

\foreach \x in {0,120,240}
{
\draw[->] (lcenter)-- ++(\x:0.4);
\draw ($(lcenter)+(\x:0.4)$)--($(lcenter)+(\x:0.8)$);
}

\foreach \x in {60,180,300}
{
\draw (lcenter)-- ++(\x:0.4);
\draw[<-] ($(lcenter)+(\x:0.4)$)--($(lcenter)+(\x:0.8)$);
}

\foreach \x in {60,120, ..., 300}
\draw (lcenter)++(\x:0.8)+(\x+90:0.1) -- +(\x-90:0.1);

\node[anchor=north east] at ($(lcenter)+(60:0.6)+(-0.08,-0.08)$) {$e_0$};
\node[anchor=north] at (-0.3,-0.02) {$e_1$};
\node[anchor=west] at ($(rcenter)+(90:0.4)+(-0.02,-0.18)$) {$e_2$};

\node at (-0.52,0.3) {$R_0^+$};
\node at (-0.3,0.09) {$R_1^+$};
\node at (0.72,0.25) {$R_2^+$};

\def\loose2{0.6}
\def\short2{0.3}

\foreach \x in {0, 90,...,270}
{
\draw (rcenter)-- +(\x+45:\short2);
\draw ($(rcenter)+(\x:0.8)+(\x+90:0.1)$) to [out=\x+180,in=\x-45,looseness=\loose2] ($(rcenter)+(\x+45:\short2)$);
\draw ($(rcenter)+(\x:0.8)+(\x-90:0.1)$) to [out=\x+180,in=\x+45,looseness=\loose2] ($(rcenter)+(\x-45:\short2)$);
}

\foreach \x in {90,270}
{
\draw[->] (rcenter)-- ++(\x:0.4);
\draw ($(rcenter)+(\x:0.4)$)--($(rcenter)+(\x:0.8)$);
}

\foreach \x in {0,180}
{
\draw (rcenter)-- ++(\x:0.4);
\draw[<-] ($(rcenter)+(\x:0.4)$)--($(rcenter)+(\x:0.8)$);
}

\foreach \x in {0,90,270}
\draw (rcenter)++(\x:0.8)+(\x+90:0.1) -- +(\x-90:0.1);

\end{tikzpicture}
\caption[The complex $\mathcal{N}_0$]{The complex $\mathcal{N}_0$, presented topologically, showing
  three adjacent rectangles. The rectangles are presented with correct 
  geometries below.}
\label{fig: top correct}
\end{figure}

 \begin{figure}[h]
\begin{tikzpicture}[scale=2,>=stealth]

\def\spread{0.8};
\def\h{0.6}

\node (lcenter) at (-0.8,0) [circle,draw,fill=white,inner sep=0pt,minimum size=1.0mm] {};

\node (rcenter) at (0.8,0) [circle,draw,fill=black,inner sep=0pt,minimum size=1.0mm] {};

\coordinate (A) at (-2,0);
\coordinate (B) at (-2,\h);
\coordinate (C) at (-0.8,\h);
\coordinate (D) at (0.8,\h);
\coordinate (E) at (2,\h);
\coordinate (F) at (2,0);
+
\draw (A) -- (lcenter) -- (rcenter) -- (F) -- (E) -- (D) -- (C) -- (B)
-- (A) -- (lcenter);
\draw (lcenter) -- (C);
\draw (rcenter) -- (D);

\draw[->] (A)--+(0.6,0);
\draw[->] (lcenter)--+(0.8,0);
\draw[->] (rcenter)--+(0.6,0);

\node[anchor=north] (mid0) at ($(lcenter)!0.5!(A)$) {$e_0$};

\node[anchor=north] (mid1) at ($(lcenter)!0.5!(rcenter)$) {$e_1$};

\node[anchor=north] (mid2) at ($(rcenter)!0.5!(F)$) {$e_2$};

\node[anchor=north] at (lcenter) {$\xi_0$};

\node[anchor=north] at (rcenter) {$\xi_1$};

\node at (-1.4,0.3) {$R_0^+$};
\node at (0,0.3) {$R_1^+$};
\node at (1.4,0.3) {$R_2^+$};

\end{tikzpicture}

\end{figure}

 \begin{figure}[h]
\begin{tikzpicture}[scale=2]

\def\spread{0.8};
\def\h{0.6}

\node (lcenter) at (-1.2,0) [circle,draw,fill=white,inner sep=0pt,minimum size=1.0mm] {};

\node (rcenter) at (1.0,0) [circle,draw,fill=black,inner sep=0pt,minimum size=1.0mm] {};

\coordinate (A) at (-2,0);
\coordinate (B) at (-2,\h);
\coordinate (C) at (-0.8,\h);
\coordinate (D) at (0.8,\h);
\coordinate (E) at (2,\h);
\coordinate (F) at (2,0);

\draw (A) -- (lcenter) -- (rcenter) -- (F) -- (E) -- (D) -- (C) -- (B)
-- (A) -- (lcenter);
\draw (lcenter) -- (C);
\draw (rcenter) -- (D);

\node[anchor=north] (mid0) at ($(lcenter)!0.5!(A)$) {$e_0$};

\node[anchor=north] (mid1) at ($(lcenter)!0.5!(rcenter)$) {$e_1$};

\node[anchor=north] (mid2) at ($(rcenter)!0.5!(F)$) {$e_2$};

\node[anchor=north] at (lcenter) {$\xi_0$};

\node[anchor=north] at (rcenter) {$\xi_1$};

\node at (-1.5,0.3) {$R_0^+$};
\node at (-0.1,0.3) {$R_1^+$};
\node at (1.4,0.3) {$R_2^+$};

\end{tikzpicture}

\end{figure}

 \begin{figure}[h]
\begin{tikzpicture}[scale=2]

\def\spread{0.8};
\def\h{0.6}

\node (lcenter) at (-0.4,0) [circle,draw,fill=white,inner sep=0pt,minimum size=1.0mm] {};

\node (rcenter) at (0.8,0) [circle,draw,fill=black,inner sep=0pt,minimum size=1.0mm] {};

\coordinate (A) at (-2,0);
\coordinate (B) at (-2,\h);
\coordinate (C) at (-0.8,\h);
\coordinate (D) at (0.8,\h);
\coordinate (E) at (2,\h);
\coordinate (F) at (2,0);

\draw (A) -- (lcenter) -- (rcenter) -- (F) -- (E) -- (D) -- (C) -- (B)
-- (A) -- (lcenter);
\draw (lcenter) -- (C);
\draw (rcenter) -- (D);

\node[anchor=north] (mid0) at ($(lcenter)!0.5!(A)$) {$e_0$};

\node[anchor=north] (mid1) at ($(lcenter)!0.5!(rcenter)$) {$e_1$};

\node[anchor=north] (mid2) at ($(rcenter)!0.5!(F)$) {$e_2$};

\node[anchor=north] at (lcenter) {$\xi_0$};

\node[anchor=north] at (rcenter) {$\xi_1$};

\node at (-1.3,0.3) {$R_0^+$};
\node at (0.1,0.3) {$R_1^+$};
\node at (1.4,0.3) {$R_2^+$};

\end{tikzpicture}
\caption[Different deformations]{The effect of different deformations, transforming the
rectangles into trapezoids.}
\label{fig: deformations}

\end{figure}
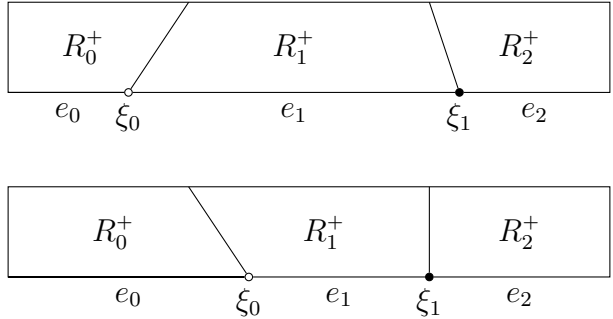

Note that for any $t \in [0,1]$, the lengths of edges of $R_t$ do not
vanish. Indeed,  $R^\pm_e$ has either one or two vertices which are in
$\Sigma$, depending on whether $e$ is a prong edge or an edge of
$\Xi(M)$. In case it is a prong edge, the length of horizontal sides
of $R$ is $L$
and \eqref{eq: cond on L} implies that the sidelength of $R_t$ is
positive, and in case it is an edge of $\Xi(M)$, \eqref{eq: cond on z}
implies that the sidelength of $R_t$ 
is positive. We glue the different trapezoids $\{R_t\}$ to each other
along their edges, using 
the same gluing that defines $\mathcal{N}_0$, to obtain a complex
$\mathcal{N}_t$. Since the sides of $R$ corresponding to 
$\partial \mathcal{N}_0$ have the same length in $R_t$, 
 their boundaries $\partial \mathcal{N}_t$ and $\partial \mathcal{N}_0$ can be
identified by a translation, and so we can glue 
$\mathcal{N}_t$ to $M \sm \mathcal{N}_0$ along
$\partial \mathcal{N}_0$. We denote the resulting translation surface
by $M_t$; clearly it is obtained from $M$ by modifying
$\mathcal{N}_0$. 

 On each rectangle $R$ we choose a homeomorphism
$\bar{f}_t : R \to R_t$ which sends vertices of $R$ to the
corresponding vertices of $R_t$, and acts affinely on each boundary
edge of $\partial R_t$. This choice ensures that $\bar{f}_t$ can be
extended consistently from rectangles to their union, defining a map $\bar{f}_t:
\mathcal{N}_0 \to \mathcal{N}_t$, and then extended to a
homeomorphism $\bar{f}_t: M \to M_t$. We set $f_t = \bar{f}_t \circ
f$. With this choice $(f_t, M_t)$ is a path in $ \HH_{\mathrm{m}}$, 
and the maps $f_t \circ f^{-1}$ are isotopic to the
identity via an isotopy fixing $\Sigma$. 

We claim that for each $t$, the pullback $\dev(f_t, M_t) = f^*_t \hol(M_t, \cdot)$ is
the cohomology class $\dev(f, M)+tz$. This will imply that $M_t =
\rel_{tz}(M)$ for all $t \in [0,1]$, which implies all the required
conclusions of the Theorem. We verify this formula on each
path $\gamma: [0,1] \to M$ between singularities $\xi_i$ and $\xi_j$. The path $\gamma$ is
homotopic to a concatenation of linear segments
$\delta_1, \ldots, \delta_\ell$ which begin and end at singular points
and are completely contained in
$\mathcal{N}_0$, 
and linear segments $\delta'_1, \ldots, \delta'_{m}$ which begin and end at
singular points and are not completely
contained in
$\mathcal{N}_0$.
Each of the segments $\delta = \delta_i$ is a
saddle connection in $\Xi(M)$, thus 
it is
one of the edges $e$ of the rectangles $R_e$, and by construction
$\hol(f^*_t M_t, \delta) = \hol(f^*M, \delta)+t z(\delta).$ If
$\delta' = \delta'_\ell$ is a path not contained in $\mathcal{N}_0$ we can subdivide
it into a concatention of paths $\sigma_1, \sigma_2, \sigma_3,
\sigma_4,$ where $\sigma_1$ goes from $\xi_1$ to $\partial
\mathcal{N}_0$, $\sigma_2$ goes from $\partial \mathcal{N}_0$ to
$\xi_j$, $\sigma_3$ (resp. $\sigma_4$) is a union of segments with
interior completely inside
(resp. outside) $\mathcal{N}$ from $\partial
\mathcal{N}_0$ to $\partial
\mathcal{N}_0$. Moreover by applying a homotopy we
can assume that on the initial surface $M$, each of the segments in  $\sigma_1$,
$\sigma_2$ and $\sigma_3$ proceeds along a vertical
line in the rectangles $R^{\pm}_e$. Now we compute the difference 
$\hol(f^*_t M_t, \sigma) -\hol(f^*M, \sigma)$ in each
case. We have 
$\hol(f^*_t M_t, \sigma_1) -\hol(f^*M, \sigma_1) = -\bar{z}(\xi_i)$
and $\hol(f^*_t M_t, \sigma_2) -\hol(f^*M, \sigma_2) =
\bar{z}(\xi_j)$ by definition of the deformed flat structure on
$R_{t,e}^{\pm}$. We also have 
$\hol(f^*_t M_t, \sigma_4) -\hol(f^*M, \sigma_4) =0$, since
$\sigma_4$ is in the complement of $\mathcal{N}_0$ where the two flat
structures are the same, and we have 
$\hol(f^*_t M_t, \sigma_3) -\hol(f^*M, \sigma_3) = 0$ since each of
the segments of 
$\sigma_3$ passes through both $R_e^+$ and
$R_e^-$ for some $e$, leaving and exiting at symmetric points, and the
change to the holonomy in these two 
rectangles cancel each other. All together we have $\hol (f^*M,
\delta') - \hol(f^*_tM_t, 
\delta')=\bar{z}(\xi_j) - \bar{z}(\xi_i)$, as required. 
\end{proof}
 
The following result says that the only obstruction to defining the $\rel$
flow is the one illustrated in Figure~\ref{fig: dec1}. 
Let $z_{ij}$ be defined as in equation \eqref{eq: differ by constants}.

\begin{cor}\label{thm: criterion} Suppose 
$Z$ is the real Rel subspace for a stratum $\HH$ as above and
let $z \in Z, \, M \in \HH$. Then $M\in\HH'_z$ exactly when
  there is no horizontal saddle connection $\delta$ on $M$ from
  singularity $\xi_i$ to singularity $\xi_j$, and $t \in [0,1]$ such
  that $\hol(M, \delta) + t z_{ij} =0$. In particular $\HH'_\infty$ is
  the set of surfaces which have no horizontal saddle 
connections joining distinct singularities. 

Furthermore, if $\HH$ has two singularities, and $v \in \mathfrak{R}
\cong \R^2$, then $M \notin \HH'_v$ if
and only if there is a saddle connection $\delta$ on $M$
from $\xi_2$ to singularity $\xi_1$, with $\hol(M, \delta) = tv$ for
some $t \in [0,1]$. 
\end{cor}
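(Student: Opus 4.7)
The plan is to deduce this corollary directly from Theorem \ref{thm: Bainbridge-Smillie surgery}. For the first assertion, I would fix $\delta \in \Xi(M)$ running from $\xi_i$ to $\xi_j$ and observe that, because $\delta$ is horizontal and $z \in Z$, both $\hol(M,\delta)$ and $z(\delta) = z_{ij}$ are scalars (valued in $\R = \R_x$). Hence $\{z \in Z : \hol(M,\delta) + z(\delta) = 0\}$ is an affine hyperplane in $Z$, and its complement splits into two open half-spaces. The component $Z^{(M,\delta)}$ containing $0$ is thus characterized by $\hol(M,\delta) + z(\delta)$ having the same sign as $\hol(M,\delta) \neq 0$; since $t \mapsto \hol(M,\delta) + t z_{ij}$ is affine in $t$, this is equivalent to $\hol(M,\delta) + t z_{ij} \neq 0$ for every $t \in [0,1]$. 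Combining with \equ{eq: ZM} yields the stated criterion.

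The characterization of $\HH'_\infty$ is then a two-line case analysis. If every horizontal saddle connection is a loop, then $z_{ii} = 0$ and the linear function above collapses to the nonzero constant $\hol(M,\delta)$, so the criterion holds for every $z \in Z$. Conversely, if some $\delta \in \Xi(M)$ joins distinct $\xi_i, \xi_j$, then $z \mapsto z_{ij}$ is a non-trivial linear functional on $Z$, so I can pick $z$ with $z_{ij} = -\hol(M,\delta)$, producing an obstruction at $t = 1$ and forcing $M \notin \HH'_z$.

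For the two-singularity case with general $v \in \mathfrak{R} \cong \R^2$, the plan is to reduce to the horizontal case by rotating. Choose $\theta$ with $r_\theta v \in Z$; by Proposition \ref{prop: distributive} (applied to both $r_\theta$ and $r_\theta^{-1}$), $M \in \HH'_v$ if and only if $r_\theta M \in \HH'_{r_\theta v}$. The horizontal saddle connections of $r_\theta M$ are exactly the saddle connections of $M$ whose holonomy is parallel to $v$. Applying the first assertion to the pair $(r_\theta M, r_\theta v)$, the obstruction becomes: some saddle connection $\delta$ of $M$ parallel to $v$ satisfies $\hol(M,\delta) + t v(\delta) = 0$ for some $t \in [0,1]$. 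Orienting $\delta$ from $\xi_2$ to $\xi_1$, so that $v(\delta) = -v$ under the identification $\mathfrak{R} \cong \R^2$, this rearranges to $\hol(M,\delta) = t v$ with $t \in [0,1]$, as claimed. The only real subtlety is the orientation bookkeeping, which is routine and poses no serious obstacle.
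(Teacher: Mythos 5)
Your proposal is correct and takes essentially the same route as the paper: deduce the first assertion by unfolding the definition of $Z^{(M,\delta)}$ in \equ{eq: ZM} (the paper calls this a ``restatement''), handle $\HH'_\infty$ by a short case analysis, and prove the two-singularity claim by rotating $v$ into $Z$ and invoking Proposition \ref{prop: distributive} with $g = r_\theta$. Your account merely spells out the half-space argument and the orientation bookkeeping that the paper leaves implicit.
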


\begin{proof} The first assertion is a restatement of \eqref{eq:
    ZM}, and the second assertion follows immediately from the
  first. For the third assertion, let $r_\theta$ be the rotation matrix for
  which $r_\theta v$ is horizontal. We obtain the assertion by
  applying the first assertion to the surface $r_\theta M$ and using
  Proposition \ref{prop: distributive} with $g=r_\theta$.  
\end{proof}
Corollary \ref{thm: criterion} was proved in 
\cite[Thm. 11.2]{MW}. See also \cite{McMullen-twists}.

Figures~\ref{fig: dec1} 
and \ref{fig: tdec1} illustrate the meaning of Corollary~\ref{thm:
  criterion}. In Figure~\ref{fig: dec1} 
the saddle connection at the top and bottom of the decagon violates
the first condition, when $v = 
(T,0)$ for $T$ which is at least as large as the length of this
segment. In Figure~\ref{fig: tdec1} there are no horizontal saddle 
connections joining distinct singularities, and as a consequence of
Corollary~\ref{thm: criterion},  
$\rel_{(T,0)}M$ is defined for all $T$.

We now derive some consequences. As a first consequence we have:

\begin{proof}[Proof of Proposition \ref{prop: convex}]
Each $Z^{(M, \delta)}$ is a half-space, and in particular is
convex. Thus Proposition \ref{prop: convex} follows immediately from
\eqref{eq: ZM}. 
\end{proof}

An immediate consequence of the explicit surgery we have presented in
the proof of 
\ref{thm: Bainbridge-Smillie surgery} is the following useful statement:

\begin{cor}\label{prop: horizontal preserved}
For any $M$ and any $t$ for which $\rel_tM$ is defined, 
there is a natural bijection between horizontal saddle connections on
$M$ and on $\rel_t M$, and for each saddle connection $\delta$ directed from
$\xi_2$ to $\xi_1$, $\hol(\rel_tM, \delta) = \hol (M, \delta)- (t,0).$
In particular $M$ and $\rel_tM$ are topologically horizontally
equivalent. 
\end{cor}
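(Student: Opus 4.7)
The plan is to read off both claims directly from the explicit surgery description of $\rel_tM$ produced in the proof of Theorem \ref{thm: Bainbridge-Smillie surgery}. The essential feature of that construction is that the surgery is localized to the rectangle thickening $\mathcal{N}=\mathcal{N}(L,\vre)$ of $\Xi_-(M)$: outside $\mathcal{N}$ the surfaces $M$ and $\rel_tM$ are literally identified by a translation in each chart, while inside $\mathcal{N}$ the rectangles $R_e^\pm$ are replaced by trapezoids $R_{t,e}^\pm$ whose top and bottom edges remain horizontal. Each horizontal saddle connection $\delta\in\Xi(M)$ runs along the horizontal edge shared by $R_\delta^+$ and $R_\delta^-$, and after surgery this edge persists as the horizontal edge shared by $R_{t,\delta}^+$ and $R_{t,\delta}^-$, hence as a horizontal saddle connection of $\rel_tM$. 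This defines a natural map $\Xi(M)\to\Xi(\rel_tM)$.

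Next I would verify the holonomy formula. Choose the representative $\bar z$ of the Rel class normalized so that $\bar z(\xi_2)-\bar z(\xi_1)=t$. The holonomy computation carried out in the proof of Theorem \ref{thm: Bainbridge-Smillie surgery} shows that for any path $\gamma$ from $\xi_i$ to $\xi_j$, the pullback holonomy in $\rel_tM$ differs from that in $M$ by $t(\bar z(\xi_j)-\bar z(\xi_i))$ in the horizontal direction. Applying this to a saddle connection $\delta$ directed from $\xi_2$ to $\xi_1$ gives $\hol(\rel_tM,\delta)=\hol(M,\delta)-(t,0)$, as required.

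For the bijection and for excluding spurious horizontal saddle connections on $\rel_tM$, I would apply the same surgery procedure to $\rel_tM$ with parameter $-t$. By Proposition \ref{prop: star}(iii) this returns $M$, and it yields a map $\Xi(\rel_tM)\to\Xi(M)$ which is inverse to the forward map on the subset coming from $\Xi(M)$. The one point that needs care --- what I expect to be the only real obstacle --- is to rule out extra horizontal saddle connections on $\rel_tM$ that are not in the image of $\Xi(M)$. Any such extra edge, pushed forward by the inverse surgery, would produce a horizontal saddle connection on $\rel_{-t}(\rel_tM)=M$ which is not in $\Xi(M)$, a contradiction; so the map is a bijection.

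Finally, topological horizontal equivalence of $M$ and $\rel_tM$ then falls out of the construction: the bijection preserves edge orientations and the cyclic order of incidences at each singularity, because the attaching instructions for the trapezoids $R_{t,e}^\pm$ in $\mathcal{N}_t$ are identical to those of the rectangles $R_e^\pm$ in $\mathcal{N}$. Choosing a marking $\check f_1$ of $\check M$ and taking $\check f_2$ to be the composition of $\check f_1$ with the natural lift of the surgery homeomorphism $M\to \rel_tM$ (which is the identity outside $\mathcal{N}$), the subsets $\check f_1^*(\Xi(M))$ and $\check f_2^*(\Xi(\rel_tM))$ of $\check S$ coincide set-theoretically and in particular are isotopic through an isotopy fixing $\partial\check S$.
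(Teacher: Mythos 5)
Your overall strategy is correct and matches what the paper has in mind: the corollary is presented there as an immediate consequence of Theorem \ref{thm: Bainbridge-Smillie surgery}, with no separate proof, and your plan of reading the forward map, the holonomy formula, and the topological equivalence directly off the rectangle-thickening surgery is exactly right. The holonomy computation and the identification $\check f_2^*(\Xi(\rel_t M)) = \check f_1^*(\Xi(M))$ follow from the construction, and the description of the forward map $f:\Xi(M)\to\Xi(\rel_tM)$ (the edge $e$ of $\Xi_-(M)$ becomes the shared horizontal edge of the two trapezoids $R_{t,e}^\pm$, which is nondegenerate precisely because $z\in Z^{(M,e)}$) is correct.

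The one place where your argument does not close as written is the contradiction ruling out extra saddle connections. You assert that pushing an extra edge $\delta'\in\Xi(\rel_tM)\setminus f(\Xi(M))$ forward by the inverse surgery ``would produce a horizontal saddle connection on $M$ which is not in $\Xi(M)$.'' But by construction the inverse surgery takes horizontal saddle connections to horizontal saddle connections, so $g(\delta')$ \emph{is} an element of $\Xi(M)$; the sentence as stated has no content. What is actually needed is the injectivity of $g$: then, since $g$ restricted to $f(\Xi(M))$ is the inverse of $f$ and hence maps onto $\Xi(M)$, injectivity forces $g(\delta')\notin \Xi(M)$, giving the contradiction. Injectivity of $g$ holds because distinct horizontal saddle connections on $\rel_tM$ lie in distinct homotopy classes relative to $\Sigma$ (the universal cover of a translation surface is CAT(0), so each homotopy class has a unique length-minimizing representative) and the inverse surgery homeomorphism induces a bijection on such homotopy classes. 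Equivalently, one can argue directly that $f\circ g=\mathrm{id}_{\Xi(\rel_tM)}$ and $g\circ f=\mathrm{id}_{\Xi(M)}$: the forward surgery path $\{\rel_{st}M\}_{s\in[0,1]}$ and its reverse concatenate to a contractible loop in period coordinates, so by uniqueness of path lifting (Proposition \ref{prop: lifting}) the composition of the two surgery homeomorphisms is isotopic to the identity rel $\Sigma$; applying CAT(0) uniqueness of geodesic representatives then gives both identities, and $f$, $g$ are mutually inverse bijections. This is a small supplement rather than a change of approach, but the contradiction you invoke does not stand without it.
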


The following will be crucial for analyzing $U$-invariant measures in
\S \ref{section: construction}. 

\begin{dfn}\label{def: defn}
Suppose $\HH = \HH(1,1)$, and let $T \in
\R \sm\{0\} $. 
  Let $\HH''_T$ \index{H@$\HH''_T$} denote the set of surfaces $M \in \HH$ for which 
\begin{itemize}
\item[(i)] $M$ contains exactly one directed saddle
  connection $\delta'$  from $\xi_2$ to $\xi_1$ 
 with $\hol(M, \delta') = (T,0)$; 
\item[(ii)] $M$ contains no directed saddle connection $\delta$ from 
 $\xi_2$ to $\xi_1$, such that $\hol(M, \delta) = (c,0)$ with $c$
 between $0$ and $T$. 
\end{itemize}
\end{dfn}

\begin{thm}\label{thm: criterion1} There is a  map 
$$
\Phi: \HH''_T \to \HH(2)
$$
which is affine in charts (hence continuous) and $U$-equivariant. For
each $M \in \HH''_T$, $\Phi(M)$ is obtained by modifying
$\mathcal{N}(\vre, L)$ for some $\vre>0, L>T$ depending on $M$. There
is a map 
$$
\Phi_{\mathrm{f}} = \Phi_{\mathrm{f}}^{(T)} : \HH''_T \to \HH_{\mathrm{f}}(2),
$$
which is a lift of $\Phi$ (i.e. $\Phi = P \circ \Phi_{\mathrm{f}}$ where $P: \HH_{\mathrm{f}}(2) \to \HH(2)$
is the projection), and $\Phi_{\mathrm{f}}$ is a homeomorphism onto its image.
\end{thm}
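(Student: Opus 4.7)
The plan is to extend the explicit surgery of Theorem \ref{thm: Bainbridge-Smillie surgery} to the boundary case $z = T \in \partial Z^{(M)}$. Given $M \in \HH''_T$, conditions (i) and (ii) of Definition \ref{def: defn}, combined with Corollary \ref{thm: criterion}, say that $M \in \HH'_t$ for all $t \in [0,T)$ but $M \notin \HH'_T$, with the unique obstruction being the collapse of $\hol(M,\delta') + T z(\delta')$ to zero. I choose the representative $\bar z : \Sigma \to \R$ of $T \in Z$ with $\bar z(\xi_1) = 0$ and $\bar z(\xi_2) = -T$, pick $L > T$ (so that \equ{eq: cond on L} holds) and $\vre < \vre_0(M,L)$, and form the rectangle thickening $\mathcal{N}(L,\vre)$ of $\Xi_-(M,L)$. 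Running the trapezoid deformation from the proof of Theorem \ref{thm: Bainbridge-Smillie surgery} all the way to $t = 1$, every trapezoid has strictly positive sidelengths except the pair $R^{\pm}_{\delta'}$, whose horizontal side collapses, thereby identifying $\xi_1$ and $\xi_2$ into a single cone point. A Gauss--Bonnet check (or a direct angle count: $4\pi + 4\pi - 2\pi = 6\pi$) shows the merged singularity has order $2$, so the resulting translation surface, which I declare to be $\Phi(M)$, lies in $\HH(2)$. Affineness in period coordinates is built into the trapezoid construction, and $U$-equivariance follows from Proposition \ref{prop: distributive} (elements of $U$ fix $Z$ pointwise) together with the fact that $U$ preserves $\delta'$ with its holonomy.

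To define $\Phi_{\mathrm{f}}(M) \in \HH_{\mathrm{f}}(2)$, I use the surgery to select one of the three right-pointing horizontal prongs at the order-$2$ singularity of $\Phi(M)$. The right-pointing prong at $\xi_1$ in $M$ which serves as the terminal endpoint of $\delta'$ survives the surgery as a specific right-pointing horizontal prong at the merged singularity of $\Phi(M)$; I declare this to be the selected prong. This choice varies continuously with $M$ (the prong is pinned down by the collapsing rectangle $R^{\pm}_{\delta'}$, whose geometry depends affinely on $M$) and is $U$-invariant, so $\Phi_{\mathrm{f}} : \HH''_T \to \HH_{\mathrm{f}}(2)$ is continuous and satisfies $P \circ \Phi_{\mathrm{f}} = \Phi$.

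The main obstacle is showing that $\Phi_{\mathrm{f}}$ is a homeomorphism onto its image; I will prove this by constructing an explicit inverse surgery. Given $N \in \HH_{\mathrm{f}}(2)$ in the image of $\Phi_{\mathrm{f}}$, the selected prong of $N$ determines a unique horizontal ray of length $T$ issuing from the singularity of $N$; by Proposition \ref{prop: continuous} the non-existence of an intermediate singular point on this ray is an open condition. Slitting $N$ along this horizontal segment and re-labelling the two endpoints of the slit as $\xi_2$ (initial) and $\xi_1$ (terminal), one obtains a surface in $\HH(1,1)$ with a distinguished horizontal saddle connection $\delta'$ of holonomy $(T,0)$ and no shorter obstruction, i.e., an element of $\HH''_T$. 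The key point is that the framing of $N$ makes this slitting unambiguous: without the selected prong there would be three right-pointing horizontal directions to choose from at the singularity of $N$, each leading to a different preimage in $\HH(1,1)$. A direct comparison of the trapezoid neighborhoods on both sides shows that this slitting is the exact reverse of the collapsing surgery defining $\Phi_{\mathrm{f}}$, yielding both injectivity of $\Phi_{\mathrm{f}}$ and affineness of the inverse in period coordinates, and hence that $\Phi_{\mathrm{f}}$ is a homeomorphism onto its image.
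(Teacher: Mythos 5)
Your overall strategy matches the paper's: degenerate the trapezoid surgery of Theorem \ref{thm: Bainbridge-Smillie surgery} at $z=T$ so that $\delta'$ collapses and the two zeros merge, select a prong near the collapse to lift to $\HH_{\mathrm{f}}(2)$, and invert by splitting the order-two zero along the selected prong. But there is a sign error that undoes the first step. For $\delta'$ directed from $\xi_2$ to $\xi_1$, the trapezoid deformation at time $t$ changes $\hol(\cdot,\delta')$ by $t\,\bigl(\bar{z}(\xi_1)-\bar{z}(\xi_2)\bigr)$; to go from $\hol(M,\delta')=(T,0)$ to $(0,0)$ at $t=1$ you need $\bar{z}(\xi_1)-\bar{z}(\xi_2)=-T$, so with $\bar{z}(\xi_1)=0$ you must take $\bar{z}(\xi_2)=+T$. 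Your choice $\bar{z}(\xi_2)=-T$ moves $\xi_2$ away from $\xi_1$: at $t=1$ one would get $\hol(\delta')=(2T,0)$, the pair of rectangles at $\delta'$ does not degenerate, and the output would not lie in $\HH(2)$. (Your $\bar{z}$ in fact represents $-T\in Z$, not $T$, since the paper identifies $Z$ with $\R$ via paths from $\xi_1$ to $\xi_2$, so the scalar is $\bar z(\xi_2)-\bar z(\xi_1)$.) Moreover, since Definition \ref{def: defn} places no constraint on saddle connections from $\xi_2$ to $\xi_1$ with negative horizontal holonomy, $\rel_{-T}M$ need not be defined at all.

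Two further points need tightening. The terminal prong of $\delta'$ at $\xi_1$ is left-pointing, not right-pointing, and since it lies on the collapsing edge it does not survive the surgery; the two right-pointing prongs at $\xi_1$ that do survive are separated by $2\pi$, and the phrase ``the one which serves as the terminal endpoint of $\delta'$'' does not distinguish them. A precise rule, as in the paper (angular distance $\pi$, counterclockwise, from a specified prong of $\delta'$), is needed. More importantly, the injectivity of $\Phi_{\mathrm{f}}$ is the real content of the theorem, and ``slitting $N$ along the horizontal ray'' together with ``a direct comparison of the trapezoid neighborhoods shows'' is an assertion rather than a construction: one must actually define the cut-and-reglue that splits the order-two zero, show the selected prong determines it uniquely, and show it reverses the collapse. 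The paper does this by choosing four prongs $q_1,\ldots,q_4$, building the triangles $\Delta_i$ and a modified rectangle decomposition of $\mathcal{N}'$, and running the inverse trapezoid deformation; something of that explicitness is required. Finally, affineness in charts is not automatic: the paper exhibits the linear map $L=\Res_{(2)}^{-1}\circ p^*\circ\Res_{(1,1)}$ via the quotient $p\colon S\to S'$ collapsing $f^{-1}(e')$, a step your one-sentence appeal to the trapezoid construction skips.
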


Suppose $T>0$. 
Note that assumption (ii) implies that $[0,T) \subset Z^{(M)}$, and
assumption (i) implies that $T \notin Z^{(M)}$; that is $T
\in \partial Z^{(M)}$. A topology on $\HH(1,1) \cup
\HH_{\mathrm{f}}(2)$ can be constructed in which the map $\Phi$ can be
recovered  as $\Phi(M) =
\lim_{s
  \to T-} \rel_s(M). $ We will not construct this topology here. 

\begin{proof} We will assume throughout the proof that $T>0$. The case in which
  $T<0$ can be dealt with by repeating the arguments below, switching
  the labels of the
  two singularities. 
Let $\delta'$ be as in (i) in the definition of $\HH''_T$ and let
$e'$ be the corresponding edge of $\Xi_-(M)$. Let $L>|T|$ and define  
$\mathcal{N}(L, \vre)$ as in the discussion prior to the statement of 
Theorem \ref{thm: Bainbridge-Smillie surgery},  where $\vre>0$ is small enough so
that the rectangles $R_e^\pm$ are all embedded in $M$. Define the polygons
$R_{T,e}^\pm$ as in the proof of Theorem \ref{thm: 
  Bainbridge-Smillie surgery}.  For all $e \neq e'$ condition (ii)
ensures that $R_{T,e}^\pm$ is a nondegenerate trapezoid. For $e'$,
the length of the edge corresponding to $e'$ is zero and so it is a
triangle. See  Figure \ref{fig: 2.1}. We glue the two triangles $R^\pm_{T, e'}$ to the
trapezoids $R^\pm_{T,e}, e \neq e'$ to obtain the complex
$\mathcal{N}_{T}$ which we glue to $M \sm \mathcal{N}_0$ as before, to
obtain $M_T$. We note that $\mathcal{N}_T$ is a thickening of a graph obtained from
$\Xi_-(M,L)$ by collapsing the edge $e'$. 
One can compute explicitly
that there is one singular point in $\mathcal{N}_T$
and that it has cone angle $6\pi$; that is $M_T \in \HH(2)$. We set
$\Phi(M)=M_T$.

 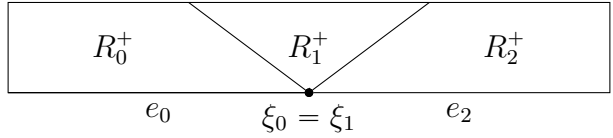
\begin{figure}[h]
\begin{tikzpicture}[scale=2]

\def\spread{0.8};
\def\h{0.6}

\node (lcenter) at (0,0) [circle,draw,fill=white,inner sep=0pt,minimum size=1.0mm] {};

\node (rcenter) at (0,0) [circle,draw,fill=black,inner sep=0pt,minimum size=1.0mm] {};

\coordinate (A) at (-2,0);
\coordinate (B) at (-2,\h);
\coordinate (C) at (-0.8,\h);
\coordinate (D) at (0.8,\h);
\coordinate (E) at (2,\h);
\coordinate (F) at (2,0);

\draw (A) -- (lcenter) -- (rcenter) -- (F) -- (E) -- (D) -- (C) -- (B)
-- (A) -- (lcenter);
\draw (lcenter) -- (C);
\draw (rcenter) -- (D);

\node[anchor=north] (mid0) at ($(lcenter)!0.5!(A)$) {$e_0$};

\node[anchor=north] (mid2) at ($(rcenter)!0.5!(F)$) {$e_2$};

\node[anchor=north] at (lcenter) {$\xi_0=\xi_1$};

\node at (-1.3,0.3) {$R_0^+$};
\node at (0.0,0.3) {$R_1^+$};
\node at (1.3,0.3) {$R_2^+$};

\end{tikzpicture}
\caption[The developing image of degenerate rectangles]{The developing image of degenerate rectangles. The trapezoid 
  $R_1^+$ has degenerated to a triangle, the edge $e'=e_1$ has
  disappeared, and the two singular points
  have coalesced.}
\label{fig: 2.1}
\end{figure}

 
  \begin{figure}[h]
\begin{tikzpicture}[scale=3.0,>=stealth]

\def\spread{0.8};

\node (lcenter) at (-\spread,0) [circle,draw,fill=white,inner sep=0pt,minimum size=1.0mm] {};

\node (rcenter) at (\spread,0) [circle,draw,fill=black,inner sep=0pt,minimum size=1.0mm] {};

\draw (lcenter) -- (rcenter);

\def\loose1{0.6}
\def\short1{0.4}
\def\endlength{0.2}

\foreach \x in {0, 90,...,270}
{

\draw (lcenter)-- +(\x+45:\short1);
\draw ($(lcenter)+(\x:0.8)+(\x+90:\endlength)$) to [out=\x+180,in=\x-45,looseness=\loose1] ($(lcenter)+(\x+45:\short1)$);
\draw ($(lcenter)+(\x:0.8)+(\x-90:\endlength)$) to [out=\x+180,in=\x+45,looseness=\loose1] ($(lcenter)+(\x-45:\short1)$);
}

\foreach \x in {0, 180}
{
\draw[->] (lcenter)-- ++(\x:0.4);
\draw ($(lcenter)+(\x:0.4)$)--($(lcenter)+(\x:0.8)$);
}

\foreach \x in {90, 270}
{
\draw (lcenter)-- ++(\x:0.4);
\draw[<-] ($(lcenter)+(\x:0.4)$)--($(lcenter)+(\x:0.8)$);
}

\foreach \x in {90,180,270}
\draw (lcenter)++(\x:0.8)+(\x+90:\endlength) -- +(\x-90:\endlength);


\def\loose2{0.6}
\def\short2{0.4}

\foreach \x in {0, 90,...,270}
{
\draw (rcenter)-- +(\x+45:\short2);
\draw ($(rcenter)+(\x:0.8)+(\x+90:\endlength)$) to [out=\x+180,in=\x-45,looseness=\loose2] ($(rcenter)+(\x+45:\short2)$);
\draw ($(rcenter)+(\x:0.8)+(\x-90:\endlength)$) to [out=\x+180,in=\x+45,looseness=\loose2] ($(rcenter)+(\x-45:\short2)$);
}

\foreach \x in {90, 270}
{
\draw[->] (rcenter)-- ++(\x:0.4);
\draw ($(rcenter)+(\x:0.4)$)--($(rcenter)+(\x:0.8)$);
}

\foreach \x in {0, 180}
{
\draw (rcenter)-- ++(\x:0.4);
\draw[<-] ($(rcenter)+(\x:0.4)$)--($(rcenter)+(\x:0.8)$);
}

\foreach \x in {0,90,270}
\draw (rcenter)++(\x:0.8)+(\x+90:\endlength) -- +(\x-90:\endlength);

\node at (-0.9,0.4) {$q_3$};
\node at (-0.67,0.4) {$R_3^+$};
\node at (-0.9,-0.4) {$q_2$};
\node at (-0.67,-0.4) {$R_2^-$};
\node at (0.9,0.4) {$q_4$};
\node at (0.67,0.4) {$R_4^+$};
\node at (0.9,-0.4) {$q_1$};
\node at (0.67,-0.4) {$R_1^-$};
\node at (0,-0.1) {$R_{\delta'}^-$};
\node at (0,0.1) {$R_{\delta'}^+$};
\node at (0.4,-0.1) {$\delta'$};

\end{tikzpicture}
\caption{The complex $\mathcal{N}_0$, shown topologically, with
  $\delta'$ connecting the two 
singularities.}
\label{fig: connecting}
\end{figure}
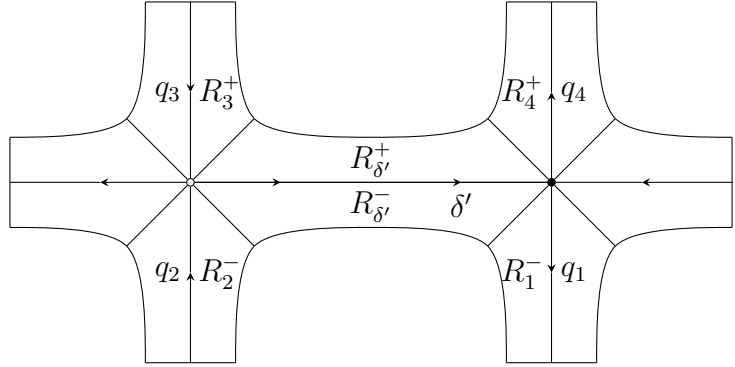

 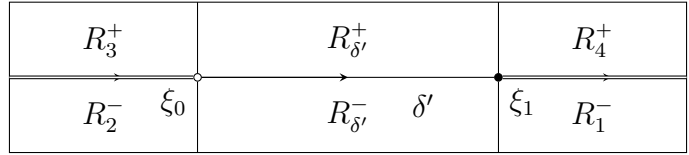
\begin{figure}[h]
\begin{tikzpicture}[scale=2.5,>=stealth]

\def\spread{0.8};
\def\h{0.4}

\node (lcenter) at (-0.8,0) [circle,draw,fill=white,inner sep=0pt,minimum size=1.0mm] {};

\node (rcenter) at (0.8,0) [circle,draw,fill=black,inner sep=0pt,minimum size=1.0mm] {};

\coordinate (A) at (-1.8,0);
\coordinate (B) at (-1.8,\h);
\coordinate (C) at (-0.8,\h);
\coordinate (D) at (0.8,\h);
\coordinate (E) at (1.8,\h);
\coordinate (F) at (1.8,0);

\draw (lcenter) -- (rcenter);
\draw (F) -- (E) -- (D) -- (C) -- (B)-- (A) ;
\draw (lcenter) -- (C);
\draw (rcenter) -- (D);

\coordinate (Bm) at (-1.8,-\h);
\coordinate (Cm) at (-0.8,-\h);
\coordinate (Dm) at (0.8,-\h);
\coordinate (Em) at (1.8,-\h);

\draw  (F) -- (Em) -- (Dm) -- (Cm) -- (Bm)
-- (A);
\draw (lcenter) -- (Cm);
\draw (rcenter) -- (Dm);

\node at (0.4,-0.15) {$\delta'$};

\draw[->] (A)--+(0.6,0);

\draw[double] (A) -- (lcenter);
\draw[->] (lcenter)--+(0.8,0);
\draw[->] (rcenter)--+(0.6,0);
\draw[double] (rcenter) -- (F);

\node[anchor=north east] at (lcenter) {$\xi_0$};
\node[anchor=north west] at (rcenter) {$\xi_1$};
\node at (-1.3,0.2) {$R_3^+$};
\node at (0,0.2) {$R_{\delta'}^+$};
\node at (1.3,0.2) {$R_4^+$};
\node at (-1.3,-0.2) {$R_2^-$};
\node at (0,-0.2) {$R_{\delta'}^-$};
\node at (1.3,-0.2) {$R_1^-$};

\end{tikzpicture}
\caption[The developing image of rectangles.]{The developing image of rectangles. The double lines represent
  two different edges on the surface.}
\label{fig: developing image}
\end{figure}


 \begin{figure}[h]
\begin{tikzpicture}[scale=2.5,>=stealth]

\def\spread{0.8};
\def\h{0.4}

\node (lcenter) at (0,0) [circle,draw,fill=white,inner sep=0pt,minimum size=1.0mm] {};

\node (rcenter) at (0,0) [circle,draw,fill=black,inner sep=0pt,minimum size=1.0mm] {};

\coordinate (A) at (-1.8,0);
\coordinate (B) at (-1.8,\h);
\coordinate (C) at (-0.8,\h);
\coordinate (D) at (0.8,\h);
\coordinate (E) at (1.8,\h);
\coordinate (F) at (1.8,0);

\draw (A) -- (lcenter) -- (rcenter) -- (F) -- (E) -- (D) -- (C) -- (B)
-- (A) -- (lcenter);
\draw (lcenter) -- (C);
\draw (rcenter) -- (D);

\coordinate (Bm) at (-1.8,-\h);
\coordinate (Cm) at (-0.8,-\h);
\coordinate (Dm) at (0.8,-\h);
\coordinate (Em) at (1.8,-\h);

\draw (A) -- (lcenter) -- (rcenter) -- (F) -- (Em) -- (Dm) -- (Cm) -- (Bm)
-- (A) -- (lcenter);
\draw (lcenter) -- (Cm);
\draw (rcenter) -- (Dm);


\node[anchor=north] at (lcenter) {$\xi$};

\draw[->] (A)--+(0.8,0);
\draw[->] (0,0)--+(0.8,0);

\node at (-1.3,0.2) {$R_3^+$};
\node at (0,0.2) {$R_{\delta'}^+$};
\node at (1.3,0.2) {$R_4^+$};
\node at (-1.3,-0.2) {$R_2^-$};
\node at (0.2,-0.25) {$R_{\delta'}^-$};
\node at (1.3,-0.2) {$R_1^-$};

\draw[double] (A) -- (lcenter);
\draw[double] (lcenter) -- (F);

\end{tikzpicture}
\caption[Coalescing singularities]{The developing image of rectangles when $\delta'$ shrinks to a
  point and the two singularities coalesce.} 

\end{figure}
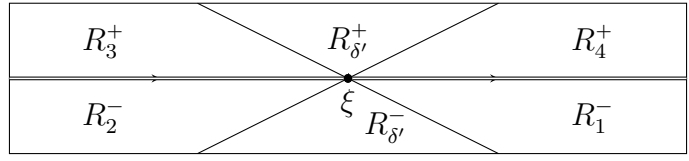
 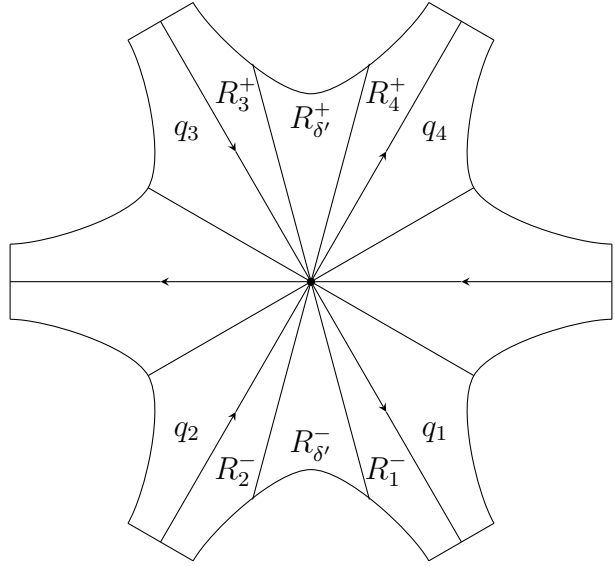
\begin{figure}[h]
\begin{tikzpicture}[scale=5.0,>=stealth]

\node (lcenter) at (0,0) [circle,draw,fill=black,inner sep=0pt,minimum size=1.0mm] {};

\def\loose1{0.6}

\foreach \x in {0,60, ..., 300}
{
\draw ($(lcenter)+(\x:0.8)+(\x+90:0.1)$) to [out=\x+180,in=\x-60,looseness=\loose1] ($(lcenter)+(\x+30:0.5)$);
\draw ($(lcenter)+(\x:0.8)+(\x-90:0.1)$) to [out=\x+180,in=\x+60,looseness=\loose1] ($(lcenter)+(\x-30:0.5)$);
\draw (lcenter)++(\x:0.8)+(\x+90:0.1) -- +(\x-90:0.1);
}
\foreach \x in {0,120,240}
{
\draw (lcenter)-- ++(\x:0.4);
\draw[<-] ($(lcenter)+(\x:0.4)$)--($(lcenter)+(\x:0.8)$);
}
\foreach \x in {60, 180, 300}
{
\draw[->] (lcenter)-- ++(\x:0.4);
\draw ($(lcenter)+(\x:0.4)$)--($(lcenter)+(\x:0.8)$);}

\foreach \x in {0,120,180, 300}
\draw (lcenter)-- +(\x+30:0.5);

\node at (-0.33,0.4) {$q_3$};
\node at (-0.33,-0.4) {$q_2$};
\node at (0.33,0.4) {$q_4$};
\node at (0.33,-0.4) {$q_1$};

\draw (lcenter)--(75:0.60);
\draw (lcenter)--(105:0.60);
\draw (lcenter)--(255:0.60);
\draw (lcenter)--(285:0.60);

\node at (0,0.43) {$R_{\delta'}^+$};
\node at (0.0,-0.43) {$R_{\delta'}^-$};
\node at (-0.2,0.5) {$R_3^+$};
\node at (0.2,0.5) {$R_4^+$};

\node at (-0.2,-0.5) {$R_2^-$};
\node at (0.2,-0.5) {$R_1^-$};

\end{tikzpicture}
\caption[Topologically correct picture.]{The corresponding topologically correct picture. The chosen prong edge
  is labeled $q_1$. } 
\label{fig: corresponding}
\end{figure}

We now show that $\Phi$ is affine in charts. We first explain what
this means. Let
$\HH_{\mathrm{m}, T}''$ be the pre-image of $\HH_T''$ in
$\HH_{\mathrm{m}}(1,1)$, and let $\dev_{(1,1)}: \HH_{\mathrm{m}}(1,1)
\to H^1(S, \{\xi_1, \xi_2\}; \R^2)$ be the developing map as in 
\eqref{eq: dev}. Condition (i) in Definition \ref{def: defn} can be expressed as a
linear condition on the image of $\dev_{(1,1)}$ and condition (ii) is
an open condition on the image of $\dev_{(1,1)}$, and thus
$\HH''_{\mathrm{m},T}$ is an affine submanifold of
$\HH_{\mathrm{m}}(1,1)$. Also let $\dev_{(2)} : \HH_{\mathrm{m}}(2)
\to H^1(S', \{\xi'\}; \R^2)$, for some model surface $S'$ of genus 2
with one distinguished point $\xi'$. To say that $\Phi$ is affine in
charts is to say that 
that there is a map $\Phi_{\mathrm{m}} : \HH''_{\mathrm{m},T} \to
\HH_{\mathrm{m}}(2)$, which is a lift of $\Phi$, and a linear map $L:
H^1(S, \{\xi_1, \xi_2\}; 
\R^2) \to H^1(S', \{\xi'\}; \R^2)$ such that 
\begin{equation}\label{eq: what we want}{
\dev_{(2)} \circ
\Phi_{\mathrm{m}} = L \circ \dev_{(1,1)}.} 
\end{equation}

Let $(f,M)
\in \HH_{\mathrm{m},T}''$ be a marked surface projecting to $M \in \HH''_T$, and
define the map $\bar{f}_T: M \to M_T$  as in 
the proof of  Theorem \ref{thm: 
  Bainbridge-Smillie surgery}. Let $f_T = f \circ \bar{f}_T: S \to M_T$. 
The
map $\bar{f}_T$ is injective on the complement of 
$e'$ and maps all points in $e'$, including its endpoints $\xi_1$ and
$\xi_2$, to the unique 
singularity of $M_T$, which we denote by $\xi$. 
Note that $f^{-1}(e')$ is a simple path connecting the 
two points $\xi_1 $ and $ \xi_2$. Let $S'$ be the
surface obtained from $S$ by collapsing  
$f^{-1}(e')$ to a point $\xi'$, and let $p:S\to S'$ be the quotient
map. Since $f^{-1}(e')$ is contractible, $S'$ is also a genus 2
surface, and since $f_T$ is constant on $f^{-1}(e')$, it descends to a
homeomorphism $S' \to M_T$, which we continue to denote by
$f_T$. We see that 
$$\Phi_{\mathrm{m}} : \HH_{\mathrm{m}, T}'' \to \HH_{\mathrm{m}}(2), \
\ \Phi_{\mathrm{m}}(f, M) = (f_T, M_T)$$ is a lift of $\Phi$. Since $f^{-1}(e')$ is contractible, 
the pullback  $p^*: H^1(S'; \R^2) \to H^1(S ; \R^2)$ is an
isomorphism. Let $\Res_{(1,1)}$ and $\Res_{(2)}$ denote the restriction maps
in \eqref{eq: exact}, in the two cases corresponding respectively to
$\HH(1,1)$ and $\HH(2)$. Since $\xi_1, \xi_2$ are contained in
$f^{-1}(e')$, and the holonomies of absolute periods are on the same
on $(f,M)$ and $(f_T, M_T)$, we find that $\Res_{(2)} \circ \dev_{(2)} \circ
\Phi_{\mathrm{m}} = p^* \circ \Res_{(1,1)} 
\circ \dev_{(1,1)}$. Since $\Res_{(2)}$ is an isomorphism, this yields
\eqref{eq: what we want} with $L = \Res_{(2)}^{-1} \circ p^* \circ \Res_{(1,1)}$. This
computation, and the fact that the action of $U$ does not move the
points of $e'$, also shows that $\Phi$ is $U$-equivariant.

We now show that $\Phi$ lifts to a continuous map $\Phi_{\mathrm{f}}: \HH''_T \to
\HH_{\mathrm{f}}(2)$. In view of the discussion in \S \ref{subsec:
  framings}, in order to lift $\Phi$ to a map to $\HH_{\mathrm{f}}(2)$ 
we need to equip $M_T$ with a right-pointing horizontal prong at the
singular point $\xi$.  
Let $\delta' = \delta'(M)$ be as in the definition of
$\HH''_T$ and let $q(M)$ be the prong which is obtained by moving an
angle $\pi$ in the 
counterclockwise direction from the terminal prong of $\delta'$, at the
singularity $\xi_2$. Then $q(M)$ is in the complement of $\delta'$
and so is mapped by $\bar{f}_T$ to a horizontal prong on
$M_T$. See Figures \ref{fig: connecting} and
\ref{fig: corresponding}, where $q(M)$ is marked as $q_1$. We need to
show that with this choice of selected prong, $\Phi_{\mathrm{f}}$ is
continuous. In light of Proposition \ref{prop: recovering framed}, it
is enough to show two things: (i) that the choice of prong $M \mapsto
q(M)$ is continuous with respect to the coordinates given by the
developing map, for any fixed triangulation; and (ii), that $M \mapsto
q(M)$ is $\Mod(\check S)$-invariant. It is clear from our description
of $q(M)$ that (i) and (ii) are satisfied.

Finally, since $\Phi_{\mathrm{f}}$ is a lift of a locally affine 
map in charts, 
 in order to show that
it is a homeomorphism onto its image we need only
verify that it is injective, and for this we explicitly construct its
inverse. We first pick one basepoint $M_0' = \Phi_{\mathrm{f}}(M_0)$ in
each connected component of the image of $\Phi$, choose a horizontal
prong at $M_0'$ using the direction of $\delta$ in $M_0$ as in the
preceding paragraph, and extend this choice continuously to all
surfaces in the image of $\Phi$. Now for any $M'$ in the image of
$\Phi$, let $\mathcal{N}'$ be the $(L, \vre)$-rectangle thickening of $\Xi_-(M')$ for
$L>|T|$ and small enough $\vre$. We consider $\mathcal{N}'$ with its
decomposition into rectangles as in the preceding discussion, and we
now modify this decomposition. Let $q_1$ be the chosen right-pointing
prong on $M'$ and let $q_2$ be the left-pointing prong which is clockwise
from $q_1$. Similarly let $q_3, q_4$ be the left- and right-pointing
prongs which are at angular
distance $3\pi$ from $q_1, q_2$ respectively. Let $\sigma_1$ (resp. $\sigma_2$)
be the two vertical segment  of lengths $\vre$ between $q_1, q_2$
(resp., between $q_3$ and $q_4$) connecting $\xi$ to the boundary of
$\partial{N}'$. For $i=1,2$, the segments $\sigma_1, \sigma_2$ are boundary
segments of two rectangles $R_i^{\pm}$ of the complex $\mathcal{N}'$,
one on each side. Let $\Delta_i$ be a triangle which is embedded in
the union of $R_i^+ \cup R_i^-$, has an apex at $\xi$, and has
$\sigma_i$ as an altitude contained in its interior. We now replace each
$R_i^{\pm}$ with $R_i^{\pm} \sm \Delta_i$, and add the $\Delta_i$ to
the polygonal decomposition of $\mathcal{N}'$. Thus we have a
decomposition of $\mathcal{N}'$ into rectangles, trapezoids, and two
triangles. To each of them we apply the map described  in the proof of
Theorem \ref{thm: Bainbridge-Smillie surgery}, with $-T$ instead of
$T$. That is, we do not move
points on $\partial \mathcal{N}'$ and the non-boundary edges. The two
triangles are thought of as 
degenerate trapezoids. The choice of
the prongs at $\xi$, and the fact that $M'$ is in the image of $\Phi$,
ensure that these operations are well-defined, that is for all $t$
strictly 
between $0$ and $T$, the deformed shapes are nondegenerate
trapezoids. Gluing them to each other using the gluing map of
$\mathcal{N}'$ and gluing the resulting complex to $M \sm
\mathcal{N}'$ completes the definition of the inverse of
$\Phi_{\mathrm{f}}$. 
\end{proof}

The image of $\Phi$ in Theorem \ref{thm: criterion1} can be described
explicitly in 
terms of the choice of horizontal prong at the singularity. Namely
suppose again that $T>0$ and that $q_1$ is the chosen prong.  Let $\bar{q}_2,
\ldots, \bar{q}_6$ be the additional
prongs at $\xi$ in counterclockwise order (note that this differs from
the labeling in Figure \ref{fig: corresponding}). Then the image of $\Phi$ is the
set of $M \in \HH_{\mathrm{f}}(2)$ which have no horizontal saddle
connections of length at most $T$ from any one of $q_1$ or $\bar{q}_3$, to
one of $\bar{q}_4$ or 
$\bar{q}_6$. 

The inverse of $\Phi$ appearing in Theorem \ref{thm: criterion1} is
the operation of `splitting open a singularity' which was discussed in
\cite{EMZ}.

\section{The eigenform locus}
\label{section: eigenform locus}
In this section we will define the eigenform locus $\EE_D$. We describe its
intersection with $\HH(1,1)$ and $\HH(2)$ and describe how it meets some boundary strata.
We summarize some properties of surfaces in the eigenform locus.

\subsection{Definition of the eigenform loci}
The eigenform loci were defined by Calta \cite{Calta} and McMullen
\cite{McMullen-JAMS}.   Calta made use of the $J$ invariant and McMullen used properties
of real multiplication on Jacobians. Here we follow the approach of McMullen.  

For every positive integer $D \equiv 0,1 \pmod 4$ with $D\geq 4$ there is a closed, connected,
$G$-invariant locus $\EE_D \subset \HH(2)\cup\HH(1,1)$, called the
\emph{eigenform locus}, which we now describe. 

An \emph{order} in a number field $F$ is a subring $\mathcal{O}$ of the ring of integers
$\mathcal{O}_F$ which is finite index as an abelian group.  Orders in quadratic fields are
particularly simple as they can be classified by a single integer $D$, the
\emph{discriminant}.  More precisely,  
for every positive integer $D \equiv 0,1 \pmod 4$, we consider the real quadratic order \begin{equation*}
  \mathcal{O}_D\index{O@$\mathcal{O}_D$}  = \zed[T]/(T^2 + bT + c),
\end{equation*}
where $b,c\in\zed$ are such that $b^2-4c=D$.  If $D$ is not square, it is a subring of the real
quadratic field, $F_D = \ratls[T]/(T^2+bT+c)$.  We also allow $D$ to be square, in which case $F_D$
is isomorphic to $\ratls\oplus\ratls$ as a $\ratls$-algebra.  In either case the isomorphism classes
of $\mathcal{O}_D$ and $F_D$ depend only on $D$.  We fix a choice of a ring homomorphism
$\iota\colon F_D\to \reals$.  When $D$ is not square, $\iota$ is a field embedding.  If $D$ is
square, $\iota$ is obtained from the projection of $\ratls\oplus\ratls$ onto its first factor.  A
more detailed discussion of orders in number fields appears in \cite{BoSh}.

Consider a genus two Riemann surface $X$ with Jacobian variety $\Jac(X) = \Omega(X)^* / H_1(X;\zed)$,
where $\Omega(X)$ is the space of holomorphic one-forms on $X$.  \emph{Real multiplication} by
$\mathcal{O}_D$ on $\Jac(X)$ is a ring monomorphism $\rho\colon \mathcal{O}_D\to \End^0(\Jac(X))$,
where $\End^0(\Jac(X))$ is the ring of endomorphisms of $\Jac(X)$ which are self-adjoint with respect
to the intersection form on $H_1(X; \zed)$.  We also require $\rho$ to be \emph{proper}, in the
sense that it does not extend to
$\mathcal{O}_E\supsetneq\mathcal{O}_D$ for some $E \mid D$. 

Real multiplication by $\mathcal{O}_D$ induces a representation of $\mathcal{O}_D$ on $\Omega(X)$,
and by self-adjointness, a decomposition of $\Omega(X)$ into complementary eigenspaces. A nonzero
holomorphic one-form on $X$ is an eigenform if it lies in one of these eigenspaces.  We say that a
pair $(X, \omega)$ is an eigenform for real multiplication if $\Jac(X)$ has real multiplication with
$\omega$ an eigenform. 

Real multiplication for curves in genus two is very special, as it can
be detected from knowledge of the 
absolute periods of a single one-form on the curve.  That is to say,
real multiplication in genus two has a `purely flat' description.
More precisely: 

\begin{prop}[\cite{Matt}]\label{prop: detecting RM}
  A genus two translation surface $M$ is an eigenform for real multiplication by
  $\mathcal{O}_D$ if and only if there is a proper monomorphism
  $\rho_0\colon \mathcal{O}_D 
  \to \End^0(H_1(M; \zed))$ such that
  \begin{equation}\label{eq:1}
   \hol(M, \rho_0(\lambda)\cdot \gamma) = \iota(\lambda)\hol(M, \gamma)
     \end{equation}
  for each $\lambda\in\mathcal{O}_D$ and $\gamma\in H_1(M; \zed)$.   
\end{prop}
The $\rho_0$ in this Proposition is simply the action on homology
induced by the real 
multiplication $\rho\colon\mathcal{O}_D\to \End^0(\Jac(M))$. 
See also \cite[Lemma~7.4]{McMullen-JAMS}, and see \cite{CS} for an
alternative approach.

We define the eigenform locus $\EE_D\subset\HH(2)\cup\HH(1,1)$ \index{E@$\EE_D$} to be
the locus of eigenforms for 
real multiplication by $\mathcal{O}_D$, and we define $\EE_D(2)$ \index{ED2@$\EE_D(2)$}and
$\EE_D(1,1)$ \index{ED1@$\EE_D(1,1)$}to be the 
intersections of $\EE_D$ with the respective strata. Similarly we
denote the corresponding subsets of area-one surfaces by
$\EE_D^{(1)}(2)$ and $\EE_D^{(1)}(1,1)$.\index{Ed@$\EE_D^{(1)}$}

The locus $\EE_D(1,1)$ is
$\GL_2(\reals)$-invariant, as it can be easily seen that the condition
of Proposition~\ref{prop: 
  detecting RM} is $G$-invariant (this was first proved in \cite{McMullen-JAMS} and
\cite{Calta}). It is also Rel invariant since this condition only involves absolute
periods. Moreover $\EE_D(1,1)$ is a six dimensional linear submanifold
of $\HH(1,1)$ with respect to 
the period coordinates from \S\ref{subsec: strata}.  To see this
explicitly, choose two generators $\gamma_1, 
\gamma_2$ of $H_1(M;\Z)$ (as an $\mathcal{O}_D$-module) and complete
to a set of four generators as 
a $\zed$-module, e.g.\ by adjoining a multiple of each $\gamma_i$ by a
generator of $\mathcal{O}_D$ 
over $\zed$. Equation \eqref{eq:1} now gives linear equations which
the vectors $\hol(M, \gamma)$ 
must satisfy, and these equations define $\EE_D(1,1)$ locally. As a
consequence $\EE_D^{(1)}(1,1)$ is a five-dimensional manifold locally
defined in period coordinates by linear equations and one quadratic
equation. 

This dimension count easily implies that $\GL_2(\reals )$-orbits and
Rel \index{Rel} leaves locally fill  $\EE_D(1,1)$. 

Recall the semi-direct product $L = G \ltimes \R^2$ introduced in \S
\ref{subsec: real rel}. The group $L$ is embedded in $\GL_2(\R) 
\ltimes \R^2$ which is an open set in a 6-dimensional vector space. 

\begin{prop}\label{prop: local
  L action}
For any $M \in \EE_D^{(1)}(1,1)$ there is a neighborhood $\mathcal{U}$ of the identity in
$L $ and a neighborhood $\mathcal{U}'$ of $M$ in  
$\EE_D^{(1)}(1,1)$ such that the map $p\colon \mathcal{U}\to \mathcal{U}'$ defined by
$$ p(g, v) = gM \pluscirc v $$ 
is the restriction of an affine homeomorphism to $\mathcal{U}$.  
\end{prop}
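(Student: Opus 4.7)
The plan is to work in period coordinates, where both the $G$-action and real Rel have explicit affine descriptions. Pick a triangulation $\tau$ of $(S, \Sigma)$ and a marking lifting $M$ into the chart $\mathcal{U}_\tau$, so that a neighborhood of $M$ in $\HH(1,1)$ is identified via $\dev$ with an open subset of $H^1(S, \Sigma; \R^2)$. Set $\alpha = \dev(M)$. By \S\ref{subsec: action} the $G$-action is given by the linear action of $G$ on the coefficients $\R^2$, and by \S\ref{subsection: real rel}, for $v \in \mathfrak{R}$ the map $\rel_v$ corresponds to translation by $v$. Therefore, wherever $p$ is defined,
\[
  \dev\bigl(p(g, v)\bigr) \;=\; g\cdot \alpha + v,
\]
which is the restriction to $L = G \ltimes \R^2$ of the global affine map $\Mat_{2\times 2}(\R) \oplus \R^2 \to H^1(S, \Sigma; \R^2)$, $(g, v) \mapsto g\cdot \alpha + v$. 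By Proposition~\ref{prop: continuous}, $p$ is defined on some neighborhood of $(e, 0)$ in $L$. It takes values in $\EE_D^{(1)}(1,1)$: the area is preserved by Proposition~\ref{prop: same area} and because $G = \SL_2(\R)$, while the eigenform condition from Proposition~\ref{prop: detecting RM} is defined by relations on absolute periods, and hence is both $G$-equivariant and Rel-invariant.

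The main step is to verify that the derivative of $p$ at $(e, 0)$ is a linear isomorphism from $T_{(e, 0)}L = \mathfrak{sl}_2(\R) \oplus \mathfrak{R}$ onto $T_M \EE_D^{(1)}(1,1)$. Dimensions already match, as $\dim L = 3 + 2 = 5 = \dim \EE_D^{(1)}(1,1)$; it is therefore enough to prove injectivity. The derivative sends $(\xi, v) \in \mathfrak{sl}_2(\R) \oplus \mathfrak{R}$ to $\xi \cdot \alpha + v \in H^1(S, \Sigma; \R^2)$, so suppose $\xi \cdot \alpha + v = 0$. Apply the restriction map $\Res\colon H^1(S, \Sigma; \R^2) \to H^1(S; \R^2)$ from the exact sequence \equ{eq: exact}: by definition, $\Res$ kills $\mathfrak{R}$, and it visibly commutes with the $G$-action on coefficients. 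Thus $\xi \cdot \bar\alpha = 0$, where $\bar\alpha = \Res(\alpha) = (\bar\alpha_x, \bar\alpha_y) \in H^1(S; \R) \oplus H^1(S; \R)$. Since $M$ has nonzero area, formula \equ{eq: hermitian} gives $\bar\alpha_x \cup \bar\alpha_y \ne 0$ in $H^2(S; \R)$, and the antisymmetry of the cup product on $H^1(S; \R)$ forces $\bar\alpha_x$ and $\bar\alpha_y$ to be $\R$-linearly independent. Expanding $\xi \cdot \bar\alpha$ componentwise then yields $\xi = 0$, and so $v = 0$.

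To conclude, since $\dev \circ p$ is a smooth map whose derivative at $(e, 0)$ is a linear isomorphism onto the 5-dimensional tangent space $T_M \EE_D^{(1)}(1,1) \subset H^1(S, \Sigma; \R^2)$, the inverse function theorem produces neighborhoods $\mathcal{U} \ni (e, 0)$ and $\mathcal{U}' \ni M$ between which $p$ is a homeomorphism. Because the formula for $\dev \circ p$ comes from the restriction of a single affine map on $\Mat_{2\times 2}(\R) \oplus \R^2$, this homeomorphism is the claimed restriction of an affine homeomorphism. The main obstacle is the transversality calculation of the previous paragraph; it is precisely there that the hypothesis of nonzero area enters, by ensuring the linear independence of the absolute period classes $\bar\alpha_x, \bar\alpha_y$ in $H^1(S; \R)$.
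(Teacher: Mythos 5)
Your argument is correct and follows essentially the same route as the paper's own proof: pass to period coordinates, where $p$ becomes the restriction of an affine map, verify the domain of definition, and conclude via the dimension count. The one substantive difference is that you explicitly verify injectivity of the derivative of $p$ at the identity by applying the restriction map $\Res$ from the exact sequence \equ{eq: exact} to separate the $\mathfrak{sl}_2$ direction from the Rel direction, and then using nonvanishing area via \equ{eq: hermitian} to obtain linear independence of $\bar\alpha_x,\bar\alpha_y$ in $H^1(S;\R)$. The paper's proof is terser: after placing the image of $p$ inside an affine chart, it simply asserts that $p$ is a homeomorphism onto its image, leaning on the preceding remark that ``this dimension count easily implies that $\GL_2(\R)$-orbits and Rel leaves locally fill out $\EE_D(1,1)$.'' You have supplied the transversality argument that the paper treats as obvious, which is a clean and self-contained way to do it. (One wording quibble: you say the eigenform condition is ``$G$-equivariant''; what you need, and what the paper states, is that the eigenform locus is $G$-invariant. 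Also, the paper verifies the domain of definition via Corollary~\ref{thm: criterion} and short saddle connections, whereas you cite Proposition~\ref{prop: continuous}; both are fine.)
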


\begin{proof}
  Consider a precompact neighborhood $\mathcal{W}$ of the identity in
  $\GL_2(\reals)$.  For some $\varepsilon>0$, 
  no surface in $\mathcal{W} \cdot M$ has saddle connections of length less than $\varepsilon$.  By
  Corollary~\ref{thm: criterion}, $p$ is well-defined on $\mathcal{V}=
  \mathcal{W} \times B_\varepsilon(0)\subset
  \GL_2(\reals)\ltimes \reals^2$.  Possibly decreasing $\varepsilon$ so that the image of $p$ is
  contained in an affine coordinate chart as defined above, $p$ is a homeomorphism onto its image.
  Since the action of $\rel$ preserves the area of surfaces, $p$ sends $L$ into the locus of
  area-one surfaces.  Intersecting $\mathcal{V}$ with $L$ and the image of $p$ with the locus of area-one
  surfaces, we obtain $\mathcal{U}$ and $\mathcal{U}'$ with the
  required properties.
\end{proof}

Recall that the Rel operations were defined in \S \ref{subsec: real rel} in terms of a
vector field $\mathfrak{R}$ and in $\HH(1,1)$ we have $\mathfrak{R}
\cong \R^2$. Also note that the $G$-action on a stratum makes it
possible to associate to each element of the Lie algebra
$\mathfrak{sl}_2(\R)$ of $G$, a vector field on $\HH$. 
From Proposition \ref{prop: local
  L action} and the Frobenious integrability criterion (see
e.g. \cite{Warner}) we obtain: 

\begin{cor}\label{cor: frobenius}
For any $D$, the vector fields on $\EE_D^{(1)}(1,1)$ obtained as the infinitesimal generators of the
$G$-action and the Rel operations, taken with the natural Poisson
bracket, have the structure of the Lie algebra $\mathfrak{l} = \mathfrak{sl}_2(\R)
\ltimes \R^2$. For any Lie subalgebra $\mathfrak{l}_0 \subset
\mathfrak{l}$, the plane field corresponding to the vector fields in
$\mathfrak{l}_0$ are integrable and define a foliation of
$\EE_D^{(1)}(1,1)$. 
\end{cor}

The eigenform locus has a more elementary description in the case where $D$ is a square.  A
translation surface $X$ is a \emph{torus cover} if there is a branched cover $p\colon X \to T$ which
is a local translation for some flat torus $T$ (note that the branch points of $p$ are not required
to lie over a single point of $T$).  We say $p$ is \emph{primitive} if it does not
factor through a torus cover of smaller degree, equivalently if the map on homology $p_*\colon H_1(X;
\mathbb{Z}) \to H_1(T; \mathbb{Z})$ is onto.  McMullen established in \cite{McMullen-SL(2)} that
$\EE_{d^2}$ is the locus of primitive degree $d$ torus covers.

\subsection{$G$-invariant measures in genus two}
We now discuss the $G$-invariant measures in genus two. 
Of course each of these is also a horocycle invariant measure. These were classified by McMullen in
\cite{McMullen-SL(2)}.  Measures supported on the full strata were constructed by Masur
\cite{Masur1} and Veech \cite{Veechstrata} using period coordinates on
these strata. 
In \cite{McMullen-SL(2)} McMullen constructed measures on
the eigenform loci in an analogous way using period coordinates.

We may use Proposition~\ref{prop: local L action} to define a measure on $\EE_D(1,1)$ by locally
pushing forward Haar measure on $L$.  
More precisely, recall that $L$ is unimodular,
that is its Haar measure is invariant under both right and left
multiplications. Given $E$ in the image $\mathcal{U}'$ of $p$,
we assign to $E$ the Haar measure of $p^{-1}(E)\subset L$. Right-invariance of Haar measure
implies that the measure of $E$ doesn't depend on the choice of
basepoint, and left-invariance implies that the measure is
$G$-invariant. McMullen \cite{McMullen-SL(2)} proved that this measure
is finite. We call this the \emph{flat measure} on 
$\EE_D(1,1)$. 

Here is an alternative description of the flat measure which can be generalized to define a measure
on any linear submanifold of a stratum.  Suppose $f\colon S\to M$ is a
marked translation surface, 
write $M=(X,\omega)$, and suppose that $\Jac(X)$ admits real
multiplication by $\mathcal{O}_D$.  The 
real multiplication on $\Jac(X)$ gives $H_1(S; \zed)$ the structure of
an $\mathcal{O}_D$-module. A 
choice of embedding $\mathcal{O}_D\to \reals$ makes $\reals^2$ an
$\mathcal{O}$-module as well.  We 
define $H^1_{\mathcal{O}_D}(S, \Sigma; \reals^2)\subset H^1(S, \Sigma;
\reals^2)$ to be the subspace 
of cocycles for which the induced period map $H_1(S; \zed) \to
\reals^2$ is $\mathcal{O}_D$-linear. 
This is in other words the space of cocycles satisfying \eqref{eq:1}.  By Proposition~\ref{prop: detecting RM}, the linear subspace $H^1_{\mathcal{O}_D}(S, \Sigma;
\reals^2)$ parameterizes the eigenform locus in $\HH_{\mathrm{m}}$ near $M$.

We have the
commutative diagram of cohomology groups (all coefficients in $\reals^2$):
$$
\xymatrix{  \ar[r]0  &  \ar[r] \ar[d] H^0(\Sigma)/H^0(S) &
  \ar[r]\ar[d] 
  H^1_{\mathcal{O}_D}(S, \Sigma) & \ar[r]\ar[d]  H^1_{\mathcal{O}_D}(S)& 0\\
  \ar[r] 0 & \ar[r] H^0(\Sigma)/H^0(S) & \ar[r] H^1(S,
  \Sigma) & \ar[r] H^1(S) & 0
}
$$
We give $\EE_D(1,1)$ a measure by defining a measure on the linear space
$H^1_{\mathcal{O}_D}(S, \Sigma)$ on which it is modeled which is invariant under the monodromy
action.  To define such a measure, we give a measure on the other two terms of the short exact
sequence.  The left term is canonically $\reals^2$, and the monodromy action is trivial.  We give it
the usual Euclidean area form.  The $H^1(S)$ term has a symplectic form arising from the
intersection form, which is preserved by the action of the mapping class group.  This descends to a
symplectic form on $H^1_{\mathcal{O}_D}(S)$ which is preserved by monodromy. This form is
non-degenerate; this could be checked by direct computation in this case, and was proved in complete generality
by Avila, Eskin and M\"oller \cite{AEM}. Therefore the symplectic form defines a volume form on
$H^1_{\mathcal{O}_D}(S)$ which is also monodromy invariant.  The product of these volume forms
induces one on $H^1_{\mathcal{O}_D}(S, \Sigma)$ which defines a measure on $\EE_D(1,1)$.  Finally,
we apply the standard cone construction (meaning we push forward the
restriction of the measure to surfaces in $\EE_D(1,1)$ of area at most
1, by the canonical
projection onto the locus of surfaces of area one; see \cite{zorich survey} for details)
to obtain a 
$G$-invariant measure on $\EE^{(1)}_D(1,1)$.

The eigenform locus $\EE_D(1,1)$ is nonempty for each $D\geq 4$ and $D \equiv 0$ or $1\pmod 4$.  In
each case it is connected 
by \cite{McMullen-JAMS}.  The $D=1$ case can be regarded as the
locus of degenerate eigenforms where one separating curve has been pinched.  This locus lies in the
boundary of $\EE_D(1,1)$ and is called the ``product locus'' in \cite{Matt_thesis}.   

In the stratum $\HH(2)$, the locus of eigenforms $\EE_D(2)$, is called the Weierstrass curve in
McMullen's papers. By \cite{mcmullenspin}, $\EE_D(2)$ consists of a single $G$-orbit if $D\not\equiv
1 \pmod 8$ and $D\geq 5$ (note $\EE_5(2)$ is empty), or if $D = 9$. Otherwise $\EE_D(2)$
consists of two orbits.  It is equipped with a finite measure coming from
Haar measure on $G$. 

The square-discriminant eigenform locus $\EE_{d^2}$ also contains a countable, dense collection of
closed $G$-orbits.  A translation surface $X$ is called a \emph{square-tiled surface} if it is a
branched cover of the standard square torus with all of the branching lying over a single point.
Every square-tiled surface has a closed $G$-orbit, and the square-tiled surfaces are dense in each
$\EE_{d^2}$.

Closed $G$-orbits inherit a measure from the Haar measure on $G$, and
this measure is finite by a result of Smillie (see
\cite{toronto}). We 
will refer to this measure as the Haar measure on the closed $G$-orbit.
The {\em decagon surface} is the surface obtained by identifying
opposite sides of the regular 10-gon. It was shown by Veech
\cite{Veech - alternative} that it has a closed $G$-orbit. It belongs to 
$\HH(1,1)$ and in fact to the eigenform locus $\EE_5(1,1)$.  We write $\mathcal{L}_{\rm dec}\subset
\EE_5(1,1)$\index{L@$\mathcal{L}_{\rm dec}$} for its $G$-orbit.
Closed $G$-orbits in genus two were constructed by Calta 
\cite{Calta} and McMullen \cite{McMullen-JAMS} and then classified by McMullen \cite{McMullen - decagon}.

\begin{thm}
  \label{thm:closed-orbit-classification}
Each connected component of $\EE_D(2)$ is a closed $G$-orbit, and
every closed $G$-orbit in $\HH(2)$ is of this form. 
  
  Every closed $G$-orbit in $\HH(1,1)$ is either the $G$-orbit of a
  square-tiled surface or is $\mathcal{L}_{\rm dec}$.
\end{thm}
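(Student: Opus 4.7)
The plan is to invoke the classification of Veech surfaces in genus two, which is essentially due to McMullen and Calta; I would structure the argument to reduce each assertion to a combination of a dimension count and the characterization of lattice surfaces in terms of real multiplication.

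First, observe that a closed $G$-orbit of $M$ is precisely the condition that the stabilizer of $M$ in $G$ (the Veech group) is a lattice. By the Veech dichotomy together with McMullen's theorem relating Veech groups in genus two to real multiplication (see \cite{McMullen-JAMS}), any Veech surface $M$ of genus two has the property that $\Jac(X)$ admits real multiplication by some order $\mathcal{O}_D$ and $\omega$ is an eigenform. Concretely, for a genus two Veech surface the trace field of the Veech group has degree at most $2$, and in either case the affine action on $H_1(X;\Z)$ gives a proper monomorphism $\mathcal{O}_D\to\End^0(H_1(M;\Z))$ satisfying \eqref{eq:1}. Hence $GM\subset \EE_D$ for some $D$.

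For the first statement, when $M\in\HH(2)$ there is no marked point in $\Sigma$ of order zero, and an analogue of Proposition~\ref{prop: local L action} for $\HH(2)$ shows that the $G$-orbit of any eigenform in $\EE_D(2)$ is open in $\EE_D(2)$: indeed, $\dim_\R \EE^{(1)}_D(2) = 3 = \dim_\R G$, because the Rel subspace $\mathfrak{R}$ is trivial when $|\Sigma|=1$ and the eigenform condition cuts out a complex curve in $\HH(2)$. Combined with the fact that $\EE_D(2)$ is $G$-invariant and closed in $\HH(2)$, and that each $G$-orbit of a Veech surface is closed in its stratum, this forces each connected component of $\EE_D(2)$ to be a single closed $G$-orbit. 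Conversely, the first paragraph shows every closed $G$-orbit in $\HH(2)$ lies in some $\EE_D(2)$.

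For the second statement, let $GM$ be a closed $G$-orbit in $\HH(1,1)$, so $M$ is a Veech surface and $M\in\EE_D(1,1)$ for some $D$. I would split the analysis according to whether $D$ is a square. If $D=d^2$ is a square, then by the characterization recalled in the excerpt, $\EE_{d^2}$ is the locus of primitive degree $d$ torus covers; a Veech surface in $\HH(1,1)$ which is a torus cover must be commensurable with a branched cover of the standard torus with all branching over a single point, i.e.\ a square-tiled surface. If $D$ is not a square, the key input is McMullen's classification of non-arithmetic Veech surfaces in $\HH(1,1)$ in \cite{McMullen - decagon}, which asserts that the only such surface (up to the $G$-action) is the regular decagon, lying in $\EE_5(1,1)$; this gives $\mathcal{L}_{\mathrm{dec}}$. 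The main obstacle is precisely this last step, for which I would cite \cite{McMullen - decagon}: the proof there uses the fact that the splitting of $M\in\HH(1,1)$ along a saddle connection produces a pair of isogenous tori, and a careful analysis of which configurations are compatible with a lattice Veech group forces the decagon up to $G$-equivalence.
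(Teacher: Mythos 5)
The paper states this theorem as a direct citation of McMullen (\cite{McMullen-JAMS}, \cite{McMullen - decagon}) and Calta, without giving its own proof, so there is no internal argument to compare against. Your sketch of how McMullen's results combine to yield the classification is a reasonable overview, and the dimension count in $\HH(2)$ — using that $\mathfrak{R}$ is trivial for one singularity, so $\EE_D^{(1)}(2)$ is $3$-dimensional and $G$-orbits are open-and-closed in it — is correct and matches the spirit of McMullen's argument (although the phrase ``Veech dichotomy together with McMullen's theorem'' is a slight misattribution: the Veech dichotomy is about the billiard dynamics and plays no role here; what you need is precisely \cite{McMullen-JAMS}'s result that genus-two lattice surfaces are eigenforms).

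There is, however, a genuine gap in your treatment of the square-discriminant case. You assert that ``a Veech surface in $\HH(1,1)$ which is a torus cover must be commensurable with a branched cover of the standard torus with all branching over a single point, i.e.\ a square-tiled surface'' as though this followed straightforwardly from the characterization of $\EE_{d^2}$ as the locus of primitive degree-$d$ torus covers. It does not: a primitive degree-$d$ torus cover in $\HH(1,1)$ has its two simple zeros mapping, a priori, to two \emph{distinct} points of the base torus, so being a torus cover does not automatically give ``all branching over a single point.'' Ruling out such non-square-tiled torus covers with lattice Veech group (equivalently, showing the branch points must be torsion-related and then reducing to the one-point case) is itself a substantial piece of McMullen's classification of closed $G$-orbits in $\HH(1,1)$. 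You explicitly flag reliance on \cite{McMullen - decagon} for the non-square case, but the square case requires an equally non-trivial external input, which your sketch treats as automatic. Either cite McMullen for both halves of the $\HH(1,1)$ classification, or supply the missing argument (e.g.\ via a torsion/covering analysis of the branch locus, or via arithmeticity of the Veech group combined with Gutkin--Judge, carefully justified).
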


In \cite{McMullen-SL(2)}, McMullen showed that the measures defined above are the full list of ergodic
$G$-invariant measures in genus two:

\begin{thm}
  \label{thm:ergodic-SLtwoR-classification}
  Every ergodic $G$-invariant measure on $\HH(2)$ is either the flat measure on the full stratum or
  Haar measure on a closed $G$-orbit.
  
  Every ergodic $G$-invariant measure on $\HH(1,1)$ is either the flat measure on the full stratum,
  the flat measure on some $\EE_D(1,1)$, or the Haar measure on a closed $G$-orbit.
\end{thm}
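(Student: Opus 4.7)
\medskip

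\noindent\textbf{Proof Plan.} My plan is to reduce the measure classification to the orbit-closure classification of McMullen. More precisely, any ergodic $G$-invariant probability measure $\mu$ has support equal to the orbit closure $\overline{G \cdot M}$ for $\mu$-a.e.\ $M$. So the first step is to invoke McMullen's orbit-closure theorem: in genus two, the $G$-orbit closure of any $M \in \HH(2) \cup \HH(1,1)$ is either the full stratum, an eigenform locus $\EE_D$, or a closed $G$-orbit. This reduces the problem to three cases per stratum, and I would handle each case by showing that the space of $G$-invariant ergodic probability measures supported on a given orbit-closure is a single point.

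The closed-orbit case is the easiest: on a closed $G$-orbit $\mathcal{L}$, the stabilizer of any point is a lattice (by the affine structure combined with the closedness), and standard arguments in homogeneous dynamics show that Haar measure is the unique $G$-invariant Borel probability measure on $\mathcal{L}$, so any ergodic measure with $\supp \mu \subseteq \mathcal{L}$ equals the Haar measure on some closed $G$-orbit contained in $\mathcal{L}$; combined with Theorem \ref{thm:closed-orbit-classification}, this yields the listed closed-orbit measures.

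For the remaining two cases (the eigenform locus $\EE_D(1,1)$ and the full strata $\HH(2), \HH(1,1)$), the key is Proposition \ref{prop: local L action} and its analogue for the full stratum. Using these local product structures, one shows that the Lebesgue-class flat measure is the unique (up to scale) $G$-invariant probability measure on $\EE_D(1,1)$ whose support is not contained in a proper $G$-invariant affine subvariety. The strategy I would carry out is: pick $M \in \supp \mu$ which is generic for $\mu$; by the orbit-closure hypothesis $\overline{G \cdot M} = \EE_D(1,1)$ (resp.\ the full stratum); using the local $L$-chart from Proposition \ref{prop: local L action}, transport $\mu$ to a measure on an open subset of $L = G \ltimes \R^2$ that is left $G$-invariant; then deduce by the structure theory of invariant measures on homogeneous-like spaces together with the fact that the $\rel$-pushforwards $\rel_{t*}\mu$ are all ergodic $G$-invariant measures with the same support, that $\mu$ must be in the Lebesgue class, and then is uniquely normalized to be the flat measure by the ergodic decomposition of the flat measure.

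The main obstacle is this last point --- ruling out singular $G$-invariant measures supported on $\EE_D(1,1)$ or the full stratum. The cleanest approach is to use McMullen's observation that the $G$-action together with $\rel$ acts transitively on an open dense subset of $\EE_D(1,1)$ in the local affine structure, so any $G$-invariant measure is determined by how it disintegrates along $\rel$-leaves, and then to invoke ergodicity of the $G$-action on the flat measure (proved via mixing of the geodesic flow, which goes back to Masur and Veech for the full stratum and is in \cite{McMullen-SL(2)} for $\EE_D$) to conclude the disintegrations are uniquely pinned down. Once this uniqueness-in-Lebesgue-class is established, combining it with the closed-orbit analysis and the orbit-closure classification gives exactly the list in the theorem.
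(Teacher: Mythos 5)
This theorem is not proved in the paper; it is cited directly from \cite{McMullen-SL(2)} (the paper says ``In \cite{McMullen-SL(2)}, McMullen showed that the measures defined above are the full list of ergodic $G$-invariant measures in genus two''). So there is no in-paper proof to compare against, and the proposal must be judged on its own.

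Your plan has two serious problems. First, a likely circularity: you propose to ``first invoke McMullen's orbit-closure theorem,'' but in \cite{McMullen-SL(2)} the full orbit-closure classification is a \emph{consequence} of the measure classification (via linearization arguments in the spirit of Dani--Margulis), so you cannot assume it when proving the measure classification. Theorem~\ref{thm:closed-orbit-classification} of the present paper only classifies \emph{closed} $G$-orbits, which is a much weaker (and differently proved) statement than the classification of all $G$-orbit closures. The general statement that every orbit closure is a stratum, an eigenform locus, or a closed orbit is exactly the analogue of Ratner's orbit-closure theorem and is not available to you upstream of the measure result.

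Second, and more fundamentally, even granting the orbit-closure input, the step that actually carries the theorem is missing. Once $\supp\mu=\EE_D(1,1)$ (or a full stratum), you need to show that $\mu$ is absolutely continuous with respect to, and in fact equal to, the flat measure. You write that one should ``transport $\mu$ to a measure on an open subset of $L=G\ltimes\R^2$ that is left $G$-invariant'' and then ``deduce by the structure theory of invariant measures on homogeneous-like spaces\ldots that $\mu$ must be in the Lebesgue class,'' then pin it down via ``ergodicity of the $G$-action on the flat measure.'' None of this is a proof. The chart from Proposition~\ref{prop: local L action} only identifies a small neighborhood with a neighborhood of $1_L$, and $\rel$ is only a partially defined pseudo-action, so there is no global $L$-space and no off-the-shelf ``structure theory'' to invoke. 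Ergodicity of the flat measure under $G$ is a statement about that one measure and says nothing about whether other singular $G$-invariant measures can exist with the same support; this is exactly the content that a Ratner-type unipotent argument (polynomial divergence, alignment of transverse directions along $U$-orbits) must supply, and it is the hard part of McMullen's theorem. Your proposal identifies this as ``the main obstacle'' but then asserts rather than overcomes it. To close the gap you would need an actual measure-rigidity argument on $\EE_D(1,1)$ and the full strata — in effect redoing the core of \cite{McMullen-SL(2)} (or appealing to the later, more general \cite{EM}, which would not be a proof in the intended chronological sense).
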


\subsection{Degenerate eigenform surfaces}
We will also be interested in eigenforms which are not genus two
surfaces but can be thought of as surfaces lying in a bordification of
$\HH(1,1)$. We will consider two cases, where the role of ``boundary 
strata'' will be played respectively by $\HH(0) \times \HH(0)$ and
$\HH(0,0)$. 

 Given a pair of genus
one translation surfaces $E_1$ and $E_2$, we may consider the
one-point connected sum $X = E_1 \# 
E_2$.  These degenerations arise from families of genus two surfaces
where a separating curve has 
been pinched.  As we have the direct sum decompositions $H_1(X) =
H_1(E_1) \oplus H_1(E_2)$ and 
$\Omega(X) = \Omega(E_1) \oplus \Omega(E_2)$, the Jacobian of $X$ is
simply the product of $E_1$ and 
$E_2$:
\begin{align*}
  \Jac(X) &= \Omega^*(X)/ H_1(X; \zed)\\
  &\cong \Omega^*(E_1)/H_1(E_1; \zed) \oplus \Omega^*(E_2) / H_1(E_2; \zed)\\
  &\cong E_1 \times E_2
\end{align*}
Just as for the smooth case, we say that $X$ is an eigenform
for real multiplication if 
$\Jac(X)$ has real multiplication, with the one-form defining the
translation structure belonging to 
one of the eigenspaces in $\Omega(X)$.   McMullen gave a more explicit description of real multiplication for
these degenerate surfaces in terms of isogenies of the $E_i$. 

Recall that $E_1$ and $E_2$ are \emph{isogenous} if there is a holomorphic covering map
$p\colon E_1\to E_2$.  The isogeny $p$ is \emph{primitive} if it cannot be written as a composition
of an isogeny of lower degree with a self-covering of $E_2$.  Existence of $p$ yields a dual isogeny
$\bar{p}\colon E_2\to E_1$, so isogeny is an equivalence relation.  In translation coordinates, an
isogeny $p$ is of the form $p(z) = \lambda z + c$ for some complex number $\lambda$ which we call
the \emph{scaling factor} of $p$.  In our setting, $\lambda$ will always be real, in which case $p$
preserves the horizontal direction but scales the metric by a factor of $\lambda$.

\begin{prop}[\cite{mcmullenspin}]\label{prop:PD explicit}
  The surface $E_1 \# E_2$ is an eigenform for real multiplication by $\mathcal{O}_D$ if and only if
  there exists a primitive degree $m$ isogeny $p\colon E_1\to E_2$, together with an integral
  solution $(e, \ell)$ to the equation $e^2 + 4m\ell^2 = D$ with $\ell > 0$ and $\gcd(e, \ell) = 1$,
  such that the scaling factor $\lambda$ is the unique real positive root to the equation $\lambda^2
  - e\lambda - \ell^2 m = 0$.
\end{prop}

We define $\mathcal{P}_D \subset \HH(0) \times \HH(0)$ to be the locus
of pairs $(E_1, E_2)$ such 
that $E_1 \# E_2$ is an eigenform for real multiplication by
$\mathcal{O}_D$.  With the diagonal 
$G$-action on $\HH(0) \times \HH(0)$, the locus $\mathcal{P}_D$ is $G$-invariant by
Proposition~\ref{prop:PD explicit}.

By \cite{mcmullenspin}, the locus $\mathcal{P}_D$ consists of finitely many closed $G$-orbits.  We
recall McMullen's classification of these $G$-orbits.  A \emph{prototype} for real
multiplication by $\mathcal{O}_D$ is a triple of integers $(e, \ell,
m)$ such that $D= e^2 + 4 
\ell^2 m$, with $\ell,m > 0$ and $\gcd(\ell,m)=1$.  A prototype $(e,
\ell, m)$ determines a prototypical 
form in $\mathcal{P}_D$ as follows.  Let $\lambda$ be the unique positive solution of
$\lambda^2=e\lambda + \ell^2 m$.  We define a pair of lattices in $\mathbb{C}$:
$$\Lambda_1 = \zed(\lambda, 0) \oplus \zed(0, \lambda) \quad \Lambda_2 = \zed(\ell m, 0)\oplus\zed(0,
\ell),$$
and associated genus one translation surfaces  $E_i = (\mathbb{C} /
\Lambda_i, dz)$.  Multiplication by 
$\lambda$ defines an isogeny $p \colon E_1 \to E_2$ of degree
$l^2m$. 

\begin{prop}[\cite{mcmullenspin}] 
  Each $G$-orbit of $\mathcal{P}_D$ contains a unique prototypical form.
\end{prop}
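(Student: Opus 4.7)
The plan is to use the algebraic structure of real multiplication to produce a prototypical representative in each $G$-orbit, and then to recover the prototype parameters from $G$-invariants of the orbit.

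First I would unpack the real multiplication structure. Given $(E_1, E_2) \in \mathcal{P}_D$, let $X = E_1 \# E_2$ and let $T \in \End(H_1(X; \Z))$ be the self-adjoint endomorphism induced by a generator $\lambda = (e+\sqrt D)/2$ of $\mathcal{O}_D$. With respect to the splitting $H_1(X;\Z) = H_1(E_1;\Z) \oplus H_1(E_2;\Z)$, write $T$ in block form. Self-adjointness with respect to the intersection pairing together with the eigenform identity $\mathrm{per}(T\gamma) = \iota(\lambda)\,\mathrm{per}(\gamma)$ constrain these blocks: the off-diagonal blocks are identified (up to sign) with the isogeny $p\colon E_1 \to E_2$ of degree $m$ furnished by Proposition~\ref{prop:PD explicit} and its dual, while the diagonal blocks have trace $e$, and the minimal polynomial $T^2 = eT + \ell^2 m$ is satisfied.

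Next I would normalize. The self-adjoint action of $T$ on $H_1(E_2;\R)$ singles out a pair of orthogonal real eigendirections, and the eigenform identity forces the period map to send these to the horizontal and vertical axes of the prototypical form. Using an element of $\mathrm{SO}_2(\R)$, rotate so one of these directions becomes horizontal on $E_2$; then apply an element of the diagonal subgroup $A \subset G$ to rescale the lattice of $E_2$ to $\Z(\ell m, 0) \oplus \Z(0,\ell)$, where $\ell$ is the unique positive integer with $D = e^2 + 4\ell^2 m$. Once $E_2$ is so normalized, the block form of $T$ and the minimal polynomial relation force the period lattice of $E_1$ to coincide with $\lambda \Z[i]$, and they force $p$ to be the isogeny induced by multiplication by $\lambda$. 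Thus the pair $(E_1,E_2)$ has been carried into the prototypical form associated to $(e,\ell,m)$, proving existence.

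For uniqueness, I would show that the triple $(e,\ell,m)$ is a $G$-invariant of the orbit. The integer $m$ is recovered as the degree of the minimal translation-cover $E_1 \to E_2$; the integer $e$ is the trace of $T|_{H_1(E_2;\Z)}$, with the ambiguity in the choice of generator of $\mathcal{O}_D$ (Galois conjugation and sign) pinned down by the conventions $\ell > 0$ and $\gcd(\ell,m) = 1$; and $\ell$ is then forced by $4\ell^2 m = D - e^2$. Since a $G$-element takes the action of $\lambda$ on $H_1(X;\Z)$ to an $\Z$-linearly conjugate operator, these quantities are unchanged, and distinct prototypes therefore give distinct $G$-orbits. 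The main obstacle will be the step that identifies the lattice of $E_1$ with $\lambda\Z[i]$ once $E_2$ is normalized: this requires turning the block-matrix relations imposed by $T^2 = eT + \ell^2 m$ and self-adjointness into an explicit identification of periods, rather than a mere isomorphism up to isogeny. The rest of the argument is essentially normalization under the well-understood $G$-action together with an invariant-theoretic check.
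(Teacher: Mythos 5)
The paper does not prove this proposition; it is cited from \cite{mcmullenspin} without argument, so there is no in-paper proof to compare your sketch against.

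Evaluating the sketch on its own terms, the overall plan (normalize by $G$ to exhibit a prototypical representative, then recover $(e,\ell,m)$ as $G$-invariants to get uniqueness) is a sensible route, and the uniqueness half is plausibly sound modulo bookkeeping of sign conventions. The existence half, however, has a gap that is larger than the one you flag. After rotating and rescaling so that $\Lambda_2 = \Z(\ell m,0)\oplus\Z(0,\ell)$, you assert that the block structure of $T$ and the relation $T^2 = eT + \ell^2 m$ ``force'' $\Lambda_1 = \lambda\Z[i]$. That cannot be right as stated: by \cite[Theorem~2.1]{mcmullenspin} the component $\mathcal{P}_D(e,\ell,m)$ is a $G$-space isomorphic to $G/\Gamma_0(m)$, whereas the stabilizer of the single torus $E_2$ in $G$ is a conjugate of all of $\SL_2(\Z)$; consequently the fiber of the projection $(E_1,E_2)\mapsto E_2$ over the prototypical $E_2$ contains $[\SL_2(\Z):\Gamma_0(m)]$ distinct partners $E_1$, not a unique one. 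Normalizing $E_2$ therefore leaves an $\SL_2(\Z)$-worth of residual freedom, and the existence claim amounts precisely to showing that this residual freedom moves $E_1$ to $\lambda\Z[i]$ --- that is not a consequence of the minimal-polynomial relation alone, and it is the actual content one needs to supply. A secondary point: you pass between the degree-$m$ translation cover of Proposition~\ref{prop:PD explicit} and the degree-$\ell^2 m$ isogeny ``multiplication by $\lambda$'' from the prototype under the single symbol $p$; these are different maps, and the block computation you intend to make explicit will go wrong if they are conflated.
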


Given a prototype $(e,\ell,m)$, we define $\mathcal{P}_D(e,\ell,m)$ to
be the closed $G$-orbit 
containing the prototypical form associated to $(e, \ell, m)$.  By the
above proposition, this gives 
a bijection between prototypes and components of $\mathcal{P}_D$.  We
say that two pairs of 
genus-one forms \emph{have the same combinatorial type}\index{combinatorial type} if they lie on
the same component of 
$\mathcal{P}_D$.  

By \cite[Theorem~2.1]{mcmullenspin}, the $G$-orbit
corresponding to the prototype 
$(e,\ell,m)$ is isomorphic to the modular curve $G/\Gamma_0(m)$, where
$\Gamma_0(m)\subset \SL(2, 
\mathbb{Z})$ is the group of matrices which are upper-triangular mod $m$.

Finally, in the case when $D=d^2$ with $d>1$, we consider the moduli
space $\HH(0,0)$ of genus one 
translation surfaces $E$ with two marked points $p$ and $q$.  Again,
there is a natural $G$-action 
on this space.  We define $\mathcal{S}_{d^2}\subset\HH(0,0)$ to be the
locus of $(E,p,q)$ such that 
$p-q$ is exactly $d$-torsion in the group law on $E$ (that is
$d(p-q)=0$ and $d'(p-q)\neq 0$ for any 
$d'<d$, in particular this implies $p\neq q$).  By \cite{Matt_thesis},
$\mathcal{S}_{d^2}$ is 
a closed $G$-orbit isomorphic to $G/\Gamma_1(d)$, where
\begin{equation*}
  \Gamma_1(d) = \left\{
    \begin{pmatrix}
      a & b \\
      c & e 
    \end{pmatrix}
    \in \SL_2(\zed)
    : a \equiv e \equiv 1 \text{ (mod } d) \text{ and } c\equiv 0 \text{ (mod } d) \right\}.
\end{equation*}\index{G@$\Gamma_1(d)$}

We can regard $(E, p,q)$ as a degenerate degree $d$ torus cover obtained by pinching a nonseparating
curve.  Since $p-q$ is a point of order
$d$ in the group law on $E$, we have a degree $d$ map $\pi\colon E\to F$, where $F$ is the quotient
of $E$ by the order $d$ subgroup generated by $p-q$.  This $\pi$ is the torus cover of minimal
degree sending
$p$ and $q$ to the same point.

The precise sense in which surfaces in the loci $\mathcal{P}_D$, $\mathcal{S}_{D}$, and $\EE_D(2)$ can all be regarded as lying in the
boundary of $\EE_D(1,1)$, is explained in \cite{Matt_thesis}. We will
not explicitly use this point of view.

\subsection{Properties of eigenform surfaces}
We will require the following properties of surfaces in $\EE_D(1,1)$.  Recall that the
\emph{modulus} of a flat cylinder is defined to be its height divided by its circumference.

\begin{prop}\label{prop: configurations HD}
  In any cylinder decomposition of a surface in $\EE_D(1,1)$ with more than one cylinder, there is
  at least one rational relation among the moduli of horizontal cylinders.
\end{prop}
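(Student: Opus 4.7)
The plan is to exploit the self-adjoint endomorphism $T\in\End(H_1(M;\zed))$ afforded by real multiplication (Proposition \ref{prop: detecting RM}) to force a rational relation among cylinder moduli. Fix $M\in\EE_D(1,1)$ with horizontal cylinder decomposition $C_1,\dots,C_n$, $n\geq 2$. I will take $\gamma_i$ to be the core curves (with $\hol(M,\gamma_i)=(c_i,0)$, $c_i>0$) and $\delta_i\subset C_i$ to be cross-curves normalized so that $\langle\gamma_i,\delta_j\rangle=\delta_{ij}$, giving $\hol(M,\delta_i)=(t_i,h_i)$ where $h_i>0$ is the cylinder height and $m_i=h_i/c_i$ the modulus. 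Picking $\lambda\in\mathcal{O}_D$ generating $\mathcal{O}_D$ over $\zed$ and setting $T=\rho_0(\lambda)$, the endomorphism $T$ is self-adjoint for the intersection form, satisfies $\hol(T\alpha)=\iota(\lambda)\hol(\alpha)$, and in particular preserves horizontal classes.

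\textbf{First I would establish a $T$-invariant $2$-plane through the cores} (assuming first that $D$ is not a square; the square case is discussed below). The claim is that $V:=\spa_\ratls\{\gamma_1,\dots,\gamma_n\}$ is $\ratls$-two-dimensional, $T$-invariant, and Lagrangian. Each $\gamma_i\neq 0$ in $H_1(M;\ratls)$ since $\hol(\gamma_i)\neq 0$, and any two cores $\gamma_i,\gamma_j$ are $\ratls$-linearly independent: a relation $\gamma_j=\mu\gamma_i$ would give $\mu=\langle\gamma_j,\delta_i\rangle=0$ (as $\delta_i\subset C_i$ is disjoint from $\gamma_j\subset C_j$), contradicting $\gamma_j\neq 0$. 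Hence $\dim_\ratls V\geq 2$. On the other hand, $W:=V+TV$ is $T$-invariant (since $T$ satisfies a degree-$2$ minimal polynomial over $\ratls$), consists entirely of horizontal classes (since $T$ preserves horizontality), and is an $F_D$-subspace of the rank-$2$ $F_D$-vector space $H_1(M;\ratls)$. Because $W$ is a proper subspace (e.g.~$\delta_1\notin W$), it has $F_D$-dimension $1$, i.e.~$\dim_\ratls W=2$. Thus $V=W$, and any two cores (say $\gamma_1,\gamma_2$) form a basis of this $T$-invariant Lagrangian; for $n=3$ the third core automatically satisfies a $\ratls$-linear relation with them.

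\textbf{Next I would run the matrix computation.} Write $T|_V$ in the basis $(\gamma_1,\gamma_2)$ as $A=\bigl(\begin{smallmatrix}a&b\\c'&d\end{smallmatrix}\bigr)\in \Mat_2(\ratls)$, with eigenvalue $\iota(\lambda)\notin\ratls$. Both off-diagonal entries $b,c'$ are nonzero, since otherwise $A$ would be triangular with rational eigenvalues. The relation $\hol(T\gamma_j)=\iota(\lambda)(c_j,0)$ reads $A^T(c_1,c_2)^T=\iota(\lambda)(c_1,c_2)^T$, so $(c_1,c_2)$ is a left $\iota(\lambda)$-eigenvector of $A$. Writing $T\delta_j=\sum_k B_{kj}\gamma_k+\sum_k C_{kj}\delta_k$, self-adjointness $\langle T\gamma_j,\delta_i\rangle=\langle\gamma_j,T\delta_i\rangle$ forces $C=A^T$, and the $y$-component of $\hol(T\delta_j)=\iota(\lambda)(t_j,h_j)$ then gives $A(h_1,h_2)^T=\iota(\lambda)(h_1,h_2)^T$, making $(h_1,h_2)$ a right $\iota(\lambda)$-eigenvector of $A$. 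The standard $2\times 2$ eigenvector formulas give $(c_1,c_2)\propto(c',\iota(\lambda)-a)$ and $(h_1,h_2)\propto(b,\iota(\lambda)-a)$, so
\[\frac{m_2}{m_1}=\frac{h_2/h_1}{c_2/c_1}=\frac{(\iota(\lambda)-a)/b}{(\iota(\lambda)-a)/c'}=\frac{c'}{b}\in\ratls,\]
producing the desired rational relation.

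\textbf{The hard part is the square discriminant case.} When $D=d^2$ is a square, $F_D\cong\ratls\oplus\ratls$ is not a field and $\iota(\lambda)\in\ratls$, so both the $F_D$-dimension count in the first step and the non-vanishing of the off-diagonal entries of $A$ in the second step can fail. My plan here is to fall back on McMullen's characterization of $\EE_{d^2}$: the surface $M$ is a primitive degree-$d$ torus cover $p\colon M\to E$, every horizontal cylinder $C_i$ of $M$ projects to the unique horizontal cylinder $C$ of $E$ as a translation cover with horizontal degree $e_i$ and vertical degree $f_i$ (so $e_if_i\mid d$), and the modulus satisfies $m_i=(f_i/e_i)m(C)$. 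All $m_i$ are then positive rational multiples of $m(C)$, which immediately gives the required rational relation and completes the argument in the remaining case.
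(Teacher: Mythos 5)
Your proposal takes a genuinely different route from the paper's. The paper's proof is a dimension count: by Proposition \ref{prop: minimal} (citing \cite{calanque}), $\overline{UM}$ is a torus of dimension $n-r$ with $r$ the number of independent rational relations among the moduli, and $r\geq 1$ is forced because an $n$-dimensional orbit closure would make the twist parameters unconstrained, contradicting the real-linear equations cutting out $\EE_D(1,1)$ in period coordinates. Your argument tries instead to exhibit $(m_1,m_2)$ directly as coming from an eigenvector of a rational matrix attached to the real-multiplication operator.

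The two-cylinder, non-square case of your argument is correct, but the three-cylinder case has a real gap, and it is exactly the case where the statement is sharp. In genus two with three horizontal cylinders the cores satisfy a nontrivial relation $\gamma_3 = a\gamma_1 + b\gamma_2$ with $a,b\in\ratls$ not both zero. Any cycles $\delta_1,\delta_2$ with $\langle\gamma_i,\delta_j\rangle=\delta_{ij}$ for $i,j\in\{1,2\}$ then have $\langle\gamma_3,\delta_j\rangle = a\delta_{1j}+b\delta_{2j}$, which is nonzero for at least one $j$. Since the $y$-period of any cycle $\alpha$ is $\sum_k h_k\langle\gamma_k,\alpha\rangle$, you get $\mathrm{Im}\,\hol(\delta_j) = h_j + \langle\gamma_3,\delta_j\rangle h_3$, not $h_j$. (Relatedly, one cannot have $\delta_1\subset C_1$ and $\delta_2\subset C_2$ both as closed cycles here, since that would force $a=b=0$; this also undercuts the justification of linear independence of $\gamma_1,\gamma_2$.) So $(h_1,h_2)$ is not the right $\iota(\lambda)$-eigenvector of $A$, and the conclusion $m_2/m_1\in\ratls$ does not follow. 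Indeed it is false: by Corollary \ref{cor: missing} there are three-cylinder eigenform surfaces with two-dimensional $U$-orbit closure, i.e.\ with exactly one rational relation among three moduli, so the moduli are not pairwise commensurable.

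The approach can be rescued with additional work. Running the computation with $\eta=(h_1+ah_3,\, h_2+bh_3)$ as the right eigenvector gives $\eta_1 c_2 = q\,\eta_2 c_1$ with $q = b/c'\in\ratls^\times$; substituting $h_j=m_jc_j$, $c_3=ac_1+bc_2$, and the rational identity $\tfrac{c_2}{c_1}-q\tfrac{c_1}{c_2}=\tfrac{d-a}{c'}$ (which also follows from the eigenvector equations for $c$) eventually produces a bona fide relation $m_1 - qm_2 + rm_3 = 0$ with $q,r\in\ratls$, $q\neq 0$. But none of this appears in your write-up. Your square-discriminant fallback has a parallel issue: the formula $m_i=(f_i/e_i)m(C)$ is only valid when the two branch points of $p\colon M\to E$ lie on a common horizontal leaf of $E$; when they do not (which happens precisely when all saddle connections are loops, i.e.\ in the three-cylinder case), each $C_i$ covers one of two horizontal strips whose heights are not in general rationally related, and the claimed commensurability of all moduli fails, even though a rational relation still exists.
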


\begin{proof}
  Given a horizontally periodic surface $M$ with $n$ cylinders, the
  orbit-closure $\overline{UM}$ is 
  a torus of dimension $n-r$, where $r$ is the number of independent
  rational relations among the 
  moduli of the cylinders (see \cite [Proposition~4]{calanque}).  We
  must then show that in any case the dimension of $\overline{UM}$ is 
  smaller than $n$.

  First, suppose $n = \dim \overline{UM} = 3$.  We may parameterize a
  neighborhood of $M$ in 
  $\HH(1,1)$ by the three holonomy vectors $(x_i, y_i)$ of a saddle
  connection joining cone points on the 
  two boundary components of the $i$-th cylinder $C_i$.  Since the
  $y_i$ are constant on 
  $\overline{UM}$ and $\dim\overline{UM}=3$, the $x_i$ are arbitrary
  on $\overline{UM}$.  However, 
  since $\overline{UM}$ is contained in the eigenform locus, the $x_i$
  satisfy the nontrivial real-linear equation \eqref{eq:1} which defines the eigenform locus (see
  also equation~(6.5) of \cite{Matt} where this equation is given explicitly), a contradiction.

  Now suppose $n = \dim \overline{UM} = 2$.  There must then be a
  horizontal saddle connection 
  joining distinct zeros.  Applying the map $\Phi$ of Theorem
  \ref{thm: criterion1},
  i.e., the real-rel flow so that the
  length of this saddle connection 
  tends to $0$, yields a surface $M'$ in $\HH(2)$ with two
  horizontal cylinders with the same 
  moduli.  A neighborhood of $M'$ in $\HH(2)$ is parameterized by two
  holonomy vectors $(x_i, y_i)$. 
  Again, if $\dim \overline{UM}=2$, the $x_i$ would be arbitrary on
  $\overline{UM'}$, which 
  contradicts the real-linear equation \eqref{eq:1} which defines the eigenform
  locus.  Alternatively, since $GM'$ is closed, $UM'$ must also be closed, and so is $UM$, a
  translate of $UM$ by real-rel.
  (Note in the two-cylinder case, the 
  claim also follows directly from \cite [Theorem~9.1]{McMullen-SL(2)}.)
\end{proof}

\begin{prop}\label{prop: ergodicity}
The $U$-action on each $\EE_D(1,1)$, with respect to the flat
measure, is ergodic. 
\end{prop}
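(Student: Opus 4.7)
The plan is to derive $U$-ergodicity of the flat measure $m_D$ on $\EE_D(1,1)$ from the $G$-ergodicity of $m_D$ via the Mautner phenomenon (the standard Howe--Moore argument for $\SL_2(\R)$). So there are two things to establish: first, that $m_D$ is a finite $G$-invariant $G$-ergodic probability measure; and second, that for such a measure, $G$-ergodicity forces $U$-ergodicity.

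For the first step, $G$-invariance and finiteness of $m_D$ were recorded in the construction of the flat measure (following McMullen \cite{McMullen-SL(2)}). To obtain $G$-ergodicity, I would apply the ergodic decomposition to $m_D$. Write $m_D=\int \mu_x\, d\nu(x)$ with each $\mu_x$ an ergodic $G$-invariant probability measure on $\HH(1,1)$. For $\nu$-a.e. $x$, $\supp\mu_x\subset\EE_D(1,1)$. By Theorem~\ref{thm:ergodic-SLtwoR-classification}, each $\mu_x$ is one of: the flat measure on the full stratum $\HH(1,1)$, the flat measure on some $\EE_{D'}(1,1)$, or Haar measure on a closed $G$-orbit. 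The first is ruled out because its support is not contained in the proper submanifold $\EE_D(1,1)$; the second with $D'\neq D$ is ruled out because distinct eigenform loci are disjoint \cite{McMullen-JAMS}. Hence each $\mu_x$ is either $m_D$ itself or Haar measure on a closed $G$-orbit contained in $\EE_D(1,1)$. By Theorem~\ref{thm:closed-orbit-classification} there are only countably many such closed orbits, and each has dimension strictly smaller than $\EE_D^{(1)}(1,1)$, so their union is $m_D$-null (using that $m_D$ has full support on a manifold of larger dimension, via the local transitive action of $L$ in Proposition~\ref{prop: local L action}). Writing $\nu(\{m_D\})=\alpha$ and comparing both sides, either $\alpha=1$ (and $m_D$ is $G$-ergodic) or $m_D$ would be supported on an $m_D$-null set, which is absurd. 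Therefore $m_D$ is $G$-ergodic.

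For the second step, I would invoke the Howe--Moore theorem: for the unitary representation of $G=\SL_2(\R)$ on $L^2(\EE_D(1,1),m_D)\ominus\C$, all matrix coefficients vanish at infinity. Suppose $f\in L^2(m_D)$ is $U$-invariant. Averaging over the $A$-action and using Howe--Moore for $A$ (applied to $f$ minus its mean), standard Mautner gives that $f$ is in fact $A$-invariant: for any $h\in L^2$ with zero mean, $\langle g_t f,h\rangle=\langle f, g_{-t}h\rangle\to 0$, but $u_s$-invariance of $f$ together with the relation $g_{-t}u_s g_t=u_{e^{-t}s}$ forces $\langle g_t f,h\rangle$ to equal $\langle f,h\rangle$ in the limit; one concludes $f$ is $B=AU$-invariant, hence (since $B$ and a rotation generate $G$, and Howe--Moore applied once more closes the argument) $G$-invariant. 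By the $G$-ergodicity proved in Step~1, $f$ is constant, so $U$ is ergodic.

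The main obstacle is the ergodic decomposition argument in Step~1, specifically the need to know that closed $G$-orbits in $\EE_D(1,1)$ form a countable family of proper submanifolds and that $m_D$ assigns them zero mass; this in turn relies on the fact that $m_D$ is locally equivalent to Lebesgue in period coordinates, combined with the dimension count coming from Proposition~\ref{prop: local L action}. Once this is in hand, the Howe--Moore/Mautner deduction in Step~2 is entirely standard.
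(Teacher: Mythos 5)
Your Step 2 (the Mautner phenomenon for $\SL_2(\R)$, reducing $U$-ergodicity to $G$-ergodicity) is exactly what the paper does. Where you diverge is Step 1: the paper establishes $G$-ergodicity by applying the Hopf argument to the geodesic flow following Masur \cite{Masur}, whereas you deduce it from the ergodic decomposition combined with McMullen's classification of ergodic $G$-invariant measures (Theorem \ref{thm:ergodic-SLtwoR-classification}). Your argument is logically sound: each ergodic component of the flat measure is a $G$-invariant $G$-ergodic measure giving full mass to $\EE_D(1,1)$, hence by the classification (and disjointness of the eigenform loci) must be either the flat measure itself or Haar measure on one of the countably many closed $G$-orbits; since the flat measure is Lebesgue in period coordinates on a $5$-dimensional manifold while closed $G$-orbits are $3$-dimensional, their union is null, forcing almost every ergodic component to equal the flat measure, which is therefore ergodic. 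The trade-off is that McMullen's proof of the classification theorem already establishes $G$-ergodicity of the flat measures as an intermediate step, so your route, while valid when one treats the classification as a black box, circles back through a substantially harder theorem; the Hopf argument the paper invokes is more direct and does not rely on the full measure classification.
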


\begin{proof}
In a $G$-action,
ergodicity of the geodesic flow implies ergodicity of the $U$-action
by the ``Mautner phenomenon'' (see e.g. \cite{EW}). So it suffices to prove ergodicity of the
$G$-action. This can be proved by applying the Hopf argument to the
geodesic flow as in \cite{Masur1}.
\end{proof}

Recall that a \emph{periodic direction} of a translation surface is a direction in which the surface
is a union of parallel cylinders and saddle connections.  A translation surface is \emph{completely
  periodic} if for any direction which contains a cylinder, the surface is a union of parallel
cylinders and saddle connections.

\begin{thm}[\cite{McMullen - decagon, Calta}]\label{thm:complete periodicity}  
  Every genus two surface $M$ which is an eigenform is completely
  periodic. Moreover for $M\in
  \EE_D(1,1)$, if 
  $\Xi(M)$ contains a saddle connection joining a singularity 
to itself, then the horizontal direction of
  $M$ is periodic.
\end{thm}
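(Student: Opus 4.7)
The plan is to prove both parts of the theorem together, reducing complete periodicity to the ``in particular'' statement through a general observation: if $M \in \EE_D(1,1)$ admits a horizontal cylinder then the core curve is a nonzero class in $H_1(M;\Z)$ with horizontal holonomy, and any cylinder in direction $\theta$ can be rotated to the horizontal by an element of $G$ (using $G$-invariance of $\EE_D$). A horizontal saddle connection joining a zero to itself is itself a closed curve and so gives such a class. Thus it suffices to show: whenever there is a nonzero $\gamma \in H_1(M;\Q)$ with horizontal holonomy, the horizontal foliation of $M$ is periodic.

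I would handle the non-square and square discriminant cases separately. For the non-square case, $F_D = \Q \otimes_\Z \mathcal{O}_D$ is a real quadratic field and $\iota \colon F_D \to \R$ is irrational. The embedding $\rho_0$ of Proposition \ref{prop: detecting RM} extends $\Q$-linearly to an action of $F_D$ on $H_1(M;\Q)$, and the rational horizontal subspace
\[
H^{\mathrm{hor}}_\Q := \{ \gamma \in H_1(M;\Q) : \operatorname{Im}(\hol(M,\gamma)) = 0 \}
\]
is $F_D$-stable by \eqref{eq:1}: if $\hol(M,\gamma) = (x,0)$ then $\hol(M,\rho_0(\lambda)\gamma) = (\iota(\lambda)x,0)$. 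Since $F_D$ is a field of degree $2$ over $\Q$, any nonzero $F_D$-submodule has even $\Q$-dimension, so a nonzero horizontal class forces $\dim_\Q H^{\mathrm{hor}}_\Q \geq 2$.

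For the square case $D=d^2$, $M$ is a primitive degree-$d$ translation cover $\pi \colon M \to T$ of a flat torus $T$. A horizontal saddle connection on $M$ from a zero to itself (or more generally a core curve of a horizontal cylinder on $M$) pushes forward to a closed horizontal geodesic on $T$, hence the horizontal direction of $T$ has rational slope and is periodic; pulling back through $\pi$ then gives horizontal periodicity on $M$.

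The remaining ingredient in the non-square case is a classical genus-two fact: a translation surface of genus $2$ with $\dim_\Q H^{\mathrm{hor}}_\Q \geq 2$ must have periodic horizontal direction. I would prove this by contradiction: if the horizontal direction is not periodic, then the horizontal decomposition of $M$ consists of some cylinders together with at least one minimal component $X_0$, and an Euler-characteristic count in genus two forces the configuration to be exactly one cylinder plus a genus-one minimal component. On $X_0$ the horizontal flow has irrational slope, so $X_0$ contributes no independent rational class to $H^{\mathrm{hor}}_\Q$; the cylinder contributes only its core curve. Hence $\dim_\Q H^{\mathrm{hor}}_\Q \leq 1$, a contradiction. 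I expect this dichotomy, together with the bookkeeping for the boundary saddle connections between the cylinder and $X_0$, to be the main technical obstacle; it is a special feature of low genus and fails in higher genus without the eigenform hypothesis.
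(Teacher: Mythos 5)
Your $\mathcal{O}_D$-stability observation is correct and is the clean algebraic half of the argument: since $\hol(M,\rho_0(\lambda)\gamma)=\iota(\lambda)\hol(M,\gamma)$ scales both components of the holonomy vector, $\ker(\hol_y)\cap H_1(M;\Q)$ is an $F_D$-submodule, and for $D$ non-square a nonzero element forces $\dim_\Q\geq 2$; the square-discriminant reduction via primitive torus covers is also sound. The gap is the so-called ``classical genus-two fact'': without the eigenform hypothesis it is false, and no Euler-characteristic bookkeeping will produce it. Take tori $T_i=\R^2/\Lambda_i$ with $\Lambda_1=\Z(1,\sqrt 2)\oplus\Z(a,\sqrt 3)$ and $\Lambda_2=\Z(c,\sqrt 2)\oplus\Z(d,\sqrt 3)$ for generic real $a,c,d$, and form $M\in\HH(1,1)$ by gluing along a short horizontal slit. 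Then $H_1(M;\Q)\cong H_1(T_1;\Q)\oplus H_1(T_2;\Q)$ and $\hol_y$ takes values in $\Q\sqrt 2+\Q\sqrt 3$, so $\dim_\Q\ker(\hol_y)=2$; but each torus has irrational horizontal slope, so the horizontal foliation of $M$ is two genus-one minimal components joined along the slit, with no cylinders, and it is certainly not periodic. For generic parameters $T_1,T_2$ admit no translation cover to each other, so by Proposition~\ref{prop:PD explicit} this $M$ is not an eigenform. This is also one of the decomposition types your dichotomy omits: a non-periodic horizontal direction in genus two may be a single genus-two minimal component, a cylinder plus a genus-one minimal piece, or (as here) two genus-one minimal pieces with no cylinder at all, and the dimension of $\ker(\hol_y)$ does not distinguish these from the periodic case.

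What actually carries the theorem is precisely the geometric datum you discard by passing to homology --- that a horizontal cylinder or closed singular leaf \emph{exists} --- combined with the quadratic structure of the periods through a Galois-conjugation argument that has no purely topological counterpart. Calta's proof runs through her $J$-invariant (a relative of the Sah--Arnoux--Fathi invariant of the return interval exchange) and a combinatorial analysis of genus-two interval exchanges; McMullen's runs through the vanishing of Galois flux on the Hilbert modular surface, which is exactly what the present paper invokes in the proof of the companion statement Theorem~\ref{thm: loops in the spine} (``zero flux follows from equation~(4.2) of \cite{McMullen-Acta} and Theorem~5.1 of \cite{McMullen-SL(2)}''). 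A corrected version of your plan would need to feed the cylinder, not merely its homology class, into one of these frameworks.
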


Given two genus one translation surfaces $E_1$ and $E_2$, consider a horizontal segment
$I\subset\reals^2$ which embeds into each $E_i$.  We may form the \emph{connected sum} $X = E_1 \#_I
E_2$ by removing the image of $I$ from each $E_I$, and then gluing the resulting boundary
components.  The result is a surface $X\in \HH(1,1)$ with two horizontal saddle connections whose
union is a loop separating $X$ into two tori (see \cite{McMullen-SL(2)} for details).

\begin{thm}\label{thm: loops in the spine}
  Suppose that $D$ is not square.
  Let $M\in \EE_D$, and suppose $M$ contains a loop which is a union of horizontal saddle
  connections.  Then either
  \begin{itemize}
  \item The horizontal direction of $M$ is periodic, or
  \item $M$ is obtained by gluing two genus one translation surfaces with uniquely ergodic
    horizontal directions along a horizontal slit.
  \end{itemize}
  In particular, in the second case $\Xi(M)$ consists of two horizontal saddle connections joining distinct
singularities which are interchanged by the hyperelliptic involution (for
  the corresponding horizontal data diagram, see 
  diagram 5 in Figure \ref{fig: nonmaximal}).\end{thm}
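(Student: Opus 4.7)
The plan is to extract from the given horizontal loop a decomposition $M = E_1 \#_I E_2$ into two genus one tori glued along a horizontal slit, and then to use the real multiplication structure to force the horizontal ergodic behavior on $E_1$ and $E_2$ to agree. First, if any edge of the given loop is a saddle connection joining a zero to itself, then Theorem~\ref{thm:complete periodicity} immediately gives that the horizontal direction of $M$ is periodic, which is the first conclusion. From here on assume every horizontal saddle connection of $M$ joins the two distinct zeros $\xi_1$ and $\xi_2$.

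Under this assumption I would extract a bigon $\gamma = \delta_1 \cup \delta_2$ from the loop, where $\delta_1, \delta_2$ are two parallel horizontal saddle connections joining $\xi_1$ to $\xi_2$. The next step is to argue that $\gamma$ is separating, which I would do using the hyperelliptic involution $\tau$ of $M$: it interchanges $\xi_1$ and $\xi_2$, preserves the horizontal direction, and acts as $-\mathrm{id}$ on $H_1(M;\mathbb{Z})$. In the minimal case $\Xi(M) = \{\delta_1,\delta_2\}$, $\tau$ must permute $\{\delta_1,\delta_2\}$, so $\tau(\gamma) = \gamma$ set-theoretically; combined with $\tau_* = -\mathrm{id}$, this forces $[\gamma] = 0$ in $H_1(M;\mathbb{Z})$ and in particular $|\delta_1| = |\delta_2|$. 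Cutting $M$ along the separating curve $\gamma$ and collapsing each boundary circle to an arc then yields $M = E_1 \#_I E_2$, with $I$ a horizontal slit of this common length.

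With the decomposition in hand, I would exploit the eigenform condition as follows. Apply the real rel flow in the direction that simultaneously shrinks $\delta_1$ and $\delta_2$ (well defined since $|\delta_1| = |\delta_2|$). By Theorem~\ref{thm: Bainbridge-Smillie surgery}, $\rel_{(t,0)}M$ is defined for $t$ strictly smaller in absolute value than this common length, and at the boundary of its interval of definition the surgery collapses both saddle connections at once. The limit is the stable eigenform $E_1 \# E_2 \in \mathcal{P}_D$, since the eigenform condition depends only on absolute periods (Proposition~\ref{prop: detecting RM}), which are preserved by real rel. Proposition~\ref{prop:PD explicit} then provides a translation isogeny $p\colon E_1 \to E_2$, given in local charts by multiplication by a positive real scalar $\lambda$ with $\lambda\Lambda_1 \subset \Lambda_2$ of finite index, where $\Lambda_i$ is the period lattice of $E_i$. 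Since $\lambda$ is real, this inclusion preserves the horizontal direction, and together with the finite-index condition it implies that $\Lambda_1$ contains a nonzero horizontal vector if and only if $\Lambda_2$ does; that is, $E_1$ is horizontally periodic if and only if $E_2$ is.

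To conclude, I combine these observations. Because $I$ is horizontal, horizontal trajectories on $M$ cannot cross it, so away from the two saddle connections comprising $\gamma$ the horizontal foliation on $M$ is the disjoint union of the horizontal foliations on $E_1 \setminus I$ and $E_2 \setminus I$. On each torus the horizontal flow is either periodic (rational direction) or uniquely ergodic (Kronecker). In the first case the cylinder decompositions of $E_1$ and $E_2$ assemble into a horizontal cylinder decomposition of $M$, giving the first conclusion; in the second case we land in the second conclusion. The hard part will be the possibility that $\Xi(M)$ contains three or four horizontal saddle connections between $\xi_1$ and $\xi_2$: then it is not immediate that the bigon $\gamma$ one extracts is $\tau$-invariant. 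I would handle this either by producing a $\tau$-invariant bigon directly (using that $\tau$ permutes the finite set $\Xi(M)$ of at most four elements) or by showing that the existence of three or more parallel horizontal saddle connections between $\xi_1$ and $\xi_2$ imposes enough linear relations on the absolute period lattice to force horizontal periodicity, using that $D$ is non-square.
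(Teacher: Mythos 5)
Your approach is genuinely different from the paper's. The paper's proof is essentially a citation: it invokes McMullen's ``Galois flux'' dichotomy from \cite{McMullen-Acta}, which says that for an eigenform $(X,\omega)$ with $\RE\omega$ of zero flux, a horizontal loop of saddle connections forces either horizontal periodicity or the slit-torus decomposition, and that zero flux holds when $D$ is non-square. Everything except Lemma~\ref{lem: interchange equals disconnect} is outsourced. You instead attempt a direct geometric argument: establish that the bigon is separating via the hyperelliptic involution, cut to get two slit tori, flow by real Rel into $\mathcal{P}_D$, use Proposition~\ref{prop:PD explicit} to get an isogeny, and conclude the two horizontal flows are simultaneously periodic or simultaneously uniquely ergodic. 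This is a more elementary and self-contained route, and the isogeny argument at the end is correct. However, there is a real gap in the middle.

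The step that fails is ``$\tau(\gamma)=\gamma$ set-theoretically; combined with $\tau_*=-\mathrm{id}$, this forces $[\gamma]=0$.'' When $\tau$ \emph{interchanges} $\delta_1$ and $\delta_2$ it preserves the orientation of $\gamma$ (since $\tau^*\omega=-\omega$), so $[\gamma]=\tau_*[\gamma]=-[\gamma]$ and indeed $[\gamma]=0$. But when $\tau$ \emph{fixes} each $\delta_i$ it reverses the orientation of each, so $\tau_*[\gamma]=-[\gamma]$ gives $-[\gamma]=-[\gamma]$ and no information at all. In fact the fixing case yields a non-separating $\gamma$ (this is exactly Lemma~\ref{lem: interchange equals disconnect}), and, as Theorem~\ref{thm: loops in the spine square case} shows, this configuration genuinely occurs — it produces a self-connected sum of a torus with two marked points differing by $d$-torsion, which forces $D=d^2$. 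So to make your argument work you must still prove: \emph{if $D$ is not a square, the hyperelliptic involution cannot fix both horizontal saddle connections.} That requires running the reverse of the square-case argument: cutting along the two fixed saddle connections yields a genus-one surface with two boundary components of trivial holonomy, i.e.\ a torus $(E,p,q)$; imposing real multiplication on the self-connected-sum then forces $p-q$ to be torsion and $D$ to be a square, contradiction. This is a nontrivial extra step that your writeup presupposes rather than proves. The ``$3$ or $4$ saddle connections'' issue you flag is, by contrast, a non-issue: if $M$ has a horizontal saddle connection from a zero to itself, Theorem~\ref{thm:complete periodicity} gives periodicity, and once you have the slit-torus decomposition with uniquely ergodic horizontal directions, the complement of the slit contains no further horizontal saddle connections, so $\Xi(M)$ is exactly $\{\delta_1,\delta_2\}$.
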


\begin{proof}
  We regard $M$ as a Riemann surface $X$ equipped with a holomorphic one-form $\omega$.  By
  \cite{McMullen-Acta}, if $\RE\omega$ has zero ``Galois flux'' and $\Xi(M)$ contains a loop,
  then we are in one of these two cases.  Zero flux follows from equation (4.2) of
  \cite{McMullen-Acta} and Theorem~5.1 of \cite{McMullen-SL(2)}.

  The final statement is proved in the following lemma.
\end{proof}

\begin{lem}
  \label{lem: interchange equals disconnect}
  Let $M\in \HH(1,1)$ be a surface with two horizontal saddle connections $I$ and $I'$ joining
  distinct 
singularities. The loop $\gamma = I\cup I'$ disconnects $M$ if and only if the hyperelliptic
  involution interchanges $I$ and $I'$.  
\end{lem}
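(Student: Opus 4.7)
The plan is to prove the two directions separately. For the reverse direction ($\sigma$ swaps $\Rightarrow \gamma$ separates), I would orient both $I$ and $I'$ from $\xi_1$ to $\xi_2$, so that the loop $\gamma$ corresponds to the absolute class $[I] - [I'] \in H_1(M;\Z) \hookrightarrow H_1(M, \{\xi_1, \xi_2\};\Z)$. The hyperelliptic involution $\sigma$ swaps $\xi_1$ and $\xi_2$: since $M/\sigma = \mathbb{P}^1$ has no holomorphic one-forms, $\sigma^*\omega = -\omega$ for every $\omega \in H^0(K_M)$, and a local computation near a simple zero (writing $\omega = w\,dw$ and $\sigma(w) = aw + O(w^2)$ with $a^2 = -1$) shows $\sigma$ cannot fix a simple zero of $\omega$. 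Combined with the hypothesis $\sigma(I) = I'$, one gets $\sigma_*[I] = -[I']$ and hence $\sigma_*[\gamma] = [\gamma]$. On the other hand, $\sigma$ acts as $-\mathrm{id}$ on $H^0(K_M)$ and therefore as $-\mathrm{id}$ on $H_1(M;\Z)$, so $[\gamma] = -[\gamma]$, forcing $[\gamma] = 0$ in the torsion-free $\Z^4$. An embedded null-homologous loop in a closed oriented surface separates, so $\gamma$ disconnects $M$.

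For the forward direction, suppose $\gamma$ separates $M = M_1 \cup M_2$. Then $[\gamma] = 0$ gives $\hol(M,I) = \hol(M,I')$ and in particular $|I| = |I'|$. An Euler characteristic computation gives $g(M_1) + g(M_2) = 2$, and a genus-zero component is ruled out by Gauss-Bonnet: a flat disk whose boundary is two horizontal geodesic arcs meeting at $\xi_1,\xi_2$ would satisfy $(\pi-\alpha_1) + (\pi-\alpha_2) = 2\pi$, forcing $\alpha_1+\alpha_2 = 0$, impossible. Hence $g(M_1) = g(M_2) = 1$, and each $M_i$ is obtained from a flat torus $E_i$ by cutting a horizontal slit from $\xi_1$ to $\xi_2$ of length $|I|$.

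On each $E_i = \C/\Lambda_i$ consider the elliptic involution $\tau_i(z) = (\xi_1 + \xi_2) - z$; it swaps $\xi_1 \leftrightarrow \xi_2$, preserves the slit setwise, and acts on the slit by $\alpha_i(t) \mapsto \alpha_i(|I|-t)$, swapping its two sides when the slit is cut open. Parametrizing both $I$ and $I'$ by distance $t \in [0,|I|]$ from $\xi_1$, the restriction of $\tau_i$ to $\gamma$ sends $I(t) \mapsto I'(|I|-t)$ and $I'(t) \mapsto I(|I|-t)$; this formula is intrinsic, so $\tau_1|_\gamma = \tau_2|_\gamma$, and the $\tau_i$ combine to a holomorphic involution $\tau$ of $M$ that manifestly interchanges $I$ and $I'$.

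It remains to identify $\tau$ with $\sigma$. By Mayer-Vietoris applied to $M = M_1 \cup M_2$ with $M_1 \cap M_2 = \gamma$, using that $[\gamma] = 0$ in each $H_1(M_i)$, one obtains $H_1(M;\Z) = H_1(M_1) \oplus H_1(M_2)$; the inclusion $M_i \hookrightarrow E_i$ induces an isomorphism $H_1(M_i) \cong H_1(E_i) \cong \Z^2$, on which $\tau_{i*}$ is $-\mathrm{id}$. Hence $\tau_* = -\mathrm{id} = \sigma_*$ on $H_1(M;\Z)$, so the holomorphic automorphism $\tau\sigma$ acts trivially on $H_*(M)$. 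By the Lefschetz fixed point formula, a nontrivial finite-order holomorphic automorphism of a genus two Riemann surface has only isolated fixed points and thus nonnegative Lefschetz number, whereas trivial action on $H_*$ would give Lefschetz number $\chi(M) = -2$. Therefore $\tau\sigma = \mathrm{id}$, i.e., $\tau = \sigma$, and $\sigma$ interchanges $I$ and $I'$. The main technical point to verify carefully is the compatibility of the local involutions $\tau_i$ across $\gamma$, which relies on $|I| = |I'|$ and the fact that the intrinsic boundary action depends only on the basepoint $\xi_1$ and the common length $|I|$.
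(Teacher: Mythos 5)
Your proof of the lemma is correct, but the two directions compare differently to the paper's proof.

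For the direction ``interchange $\Rightarrow$ separate,'' you argue exactly as the paper does: the involution acts by $-\mathrm{id}$ on $H_1$, while the orientation bookkeeping on $\gamma = I\cup I'$ shows that if the involution swaps $I$ and $I'$ then $\sigma_*[\gamma]=[\gamma]$, forcing $[\gamma]=0$ and hence separation. One small slip: you write ``$\sigma(w)=aw+O(w^2)$ with $a^2=-1$'' as if it were a single constraint, whereas the point is that $\sigma^*\omega=-\omega$ forces $a^2=-1$ while $\sigma^2=\mathrm{id}$ forces $a^2=1$; it is the incompatibility of the two that excludes a fixed simple zero. The conclusion is standard and correct, just under-explained.

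For the direction ``separate $\Rightarrow$ interchange,'' your route is genuinely different from the paper's. The paper proves the contrapositive: it notes that $\eta$ permutes $\{I,I'\}$ (tacitly using that $\Xi(M)=\{I,I'\}$ in the situations where the lemma is invoked), assumes $\eta$ fixes each of $I,I'$ setwise, and then observes that the images $f(I),f(I')$ under the quotient $f\colon M\to S^2$ form an embedded arc joining two branch points; since an embedded arc does not disconnect $S^2$ and the remaining four branch points lie off this arc, the branched double cover of the complement is connected. Your proof instead works directly: from $[\gamma]=0$ you get $\hol(M,I)=\hol(M,I')$, a Gauss--Bonnet/Euler-characteristic count shows both sides are genus-one pieces, you close each up to a torus with a slit, build an involution $\tau$ by gluing the two elliptic involutions centred at the midpoint of the slit, and identify $\tau$ with the hyperelliptic involution via Mayer--Vietoris and the Lefschetz fixed-point theorem. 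This is longer and requires more verification (you should also note explicitly why the corner angles of each piece are $2\pi$, i.e.\ that $I$ and $I'$ occupy the two like-oriented prongs at each $\xi_i$ of the $4\pi$ cone, so each piece really does reglue to a smooth torus), but it buys some robustness: unlike the paper's argument, it does not rely on the a priori observation that $\eta$ preserves the set $\{I,I'\}$, and it produces the hyperelliptic involution explicitly as the glued elliptic involutions, a fact that is useful in its own right. Both arguments are valid.
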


\begin{proof}
  Let $\eta\colon M\to M$ denote the hyperelliptic involution and $\omega$ the holomorphic one-form
  on $M$ induced by the translation structure.

  Suppose that we are in the case where $\eta$ interchanges $I$ and $I'$.  Since $\eta^*\omega =
  -\omega$, the map $\eta$ sends $\gamma$ to itself preserving the
  orientation.  Thus 
  $[\eta_*\gamma] = [\gamma] \in H_1(M; \zed)$.  But the hyperelliptic
  involution acts on $H_1(M; 
  \zed)$ as $-\mathrm{Id}$. 
 It follows that $\gamma$ is
  homologous to zero, so it separates 
  $M$.

  Suppose now that $\eta$ fixes $I$ and $I'$.  Since $\eta$ reverses
  the orientation of each of 
  these saddle connections, each must contain a single fixed point of
  $\eta$ (i.e.\ one of the six 
  Weierstrass points of $M$).  If $f\colon M\to S^2$ is the quotient map induced by the
  hyperelliptic involution, then in this case the union $f(I) \cup
  f(I')$ is a smooth embedded path 
  joining two branch points of $f$.  Since an embedded path does not
  disconnect the sphere, and $f$ 
  has branch points disjoint from $f(I)\cup f(I')$, the complement $M
  \setminus (I \cup I')$ is also 
  connected, a contradiction.
\end{proof}

When $D=d^2$, there is one more possible configuration of horizontal saddle connections.  Suppose
$(E, p,q)$ is a genus one translation surface with two marked points $p,q$ whose difference $p-q$ is
exactly $d$-torsion in the group law of $E$.  There is then a genus one surface $F$ and degree $d$
cover $\pi\colon E\to F$ with
 $\pi(p) = \pi(q)$.  Let $I\subset \reals^2$ be a segment which
may be embedded in $E$ by a translation to yield disjoint parallel segments $I'$ and $I''$ beginning
at $p$ and $q$ respectively.  We may then form the self-connected-sum $M=(E, p,q)\#_I$ by cutting
along $I'$ and $I''$ and then regluing to obtain a genus two surface.  Since the gluings are
compatible with the covering $\pi\colon E\to F$, the surface $M$ is a primitive degree $d$ branched
cover of $F$, so $M\in \EE_{d^2}(1,1)$.  If the slope of $I$ is not a periodic direction on $E$, the
surface $M$ has exactly two saddle connections of this slope of the same length, and the complement
of this pair of saddle connections is a genus one surface. As in the
second case in Theorem \ref{thm: loops in the spine}, the horizontal diagram is once again
diagram 5 of Figure \ref{fig: nonmaximal}, but the complement of the
slit has a different topology than in the case that $D$ is not a
square.

The following theorem says when $D$ is square, this is the only additional configuration of saddle
connections. 

\begin{thm}\label{thm: loops in the spine square case}
  Suppose that $D = d^2$.
  Let $M\in \EE_D$, and suppose $M$ contains a loop which is a union of horizontal saddle
  connections.  
Then one of the following holds. 
  \begin{itemize}
  \item The horizontal direction of $M$ is periodic. 
  \item $M$ is obtained by gluing two genus one translation surfaces with uniquely ergodic
    horizontal directions along a horizontal slit. 
  \item $M$ is a self-connected sum of $(E, p,q)$ along two horizontal slits of the same length
    based at $p$ and $q$ respectively, where $E$ is a genus one translation surface with uniquely
    ergodic horizontal direction, and $p-q$ has order exactly $d$ in the group law of $E$.
  \end{itemize}
  In particular, in the second case $\Xi(M)$ consists of two horizontal saddle connections joining distinct
singularities which are interchanged by the hyperelliptic involution.  In the third case, $\Xi(M)$
  consists of two horizontal saddle connections of the same length fixed by the hyperelliptic involution.
\end{thm}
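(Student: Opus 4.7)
The plan is to follow the structure of the proof of Theorem \ref{thm: loops in the spine}, extending it to the non-separating case which is new in the square discriminant setting. Assume the horizontal direction of $M$ is not periodic. By Theorem \ref{thm:complete periodicity}, $M$ has no horizontal cylinders, so the horizontal foliation is minimal on each component of the complement of $\Xi(M)$. Since $D = d^2$, the description of $\EE_{d^2}$ as the locus of primitive degree $d$ torus covers yields a primitive degree $d$ translation cover $\pi \colon M \to F$ onto some torus $F$.

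The loop $\gamma \subset \Xi(M)$ is either separating or non-separating. By Lemma \ref{lem: interchange equals disconnect}, these cases correspond respectively to the hyperelliptic involution interchanging or fixing the two saddle connections making up $\gamma$. In the separating case, we repeat the argument of Theorem \ref{thm: loops in the spine}: cutting $M$ along $\gamma$ and closing up each half produces two tori $E_1, E_2$ glued along a common horizontal slit. The restriction of $\pi$ to each $E_i$ (away from the slit) extends to a translation cover $E_i \to F$, so each $E_i$ inherits minimality of the horizontal flow from $M$; since minimality equals unique ergodicity on a torus, we are in the second alternative.

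In the non-separating case, which is new, the goal is to reconstruct $(E,p,q)$ witnessing the third alternative. First I would cut $M$ along $\gamma=I\cup I'$ to obtain a genus one surface $M'$ with two boundary components; reversing the self-connected-sum gluing by identifying the two copies of $I$ on the boundary with each other and the two copies of $I'$ with each other yields a torus $E$ carrying two marked points $p,q$ coming from $\xi_1,\xi_2$, together with a horizontal segment along which $M$ is recovered as the self-connected sum $(E,p,q)\#_I$. Since the self-connected sum construction is intertwined with the cover $\pi$, it descends to a translation cover $\tilde\pi\colon E\to F$ that sends $p$ and $q$ to the same point; hence the order $d'$ of $p-q$ in the group law of $E$ divides $d$. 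If $d'<d$, one could reconstruct $M$ as a self-connected sum of $(E,p,q)$ corresponding to the degree $d'$ cover $E\to E/\langle p-q\rangle$, exhibiting $M$ as a primitive degree $d'$ torus cover and contradicting the primitivity of $\pi$. Hence $d'=d$. Finally, minimality of the horizontal foliation on $M$ forces the horizontal foliation on $E$ to be minimal, so again uniquely ergodic.

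The main obstacle is carrying out rigorously the reconstruction of $(E,p,q)$ from $(M,\gamma)$ in the non-separating case, in particular keeping track of the prongs at $\xi_1,\xi_2$ under the cut-and-reglue operation, and verifying the compatibility of $\pi$ with this operation so that the primitivity argument yields the sharp torsion statement $\mathrm{ord}(p-q)=d$. Once this is in place, minimality and unique ergodicity of the horizontal direction on $E$ follow from the corresponding property on $M$ exactly as in the non-square case.
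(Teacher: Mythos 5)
Your proposal follows essentially the same route as the paper: reduce to the case where $\Xi(M)$ consists of two horizontal saddle connections $I, I'$ joining distinct zeros, appeal to Lemma~\ref{lem: interchange equals disconnect} to split into the separating and non-separating cases according to how the hyperelliptic involution acts, and in the non-separating case cut along $I \cup I'$ and sew up to reconstruct $(E,p,q)$, then use the primitive cover $\pi\colon M\to F$ to get the torsion statement.

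The genuine gap is that nothing in your argument establishes $|I| = |I'|$, and this is exactly what is needed before any cut-and-reglue step can be performed. In the non-separating case, the complement $M \setminus (I \cup I')$ has two boundary components, each consisting of a copy of $I$ and a copy of $I'$; the holonomy around a boundary component vanishes (so it can be sewn into a point) precisely because the two segments in it have equal length. Without $|I| = |I'|$ you cannot close up the boundary and produce a bona fide torus with two marked points, so the reconstruction you propose does not get off the ground. Your flagged ``obstacle'' (tracking prongs, compatibility of $\pi$) is real but secondary to this. The paper establishes $|I| = |I'|$ by first observing that the horizontal direction on $F$ cannot be periodic (else $M$ would be periodic), hence $\pi(\xi_1) \neq \pi(\xi_2)$, and there is a unique shortest horizontal segment on $F$ joining these two points; both $I$ and $I'$ are then mapped injectively onto it by $\pi$, forcing them to have the same length. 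You should insert this step before the reconstruction in both cases (it is also what the paper uses in the separating case to get a genuine ``slit'' of well-defined length).

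A smaller point: your proof that $p-q$ has order exactly $d$ (by passing to $E/\langle p-q\rangle$ and contradicting primitivity) is an alternative to the paper's more direct observation that the degree-$d$ cover $\pi$ descends to a primitive degree-$d$ isogeny $E\to F$ collapsing $p$ and $q$; your version needs a bit of care justifying that the induced cover $M\to E/\langle p-q\rangle$ is again a torus cover whose primitive part has degree $d'$, which requires knowing $M$ lies in a unique $\EE_{(d')^2}$. The paper's version is cleaner and avoids that detour.
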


\begin{proof}
  Suppose that $M$ is not periodic.  If $M$ has a saddle connection joining a singularity to itself,
  this saddle connection is the boundary of a cylinder, which implies that the horizontal direction
  of $M$ is periodic by Theorem~\ref{thm:complete periodicity}, a contradiction.  Therefore $\Xi(M)$
  consists of two horizontal saddle connections $I$ and $I'$ joining singularity $\xi_1$ to
  singularity $\xi_2$.  

  Let $\pi\colon M\to F$ be a primitive degree $d$ cover of a
  genus one translation surface.   The horizontal direction of $F$ is not periodic, otherwise $M$
  would be periodic as well.  The singularities $\xi_i$ are also the branch points of $\pi$, and the
  images $\pi(I)$ and $\pi(I')$ are horizontal segments on $F$ joining $\pi(\xi_1)$ to
  $\pi(\xi_2)$.  Since the horizontal direction of $F$ is not periodic, the images $\pi(\xi_1)$ and
  $\pi(\xi_2)$ must be distinct.  There is then a unique horizontal segment on $F$ joining these
  points which $I$ and $I'$ are both mapped to injectively by $\pi$.  It follows that $I$ and $I'$
  have the same length. 

  By Lemma~\ref{lem: interchange equals disconnect},  the loop $I\cup I'$
  disconnects $M$ if and only if  $I$ and $I'$ are interchanged by the hyperelliptic involution.  
  
  Suppose we are in the case where $I$ and $I'$ are fixed by the hyperelliptic involution.  The
  complement $M\setminus (I\cup I')$ is then a genus one translation surface with two boundary
  components, each a union of two horizontal segments of the same length.  The holonomy around each
  boundary component is trivial, so we have exhibited $M$ as a self-connected-sum $(E, p, q)\#_I$.
  The branch covering $\pi$ must arise from a primitive covering $E\to F$ which identifies $p$ and
  $q$.  This implies that $p-q$ is exactly $d$-torsion in the group law of $E$.
\end{proof}

\section{Construction of $U$-invariant ergodic measures in $\EE_D$}
\label{section: construction}
In the previous section we discussed $G$-invariant measures on
$\EE_D$. In this section we will discuss $U$-invariant measures which
are not $G$-invariant. 

\subsection{Minimal sets}\label{subsec: minimal sets}
A {\em minimal set} for a flow is a minimal (with respect to
inclusion) nonempty closed invariant subset. An example is a closed
orbit, but more complicated examples may arise. The minimal sets for the
horocycle flow in any stratum were classified in \cite{calanque},
where the following was shown (see also \cite[\S6.1]{Barak-Pat-AY}). 

\begin{prop}\label{prop: minimal}
For $M \in \HH$, the following are equivalent:
\begin{itemize}
\item
$\overline{UM}$ is minimal. 

\item
$\overline{UM}$ is compact. 
\item
$M$ has a horizontal cylinder decomposition, with $\Xi(M)$ consisting
of the boundaries of these cylinders.
\end{itemize}
In case these hold, $\overline{UM}$ is topologically conjugate to the
quotient of a torus by a finite group, via a map which is
affine in charts. The dimension of this torus
is the dimension over $\Q$ of the span of the moduli of the
cylinders in the horizontal direction of $M$, and under this conjugacy,
the $U$-action on the torus is a straight-line linear flow. The
closure of any $U$-orbit in $\HH$ contains some $M$ satisfying 
the above conditions. 
\end{prop}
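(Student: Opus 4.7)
The plan is to prove (3) directly implies (1), (2), and the torus structure, then to prove the final sentence as a separate technical step, and to use both together to derive the reverse implications.

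First I would establish (3) $\Rightarrow$ (1), (2), and the torus structure via an explicit parameterization. If $M$ has horizontal cylinders $C_1, \ldots, C_n$ with circumferences $c_i$, heights $h_i$, and moduli $\mu_i = h_i/c_i$, then the surfaces obtained by independently modifying the twist of each $C_i$ by $\tau_i \in \R/c_i\Z$ form a continuous family parameterized by a torus $T^n = \prod_i \R/c_i\Z$, with a natural map $\Phi : T^n \to \HH$. The $U$-action pulls back along $\Phi$ to the linear flow $\tau_i \mapsto \tau_i + sh_i$: a shear by $s$ shifts the twist of $C_i$ by $sh_i$. Normalizing to coordinates $t_i = \tau_i/c_i \in \R/\Z$, the $U$-action becomes $t_i \mapsto t_i + s\mu_i$, whose orbit closure in $\R^n/\Z^n$ is a subtorus of dimension $\dim_\Q \mathrm{span}\{\mu_1, \ldots, \mu_n\}$. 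Pushing forward via $\Phi$, this subtorus maps to $\overline{UM}$, giving compactness (condition (2)), minimality of the restricted linear flow (condition (1)), and the promised torus structure with the correct dimension. Continuity of $\Phi$ needs to be checked using period coordinates, and injectivity of $\Phi$ on the orbit closure follows because distinct twist configurations give non-isomorphic marked surfaces.

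Next I would prove the final statement separately, as this is the main obstacle. The goal is to show every $\overline{UM}$ contains some $M'$ satisfying (3). The strategy is to use geodesic-flow renormalization: the commutation relation $g_t u_s = u_{se^t} g_t$ means $u_{s_n} M = g_{t_n} u_{s_n e^{-t_n}} g_{-t_n} M$, so by selecting $t_n \to \infty$ and $s_n$ appropriately, one can relate limit points of $\{u_s M\}$ to limit points of $\{g_t M\}$. The plan is to choose $t_n$ so that $g_{t_n} M$ has many short nearly-horizontal saddle connections — this is possible because $g_t$ contracts vertical components and expands horizontal ones — and then pass to a limit where these saddle connections become genuinely horizontal, eventually producing a horizontal cylinder decomposition. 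The hard part will be ensuring convergence in the stratum and controlling the geometry of the limit; this likely requires compactness arguments à la Masur, together with the explicit surgery and rectangle-thickening constructions from Section \ref{sec: explicit} to realize the degenerated limit in $\HH$.

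Finally, with the first two pieces in hand, the reverse implications follow. For (1) $\Rightarrow$ (3): by the final statement there is some $M' \in \overline{UM}$ satisfying (3), and then $\overline{UM'}$ is a non-empty closed $U$-invariant subset of $\overline{UM}$, hence equals $\overline{UM}$ by minimality; but $\overline{UM'}$ is the compact image $\Phi(T^n_0)$ of a subtorus in which every point has the horizontal cylinder decomposition of $M'$, so $M$ itself satisfies (3). For (2) $\Rightarrow$ (3): Zorn's lemma applied to $\overline{UM}$ gives a minimal $U$-invariant subset $K \subset \overline{UM}$; by (1) $\Rightarrow$ (3), every surface in $K$ satisfies (3), so $K = \Phi(T^n_0)$ is a subtorus of surfaces with a common horizontal cylinder decomposition $\Xi_0$. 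Since by Proposition \ref{prop: horizontal relations} the horizontal data is constant along $U$-orbits and upper-semicontinuous under limits, the horizontal saddle connections of $M$ must include the data of $\Xi_0$, forcing $M$ to carry the same cylinder decomposition and satisfy (3). The remaining step — showing upper-semicontinuity of $\Xi$ suffices to transfer the cylinder decomposition from $K$ to $M$ — is the subtle point in this last argument, and may require an additional appeal to compactness of $\overline{UM}$ to rule out non-horizontally-periodic surfaces having compact $U$-orbit closures.
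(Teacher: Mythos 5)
The paper does not prove this proposition; it states it as a recalled result and cites \cite{calanque} (Smillie--Weiss, {\em Minimal sets for flows on moduli space}, 2004). There is therefore no in-paper proof to compare against, only that source.

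Your first piece, (3) $\Rightarrow$ (1), (2) and the torus structure, is the standard twist-parameterization argument and is essentially right. The one point to be careful about is that you want the map from the twist torus into $\HH$ (not $\HH_{\mathrm{m}}$) to be a homeomorphism onto its image; translation automorphisms permuting cylinders can make this only finite-to-one, but a finite translation quotient of a torus is again a torus, so the conclusion survives.

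The two genuinely hard parts are not handled. For the last sentence of the proposition (every $\overline{UM}$ contains a horizontally periodic surface), the identity $u_{s_n}M = g_{t_n} u_{s_n e^{-t_n}}g_{-t_n}M$ is correct, but the plan to ``choose $t_n$ so that $g_{t_n}M$ has many short nearly-horizontal saddle connections'' does not work as stated: $g_t$ for $t>0$ expands horizontal holonomy components, so it lengthens nearly-horizontal saddle connections rather than producing short ones, and more fundamentally there is no mechanism offered for why a limit should acquire an entire cylinder decomposition rather than a stray horizontal saddle connection. The actual proof in \cite{calanque} does not go through geodesic renormalization at all: it uses the quantitative non-divergence of the horocycle flow (\cite{mw0}) to produce convergent subsequences $u_{s_n}M\to M'$ inside a fixed compact set, and then runs an extremal argument on the horizontal data diagram, which can only grow under such limits, showing that a maximal one corresponds to a full cylinder decomposition. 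That machinery is the heart of the theorem and is absent from your sketch.

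The implication (2) $\Rightarrow$ (3) also has a real gap, which you flag but do not close. Producing a Zorn-minimal $K\subset\overline{UM}$ and showing every point of $K$ satisfies (3) is fine, but upper-semicontinuity of $\Xi$ then only constrains limits of points \emph{in} $K$. You have $M\in\overline{UM}\supset K$, not $M\in\overline{K}$, so there is no path from the horizontal data of $K$ to the horizontal data of $M$. This is exactly where compactness of $\overline{UM}$ must be exploited directly (for instance, by controlling the $U$-invariant vertical holonomy components of saddle connections and using a uniform lower bound on the systole on a compact set), and the phrase ``may require an additional appeal to compactness'' concedes the gap without filling it.
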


\begin{cor}\label{cor: minimal}
Each minimal set supports a unique $U$-invariant measure (which in particular is
ergodic). The measure is linear with respect to the linear structure
on the torus which is the minimal set. 

\end{cor}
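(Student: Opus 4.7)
The plan is to reduce to a classical fact about linear flows on tori. By Proposition~\ref{prop: minimal}, for any $M$ whose orbit-closure is minimal there is a homeomorphism $\Psi \colon \overline{UM} \to \mathbb{T}^d$ (for some $d \leq k$, where $d$ equals the $\Q$-dimension of the span of the cylinder moduli) conjugating the $U$-action to a straight-line flow $\{T_s\}$ on $\mathbb{T}^d$, say $T_s(x) = x + s w \pmod{\Z^d}$ for some $w \in \R^d$. Because $\Psi$ is a homeomorphism, pushforward gives a bijection between $U$-invariant Borel probability measures on $\overline{UM}$ and $\{T_s\}$-invariant Borel probability measures on $\mathbb{T}^d$. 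Therefore it suffices to prove that this translation flow on $\mathbb{T}^d$ admits a unique invariant Borel probability measure, and that this measure is Lebesgue (which pulls back to a measure linear in the affine chart on the torus, giving the second assertion of the corollary).

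Next, I would observe that $\{T_s\}$ is minimal on $\mathbb{T}^d$: minimality passes through $\Psi$ from the $U$-action on $\overline{UM}$. A straight-line flow $T_s(x) = x + sw$ on $\mathbb{T}^d$ is minimal if and only if the components of $w$ are linearly independent over $\Q$ (otherwise the orbit closure of $0$ lies in a proper closed subgroup $H \subsetneq \mathbb{T}^d$). Hence the direction vector $w$ satisfies this rational independence condition.

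Unique ergodicity of a minimal linear flow on $\mathbb{T}^d$ is classical, going back to Weyl: if $\mu$ is any $T_s$-invariant Borel probability measure on $\mathbb{T}^d$, take its Fourier coefficients $\widehat{\mu}(n) = \int e^{2\pi i \langle n, x\rangle} \, d\mu(x)$ for $n \in \Z^d$. Invariance under $T_s$ forces $e^{2\pi i s \langle n, w\rangle}\widehat{\mu}(n) = \widehat{\mu}(n)$ for all $s \in \R$; by rational independence of the components of $w$, we have $\langle n, w\rangle \neq 0$ for $n \neq 0$, so $\widehat{\mu}(n) = 0$ for all $n \neq 0$ and $\widehat{\mu}(0) = 1$. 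Thus $\mu$ equals Lebesgue (Haar) measure on $\mathbb{T}^d$, which is the unique invariant probability measure.

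Transporting this back via $\Psi^{-1}$, the unique $U$-invariant probability measure on $\overline{UM}$ is the pullback of Lebesgue measure, which is linear in the affine structure supplied by Proposition~\ref{prop: minimal}. Ergodicity is then automatic: a non-ergodic invariant measure would decompose into distinct invariant probability measures, contradicting uniqueness. No serious obstacle arises; the entire argument is a transparent reduction to the Weyl/Kronecker unique ergodicity statement for irrational linear flows on tori, once Proposition~\ref{prop: minimal} is in hand.
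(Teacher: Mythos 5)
Your proof is correct and matches the paper's approach: the paper proves this corollary simply by citing that unique ergodicity and linearity of the invariant measure are "well-known properties of straight line flows on tori" following from the topological conjugacy in Proposition~\ref{prop: minimal}. You have merely spelled out the standard Fourier-analytic argument that the paper leaves implicit.
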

\begin{proof}
These follow from well-known properties of straight line flows on tori, and
from the isomorphism above. 
\end{proof}

\begin{cor}\label{cor: missing} 
Let $M \in \EE_D$ be a surface with a horizontal cylinder
decomposition as in Proposition \ref{prop: minimal}. 
  For cylinder decompositions of type (A) of Figure~\ref{fig: maximal}, the corresponding
  orbit-closure is a minimal set which is one-dimensional or two-dimensional. In the former case it
  is a closed horocycle, and in the latter, the corresponding measure is invariant under $UZ$, and
  this group acts transitively on the support of the measure.  For cylinder decompositions of types
  (B), (C) and (D), $U$-orbits are periodic.
\end{cor}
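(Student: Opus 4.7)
The plan is to combine Proposition \ref{prop: minimal}, which identifies $\overline{UM}$ with a torus of dimension equal to $\dim_{\mathbb{Q}}\operatorname{span}_{\mathbb{Q}}(\text{moduli})$, with Proposition \ref{prop: configurations HD}, which furnishes at least one rational relation among the horizontal moduli whenever there is more than one cylinder. Case (D) has one cylinder, so the span is one-dimensional and Proposition \ref{prop: minimal} gives a closed horocycle. Cases (B) and (C) have two cylinders and one rational relation, so the span is one-dimensional and again the orbit is periodic. In case (A) there are three cylinders with at least one rational relation, so $\dim \overline{UM}\leq 2$; the one-dimensional subcase is a closed horocycle, and the remaining task is to treat $\dim\overline{UM}=2$.

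So suppose $M$ is of type (A) with $\dim\overline{UM}=2$. Every horizontal saddle connection of $M$ is a loop at a single singularity, so $M\in\HH'_\infty$ by Corollary \ref{thm: criterion}, and $\rel_z M$ is defined for every $z\in Z$. By Corollary \ref{prop: horizontal preserved}, $\rel_z$ preserves the horizontal data diagram and the holonomies of horizontal saddle connections, and hence fixes the widths, heights, and moduli of the three cylinders. Combined with the commutation $\rel_z u_s=u_s\rel_z$ on $\HH'_\infty$ (Corollary \ref{cor: real rel via group}), $\rel_z$ sends the minimal set $\overline{UM}$ onto another two-dimensional $U$-minimal torus $\overline{U\rel_zM}$, and both sit in the same local family of surfaces with the given cylinder parameters.

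Once we show $\rel_zM\in\overline{UM}$ for every $z$, the rest is automatic. Indeed, $\rel_z$ then restricts to a one-parameter group of homeomorphisms of the torus $\overline{UM}$ commuting with the minimal straight-line $U$-flow given by Proposition \ref{prop: minimal}; by unique ergodicity of that flow, $\rel_z$ preserves the unique $U$-invariant probability measure of Corollary \ref{cor: minimal}, giving $UZ$-invariance, and the transversality of the $Z$-direction to the $U$-direction on the two-torus yields transitivity of $UZ$ on $\overline{UM}$.

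The main obstacle is therefore proving $\rel_z M\in\overline{UM}$. Infinitesimally, in the twist coordinates $(\theta_1,\theta_2,\theta_3)\in\prod_i\mathbb{R}/c_i\mathbb{Z}$ locally parametrizing surfaces with the given widths and heights, the $U$-velocity is the moduli vector $(m_1,m_2,m_3)$ and the $Z$-velocity is a combinatorial vector $(w_1,w_2,w_3)\in\{-1,0,1\}^3$ whose $i$-th entry records which of $\xi_1,\xi_2$ appears on the top versus the bottom of $C_i$. The subtorus $\overline{UM}$ is the smallest closed subgroup containing $(m_1,m_2,m_3)$, cut out by the unique rational relation $\sum a_im_i=0$ with $a_i\in\mathbb{Q}$ provided by Proposition \ref{prop: configurations HD}, and the claim amounts to $\sum a_iw_i=0$. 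I expect this to follow from the eigenform relation \eqref{eq:1}, which ties the absolute periods (and hence the $m_i$ and the relation $a_i$) to the relative periods controlling the direction of the $\rel$ motion; finding the right packaging of this linear-algebraic computation is the subtle step. If the relation failed, the tori $\rel_z\overline{UM}$ would be pairwise disjoint and foliate a three-dimensional region of $\EE_D^{(1)}(1,1)$ by two-dimensional $U$-minimal sets, so an alternative strategy would be to rule out such a family directly using the dimension of the eigenform locus together with the classification of minimal sets in Proposition \ref{prop: minimal}.
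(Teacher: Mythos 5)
Your dimension count — Proposition~\ref{prop: minimal} combined with Proposition~\ref{prop: configurations HD}, giving $\dim\overline{UM}=1$ in cases (B), (C), (D) and $\dim\overline{UM}\le 2$ in case (A) — matches the paper's argument, and you have correctly located the crux of the two-dimensional subcase: everything reduces to showing $\rel_z M\in\overline{UM}$ for all $z$. The genuine gap is that neither of your strategies for that step is carried through, and your ``main'' strategy is the wrong one.

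A direct verification of $\sum a_i w_i = 0$ from \eqref{eq:1} is misdirected as formulated: the vector $(w_1,w_2,w_3)$ depends only on the combinatorics of the horizontal data diagram, the relation coefficients $a_i$ depend on the moduli (absolute period data), and the eigenform relation does not connect these two directly. The approach you relegate to an ``alternative'' is the one that works, and the key ingredient is already present in the proof of Proposition~\ref{prop: configurations HD}: with the heights $y_i$ and circumferences fixed, the eigenform constraint imposes at least one nontrivial real-linear equation on the twist parameters $x_i$, so the twist torus meets $\EE_D(1,1)$ in a set of dimension at most two near $M$. Since $\overline{UM}$ is a two-dimensional affine subtorus contained in this intersection, they coincide as germs at $M$. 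The real Rel vector field is tangent to the twist torus (it fixes heights and circumferences) and tangent to $\EE_D(1,1)$ (the eigenform condition only involves absolute periods), hence it is tangent to $\overline{UM}$; so $\rel_z M\in\overline{UM}$ for small $z$, and then for all $z$ by Proposition~\ref{prop: star}(i). This is essentially how the paper closes the argument — it phrases it as ``$UZ$-orbits are two-dimensional, hence $UZ$ is transitive on the orbit-closure,'' leaving the containment implicit and deferring the twist-coordinate computation to \cite{calanque}. Two smaller points: your appeal to ``transversality of the $Z$-direction to the $U$-direction'' needs the observation that if $(w_i)$ were proportional to $(m_i)$ then (the $w_i$ being integers) the moduli would be rationally dependent and $\dim\overline{UM}$ would be $1$, not $2$; and when you speak of ruling out a ``three-dimensional region foliated by two-dimensional minimal sets,'' be careful — such regions do exist in $\EE_D(1,1)$ (they are the beds of minimal sets discussed later in the paper). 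What is impossible is such a region lying inside a single twist torus, i.e.\ with all circumferences and heights frozen, and that is the constraint your dimension argument must exploit.
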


\begin{proof}
  By Proposition \ref{prop: minimal} the dimension of the $U$-orbit closure is the dimension of the
  $\Q$-span of the moduli, which by Proposition~\ref{prop: configurations HD} is either 1 or 2.
  Suppose that it is 2.  Again by Proposition~\ref{prop: configurations HD}, there must be three
  cylinders, so we are in case (A).  The description in \cite{calanque} shows that the additional
  dimension of the orbit closure is given by changing only the twists
  around cylinders, and it is straightforward to check using the
  explicit coordinates used in \cite{calanque}, that the $Z$-action
  also affects only the twists, changing them linearly in a way which
  is distinct from the $U$ action. Thus the $UZ$-orbits are 2
  dimensional, and hence $UZ$ is transitive on the orbit-closure and
  the corresponding measure is $UZ$-invariant.

  In cases (B) and (C) there are only two cylinders which, by
  Proposition~\ref{prop: configurations 
    HD}, have rationally related moduli. In case (D) there is only one
  cylinder. So in all of these cases 
  the minimal set is 1-dimensional.
\end{proof}

\subsection{Parameter spaces of minimal sets} 
For every discriminant $D$, there are examples in $\EE_D$ of surfaces
belonging to minimal sets of type (A), for which the $U$-orbit closure
is two-dimensional, i.e.\ is an orbit of $UZ$.  These minimal spaces
are naturally grouped in continuous families, which we refer to
as beds. Their structure
was studied in  
detail in \cite[\S6]{Matt} when $D$ is not square. We summarize this
structure here. This analysis will not be required in the
sequel.  Contrary to the rest of 
the paper, this discussion will be in the context of surfaces with
unlabeled singularities as the  
combinatorial structure is simpler.

Each $\EE_D(1,1)$ contains finitely many four (real) dimensional beds 
which parameterize unit area 
surfaces with periodic horizontal directions having three cylinders,
where the ratios of circumferences of 
these cylinders are fixed, while twist parameters and heights are
allowed to vary, subject to the 
linear condition which defines the eigenform locus. These beds are
invariant under the horocycle flow, geodesic flow and real and
imaginary rel. 
 Each bed is foliated by a family 
of 2-tori defined by fixing the heights as well as the circumferences
of the cylinders, and varying the 
twist parameters.  Each torus is a closed $UZ$-orbit and is either a
$U$-minimal set or a union of 
closed $U$-orbits, depending on whether or not moduli of the cylinders
have rational ratios.  We 
will see in Theorem~\ref{thm: limits of UZ} that applying the geodesic
flow to one of these tori 
gives a family of measures which equidistribute in $\EE_D(1,1)$.

As we vary the heights of the cylinders in one of these  families of
three-cylinder surfaces (see Figure \ref{fig: 3 cylinder
  deformation}), the height of a 
cylinder may tend to $0$, yielding a family of two-cylinder surfaces
in the boundary of the family of three-cylinder surfaces (see Figure
\ref{fig: first 2 cylinder}). 
For a given three cylinder surface there are two cylinders which are
candidates for degeneration and for each candidate cylinder there is a
path in the family of three cylinder surfaces leading to a surface in
which this cylinder is degenerate.  Thus there are two distinct
families of two-cylinder surfaces lying in the 
boundary of each three-cylinder family.  These two-cylinder families
are three-dimensional and 
composed of surfaces having horizontal data diagram of type (B) or
(C). 
By Corollary~\ref{cor: missing}, each two-cylinder family is a union
of closed $U$-orbits. 

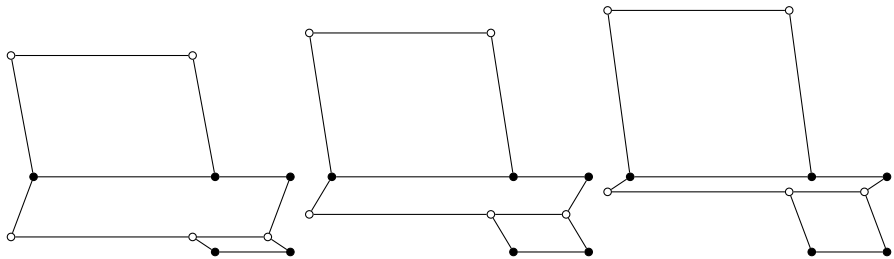
\begin{figure}[h]
\begin{tabular}{lcr}


\begin{tikzpicture}[scale=1.0]
\def\sq{1.414}
\def\t{0.8}
\def\a{1.0+\sq};
\def\b{1};
\def\c{1-\t};
\def\d{\t};
\def\e{1.0+\sq-\t};
\def\rel{0.3}

\node (A) at (\a+\rel,0) [circle,draw,fill=black,inner sep=0pt,minimum size=1.0mm] {};
\node (B) at (\a+\b+\rel,0) [circle,draw,fill=black,inner sep=0pt,minimum size=1.0mm] {};
\node (C) at (0,\c) [circle,draw,inner sep=0pt,minimum size=1.0mm] {};
\node (D) at (\a,\c) [circle,draw,inner sep=0pt,minimum size=1.0mm] {};
\node (E) at (\a+\b,\c) [circle,draw,inner sep=0pt,minimum size=1.0mm] {};
\node (F) at (\rel,\c+\d) [circle,draw,fill=black,inner sep=0pt,minimum size=1.0mm] {};
\node (G) at (\a+\rel,\c+\d) [circle,draw,fill=black,inner sep=0pt,minimum size=1.0mm] {};
\node (H) at (\a+\b+\rel,\c+\d) [circle,draw,fill=black,inner sep=0pt,minimum size=1.0mm] {};
\node (I) at (0,\c+\d+\e) [circle,draw,inner sep=0pt,minimum size=1.0mm] {};
\node (J) at (\a,\c+\d+\e) [circle,draw,inner sep=0pt,minimum size=1.0mm] {};

\draw (A) -- (B) -- (E) -- (H)--(G) --(J) -- (I) -- (F) -- (C) --(D) -- (A);
\draw (E) -- (D); 
\draw (G) -- (F);

\end{tikzpicture}

\begin{tikzpicture}[scale=1.0]
\def\sq{1.414}
\def\t{0.5}
\def\a{1.0+\sq};
\def\b{1};
\def\c{1-\t};
\def\d{\t};
\def\e{1.0+\sq-\t};
\def\rel{0.3}

\node (A) at (\a+\rel,0) [circle,draw,fill=black,inner sep=0pt,minimum size=1.0mm] {};
\node (B) at (\a+\b+\rel,0) [circle,draw,fill=black,inner sep=0pt,minimum size=1.0mm] {};
\node (C) at (0,\c) [circle,draw,inner sep=0pt,minimum size=1.0mm] {};
\node (D) at (\a,\c) [circle,draw,inner sep=0pt,minimum size=1.0mm] {};
\node (E) at (\a+\b,\c) [circle,draw,inner sep=0pt,minimum size=1.0mm] {};
\node (F) at (\rel,\c+\d) [circle,draw,fill=black,inner sep=0pt,minimum size=1.0mm] {};
\node (G) at (\a+\rel,\c+\d) [circle,draw,fill=black,inner sep=0pt,minimum size=1.0mm] {};
\node (H) at (\a+\b+\rel,\c+\d) [circle,draw,fill=black,inner sep=0pt,minimum size=1.0mm] {};
\node (I) at (0,\c+\d+\e) [circle,draw,inner sep=0pt,minimum size=1.0mm] {};
\node (J) at (\a,\c+\d+\e) [circle,draw,inner sep=0pt,minimum size=1.0mm] {};

\draw (A) -- (B) -- (E) -- (H)--(G) --(J) -- (I) -- (F) -- (C) --(D) -- (A);
\draw (E) -- (D); 
\draw (G) -- (F);

\end{tikzpicture}

\begin{tikzpicture}[scale=1.0]
\def\sq{1.414}
\def\t{0.2}
\def\a{1.0+\sq};
\def\b{1};
\def\c{1-\t};
\def\d{\t};
\def\e{1.0+\sq-\t};
\def\rel{0.3}

\node (A) at (\a+\rel,0) [circle,draw,fill=black,inner sep=0pt,minimum size=1.0mm] {};
\node (B) at (\a+\b+\rel,0) [circle,draw,fill=black,inner sep=0pt,minimum size=1.0mm] {};
\node (C) at (0,\c) [circle,draw,inner sep=0pt,minimum size=1.0mm] {};
\node (D) at (\a,\c) [circle,draw,inner sep=0pt,minimum size=1.0mm] {};
\node (E) at (\a+\b,\c) [circle,draw,inner sep=0pt,minimum size=1.0mm] {};
\node (F) at (\rel,\c+\d) [circle,draw,fill=black,inner sep=0pt,minimum size=1.0mm] {};
\node (G) at (\a+\rel,\c+\d) [circle,draw,fill=black,inner sep=0pt,minimum size=1.0mm] {};
\node (H) at (\a+\b+\rel,\c+\d) [circle,draw,fill=black,inner sep=0pt,minimum size=1.0mm] {};
\node (I) at (0,\c+\d+\e) [circle,draw,inner sep=0pt,minimum size=1.0mm] {};
\node (J) at (\a,\c+\d+\e) [circle,draw,inner sep=0pt,minimum size=1.0mm] {};

\draw (A) -- (B) -- (E) -- (H)--(G) --(J) -- (I) -- (F) -- (C) --(D) -- (A);
\draw (E) -- (D); 
\draw (G) -- (F);

\end{tikzpicture}

\end{tabular}
\caption[Moving in the bed of minimal sets]{Moving in the bed of minimal sets: 3 cylinder surfaces
  in $\EE_8(1,1)$, 
  deformed by imaginary rel.  }

\label{fig: 3 cylinder deformation}
\end{figure}

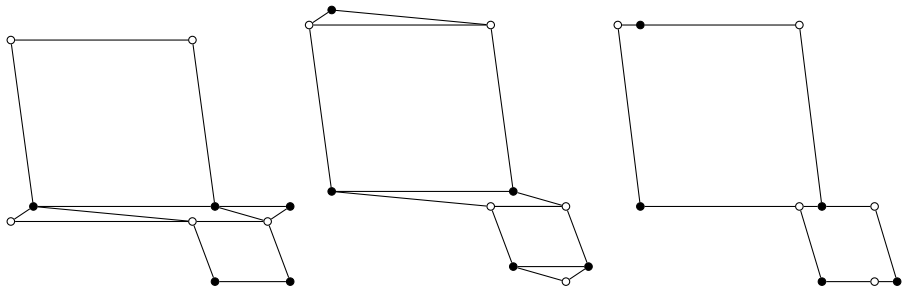
\begin{figure}[h]

\begin{tabular}{lcr}

\begin{tikzpicture}[scale=1.0]
\def\sq{1.414}
\def\t{0.2}
\def\a{1.0+\sq};
\def\b{1};
\def\c{1-\t};
\def\d{\t};
\def\e{-\t+1.0+\sq};
\def\rel{0.3}

\node (A) at (\a+\rel,0) [circle,draw,fill=black,inner sep=0pt,minimum size=1.0mm] {};
\node (B) at (\a+\b+\rel,0) [circle,draw,fill=black,inner sep=0pt,minimum size=1.0mm] {};
\node (C) at (0,\c) [circle,draw,inner sep=0pt,minimum size=1.0mm] {};
\node (D) at (\a,\c) [circle,draw,inner sep=0pt,minimum size=1.0mm] {};
\node (E) at (\a+\b,\c) [circle,draw,inner sep=0pt,minimum size=1.0mm] {};
\node (F) at (\rel,\c+\d) [circle,draw,fill=black,inner sep=0pt,minimum size=1.0mm] {};
\node (G) at (\a+\rel,\c+\d) [circle,draw,fill=black,inner sep=0pt,minimum size=1.0mm] {};
\node (H) at (\a+\b+\rel,\c+\d) [circle,draw,fill=black,inner sep=0pt,minimum size=1.0mm] {};
\node (I) at (0,\c+\d+\e) [circle,draw,inner sep=0pt,minimum size=1.0mm] {};
\node (J) at (\a,\c+\d+\e) [circle,draw,inner sep=0pt,minimum size=1.0mm] {};

\draw (A) -- (B) -- (E) -- (H)--(G) --(J) -- (I) -- (F) -- (C) --(D) -- (A);
\draw (G) -- (E) -- (D) -- (F); 
\draw (G) -- (F);

\end{tikzpicture}

\begin{tikzpicture}[scale=1.0]
\def\sq{1.414}
\def\t{0.2}
\def\a{1.0+\sq};
\def\b{1};
\def\c{1-\t};
\def\d{\t};
\def\e{1.0+\sq-\t};
\def\rel{0.3}

\node (A) at (\a+\rel,0) [circle,draw,fill=black,inner sep=0pt,minimum size=1.0mm] {};
\node (B) at (\a+\b+\rel,0) [circle,draw,fill=black,inner sep=0pt,minimum size=1.0mm] {};

\node (D) at (\a,\c) [circle,draw,inner sep=0pt,minimum size=1.0mm] {};
\node (E) at (\a+\b,\c) [circle,draw,inner sep=0pt,minimum size=1.0mm] {};
\node (F) at (\rel,\c+\d) [circle,draw,fill=black,inner sep=0pt,minimum size=1.0mm] {};
\node (G) at (\a+\rel,\c+\d) [circle,draw,fill=black,inner sep=0pt,minimum size=1.0mm] {};
\node (I) at (0,\c+\d+\e) [circle,draw,inner sep=0pt,minimum size=1.0mm] {};
\node (J) at (\a,\c+\d+\e) [circle,draw,inner sep=0pt,minimum size=1.0mm] {};

\node (K) at (\rel,\c+\d+\d+\e)  [circle,draw,fill=black,inner sep=0pt,minimum size=1.0mm] {};
\node (L) at (\a+\b,-\t) [circle,draw,inner sep=0pt,minimum size=1.0mm] {};

\draw (A) -- (B) -- (E) -- (G) --(J) -- (I) -- (F);
\draw (E) -- (D) -- (A); 
\draw (G) -- (F);
\draw(E) -- (D) -- (F); 
\draw (I) -- (K) -- (J);
\draw (A) -- (L) -- (B);

\end{tikzpicture}

\begin{tikzpicture}[scale=1.0]
\def\sq{1.414}
\def\t{0.0}
\def\a{1.0+\sq};
\def\b{1};
\def\c{1-\t};
\def\d{\t};
\def\e{1.0+\sq-\t};
\def\rel{0.3}

\node (A) at (\a+\rel,0) [circle,draw,fill=black,inner sep=0pt,minimum size=1.0mm] {};
\node (B) at (\a+\b+\rel,0) [circle,draw,fill=black,inner sep=0pt,minimum size=1.0mm] {};

\node (D) at (\a,\c) [circle,draw,inner sep=0pt,minimum size=1.0mm] {};
\node (E) at (\a+\b,\c) [circle,draw,inner sep=0pt,minimum size=1.0mm] {};
\node (F) at (\rel,\c+\d) [circle,draw,fill=black,inner sep=0pt,minimum size=1.0mm] {};
\node (G) at (\a+\rel,\c+\d) [circle,draw,fill=black,inner sep=0pt,minimum size=1.0mm] {};
\node (I) at (0,\c+\d+\e) [circle,draw,inner sep=0pt,minimum size=1.0mm] {};
\node (J) at (\a,\c+\d+\e) [circle,draw,inner sep=0pt,minimum size=1.0mm] {};

\node (K) at (\rel,\c+\d+\d+\e)  [circle,draw,fill=black,inner sep=0pt,minimum size=1.0mm] {};
\node (L) at (\a+\b,-\t) [circle,draw,inner sep=0pt,minimum size=1.0mm] {};

\draw (B) -- (E) -- (G) --(J); 
\draw (I) -- (F);
\draw (D) -- (A); 
\draw (D) -- (F); 
\draw (I) -- (K) -- (J);
\draw (A) -- (L) -- (B);
\draw (D) -- (G);

\end{tikzpicture}

\end{tabular}
\caption[Applying a cut and paste surgery]{Applying a cut and paste surgery,   one can 
continue the imaginary rel motion and arrive at a
  2-cylinder surface. }
 \label{fig: first 2 cylinder}
\end{figure}

Each  family of two-cylinder surfaces is in turn in the boundary of
exactly two  families of three-cylinder surfaces. 
Starting from a given  family of three-cylinder surfaces, we may
continue this process of moving in the family so that the cylinders
degenerate to one of the boundary two-cylinder families and then by
undoing a different degeneration we get to a different three-cylinder
family. 
After passing through a finite sequence of three-cylinder families, we
eventually return to the 
original one.  These families of minimal sets then comprise finitely
many disjoint cycles.

If $\mathcal{O}_D$ is a maximal order, the set of cycles is naturally
in bijection with the ideal 
class group of $\mathcal{O}_D$.  
The bijection is given by associating to a periodic surface in one
of these families the fractional ideal in $F_D$ obtained by taking the
$\zed$-span of the circumferences of 
its cylinders.  These circumferences are constant (up to scale) in
each two- or three-cylinder family of 
minimal sets, and when a three-cylinder family degenerates to a
two-cylinder family, one cylinder is 
lost but the fractional ideal generated by their circumferences is unchanged.
If  $\mathcal{O}_D$ is not maximal the analysis becomes more involved.
In the case where $D$ is square, the structure of the collection of
minimal sets is more complicated 
due to the presence of minimal sets consisting of one-cylinder
surfaces, in addition to two and three  
cylinder decompositions.

\subsection{Framing and splitting}\label{subsection: framing}
Let $\HH_{\mathrm{f}}(2) \to \HH(2)$ be the threefold cover
corresponding to choosing a right-pointing
horizontal prong at the singularity, as in \S\ref{subsec: framings}.
 This cover is connected, since as we have seen in Proposition \ref{prop: two pi},
 the $\til \SO_2(\R)$-orbits of a point contain all of its pre-images
 under the map $\HH_{\mathrm{f}}(2) \to \HH(2)$. We denote by
 $\widehat{\EE}_{D}(2)$ the pre-image of $\EE_D(2)$ in 
 $\HH_{\mathrm{f}}(2)$.
 By the same
reasoning as above, each connected component of $\EE_D(2)$ has connected inverse image.
Recall that since $U$ is simply connected it lifts to $\widehat{ G}$
as a subgroup, so
it acts on $\widehat{\EE}_{D}(2)$. 

\begin{prop}\label{prop: Hopf strikes back}
  The action of $U$ is ergodic on any connected component of $\widehat{\EE}_{D}(2)$.
\end{prop}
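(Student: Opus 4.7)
The plan is to reduce the statement to Moore's ergodicity theorem applied to the homogeneous space of the cover $\widehat{G}$ of $G$.

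First I would identify each connected component of $\EE_D(2)$ as a homogeneous space. By Theorem \ref{thm:closed-orbit-classification}, every connected component $\mathcal{L}$ of $\EE_D(2)$ is a closed $G$-orbit, so $\mathcal{L} \cong G/\Gamma$ for some lattice $\Gamma<G$. Let $\widehat{\mathcal{L}}\subset \widehat{\EE}_D(2)$ be its preimage under the threefold covering map; by the remark immediately preceding the statement, $\widehat{\mathcal{L}}$ is connected, and the connected components of $\widehat{\EE}_D(2)$ are exactly these sets.

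Next I would put a homogeneous space structure on $\widehat{\mathcal{L}}$. By \S\ref{subsec: action}, the threefold cover $\widehat{G}$ acts on $\HH_{\mathrm{f}}(2)$, and this action preserves $\widehat{\EE}_D(2)$ because its projection to $G$ preserves $\EE_D(2)$. Since the covering map $\widehat{\mathcal{L}}\to \mathcal{L}$ is $\widehat{G}$-equivariant, is a bijection on $\widehat{G}$-orbits, and $\mathcal{L}$ is a single $G$-orbit, the connected space $\widehat{\mathcal{L}}$ is a single $\widehat{G}$-orbit. Choosing a basepoint and writing its stabilizer as $\widehat{\Gamma}$ we obtain $\widehat{\mathcal{L}}\cong \widehat{G}/\widehat{\Gamma}$. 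The kernel of $\widehat{G}\to G$ is the finite central subgroup $C/C_3 \cong \Z/3\Z$, so $\widehat{\Gamma}$ is contained with finite index in the preimage of $\Gamma$; since a finite cover of a lattice is a lattice, $\widehat{\Gamma}$ is a lattice in $\widehat{G}$.

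Finally, with $\widehat{\mathcal{L}}\cong \widehat{G}/\widehat{\Gamma}$ a finite volume homogeneous space and $U$ a noncompact connected subgroup of $\widehat{G}$ (it lifts from $G$ as noted in \S\ref{subsec: action} since $U$ is simply connected), Moore's ergodicity theorem — or equivalently the Hopf argument for the geodesic flow followed by the Mautner phenomenon as used in the proof of Proposition~\ref{prop: ergodicity} — yields that $U$ acts ergodically on $\widehat{G}/\widehat{\Gamma}$ with respect to Haar measure. Since $\widehat{\mathcal{L}}$ was an arbitrary connected component, this completes the proof.

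The main obstacle is the second step, namely verifying that $\widehat{\mathcal{L}}$ is genuinely a $\widehat{G}$-homogeneous space rather than merely a finite cover of one. Once connectedness of $\widehat{\mathcal{L}}$ is combined with the $\widehat{G}$-action from \S\ref{subsec: action} and a dimension count, this is straightforward, but it is the crucial bridge between the cover and the classical ergodicity results for $G/\Gamma$.
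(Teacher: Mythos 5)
Your proof is correct and takes essentially the same route as the paper: both observe that each connected component of $\widehat{\EE}_D(2)$ is a single $\widehat{G}$-orbit (using connectedness of the preimage and transitivity of $G$ downstairs), and then pass from ergodicity of $\widehat{G}$ to ergodicity of $U$ via the Mautner phenomenon. The paper is more terse — it goes directly from transitivity of $\widehat{G}$ to ergodicity of $U$ without pausing to identify the space as $\widehat{G}/\widehat{\Gamma}$ or to check that $\widehat{\Gamma}$ is a lattice, since Mautner only needs an ergodic $\widehat{G}$-action, not the homogeneous-space presentation that Moore's theorem requires — but the underlying argument is the same.
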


\begin{proof}

Since any connected component of $\EE_D(2)$ is a $G$-orbit (see
Theorem \ref{thm:closed-orbit-classification}), $\widehat{G}$ acts
transitively on each connected component of $\widehat{\EE}_{D}(2)$, and in
particular, ergodically. The inclusion $U \subset \widehat{G}$ has the
Mautner property (see
  e.g.\ \cite{EW}) and hence any ergodic $\widehat{G}$-action is also
  ergodic for $U$. 
\end{proof}

\subsection{Other $U$-invariant measures}\label{subsection:
  nonempty}
We now discuss the remaining ergodic $U$-invariant measures.  Each of these arise from applying the rel
flow to a closed $G$-orbit, possibly lying in a boundary stratum of $\EE_D$.  To avoid working with
compactifications, we will not always take this point of view explicitly, in favor of more elementary
constructions.

We classify the remaining measures in terms of their horizontal data
diagram $\Xi(\mu)$, defined in \S \ref{sec: separatrix diagram}. 
We have the following possibilities.

\begin{prop}\label{prop: configurations HD2}
  Let $\mu$ be an ergodic $U$-invariant measure, which is not
  supported on a minimal set. There are four possibilities for $\Xi(\mu)$: 
  \begin{itemize}
  \item[(i)] $\Xi(\mu) = \emptyset$.  
  \item[(ii)] $\Xi(\mu)$ consists of one saddle connection joining
    distinct singularities. For $\mu$-a.e. surface, this
    saddle connection is fixed by the hyperelliptic involution.
  \item[(iii)] $\Xi(\mu)$ consists of two saddle connections joining distinct singularities which
    are interchanged by the hyperelliptic involution.  For
    $\mu$-a.e. surface $M$, the union of these saddle
    connections disconnects $M$ into a pair of tori with slits
    removed.
  \item[(iv)] $\Xi(\mu)$ consists of two saddle connections joining distinct singularities which are
    fixed by the hyperelliptic involution.  For $\mu$-a.e. surface
    $M$, the complement of these saddle connections
    is a torus with two slits removed, and $D$ is a
    square. 
  \end{itemize}

\end{prop}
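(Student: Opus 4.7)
The plan is as follows. First I would apply Corollary \ref{cor: horizontal relations} to obtain a full-measure set on which $\Xi(M) = \Xi(\mu)$, and use Proposition \ref{prop: minimal} to translate the hypothesis that $\mu$ is not supported on a minimal set into the geometric condition that $\mu$-a.e.\ surface has no horizontal cylinder decomposition. Combined with Theorem \ref{thm:complete periodicity}, this rules out any horizontal saddle connection from a singularity to itself on a typical $M$, since such a saddle connection bounds a horizontal cylinder and would force complete periodicity. Thus every edge of $\Xi(\mu)$ joins the two distinct singularities $\xi_1$ and $\xi_2$.

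Next I would split on the number of edges of $\Xi(\mu)$. The empty case is (i). If there is a single edge $\delta$, then case (ii) holds: the hyperelliptic involution $\eta_M$ preserves the horizontal foliation (since $D\eta_M = -I$) and hence permutes the finite set $\Xi(M)$; with only one element, $\delta$ must be fixed. The remaining possibility is that $\Xi(\mu)$ contains at least two edges $I$ and $I'$; since both connect $\xi_1$ to $\xi_2$, the union $I \cup I'$ traces a closed loop in $\Xi(\mu)$. At this point I would invoke Theorem \ref{thm: loops in the spine} when $D$ is not a square and Theorem \ref{thm: loops in the spine square case} when $D$ is a square. Since the horizontal direction is not periodic, the only surviving conclusions are that $M$ is a connected sum of two tori along a horizontal slit, or (only when $D$ is square) a self-connected sum of a torus with two marked points along two horizontal slits. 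A key byproduct is that both theorems explicitly state that $\Xi(M)$ consists of exactly two saddle connections in these conclusions, so the case of three or more edges cannot occur.

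Finally I would distinguish cases (iii) and (iv) using Lemma \ref{lem: interchange equals disconnect}: if $\eta_M$ interchanges $I$ and $I'$, then $I \cup I'$ disconnects $M$ into two tori with slits (case (iii), matching the first branch of Theorem \ref{thm: loops in the spine}); if $\eta_M$ fixes each of $I$ and $I'$ individually, then $I \cup I'$ does not disconnect $M$ but its complement is a torus with two slits (case (iv), matching the self-connected-sum branch of Theorem \ref{thm: loops in the spine square case}, where $D$ is necessarily a square). The main potential obstacle is keeping straight which configurations can occur for which $D$ and ensuring the case of three or more horizontal saddle connections is genuinely ruled out; both issues are handled entirely by the explicit statements of the two loops-in-the-spine theorems, so the proof essentially reduces to assembling these inputs.
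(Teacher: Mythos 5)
Your proof is correct and expands what the paper compresses into a single sentence (``This follows directly from Theorems~\ref{thm: loops in the spine} and \ref{thm: loops in the spine square case}''). You correctly fill in the routine steps the paper leaves implicit: translating non-minimality into $\mu$-a.e.\ non-periodicity of the horizontal direction via Proposition~\ref{prop: minimal}, excluding self-loops via Theorem~\ref{thm:complete periodicity}, handling the one-edge case by the fact that the hyperelliptic involution permutes $\Xi(M)$, reading off the ``exactly two edges'' conclusion from the ``in particular'' clauses of the two loop theorems, and separating (iii) from (iv) via Lemma~\ref{lem: interchange equals disconnect}. This is the same route the paper takes, just spelled out.
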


\begin{proof}
  This follows directly from Theorems~\ref{thm: loops in the spine} and \ref{thm: loops in the spine
    square case}.
\end{proof}

In the case where $M$ belongs to a minimal set, $\Xi(M)$ is one of
configurations pictured in Figure~\ref{fig: maximal}. 

Our goal in this section will be to provide examples of $U$-ergodic
$U$-invariant measures in each of these cases.

For fixed discriminant $D$, let $\CC$\index{C} denote one of the following:

\begin{itemize}
\item[(i)] The $G$-orbit of the regular decagon $\mathcal{L}_{\rm dec}\subset\EE_5(1,1)$, or if $D$ is square, the
  $G$-orbit of a square-tiled surface in $\EE_D(1,1)$.
\item[(ii)] A component of $\widehat{\EE}_{D}(2)$ in $\HH_{\mathrm{f}}(2)$.
\item[(iii)] A component of $\mathcal{P}_D\subset \HH(0) \times \HH(0)$.
\item[(iv)] If $D = d^2$, the $d$-torsion locus $\mathcal{S}_{D}\subset\HH(0,0)$.
\end{itemize}
Note that in this case and subsquently we use the
same symbol to denote subsets of different spaces; that is, more
formally we should replace $\CC$ with one of $\CC({\rm i})$--$\CC({\rm iv})$
depending on the cases above. We will continue with this slight
inaccuracy with the set $\BB$ and map $\Psi$ below, and this should
cause no confusion.

In each case, $\CC$ has a natural $G$ or $\widehat{G}$ action, and $\CC$ is isomorphic to $G/\Gamma$ or
$\widehat{G}/\Gamma$ for some lattice $\Gamma$.
Let $\BB\subset\CC$\index{B@$\BB$} denote the subset consisting of surfaces without
horizontal saddle connections.
In case (iii) or (iv), we interpret a saddle connection to be either a closed geodesic or a segment
joining two marked points.

\begin{prop}
  \label{prop:complement of horocycles} The set $\BB\subset\CC$ is the complement of the set of closed
  horocycles.
\end{prop}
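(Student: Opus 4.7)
The plan is to show, in each of the four cases, that $M \in \CC$ has a horizontal saddle connection if and only if its $U$-orbit is closed; this will identify $\BB$ with the complement of the closed horocycles. In each case $\CC$ is isomorphic to $G/\Gamma$ or $\widehat{G}/\Gamma$ for some lattice $\Gamma$, with $U$ acting by left translation. By the classical theorem of Hedlund and Dani for horocycle flows on finite-volume quotients of $\SL_2(\R)$, every $U$-orbit is either closed or equidistributes to Haar measure, and the closed $U$-orbits are precisely those $Ug\Gamma$ such that $g^{-1}Ug$ meets $\Gamma$ nontrivially, i.e.\ they correspond to the cusps.

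For the direction ``closed $U$-orbit implies horizontal saddle connection'', observe that a closed $U$-orbit is a minimal set for $U$. In cases (i) and (ii), $M$ lies in a genus-two stratum (after projecting from $\HH_{\mathrm{f}}(2)$ in case (ii)), so Proposition \ref{prop: minimal} applies and yields a horizontal cylinder decomposition of $M$; the boundaries of these cylinders are horizontal saddle connections. In cases (iii) and (iv), the existence of a nontrivial horizontal parabolic in the stabilizer translates, under the identification of $M$ with a pair of isogenous tori $(E_1,E_2)$ (resp.\ a torus with two marked points $(E,p,q)$), into periodicity of the horizontal direction on the underlying tori. Since every horizontal line in a flat torus of periodic horizontal direction meets every horizontal slice, there is such a line through each marked point, producing either a horizontal closed geodesic through a marked point (a saddle connection joining a marked point to itself) or, when $p$ and $q$ share a horizontal, a horizontal segment from $p$ to $q$.

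Conversely, suppose $M$ has a horizontal saddle connection. In cases (i) and (ii), $M$ lies in a closed $G$-orbit and is therefore a Veech surface. Veech's dichotomy asserts that each direction is either uniquely ergodic or decomposes the surface into parallel cylinders. A horizontal saddle connection obstructs unique ergodicity of the horizontal flow, so the horizontal direction is completely periodic and $M$ decomposes into horizontal cylinders. The Veech group of $M$ then contains a horizontal parabolic (an appropriate product of Dehn twists in these cylinders), so $U \cdot M$ is closed in $\CC$. In cases (iii) and (iv), a horizontal saddle connection on a surface built from tori forces the horizontal flow on each underlying torus to fail to be minimal, hence to be periodic, hence to be rational with respect to the period lattice. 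This rationality is precisely the condition that, under the identification $\CC \cong G/\Gamma$, the coset representative $g$ conjugates a nontrivial element of $U$ into $\Gamma$, exhibiting $U \cdot M$ as a closed orbit.

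The main obstacle is, in cases (iii) and (iv), carefully tracing through the explicit isomorphisms $\CC \cong G/\Gamma_0(m)$ and $\CC \cong G/\Gamma_1(d)$ recalled in \S\ref{section: eigenform locus}, so that rationality of the horizontal direction on the underlying tori corresponds to closedness of the $U$-orbit in the homogeneous model. This amounts to the observation that $\Gamma_0(m)$ and $\Gamma_1(d)$ are commensurable with $\SL_2(\Z)$, whose cusps under the $U$-action are indexed by rational slopes; once this dictionary is in place, the equivalence follows directly.
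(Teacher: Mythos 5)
Your proof follows essentially the same strategy as the paper's: reduce the claim to showing that a surface in $\CC$ has a horizontal saddle connection if and only if its $U$-orbit is closed, handle cases (i)--(ii) via the Veech dichotomy, and handle cases (iii)--(iv) by relating horizontal saddle connections to periodicity of the horizontal direction on the underlying tori. The forward implication (closed horocycle $\Rightarrow$ horizontal saddle connection) is fine, including the use of Proposition~\ref{prop: minimal} in cases (i)--(ii), which is a reasonable alternative to just citing the Veech alternative as the paper does.

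The one place you should be more careful is the converse in case~(iii). You assert that a horizontal saddle connection on $(E_1,E_2)\in\mathcal{P}_D$ ``forces the horizontal flow on each underlying torus to fail to be minimal,'' but a horizontal saddle connection on the pair is a horizontal closed geodesic on $E_1$ \emph{or} $E_2$; a priori only one of the two tori has a periodic horizontal direction. The missing ingredient, which the paper makes explicit, is that $E_1$ and $E_2$ are isogenous, so the horizontal direction is periodic on one if and only if it is periodic on the other, and moreover the two $U$-stabilizers are commensurable --- which is exactly what is needed for the $U$-orbit of the pair to close up. Your reduction to the homogeneous model $\CC\cong G/\Gamma_0(m)$ does secretly encode this (a single coset $g\Gamma_0(m)$ determines both tori, and $\Gamma_0(m)$ is finite index in $\SL_2(\Z)$), and likewise the finite index of $\Gamma_1(d)$ encodes the finiteness of the set of $d$-torsion pairs needed in case~(iv); so your route is viable, but the sentence as written skips the isogeny step. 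The paper's version is shorter because it argues directly with the isogeny and the finiteness of $d$-torsion, without passing through the modular curves.
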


\begin{proof}
  In cases (i) and (ii) this follows from the Veech alternative \cite{Veech - alternative}.

  In case (iii), recall that every point of $\CC$ represents a pair of isogenous genus one
  translation surfaces, so the horizontal direction is periodic for one if and only if it is
  periodic for the other.  A torus with periodic horizontal direction is stabilized by an infinite
  cyclic subgroup of $U$, and for two isogenous tori, these subgroups are commensurable.  Thus a
  point of $\CC$ representing a pair of tori with periodic horizontal direction lies on a closed
  horocycle.

  In case (iv), every point of $\CC$ represents a torus with two marked points $(E, p,q)$ where
  $p-q$ is $d$-torsion.  It has a horizontal saddle connection exactly when the horizontal direction
  of $E$ is periodic, in which case $E$ is stabilized by an infinite cyclic subgroup of $U$.  Given
  $E$, there are finitely many choices of $p$ and $q$ which differ by $d$-torsion, up to isomorphism of
  $E$.  Thus $(E,p,q)$ is also stabilized by an infinite cyclic subgroup of $U$, so lies on a closed
  horocycle.
\end{proof}

Note that $\BB$ is a dense $G_\delta$ subset of a locally compact
metrizable space, and hence the Borel $\sigma$-algebra structure on 
$\BB$ is a standard Borel 
space (see \cite{Kechris}). 
\begin{cor}
  \label{cor:uniquely ergodic}
  In each case the action of $U$ on $\BB$ is uniquely ergodic,
  i.e. there is a unique $U$-invariant regular Borel measure on $\BB$.
\end{cor}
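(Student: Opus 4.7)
The plan is to reduce this statement to the classical Dani measure-classification theorem for the $U$-action on quotients $G/\Gamma$ and $\widehat{G}/\Gamma$ by lattices. In each of the cases (i)--(iv), the set $\CC$ is a homogeneous space: the orbit of the decagon or a square-tiled surface in case (i), a component of $\widehat{\EE}_D(2)$ on which $\widehat{G}$ acts transitively in case (ii), a modular curve $G/\Gamma_0(m)$ in case (iii), and $G/\Gamma_1(d)$ in case (iv). In every case the relevant $U$-action on $\BB \subset \CC$ is the restriction of the homogeneous $U$-action inherited from $G$ or $\widehat{G}$ (using that $U$, being simply connected, lifts canonically into $\widehat{G}$).

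First I would invoke Dani's measure-classification theorem (\cite[Thm.~3.3.2]{KSS handbook}): every ergodic $U$-invariant Borel probability measure on $G/\Gamma$ is either the $G$-invariant probability measure or the length measure on a closed $U$-orbit. The proof of this theorem depends only on the structure of $U$ and its normalizer in $G$, both of which are present in $\widehat{G}$, so the same classification holds on $\widehat{G}/\Gamma$; ergodicity of the Haar measure for $U$ in that setting is already recorded in Proposition~\ref{prop: Hopf strikes back}. Combined with Proposition~\ref{prop:complement of horocycles}, which identifies $\CC \sm \BB$ with the union of closed $U$-orbits, this yields a clean dichotomy: every ergodic $U$-invariant probability measure on $\CC$ either is supported on a single closed orbit (hence on $\CC \sm \BB$) or is the Haar measure, which assigns full measure to $\BB$ since the closed $U$-orbits form a countable union of lower-dimensional submanifolds.

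Now I would finish as follows. Given a $U$-invariant Borel probability measure $\mu$ on $\BB$, extend it by zero to a $U$-invariant Borel probability measure $\bar\mu$ on $\CC$ (this is well-defined since $\BB$ is Borel in $\CC$). Decompose $\bar\mu$ into its ergodic components using the fact that $\CC$ is a standard Borel space. Each component is of one of the two types above. Since $\bar\mu(\CC \sm \BB) = 0$, all components supported on closed $U$-orbits must contribute zero to $\bar\mu$; hence $\bar\mu$ equals the Haar measure on $\CC$. Restricting back to $\BB$ shows that $\mu$ is the normalized restriction of Haar measure, proving uniqueness.

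The main obstacle, if any, will be verifying case (ii): one must check that Dani's argument truly carries over to $\widehat{G}/\Gamma$ and that no ergodic component arises which is not present in the $G/\Gamma$ classification. Since $\widehat{G}$ is a finite connected cover of $G$ and $U$ lifts uniquely, the analysis of invariant measures reduces to the $G$-setting by pushing forward via the covering map; any closed $U$-orbit downstairs is covered by a finite disjoint union of closed $U$-orbits upstairs, and the Haar measure on $\widehat{G}/\Gamma$ pushes to a multiple of Haar measure on the image in $G/\Gamma'$, so no new ergodic components appear. Everything else in the argument is formal.
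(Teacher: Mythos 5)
Your proof is correct and takes essentially the same route as the paper: both arguments reduce the statement to Dani's measure classification for $U$-invariant measures on $G/\Gamma$ (and $\widehat G/\Gamma$), then use Proposition~\ref{prop:complement of horocycles} to observe that every closed-horocycle measure lives in $\CC\setminus\BB$, leaving Haar measure as the only one charging $\BB$. The only cosmetic difference is that you spell out the ergodic decomposition step and give an explicit covering-map justification for the $\widehat G$ case, whereas the paper simply cites Dani as already covering $\widehat G/\Gamma$; this does not change the substance.
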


\begin{proof}
  By Proposition~\ref{prop:complement of horocycles}, the action of $U$ on $\BB$ is measurably
  conjugate to the horocycle flow on the complement of the set of horocycles in $G/\Gamma$ or
  $\widehat{G}/\Gamma$ for some lattice $\Gamma$.  Dani \cite{Dani 78} classified the $U$-invariant
  measures on  $G/\Gamma$ and $\widehat{G}/\Gamma$, showing that they are either supported
  on closed horocycles or are the global measure induced by Haar measure. Thus Dani's theorem
  implies that the $U$-action on each $\BB$ is uniquely ergodic.
\end{proof}

In each of the four cases introduced above, 
we define a map $\Psi \colon \BB \times (0, \infty) \to \HH(1,1)$\index{psi@$\Psi$} as follows:

\begin{itemize}
\item[(i)]  $\Psi(M, T) = \rel_TM. $ This is well-defined by
  Corollary~\ref{thm: criterion}.
\item[(ii)] $\Psi(M, T)$ is the inverse of the map $\Phi_{\mathrm{f}}^{(-T)}$ of Theorem
  \ref{thm: criterion1}. Informally, $\Psi(M,T)$
  is the surface obtained by splitting the singularity of angle $6\pi$ to two singularities
  of angle $4\pi$ each, and moving them apart using $\rel_T$. The way
  in which the two singularities are to be split apart is made
  explicit by the framing.  
\item[(iii)] Given a pair of genus one translation surfaces $(E_1,
  E_2)\in \BB \subset\mathcal{P}_D$, we
  define $\Psi((E_1, E_2), T)$ to be the surface obtained by removing a horizontal slit of length
  $T$ from each $E_i$ and then gluing the resulting surfaces along their boundaries to obtain a
  genus two surface in $\HH(1,1)$.  The resulting surface is an eigenform, since real multiplication
  is preserved by deformations which leave absolute periods constant by Proposition~\ref{prop:
    detecting RM}. 
\item[(iv)]  Given a genus one surface $E$ with aperiodic horizontal
  foliation 
and with marked points $p,q$ which differ by $d$-torsion, we
  define $\Psi((E, p,q), T)$ to be the surface obtained by cutting $E$ along two horizontal
  slits of length $T$ with left endpoints $p$ and $q$ and then gluing the resulting two boundary
  components to obtain a genus two surface in $\HH(1,1)$.  We saw above that there is a primitive
  degree $d$ torus covering $\pi\colon E\to F$ sending $p$ and $q$ to the same point.  Our gluing
  identifies only pairs of points which have the same image, so the resulting surface is also a
  primitive degree $d$ torus cover belonging to $\EE_{d^2}(1,1)$.   
\end{itemize}

In each case, we label the singularities of the resulting surface so that singularity $\xi_1$ is on the
left hand side of the horizontal saddle connection of length $T$, and singularity $\xi_2$ is on the right
hand side.

We extend the definition of $\Psi(M,T)$ to $T<0$ as follows. 
In case (i), we simply apply $\rel_T$.  In the remaining cases, we repeat
the same construction using $|T|$ in place of $T$, but choose the opposite
labeling for the singularities.

For fixed $T \neq 0$, let $\Psi_T : \BB
\to \EE_D(1,1)$ be defined by $\Psi_T(M) = \Psi(M, T)$.  In cases (ii), (iii), and (iv), we denote by $\BB_T$ the
image of $\Psi_T$.

We summarize this discussion in the following:
\begin{prop}\label{prop: tear}
For each $T$, 
$\Psi_T$ is continuous on $\BB$. In case (i), the image is a set of surfaces with no
horizontal saddle connections. In the remaining cases, the image
$\BB_T$ is the set of surfaces in 
$\EE_D(1,1)$ with
\begin{itemize}
\item in case (ii) there is exactly one horizontal saddle connection,
 with length $|T|$, which  joins distinct singularities. 
\item in case (iii) there are exactly two horizontal saddle
  connections of length $|T|$ joining distinct singularities which are 
  interchanged by the hyperelliptic involution. 
\item in case (iv) there are exactly two horizontal saddle connections
  of length $|T|$ joining distinct singularities which have the 
  same length and are both fixed by the hyperelliptic involution.
\end{itemize}
These saddle connections are all positively oriented in the case $T>0$
and negatively oriented
in the case $T<0$.

In each case, 
the pushforward under $\Psi_T$ of the $G$-invariant (or
$\widehat{G}$-invariant) 
measure on $\BB$ 
is the unique $U$-invariant and $U$-ergodic measure $\mu$ on
$\EE_D(1,1)$, such that $\Xi(\mu)$ is 
as in cases (i)-(iv) above. 
\end{prop}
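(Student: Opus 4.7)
The plan is to establish the claims in the order stated: continuity of $\Psi_T$, the geometric description of its image, and then the measure-theoretic assertion, all four cases handled in parallel with appropriate case-by-case input.

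First I would verify continuity. In case (i), $\Psi_T=\rel_T$ is continuous on its domain by Proposition~\ref{prop: continuous}, and $\BB\subset\HH'_T$ for all $T$ by Corollary~\ref{thm: criterion} since surfaces in $\BB$ have no horizontal saddle connections. Case (ii) is immediate from Theorem~\ref{thm: criterion1}, which gives $\Psi_T$ as the homeomorphism $(\Phi_{\mathrm{f}}^{(-T)})^{-1}$ onto its image. In cases (iii) and (iv), the slit-and-glue operation can be written as an affine map in period coordinates, which makes continuity (and in fact being a local homeomorphism onto its image) transparent. In each case $\Psi_T$ is clearly injective, so it is a continuous bijection $\BB\to\BB_T$ where $\BB_T \df \Psi_T(\BB)$.

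Next I would characterize $\BB_T$. Case (i): Corollary~\ref{prop: horizontal preserved} gives a bijection between horizontal saddle connections of $M$ and $\rel_T M$, so $\Psi_T(\BB)$ consists of surfaces with no horizontal saddle connections. Case (ii) is already contained in Theorem~\ref{thm: criterion1}; hyperelliptic invariance of the unique saddle connection follows because the $\HH(2)$ singularity is a Weierstrass point, so splitting it symmetrically yields a saddle connection fixed by the involution (cf. Lemma~\ref{lem: interchange equals disconnect}). In cases (iii) and (iv), the construction directly produces two horizontal saddle connections of length $|T|$; the topological type of the complement and the action of the hyperelliptic involution are read off from Lemma~\ref{lem: interchange equals disconnect}. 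Conversely, if $M\in\EE_D(1,1)$ has the prescribed $\Xi(M)$ of length $|T|$, then Theorems~\ref{thm: loops in the spine} and~\ref{thm: loops in the spine square case} show that $M$ is obtained by the appropriate slit construction from a pair (or single) genus-one surface whose horizontal direction is not periodic, i.e., lies in $\BB$. This gives $M\in\BB_T$ and hence identifies $\BB_T$ exactly.

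For the measure assertion, I would check $U$-equivariance case by case: in (i), Rel commutes with $U$ because $U$ fixes $Z$ pointwise in $H^1(S,\Sigma;\R^2)$, a consequence of Proposition~\ref{prop: distributive}; in (ii), Theorem~\ref{thm: criterion1} gives $U$-equivariance; in (iii) and (iv), $U$ acts on the factor tori and preserves the horizontal slits, commuting with the gluing. Since $\Psi_T$ is a continuous $U$-equivariant injection between standard Borel spaces, it is a Borel isomorphism onto $\BB_T$. The $G$- or $\widehat{G}$-invariant Haar measure on $\CC$ is concentrated on $\BB$ (Proposition~\ref{prop:complement of horocycles}) and is the unique $U$-invariant probability measure on $\BB$ by Corollary~\ref{cor:uniquely ergodic}; in particular it is $U$-ergodic. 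Hence $(\Psi_T)_*\mathrm{Haar}$ is $U$-invariant, $U$-ergodic, and supported on $\BB_T$, with the stated $\Xi$. For uniqueness, if $\mu$ is any $U$-invariant $U$-ergodic probability measure on $\EE_D(1,1)$ whose horizontal data diagram matches that prescribed (including the length $|T|$ and orientation), then by the image characterization above $\mu$ is concentrated on $\BB_T$; pulling back under $\Psi_T^{-1}$ produces a $U$-invariant probability measure on $\BB$, which by Corollary~\ref{cor:uniquely ergodic} must be Haar, proving $\mu=(\Psi_T)_*\mathrm{Haar}$.

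The main obstacle is the converse direction of the image description in cases (iii) and (iv): showing that any eigenform with the prescribed $\Xi$ actually arises from the slit construction on $\BB$. This is where the real work is, and it is where one fully exploits the complete periodicity and disconnection results, Theorems~\ref{thm: loops in the spine} and~\ref{thm: loops in the spine square case}, together with Lemma~\ref{lem: interchange equals disconnect}, to recover the pair of tori (or the single torus with $d$-torsion marked points) from the surface $M\in\BB_T$.
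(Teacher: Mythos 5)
Your proposal takes essentially the same route as the paper's proof: continuity in cases (i) and (ii) from Proposition~\ref{prop: continuous} and Theorem~\ref{thm: criterion1}, continuity in cases (iii)--(iv) from the explicit slit surgery, and then uniqueness by showing that every surface $M$ with the prescribed $\Xi(M)$ lies in the image of $\Psi_T$ (by inverting the surgery) so that unique ergodicity on $\BB$ transports to $\BB_T$. The one small organizational difference is that you isolate and emphasize the role of Theorems~\ref{thm: loops in the spine} and~\ref{thm: loops in the spine square case} in the converse direction; the paper's proof invokes them only implicitly (they underlie Proposition~\ref{prop: configurations HD2}, which is stated earlier). Your version makes this dependency explicit, which is slightly clearer, but the mathematical content is the same.
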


\begin{proof}
 Most of the statements in the Proposition were established in the
 preceding discussion.  To obtain 
  continuity of $\Psi_T$, in case (i), continuity follows from Proposition \ref{prop:
    continuous}, and in case (ii), from Theorem \ref{thm:
    criterion1}. In the other cases, it follows immediately from the
  explicit definition of the 
  surgeries described above that continuity holds in a neighborhood of
  $M$, for all $T$ sufficiently 
  small (depending on this neighborhood). Continuity for arbitrary $T$
  follows by combining this 
  with Proposition \ref{prop: continuous}.

We  now show that for each of the cases (i)--(iv),
each surface $M$ with $\Xi(M) = \Xi(\mu)$ as described in the list, is
in the image of the map  
  $\Psi_T$. To show that $M$ is in the image of $\Psi_T$, we define the
  inverse of $\Psi_T$ by inverting the surgery construction used to
  define it. Thus for case (ii), we use Theorem
  \ref{thm: criterion1}. In case (iii), $\Psi_T$ associates to a pair
  of isogenous tori $(E, F)$, the connected sum 
  $E\#_I F$.  Given a surface $M$ with two horizontal saddle
  connections interchanged by the 
  hyperelliptic involution, we cut $M$ along these saddle connections
  to obtain a pair of isogenous 
  tori $(E, F)$, and so $M = \Psi_T(E,F)$.  The other cases are
  similar. 

The uniqueness of the measures $\mu$ for which
$\Xi(\mu)$ is as in each of the cases above, now follows from the unique ergodicity of the
  $U$-action on the corresponding set
  $\BB$. 
\end{proof}

\section{Classification of ergodic measures}\label{sec: classification}
In this section we show that the only $U$-invariant
$U$-ergodic measures in $\EE_D(1,1)$ are those described in Section 
\ref{section: construction}.  The following is our
full measure classification result, which is an expanded version of
Theorem \ref{thm: Calta Wortman 
  revisited}. 

\begin{thm}\label{thm: Calta Wortman revisited2} 
Let $\mu$ be a $U$-invariant $U$-ergodic Borel probability measure on
$\EE_D$. Then exactly one of the following cases holds: 
\begin{enumerate}

\item 
$\mu$ is length measure on a periodic $U$-orbit, 
$\Xi(\mu)$ is a complete separatrix diagram of type (A), (B), (C) or
(D) (see Figure~\ref{fig: maximal}), and all the cylinders in the corresponding cylinder
decomposition have commensurable moduli. 

\item 
$\mu$ is the flat measure on a 2-dimensional torus which is a  $U$-minimal set,
$\Xi(\mu)$ is a complete separatrix 
diagram of type (A), and the cylinders in the corresponding cylinder
decomposition do not have commensurable moduli. The stabilizer of
$\mu$ is $U \times Z$, and $U \times Z$ acts transitively on $\supp \,
\mu$. 

\item
$\Xi(\mu)$ consists of two saddle 
connections joining distinct singularities, which disconnect $M$. The
complement consists of 
two isogenous tori glued along a slit as in case (iii) of \S
\ref{subsection: nonempty}, and there is a
$T \in \R \sm \{0\}$ such that $\mu$ is 
the pushforward of Haar measure on a connected component of
$\mathcal{P}_D$, via the map $\Psi_T$. 

\item
$\Xi(\mu)$ consists of one saddle connection
joining distinct singularities, as in case (ii) of \S
\ref{subsection: nonempty}, and there is a $T \in \R \sm \{0\}$
such that $\mu$ is the image under $\Psi_T$ of Haar measure on a
finite volume $\widehat{G}$-orbit in $\HH_{\mathrm{f}}(2)$, where
$\widehat{G}$ and $\HH_{\mathrm{f}}(2)$ are the threefold covers of $G$ and $\HH(2)$
described in \S \ref{subsec: action}.

\item
$\Xi(\mu)=\varnothing$ and there is $T \in \R$ such that $\mu$ is the
image under $\rel_T$, of Haar measure on a closed $G$-orbit in
$\EE_D(1,1)$. In this case $D$ is either a square or is equal 
to 5. 
\item $\Xi(\mu)$ contains two saddle connections joining distinct
  singularities, whose complement in 
  $M$ is a torus with two parallel slits of equal length, as in case
  (iv) of \S \ref{subsection: 
    nonempty}, and there is a $T \in \R \sm \{0\}$ such that $\mu$ is
  the image under $\Psi_T$ of a 
  $G$-invariant measure on the space of tori with two marked
  points. In this case $D$ is a square. 
\item
$\mu$ is the  flat
measure on $\EE_D(1,1)$. 

\end{enumerate}
\end{thm}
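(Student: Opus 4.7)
The plan is to organize the proof as a case analysis driven by the horizontal data diagram $\Xi(\mu)$ together with whether $\mu$ is supported on a $U$-minimal set. The constructions of \S\ref{section: construction} give all seven families explicitly, so the task is to show that no other measures exist. We proceed as follows.

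First I would dispose of the case when $\mu$ is supported on a $U$-minimal set. By Proposition \ref{prop: minimal} together with Theorem \ref{thm:complete periodicity}, such a minimal set is a torus obtained from a horizontally periodic surface, and its dimension equals the $\Q$-span of the cylinder moduli. Proposition \ref{prop: configurations HD} bounds this dimension by $n-1$ when there are $n$ cylinders, so by Corollary \ref{cor: missing} the only possibilities are a periodic $U$-orbit (case (1)) or a $2$-torus orbit of type (A), which by the explicit coordinates used in \cite{calanque} is acted on transitively by $UZ$, giving case (2).

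Next, assume $\mu$ is not supported on a minimal set. By Proposition \ref{prop: configurations HD2}, exactly one of the configurations (i)--(iv) there occurs. For configurations (ii), (iii), (iv) I would apply the corresponding map $\Psi_T$ of Proposition \ref{prop: tear}. Namely, by Corollary \ref{cor: horizontal relations}, the common length $T$ of the slit saddle connections is a $\mu$-almost surely constant function, so $\mu$ is supported on the image $\BB_T$ of $\Psi_T$ for a unique $T\in \R \sm\{0\}$. The inverse of $\Psi_T$ (constructed by collapsing or opening slits exactly as in Theorem \ref{thm: criterion1}) is $U$-equivariant and Borel, so $\Psi_T^{-1}{}_* \mu$ is a $U$-invariant probability measure on $\BB \subset \CC$. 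Corollary \ref{cor:uniquely ergodic} then identifies this measure as Haar on a single component of $\CC$, which via Proposition \ref{prop: tear} yields cases (3), (4), (6); the condition that $D$ be square in case (6) is forced by the existence of the $d$-torsion covering inherent in $\mathcal{S}_D$.

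The last and most delicate case is (i), $\Xi(\mu)=\emptyset$. Here Corollary \ref{thm: criterion} shows that $\mu$-almost every $M$ lies in $\HH'_\infty$, so by Corollary \ref{cor: real rel via group} the group $N$ acts on $\supp\,\mu$, and we may study the stabilizer $N_\mu$ of Definition \ref{dfn: stabilizer}. The plan is a Ratner-type dichotomy following the strategy of \cite{CW, EMM}: either $Z\subset N_\mu$, in which case $\mu$ is invariant under $UZ$, or the Rel translates $\rel_{t*}\mu$ form a non-trivial one-parameter family. In the first case, since $\EE_D^{(1)}(1,1)$ has only the finitely many Rel leaves and $\mu$ is $U$-invariant, a standard polynomial-divergence / linearization argument (using Propositions \ref{prop: local L action} and \ref{prop: stabilizer connected} to enlarge $N_\mu$ to contain a non-unipotent element, hence all of $Z$ and then by another application a non-trivial diagonal piece) forces $N_\mu$ to contain enough of $L$ that $\mu$ is the flat measure, giving case (7). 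In the second case, one uses Proposition \ref{prop: Katok Spatzier} to transport the generic set of $\mu$ by Rel and apply McMullen's $G$-invariant classification (Theorem \ref{thm:ergodic-SLtwoR-classification}) to show that $\mu = \rel_{T*}\nu$ for some $G$-invariant measure $\nu$ on a closed $G$-orbit; by Theorem \ref{thm:closed-orbit-classification} such a closed $G$-orbit in $\HH(1,1)\cap \EE_D$ exists only when $D$ is a square or $D=5$, yielding case (5).

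The hard part will be the case $\Xi(\mu)=\emptyset$: one must execute the Ratner/Calta--Wortman polynomial divergence argument in the non-homogeneous setting, leveraging the explicit description of $Z^{(M)}$ from Theorem \ref{thm: Bainbridge-Smillie surgery} and the convexity statement of Proposition \ref{prop: convex} to control how real Rel displaces nearby generic points. The continuity of $n\mapsto n_*\mu$ on $N^{(\mu)}$ from Proposition \ref{prop: continuity} and the closedness of $N_\mu$ from Corollary \ref{cor: closed} are the technical ingredients that make passage to limits legitimate; verifying that loss of mass does not occur (as flagged for \S\ref{sec: injectivity}) is what prevents a Rel trajectory from escaping to a deeper boundary stratum and is where the restriction to $\EE_D(1,1)$ is really used.
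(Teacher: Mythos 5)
Your overall case decomposition matches the paper's: dispose of $U$-minimal sets (cases 1, 2), then use Proposition \ref{prop: configurations HD2} and the maps $\Psi_T$ with unique ergodicity on $\BB$ for the configurations with nonempty $\Xi(\mu)$ (cases 3, 4, 6), then analyze the case $\Xi(\mu)=\varnothing$. The first two blocks are correctly and completely sketched. The problem is in the last block, which you correctly identify as the hard part but do not carry out.

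Your description of the Ratner-type dichotomy is confused in a way that would not repair to a correct argument. You state the dichotomy as ``$Z\subset N_\mu$'' versus ``Rel translates form a nontrivial family,'' and then in the \emph{first} branch invoke a polynomial-divergence argument to ``enlarge $N_\mu$ to contain a non-unipotent element, hence all of $Z$.'' But in that branch you already have $Z\subset N_\mu$, so there is nothing to enlarge; one goes directly to Claim \ref{claim: linchpin} and concludes $\mu$ is the flat measure. Conversely, in the \emph{second} branch you jump straight from ``$Z\not\subset N_\mu$'' to the conclusion that $\mu=\rel_{T*}\nu$ for a $G$-invariant $\nu$ on a closed $G$-orbit, but you do not supply the chain of implications that makes this work. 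The paper's argument goes: (i) first use Proposition \ref{prop: CW main}, a polynomial-divergence statement proved via the explicit time-change formula \equ{eq: formula for s} and quantitative recurrence to a compact set, to show $\dim N_\mu^\circ\geq 2$; (ii) then classify the one-parameter subgroups of $N$ up to conjugacy as $U$, $Z$, $A$, and observe that if $Z\not\subset N_\mu^\circ$ (hence no conjugate of $Z$ is contained either, as $Z$ is normal) then $N_\mu^\circ$ must contain a conjugate of $A$, which can be written as $\rel_T A\rel_{-T}$; (iii) deduce that $\mu'=\rel_{-T*}\mu$ is $B$-invariant; (iv) apply the Eskin--Mirzakhani--Mohammadi theorem \cite{EMM2} to upgrade $B$-invariance to $G$-invariance; (v) finally apply McMullen's classification. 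Steps (i)--(iv) are the substance of the case, and your sketch omits (i) in the relevant branch, misplaces the divergence argument into the wrong branch, and omits (iv) entirely --- the passage from $B$-invariance to $G$-invariance via \cite{EMM2} is essential and does not follow from what you cite. The ``loss of mass'' discussion you flag at the end is actually not part of this theorem's proof (it is needed later for Theorems \ref{thm: orbit closures} and \ref{thm: teaser1}); the measure-classification argument works directly with a compact set of positive $\mu$-measure built from the Birkhoff theorem and does not need a nondivergence input.
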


\begin{dfn}\label{dfn: types of measures} The numbering in Theorem
  \ref{thm: Calta Wortman revisited2} corresponds to  
that in Theorem \ref{thm: Calta Wortman revisited}. If $\mu$ is a
measure satisfying the conditions of item $(j)$ in Theorem \ref{thm:
  Calta Wortman revisited2} (where
$j \in \{1, \ldots, 7\}$) then we
will refer to $j$ as the {\em type} \index{type of a measure} of 
the measure $\mu$. 
\end{dfn}

Recall from Proposition \ref{prop: local L action} and Corollary
\ref{cor: frobenius} that $\EE_D(1,1)$ is an affine manifold in
period coordinates, that the tangent space
to $\EE_D(1,1)$ at a point $M$ is naturally identified with the Lie
algebra $\mathfrak{l} \cong \mathfrak{sl}_2(\R) \ltimes \R^2$, \index{l@$\mathfrak{l}$} and
every Lie subalgebra of $\mathfrak{l}$ defines a foliation of
$\EE_D(1,1)$. \index{l@$\mathfrak{l}$}
Theorem \ref{thm: Calta Wortman revisited2} can be reformulated in Lie
algebra terms, as follows. For any ergodic $U$-invariant
measure $\mu$ on $\EE_D(1,1)$, there is a Lie subalgebra
$\mathfrak{l}_{\mu}$ \index{lm@$\mathfrak{l}_{\mu}$} such that one
leaf of the foliation corresponding to $\mathfrak{l}_\mu$ is of full
$\mu$-measure, and $\mu$ is invariant under the vector fields
corresponding to the elements of $\mathfrak{l}_\mu$. 
The following list identifies $\mathfrak{l}_{\mu}$ in each of the
cases of Theorem \ref{thm: Calta Wortman revisited2}:
\begin{enumerate}
\item
 $\mathfrak{l}_{\mu} = \mathfrak{u} = \Lie (U).$ 
\item
$\mathfrak{l}_{\mu} = \mathfrak{u} \oplus \mathfrak{z}$\index{z@$\mathfrak{z}$} where $\mathfrak{z}$ is the Lie algebra of $Z$.
\item
$\mathfrak{l}_{\mu}$ is the Lie algebra of the subgroup of $L$
fixing
the holonomy of the two saddle connections in $\Xi(\mu)$ 
(note that these vectors have the same holonomy). 
\item
$\mathfrak{l}_{\mu}$ is the Lie algebra of the subgroup of $L$ fixing
the holonomy vector of either of the two saddle connections in $\Xi(\mu)$
(note that these vectors have the same holonomy).
\item
$\mathfrak{l}_{\mu}$ is the Lie algebra of the subgroup of $L$ fixing
the vector $(T,0) \in \R^2$.  
\item
$\mathfrak{l}_{\mu}$ is the Lie algebra of the subgroup of $L$ fixing
the holonomy of either of the two vectors in $\Xi(\mu)$ . 
\item
$\mathfrak{l}_{\mu} = \mathfrak{l}$. 
\end{enumerate}

The proof of Theorem \ref{thm: Calta Wortman revisited2} occupies the
rest of this section. It is modeled on arguments of Ratner 
dealing with horocycle orbits in homogeneous spaces
(see \cite{Ratner SL2} for a survey), 
which were first applied to spaces of translation surfaces in \cite{EMM} and applied to the eigenform locus by
Calta and Wortman in \cite{CW}. We follow the outline of \cite{CW}
but make several improvements. These enable us to bypass some entropy
arguments used in \cite{CW} and give a clearer justification of Proposition
\ref{prop: CW main} below. 

\subsection{The structure of Ratner's proof} \label{subsec: Ratner}
Ratner's argument hinges on the analysis of transverse divergence of nearby trajectories for
the $U$-action which she calls the {\em R-property} (see \cite[ p. 22]{Ratner SL2}). 
Suppose we want to compare two orbits $u_tM$ and $u_tM'$ where $M$ and $M'$ are close.
We can write $M'=gM \pluscirc v$. The case when $(g,v)\in N$ is somewhat special and we consider
it first. In this case $g$ normalizes $U$ and satisfies
 $u_tg =
gu_{\lambda t}$ for some $\lambda >0$ independent of $t$, and therefore 
$$u_tM'=u_t(g,v)M=(u_tg,v)M=(gu_{\lambda t},v)M=(g,v)u_{ \lambda t}M.$$
The divergence is caused by two factors. Since $(g,v)$
is independent of $t$ and small we think of $u_tM'$ and $u_{\lambda t}M$ as
being close. In particular 
if $\lambda $ is not 1 then the primary divergence of these two points is in
the horocycle direction. Rescaling time along the second orbit by
replacing 
$t$ with $\lambda t$ has the effect of removing this
divergence. 
If we refer to the remaining
orbit divergence as transverse divergence, then in case $(g,v) \in N$
we see that the transverse divergence is constant. 

Now let us consider the general case where $M'=gM \pluscirc v$ with
$\displaystyle{ 
g = \left( \begin{matrix} a & b \\ c & d\end{matrix} \right) \in G,}$ and $v$ arbitrary.
 In order to pick out the divergence in the directions 
transverse to the horocycle direction we will rescale the time factor along the second orbit. 
Specifically let $u_t M$ be the first orbit and write the second orbit as $u_s M'$
with $s=s(t)$. Unlike the previous case, the time parameter $s(t)$ for
the second orbit will depend on the time parameter $t$ in the first orbit
in a nonlinear way and the rescaling will only work in a finite
interval of time. We will show that the time change is not too far
from being linear. 
  In order to do this we choose a family of transversals to the horocycle orbits
and choose the function $s(t)$ so that  $u_t M$ and $u_s M'$ stay in the
same transversal. Recall from Proposition \ref{prop: local L action}
that locally we can model $\EE_D(1,1)$ on 
$L$. We choose as our family of transversals the elements of $L$
corresponding to elements 
$(g,v)$ where $g$ is lower triangular and $v$ is in $\R^2$, and both
$g, v$ are small. This defines a submanifold transverse to $U$ of
complementary dimension.

Since $M'=gM \pluscirc v$
 is defined, Proposition \ref{prop:
  distributive} tells us that $u_s g M \pluscirc u_s v$ is also defined and is
equal to $u_s(gM \pluscirc v) = u_sM'$. Since $u_s gM = (u_s g u_{-t})
u_t M$, the pair $(u_s g u_{-t}, u_s v)$, considered as an element of
$L$, gives a well-defined map which moves $u_t M$ to $u_s M'.$ 
We calculate: 
$$u_sgu_{-t}=\begin{pmatrix} a+sc&b-at+s(d-ct)\\c& d-ct\end{pmatrix}.$$

In order for $u_tM$ and $u_s M'$ to lie on the same transversal, we
require that the matrix $u_sgu_{-t}$ be close
to the identity and be lower triangular.
\begin{dfn}
For $t \in \R$ we set 
\begin{equation}\label{eq: formula for s}{ 
s=s(t,g) =\frac{at-b}{d-ct}.
}\end{equation}
\end{dfn}
With this choice we have:
\begin{equation}\label{eq: see that}{ 
u_{s(t)}gu_{-t} = \left ( \begin{matrix} \frac{1}{d-ct} & 0 \\ c &
    d-ct\end{matrix} \right)}\end{equation}
(here $s(t) = s(t,g)$), 
so that $u_tM$ and $u_{s(t)}M'$ lie in the same transverse leaf as long as $t$ is chosen 
such that $d-tc$ is close to 1.

Note that while the initial displacement 
may consist of displacement in the Rel direction as well as the
$G$-direction, the time change only depends on the initial displacement in the $G$ direction. 

Let $\mu$ be a $U$-invariant ergodic measure on $\EE_D(1,1)$ with
$\Xi(\mu) = \varnothing$. Since $N$ acts on the set of surfaces with $\Xi(\mu) = \varnothing$ 
we can consider the subgroup of $N$ that preserves $\mu$. Let $N_\mu^\circ$ to be the
connected component of the identity in $N_\mu$, where $N_\mu$ is the
stabilizer of $\mu$ as in 
Definition \ref{dfn: stabilizer}.

\begin{prop}
\label{prop: CW main} Let $\mu$ be a $U$-invariant ergodic measure on $\EE_D(1,1)$ with
$\Xi(\mu) = \varnothing$.
For any $\vre>0$ there is a compact subset $K$ of $\EE_D(1,1)$,
consisting of generic points for $\mu$, 
with $\mu(K)\geq 1-\vre$ such that if
$M \in K$ is a limit of $M_k = h_k M \pluscirc v_k \in K$, with $h_k
\in G$ and $ v_k = (x_k, y_k) \in \R^2$, and $(h_k, v_k) \to (1_G,0)$,
$(h_k, v_k) \notin U$ for infinitely many $k$, then
$\dim N_\mu^\circ \geq 2.$ 
\end{prop}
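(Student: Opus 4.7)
The plan is to follow the classical strategy of Ratner, adapted to the partially-defined $N$-action, exploiting the explicit time-change formula \eqref{eq: formula for s} already derived in the excerpt. First I would choose the compact set $K$. Since the set of generic points has full measure (Proposition \ref{prop: well known}) and since the assumption $\Xi(\mu) = \varnothing$ combined with Corollary \ref{thm: criterion} gives $Z^{(\mu)} = Z$ and in particular allows us to apply the $N$-action, I would use Lusin's theorem, Egorov's theorem, and Birkhoff's ergodic theorem to choose a compact $K \subset \Omega_\mu$ with $\mu(K) \geq 1-\vre$ on which the ergodic averages $\frac{1}{T}\int_0^T \mathbf{1}_K(u_sM)\,ds$ converge uniformly to $\mu(K)$ as $T \to \infty$. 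This gives the standard uniform recurrence property: for any $\eta > 0$ there is $T_0$ so that for every $M \in K$ and $T \geq T_0$, the Lebesgue measure of $\{s \in [0,T] : u_s M \in K\}$ is at least $(1-\eta)T$.

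Next I would analyze the transverse displacement. Write $h_k = \left(\begin{smallmatrix} a_k & b_k \\ c_k & d_k \end{smallmatrix}\right)$ and $v_k = (x_k, y_k)$, both tending to the respective identities. Using the identity \eqref{eq: see that} together with Proposition \ref{prop: distributive}, one obtains $u_{s_k(t)} M_k = \ell_k(t) \cdot u_t M$, where $s_k(t) = s(t, h_k)$ and
$$\ell_k(t) = \left( \begin{pmatrix} (d_k - c_k t)^{-1} & 0 \\ c_k & d_k - c_k t \end{pmatrix},\ (x_k + s_k(t)\,y_k,\ y_k) \right) \in L.$$
Two components of $\ell_k(t)$ grow as $t$ increases from $0$, namely the diagonal entry and the horizontal Rel coordinate, while the lower-triangular $G$-entry $c_k$ and the vertical Rel coordinate $y_k$ remain fixed and hence tend to $0$ with $k$. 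Because $(h_k,v_k) \notin U$, at least one of $c_k, y_k, (d_k - 1), (x_k)$ is nonzero; in particular, at least one of the two growing directions actually grows nontrivially with $t$. Choose $t_k > 0$ so that $\ell_k(t_k)$ has a prescribed magnitude (say, so that the $G$-drift reaches a fixed dilation factor $\alpha \neq 1$, or the horizontal Rel coordinate reaches some fixed $\xi \neq 0$), depending on which direction grows.

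Now comes the heart of the argument, the double recurrence step. For the chosen scale of $t_k$, I want to arrange that both $u_{t_k} M \in K$ and $u_{s_k(t_k)} M_k \in K$. Since $s_k(t_k)/t_k \to 1$ uniformly as $k \to \infty$ (both $h_k \to \mathrm{Id}$ and $c_k t_k$ is bounded), the two orbits are traversed at asymptotically the same rate, so the uniform recurrence of $K$ gives a set of admissible $t$'s of positive density near $t_k$; adjusting $t_k$ within this set preserves the order of magnitude of $\ell_k(t_k)$ while placing both endpoints in $K$. I expect this step to be the main technical obstacle, as it requires carefully matching the two recurrence statements and controlling the deviation of the time change $s_k(t)$.

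Finally, by passing to a subsequence and using compactness of $K$, let $u_{t_k} M \to M' \in K$ and $u_{s_k(t_k)} M_k \to M'' \in K$, and $\ell_k(t_k) \to \ell_\infty$ in $L$. Since $c_k \to 0$ and $y_k \to 0$ while the growing directions limit to finite nonzero values, the limit satisfies $\ell_\infty \in (A \ltimes Z) \setminus \{1_L\} \subset N$. The relation $M'' = \ell_\infty \cdot M'$ then holds on the nose (not just up to null sets) by continuity of the $N$-action on $\HH'_\infty$ (Corollary \ref{cor: real rel via group}), and both $M', M'' \in K \subset \Omega_\mu$ are generic for $\mu$. Corollary \ref{cor: Katok Spatzier} then forces $\ell_\infty \in N_\mu$. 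Since $\ell_\infty \notin U$ by construction and $N_\mu$ is a closed subgroup of $N$ containing $U$ (Corollary \ref{cor: closed}), its connected component $N_\mu^\circ$ contains both the Lie algebra of $U$ and a further direction from $\ell_\infty$, giving $\dim N_\mu^\circ \geq 2$ as required.
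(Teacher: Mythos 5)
Your overall strategy matches the paper's: use Birkhoff/Egorov to build a recurrence set, invoke the explicit time-change \eqref{eq: formula for s}--\eqref{eq: see that}, find simultaneous returns of the two orbits to that set, pass to a limit to produce $\ell_\infty \in N \setminus U$, and conclude via Corollary \ref{cor: Katok Spatzier}. However, there are two genuine gaps in the middle of the argument.

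First, the sentence ``Because $(h_k,v_k) \notin U$, at least one of $c_k, y_k, (d_k - 1), (x_k)$ is nonzero; in particular, at least one of the two growing directions actually grows nontrivially with $t$'' contains a non sequitur. If $c_k = y_k = 0$ but $d_k \neq 1$ or $x_k \neq 0$, then $(h_k,v_k) \notin U$ yet both the diagonal entry $(d_k - c_k t)^{-1}$ and the horizontal Rel coordinate $x_k + s_k(t) y_k$ are constant in $t$; nothing drifts. In that case $(h_k,v_k)$ already lies in $N \setminus U$ and your whole machinery of growing $\ell_k(t)$ is unavailable. The paper disposes of this case in one line by applying Corollary \ref{cor: Katok Spatzier} directly to $(h_k,v_k)$ and reducing to the case $(h_k,v_k) \notin N$; your proposal is missing this reduction, and the claim as you wrote it is simply false.

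Second, once you are in the case $(h_k,v_k) \notin N$, you cannot blindly choose $t_k$ to make ``whichever direction grows'' hit a prescribed magnitude and then assert that ``the growing directions limit to finite nonzero values.'' The two components grow at rates controlled by $c_k$ and $y_k$ respectively, and there is no a priori reason the non-prescribed one stays bounded: if you set the $A$-part to a fixed dilation (so $t_k \asymp 1/|c_k|$) while $y_k \gg c_k$, the Rel coordinate $s_k(t_k) y_k \asymp y_k / c_k$ blows up and $\ell_k(t_k)$ has no limit in $L$. The paper's proof is built around the dichotomy $y_k = o(c_k)$ versus $c_k = O(y_k)$: in the first case $t_k$ is chosen so the $A$-part reaches size roughly $\delta$ and the computation $s_k y_k = (y_k/c_k)\cdot(\text{bounded}) = o(1)$ shows the Rel part stays $O(\delta)$; in the second case the roles are reversed. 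This case split is not a cosmetic detail --- it is what guarantees the sequence $\ell_k(t_k)$ actually converges to an element of $N$ with one coordinate bounded away from zero and the other controlled. Your proposal leaves this as ``depending on which direction grows'' without verifying boundedness of the passive coordinate, which is exactly where the argument would break. The double-recurrence step, which you flag as the main obstacle, is in fact handled by the paper with two quantitative lemmas (the linear-function density estimate \eqref{eq: linear maps} and the bi-Lipschitz bound \eqref{eq: bilipschitz}); your sketch of it is in the right spirit, but the substance of the proof lives in the two points above.
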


\begin{proof}
Let $\Omega_\mu$ be the set of $\mu$-generic points for the
$U$-action. The set $\Omega_\mu$ has full $\mu$ measure.
Given $\vre$, let $\Omega_1$ be a compact subset of $\Omega_\mu$ such that
$\mu(\Omega_1)\geq 1-\vre$ and every surface in $\Omega_\mu$ satisfies $\Xi(\mu) = \varnothing$. By the Birkhoff ergodic theorem there is
a compact subset $K$ of $\Omega_\mu \cap \EE_D(1,1)$ of measure
$\mu(K) \geq 1-\vre$, and a $T_0>0$, such 
that for all $M \in K$ and all $T>T_0$, 
\begin{equation}\label{eq: numbered}{\frac{1}{T} \left|\left \{t \in [0,T]: u_tM \in \Omega_1 \right \} \right| \geq 1-2\vre.
}\end{equation}
We will show that this choice of $K$ satisfies the conclusion of the
Proposition. So let $M_k$, $M$, $h_k$ and $v_k = (x_k, y_k)$ be as in the statement of the Proposition.   Write $c_k =
c(h_k)$ and  $d_k = d(h_k)$.

By Corollary \ref{cor: closed}, $N_{\mu}$ is a closed subgroup of $N$  and hence a Lie group.  
Thus $N_\mu^\circ$ is a connected Lie group that contains $U$. In order to show that $\dim N_\mu^\circ \geq 2$ it suffices
to rule out the possibility $U = N_\mu^\circ$, that is, in any neighborhood
of the identity in $N$ we need to find an element in $N \sm U$ preserving
$\mu$. If the sequence $(h_k, v_k)$ contains infinitely many elements of $N\sm U$ then
it follows from Corollary \ref{cor: Katok Spatzier} that we have such a sequence of elements.
We may thus assume that $(h_k, v_k) \notin N$ for infinitely many $k$.

Write 
elements of $Z$ as $\rel_s$ (where defined). Assume first that
there is a subsequence  of indices so that
\begin{equation}\label{eq: assume first that}{
y_k= o(c_k).
}\end{equation}
In this case we will show that for each sufficiently small 
$\delta>0$, there is an element 
\begin{equation}\label{eq: in this
  case1}{
\ell = \ell(\delta)=g_{\tau} \rel_\sigma }
\end{equation}
preserving $\mu$, such that 
\begin{equation}\label{eq: in this case2}{\frac{\vre \delta}{2} \leq  \tau \leq 
2\delta \ \text{ and } 
\sigma = O(\delta). 
}\end{equation}  
The fact that $\ell$ can be written as $g_{\tau} \rel_\sigma$ means that it lies in $N$.
The fact that $\tau\ne 0$ means that $\ell$ does not lie in $U$.

In light of Corollary \ref{cor: Katok Spatzier},  in order to show that $\ell \in N$ belongs to
$N_{\mu}$, it suffices to prove that
there are surfaces $M^{(1)}$ and $M^{(2)}$ in $\Omega_\mu$ such that $\ell M^{(1)}=M^{(2)}.$ We will
find $M^{(1)}$ and $M^{(2)}$ as limits of convergent subsequences  $M_j\to M^{(1)}$ with
$M_j\in\Omega_1$  and $g_jM_j\pluscirc v_j \to M^{(2)}$ with $g_jM_j\pluscirc v_j \in\Omega_1$ using the compactness of $\Omega_1$. We will show that 
$g_j$ converges to an element of $g_\infty\in B$ and $v_j$ converges to an element of $v\in Z$. Let $\ell=(g_\infty,v)$.
Since elements of $K$ have no horizontal saddle connections and $g_{\infty}\in B$ it follows that $g_{\infty}M^{(1)}$ has no horizontal saddle connections. Since $v\in Z$, Theorem \ref{thm: Bainbridge-Smillie surgery} then tells us that $\rel_v(g_{\infty}M^{(1)})$ exists so we can apply Proposition \ref{prop: limit} to conclude that $\ell(M^{(1)})=M^{(2)}$.

Recall that in order for $u_tM$ and $u_{s(t)}M'$ to be close it is necessary that $t$ be chosen so that $d-tc$
is close to 1. 
\begin{dfn} For $g \in G$ set 
$$I_g = \{t \in
\R: |\ln (d-tc)| \leq \delta\}, \text{ where } d =d(g), c=c(g).
$$
\end{dfn}
An elementary computation shows that there is a
neighborhood $\mathcal{W}$ of $1_G$ such that if $g \in
\mathcal{W}$ then $0 \in I_g$, and for any $t_1, t_2$ in the connected component
of $I_g$ containing $0$, we have 
\begin{equation}\label{eq: bilipschitz}{
\frac{|t_1-t_2|}{2} \leq |s(t_1,g) - s(t_2,g)| \leq 2 |t_1-t_2|.
}\end{equation}
Here the $s(t_i,g)$ are defined by equation \eqref{eq: formula for s} and $\delta$ is assumed to
be sufficiently small.

Let $P(t)$ be any linear function, that is $P$ is of the form $P(t) =
At + B$ for some $A$ and $B$, and for $a<b$ let 
$\|P\| = \max_{x \in [a,b]} |P(x)|.$ Then it is easy to check that 
\begin{equation}\label{eq: linear maps}{
\frac{\left|\left\{t \in [a,b]: |P(t) | < \vre \|P\|
    \right\}\right|}{b-a} <2 \vre.  
}\end{equation}

The definition of $I_g$ and equation \eqref{eq: see that} ensure that when $t$ is at the
an endpoint of $I_g$, the $g_t$-component of the displacing element $u_sgu_{-t}$ is $2\delta$. 
Namely, 
given $c \neq 0$ and $d$, the formula $T = T(c,d)=\frac{d-e^{\pm
    \delta}}{c}$ gives the two solutions to the equation $|\ln(d-cT)| = \delta$. Since
$d_k \to 1$ and $c_k\to 0$, for all sufficiently large $k$ there is a
unique such $T_k=T(c_k, d_k)$ with $T_k>0$, and moreover for all
sufficiently large $k$, we will have $T_k\geq T_0$. 
Here  we take $\delta$ sufficiently small and $k$
sufficiently large so that 
\begin{equation}\label{eq: one more}{\frac12 \leq \frac{|d_k-c_kt-1|}{|\ln(d_k-c_kt)|}
  \leq 2.
}\end{equation}
We apply the above estimates 
for $T=T_k$ and $P(t) = d_k-c_kt-1$ and obtain the existence of a subset
of $t \in [0,T]$, of measure at least $1-8\vre$, for which all of the
following hold:
\begin{itemize}
\item
$u_tM \in \Omega_1$ (by \eqref{eq: numbered}). 
\item
$u_sM_k \in \Omega_1,$ where $s=s(h_k, t)$ (by \eqref{eq: bilipschitz},
\eqref{eq: numbered} and since $M_k \in K$). 
\item
$|d_k-c_kt-1| \geq \frac{\vre \delta}{2},$ and hence $|\ln(d_k-c_kt)| \geq
\frac{\vre\delta}{4}$ (by \eqref{eq: linear maps}
and \eqref{eq: one more}).   
\end{itemize}
In particular, if we choose $\vre<1/8$ (which we may with no loss of
generality), then 
the set of $t$ satisfying 
these three properties is nonempty. Fixing such a $t$, 
it follows from \eqref{eq: see that} and Proposition \ref{prop: distributive} that 
\begin{equation}\label{eq: need to show}{
\begin{split}
u_s M_k &= u_s( h_kM \pluscirc v_k ) = u_s h_k u_{-t} u_t M \pluscirc
u_s v_k \\ 
& = \left ( \begin{matrix} \frac{1}{d_k-c_kt} & 0 \\ c_k &
    d_k-c_kt\end{matrix} \right) u_tM \pluscirc \left(x_k+sy_k ,y_k \right) .
\end{split}
}\end{equation}
For each $k$ we let $t_k$ and $s_k = s(h_k, t_k)$ satisfy \eqref{eq:
  need to show}. Using the compactness of
$\Omega_1$ we can take a subsequence along which we have convergence
$$u_{t_k} M \to M^{(1)}, \ u_{s_k}M_k \to M^{(2)}.$$ 
By \eqref{eq: need to show}, we have $u_{s_k} M_k  = \ell_k u_{t_k} M$ where 
$$ 
\ell_k=\left[ 
\left ( \begin{matrix} \frac{1}{d_k-c_kt_k} & 0 \\ c_k &
    d_k-c_kt_k\end{matrix} \right),
\left(x_k+
s(t_k, h_k) y_k
,y_k \right)
\right].
$$
We claim that $M^{(2)} = \ell M^{(1)}$, for $\ell \in N$ satisfying
the bounds \eqref{eq: in this case1}, \eqref{eq: in this case2}. 
It is enough to show that after passing to a further subsequence,
the sequence $\ell_k$ converges to an element $\ell$ satisfying these
bounds. Since 
$|\ln(d_k-c_kt)| \geq \frac{\vre\delta}{4}$, since $c_k \to 0$, and by
definition of the interval $I_{h_k}$, 
the matrices $\left ( \begin{matrix} \frac{1}{d_k-c_kt_k} & 0 \\ c_k &
    d_k-c_kt_k\end{matrix} \right)$ converge to $g_{\tau}$ with
$\tau$ as in \eqref{eq: in this case2}. Since $x_k, y_k \to 0$, in
order to obtain the required bound on $\sigma$ we need
to show that $s_k y_k = O(\delta)$. But 
\[
\begin{split}
s_k y_k  = & y_k \, \frac{a_kt_k - b_k}{d_k-c_kt_k}  = \frac{y_k}{c_k}
 \, \frac{a_kt_k - b_k}{d_k/c_k-t_k} \\ 
\stackrel{\eqref{eq: assume first that}}{=} & o(1) \frac{a_kt_k
    - b_k}{c_k/d_k - t_k} \stackrel{g_k \to 1_G}{=} o(1) = O(\delta), 
\end{split}
\]
as required. 

In case the condition of equation \eqref{eq: assume first that} does not hold, we have 
$$c_k = O(y_k).$$
In this case we can employ a similar argument, and find an element
$\ell(\delta)=g_{\tau} \rel_\sigma$ as in \eqref{eq: in this 
  case1}  
   which satisfies (instead of \eqref{eq: in this case2}) the estimates
\begin{equation}\label{eq: further estimate}{
\sigma \geq c\delta \ \text{ for some } c>0, \text{ and } \tau=
O(\delta), \ \sigma = O(\delta). 
}\end{equation}
We leave the details of this computation to the reader. In particular
\eqref{eq: further estimate} implies that $\ell(\delta) \notin U$, as required. 
\end{proof}

\begin{remark}
 The proof of \cite[Lemma 2]{CW}, which is the statement analogous to
Proposition \ref{prop: CW main}, is incomplete. The formula preceding the final paragraph 
of the proof in \cite{CW} would hold for an honest group action, but
in this case requires justification.
\end{remark}

\begin{proof}[Proof of Theorem \ref{thm: Calta Wortman revisited2}]
To each ergodic horocycle invariant measure $\mu$ we associate a
horizontal data diagram via Corollary \ref{cor: horizontal relations}. The
possibilities for $\Xi(\mu)$ given in the statement of Theorem \ref{thm: Calta Wortman revisited2} 
cover all cases by Proposition~\ref{prop: configurations HD2}. In \S\ref{section: construction}, for
each possibility for $\Xi(\mu)$, a measure was constructed. If it is a
complete separatrix diagram, then by Proposition \ref{prop: minimal}
and Corollary \ref{cor: minimal}, we must be in cases (1) or (2). In
all other cases for which $\Xi(\mu) \neq \varnothing$, it was shown in
Proposition \ref{prop: tear} that there
are no additional measures. This shows that if $\Xi(\mu)\neq
\varnothing$, then $\mu$ must have type (1), (2), (3),
(4) or (6).

We now treat the case $\Xi(\mu)=\varnothing$. 
Recall from Definition~\ref{dfn: aitch prime sub infinity} and
Corollary \ref{thm: criterion} that $\HH'_\infty$ is the set of surfaces in $\HH(1,1)$
with no horizontal saddle connections joining distinct singular points. 
Then $\mu(\HH'_\infty)=1$ and $N= B \ltimes Z$
acts on $\HH'_\infty$. Recall that  $N_\mu^\circ$ is the connected component of the stabiliser of
$\mu$ so that $U \subset N_\mu^\circ$. We first claim that there is no 
$U$-orbit with positive measure. Indeed, if this were the case then by ergodicity, $\mu$ is supported
on one orbit which, by Poincar\'e recurrence, must be a periodic
orbit. A periodic orbit is a special case of a minimal set so by
Proposition \ref{prop: minimal}, $\Xi(\mu)$ consists of a full separatrix
diagram. This contradicts our assumption that $\Xi(\mu)=\varnothing.$

 We now claim that we cannot have
$N_\mu^\circ=U$, i.e.\ that $\dim N_\mu^\circ \geq 2$. To see this, let $\Omega_\mu \subset \HH'_\infty$ be the set
of generic points as in Proposition \ref{prop: Katok Spatzier}, and
let $K$ be as in Proposition \ref{prop: CW main}. We have $K
\subset \Omega_\mu$.
Since
$\mu$ assigns zero measure to individual $U$-orbits and $K$ is a
compact set of positive measure, $K$ contains 
a sequence $(M_k)$ such that $M_k \to M \in
K$ and none of the $M_k$ are on the $U$-orbit of $M$. According
to Proposition \ref{prop: CW main}, $\dim N_\mu^\circ \geq 2$. 

Now suppose $Z \subset N_\mu^\circ$. We will show that in this case $\mu$ is the flat measure
on $\EE_D(1,1)$. Let $\mathcal{L}$ be an affine
$G$-invariant subspace of $\HH(1,1)$, and let $\mathcal{L}^{(1)}$\index{L1@
$\mathcal{L}^{(1)}$} be
its subset of area-one surfaces. We define the {\em
  horospherical foliation for $\{g_t\}$}\index{horospherical foliation} to be the foliation into
leaves locally modelled on the intersection of the tangent space of
$\mathcal{L}^{(1)}$ with the horizontal space $H^1(S, \Sigma; \R_x)$ (the
first summand in the splitting \eqref{eq: splitting}).  This
terminology is motivated by the fact that the $\{g_t\}$-flow is
non-uniformly hyperbolic and the above leaves are generically its
strong unstable leaves. 
For the
case $\mathcal{L} = \EE_D(1,1)$, the leaves of the horospherical
foliation are just the orbits of $UZ$, so in the case we are now considering, the measure
$\mu$ is invariant under the horospherical foliation, and must be flat
measure in light of the following:

\begin{claim}\label{claim: linchpin}
Suppose that:
\begin{itemize}
\item
$\mathcal{L}$ is $\GL_2(\R)$-invariant and affine in period coordinates,
 and is defined by real-linear equations in $H^1(S, \Sigma; \R^2)
 \cong H^1(S, \Sigma; \C).$ 
\item
The flat measure on $\mathcal{L}^{(1)}$ obtained from applying the cone
construction to $\mathcal{L}$ is $G$-invariant and ergodic.

\end{itemize}
Then any strong-stable invariant measure $\mu$ on $\mathcal{L}^{(1)}$, with $\Xi(\mu) = \varnothing$
coincides with the flat measure.
\end{claim}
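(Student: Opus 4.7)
The strategy is to show that $\mu \ll \mu_{\mathrm{flat}}$, after which the $UZ$-ergodicity of $\mu_{\mathrm{flat}}$ (a consequence of its $G$-ergodicity via Mautner's phenomenon for the unipotent subgroup $U$) will force $\mu = \mu_{\mathrm{flat}}$ as probability measures.

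I would begin by exploiting the local affine structure. By Proposition \ref{prop: local L action} and the linearity hypothesis on $\mathcal{L}$, near any point $M_0 \in \mathcal{L}^{(1)}$ we can identify a neighborhood with an open subset of a real vector space in which the horospherical foliation appears as translates of the horizontal subspace. The assumption $\Xi(\mu) = \varnothing$ means (via Corollary \ref{thm: criterion}) that the $UZ$-action is everywhere defined on $\supp \mu$; consequently $UZ$-invariance forces the conditional measures of $\mu$ along horospherical leaves to be proportional to leafwise Lebesgue. Locally one disintegrates $\mu = d\mathrm{Leb}_h \otimes d\nu$ for some transverse measure $\nu$, while $\mu_{\mathrm{flat}} = d\mathrm{Leb}_h \otimes d\mathrm{Leb}_v$; absolute continuity $\mu \ll \mu_{\mathrm{flat}}$ thus reduces to the statement that $\nu \ll d\mathrm{Leb}_v$ at $\mu$-a.e. basepoint.

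The main step is to use the contraction of $g_t$ on the transversal direction (rate $e^{-t/2}$ on the vertical real-rel direction and analogously on the $U^-$ factor of the $G$-direction) to establish this absolute continuity. The plan is to form Cesaro averages $\nu_T = \frac{1}{T}\int_0^T g_{s*}\mu\, ds$ and pass to a weak-$*$ limit $\nu_\infty$. Since $\Xi(\mu) = \varnothing$, $\mu$ assigns no mass to surfaces with short horizontal saddle connections joining distinct singularities, which together with standard non-divergence for the horocycle flow ensures $\nu_\infty$ is a probability measure. By construction $\nu_\infty$ is $A$-invariant, and the $g_t$-equivariance of the horospherical foliation preserves $UZ$-invariance under pushforwards, so $\nu_\infty$ is $AUZ$-invariant. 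A Mautner-type argument using the local $L$-action structure then upgrades this to full $L$-invariance, whence by $G$-ergodicity of $\mu_{\mathrm{flat}}$ we identify $\nu_\infty = \mu_{\mathrm{flat}}$.

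The hard part will be recovering $\mu$ itself from the limit $\nu_\infty$, since $\mu$ was not assumed $g_t$-invariant. The key observation is that the transversal disintegration of $g_{s*}\mu$ is related to that of $\mu$ by the explicit linear contraction induced by $g_s$ on the transversal, so weak-$*$ convergence of the $s$-averaged transversal disintegrations to Lebesgue forces the transversal measure $\nu$ attached to $\mu$ itself to be absolutely continuous with respect to $d\mathrm{Leb}_v$. A secondary technical difficulty is guaranteeing no loss of mass in the Cesaro limit; here the hypothesis $\Xi(\mu) = \varnothing$ is critical, as it removes the dominant mechanism for mass escape in strata (concentration on short saddle connections between distinct singular points), and combined with the non-divergence estimates available for $U$ this should suffice. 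Once $\mu \ll \mu_{\mathrm{flat}}$ is established, the $UZ$-ergodicity of $\mu_{\mathrm{flat}}$ and the $UZ$-invariance of $\mu$ give $\mu = \mu_{\mathrm{flat}}$, completing the argument.
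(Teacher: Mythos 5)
You should first note that the paper itself does not supply a proof of Claim~\ref{claim: linchpin}: it cites Lindenstrauss--Mirzakhani \cite{LM} for the principal-stratum case, Calta--Wortman \cite{CW} for $\mathcal{L}=\EE_D(1,1)$, and the forthcoming \cite{linchpin} for the general affine-invariant statement. So your task is to sketch a sound strategy, and the one you outline has two genuine gaps.

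The first gap is the ``recovery'' step, which you correctly identify as the hard part but then dispose of too quickly. You form Cesaro averages $\nu_T=\frac1T\int_0^T g_{s*}\mu\,ds$, pass to a limit $\nu_\infty$, argue $\nu_\infty=\mu_{\mathrm{flat}}$, and then want to deduce that the transversal conditionals of $\mu$ itself are Lebesgue. This inference is not valid. The transversal conditionals of $g_{s*}\mu$ are obtained from those of $\mu$ by a deterministic contraction (plus the recurrence of the basepoint), so if $\nu$ is, say, atomic at some transversal parameter, then every $g_{s*}\nu$ is still atomic; the time-average can spread mass around, but weak-$*$ convergence of such averages to a smooth measure does not feed back into any regularity of the fiberwise conditionals of the \emph{un-averaged} $\mu$. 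In general, weak-$*$ convergence of $\frac1T\int_0^T g_{s*}\mu\,ds$ to $\mu_{\mathrm{flat}}$ is strictly weaker than $\mu\ll\mu_{\mathrm{flat}}$, and you give no mechanism for upgrading it. The Lindenstrauss--Mirzakhani-style proof does \emph{not} run through geodesic time-averaging; it is a recurrence-and-contraction argument: using quantitative non-divergence one shows that $\mu$-a.e.\ point has backward-geodesic returns to a fixed compact set, and then one compares $\mu$-masses of thin transversal ``flow boxes'' before and after applying $g_{-t_n}$, using a Besicovitch/Vitali-type covering lemma on the transversal. That is the missing idea.

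The second gap is the assertion that ``a Mautner-type argument \ldots upgrades this to full $L$-invariance.'' Mautner's phenomenon does not promote $AUZ$-invariance of a measure to $G\ltimes\R^2$-invariance. Mautner is a statement about fixed vectors in unitary representations and, applied to $\SL_2(\R)$, lets you pass between $U$-invariance and $A$-invariance of $L^2$-functions \emph{for a fixed $G$-invariant measure}; it does not manufacture invariance under the lower unipotent $U^-$ when you only have a $B$-invariant measure, and it certainly does not produce the Rel directions. In fact, in the proof of Theorem~\ref{thm: Calta Wortman revisited2} the passage from $B$-invariance to $G$-invariance is accomplished by citing the Eskin--Mirzakhani--Mohammadi isolation theorem \cite{EMM2} followed by McMullen's $G$-measure classification, which is a far deeper input than Mautner. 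So even if the Cesaro limit $\nu_\infty$ were shown to be $AUZ$-invariant, identifying it with $\mu_{\mathrm{flat}}$ would require this extra machinery, not a soft representation-theoretic step, and one would then still face the recovery problem above.

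Your opening reduction (conditionals along horospherical leaves are leafwise Lebesgue, so it suffices to control the transversal measure), and your appeal to $\Xi(\mu)=\varnothing$ together with non-divergence to rule out escape of mass, are both on the right track and consistent with the known proofs. The part that needs to be replaced is the geodesic-averaging scheme: it should give way to a pointwise recurrence-plus-covering argument in the spirit of \cite{LM}.
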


This result was proved by Lindenstrauss and Mirzakhani in \cite{LM}
under the additional hypothesis that $\mathcal{L}$ the principal stratum, i.e. $\mathcal{L} = \HH(1, \ldots,
1)$. In \cite{linchpin} we adapt the argument of
Lindenstrauss and Mirzakhani to prove the more general 
statement given above. The special case we require here, namely $\mathcal{L} = 
\EE_D(1,1)$, was explained by
Calta and Wortman, see \cite[\S6]{CW}. 

\medskip 

Now suppose $\dim N_\mu^\circ \geq 2$ but $N_\mu^\circ$ does not contain $Z$. Since $Z$ is
a normal subgroup of $N$, $N_\mu^\circ$ does not contain a conjugate of $Z$. Let $A$ be the
diagonal group in $N$. Up to
conjugacy, the only three one-parameter subgroups of $N$ are $U, Z$
and $A$, so we find that $N_\mu^\circ$ contains a conjugate of $A$. We can write
this conjugate as 
$A' = n A n^{-1}$ where $n \in N$, and since $N = B \ltimes Z$, we can write $n = \rel_Tua$ where
$a \in A, \, u \in U$ 
and $T \in \R$. Since $N_\mu^\circ$ contains $U$ and $A =
aAa^{-1}$ we find that $N_\mu^\circ$ contains $(\rel_T)A(\rel_{-T})$. Therefore the measure
$\mu'= (\rel_{-T})_*\mu$ is $B$-invariant. By a result of
Eskin, Mirzakhani and Mohammadi \cite{EMM2}, $\mu'$ is $G$-invariant,
and thus by McMullen's classification of $G$ invariant measures \cite{McMullen-SL(2), McMullen -
decagon}, the
following are the only  possibilities:
\begin{itemize}
\item
$\mu'$ is the $G$-invariant measure on a closed $G$-orbit, and $D$ is
either a square or $D=5$. This implies that $\mu = \rel_{T*} \mu'$ is as
described in case (5). 
\item
$\mu'$ is the flat measure on $\EE_D(1,1)$. Since the flat measure is
$Z$-invariant, $\mu=\mu'$ and we are in case (7). 
\end{itemize} 
\end{proof}

\section{Injectivity and nondivergence}\label{sec: injectivity}
In this section we will prove some results which will be used in the
proof of Theorem \ref{thm: generic in orbit closure}. The strategy
of proof involves showing that typical horocycle orbits do not spend
too much time close to a closed G-orbit $\LL$ or translations of closed G-orbits
by the $\rel$ flow. The argument depends on the fact that a point in
an eigenform locus which is close to the $\rel_t$ orbit of $\LL$ but not in the $\rel_t$ orbit of $\LL$
drifts slowly in the direction of the $\rel_t$ vector field. In order to exploit
this phenomenon it is useful to have coordinates close to the $\rel_t$ orbit of $\LL$.
This argument is similar in spirit to the linearization method of
Dani-Margulis, see \cite[\S 3.4]{KSS handbook}.

The following is an analog of \cite[Cor. 3.4.8]{KSS handbook}. 
For a stratum $\HH$ and
$r>0$, let $\HH_r$
be the set of surfaces $M$ in $\HH$ for which there
are no horizontal saddle connections of length less than $r$. This is closely related
to, but not identical with the set $\HH'_r$ defined in Definition \ref{dfn: domain of rel}. 
\begin{dfn} $$\HH_\infty = \{M \in \HH: \Xi(M) = \varnothing\} = \bigcap_{r>0}
\HH_r. $$
\end{dfn}
\index{Hinfinity@$\HH_\infty$}

\begin{lem}\label{lem: decagon injective}
Let $\LL$ be a closed $G$-orbit in $\EE_D(1,1)$. Then for any $T>0$
there is $r>0$ such that the map 
\begin{equation}\label{eq: defn map 1}{
(M, x, y) \mapsto M \pluscirc (x,y)
}\end{equation}
is well-defined and injective on $\LL_r \times [-T,T] \times \{0\},$
where $\LL_r = \LL \cap \HH_r$.\index{Lr@$\LL_r$}
In particular, if we set 
\begin{equation}\label{eq: definition L0}{
\LL_\infty = \LL \cap \HH_\infty
\index{L@$\LL_\infty$}
}\end{equation}
then the map \eqref{eq: defn map 1} is well-defined and injective on $\LL_\infty \times \R \times
\{0\}$. 
\end{lem}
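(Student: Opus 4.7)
My plan would be to separate the claim into well-definedness and injectivity, treating the two parts separately.

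For well-definedness, the natural choice is $r>T$. If $M\in\LL_r$ and $|x|\le T$, then every horizontal saddle connection on $M$ has length at least $r>T$, so there cannot be a saddle connection $\delta$ from $\xi_2$ to $\xi_1$ with $\hol(M,\delta)=(tx,0)$ for any $t\in[0,1]$. By Corollary \ref{thm: criterion}, $M\pluscirc(x,0)$ is defined. The ``in particular'' statement for $\LL_\infty\times\R\times\{0\}$ is immediate from the same corollary, since by definition $\LL_\infty\subset\HH'_\infty$, on which every element of $Z$ acts.

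For injectivity on $\LL_r\times[-T,T]\times\{0\}$, I would choose $r$ large enough so that $\rel_{(s,0)}$ is well-defined on $\LL_r$ for every $|s|\le 2T$ (i.e.\ $r>2T$). Suppose $M_1\pluscirc(x_1,0)=M_2\pluscirc(x_2,0)$ with $M_i\in\LL_r$ and $|x_i|\le T$. Using Proposition \ref{prop: commuting}(i), whose triangle hypothesis is verified since $|tx_1+(1-t)(-x_2)|\le 2T<r$, one deduces the identity $M_2=\rel_{s}M_1$ with $s:=x_1-x_2\in[-2T,2T]$. Thus it suffices to show that $M_1,M_2\in\LL$ with $M_2=\rel_s M_1$ and $M_1\in\LL_r$ forces $s=0$.

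Since $\LL$ is a closed $G$-orbit, we have $M_2=gM_1$ for some $g\in G$. Then $\rel_s M_1=gM_1$ in $\HH$. Lifting to markings, this translates into the period identity $\dev(M_1)+sv_0=\phi^* g_*\dev(M_1)$ for some $\phi\in\Mod(S,\Sigma)$, where $v_0\in H^1(S,\Sigma;\R^2)$ is the unit real Rel vector. Restricting to the absolute quotient $H^1(S;\R^2)$ annihilates the $sv_0$ contribution and shows that $g$ (paired with the induced $\phi|_{H^1(S)}$) stabilizes the absolute periods of $M_1$; equivalently $g$ lies in the Veech group $\Gamma$ of $M_1$, a lattice in $G$. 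The full identity then determines $s$ as a function of $g\in\Gamma$ (and of the finite choice of lift $\phi$); in particular the set of possible values of $s$ is discrete. The identity $\rel_s M_1=gM_1$ moreover forces $M_1$ to carry a horizontal saddle connection of some length $\ell(g,s)$ controlled by $|s|$ and the combinatorial displacement implied by $(g,\phi)$; only finitely many $(g,\phi)$ yield $|s|\le 2T$. Choosing $r$ to exceed $\max\ell(g,s(g))$ over this finite collection excludes all nontrivial $g$, leaving only $g=I$, which gives $s=0$ and hence $M_1=M_2$, $x_1=x_2$.

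The main obstacle is the final step: establishing the quantitative relationship between nontrivial elements $g\in\Gamma$ realizing the identity $\rel_s M_1=gM_1$ and the length of horizontal saddle connections on $M_1$, in a form uniform enough to pick $r=r(T)$. For the ``in particular'' assertion about $\LL_\infty\times\R\times\{0\}$, this obstacle dissolves: the hypothesis $M_1\in\LL_\infty$ means $M_1$ has no horizontal saddle connection at all, so no length $\ell(g,s)>0$ can be realized, ruling out every nontrivial $g\in\Gamma$ at once and giving global injectivity on all of $\LL_\infty\times\R\times\{0\}$.
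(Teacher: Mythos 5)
Your reduction to the assertion ``$M_1,M_2\in\LL$ and $M_2=\rel_sM_1$ forces $s=0$'' is sound, and the period computation showing that such $g\in G$ stabilizes the absolute periods of $M_1$ (up to the mapping class group) is a correct first step. But there are two problems with what follows, the second of which is fatal.

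First, a smaller point: preserving absolute periods does not put $g$ in the Veech group $\Gamma$ of $M_1$. The Veech group is the stabilizer of the point $M_1\in\HH(1,1)$, which requires fixing relative periods as well; what your computation gives is membership in the (generally larger) lattice stabilizing the absolute period structure. For a square-tiled surface in $\HH(1,1)$, for instance, the latter is all of $\SL_2(\Z)$ while the Veech group of $M_1$ may be a proper subgroup. This does not by itself wreck the argument, since discreteness is all you need, but the conflation should be fixed.

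Second, and decisively: the claim that $\rel_sM_1=gM_1$ with $g\neq I$ forces $M_1$ to carry a horizontal saddle connection of some positive, quantifiable length $\ell(g,s)$ is the entire content of the lemma and you supply no argument for it. You flag it yourself as ``the main obstacle,'' but then the treatment of the $\LL_\infty$ case is circular: you conclude that nontrivial $g$ are ruled out because $M_1$ has no horizontal saddle connection, which presupposes exactly the implication you have not established. No amount of period bookkeeping or developing-map analysis will produce a horizontal saddle connection on $M_1$ from the identity $\rel_sM_1=gM_1$; the obstruction is genuine and is precisely why the paper does not argue this way.

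The paper's proof is of an entirely different, dynamical nature. It proves the $\LL_\infty$ assertion first by contradiction: writing $\nu$ for Haar measure on $\LL$ and $\nu_1=\rel_{x_1*}\nu$, it uses the Dani--Smillie equidistribution theorem and Veech's dichotomy to identify the generic points of $\nu$, propagates genericity through $\rel_{x_1}$ via Proposition \ref{prop: Katok Spatzier}, deduces from Corollary \ref{cor: Katok Spatzier} that $\rel_{x_3}$ (with $x_3=x_1-x_2\neq 0$) stabilizes $\nu_1$, combines this with the non-unipotent stabilizer $\rel_{x_1}A\rel_{-x_1}$ via Proposition \ref{prop: stabilizer connected} to conclude that the stabilizer of $\nu_1$ properly contains $ZU$, and then appeals to the full measure classification Theorem \ref{thm: Calta Wortman revisited2} to force $\nu_1$, hence $\nu$, to be the flat measure --- contradicting that $\nu$ lives on a lower-dimensional closed $G$-orbit. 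Only after securing the $\LL_\infty$ case does the paper derive the $\LL_r$ version, by a geodesic-flow rescaling argument on the finitely many closed horocycles of a given period. To make your elementary approach viable you would need a direct proof that the rel flow and the $G$-orbit through a surface in $\LL_\infty$ do not intersect nontrivially, and I do not see how to get that without the measure-rigidity input that the paper uses.
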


\begin{proof}
We begin by proving that the map \eqref{eq: defn map 1} is well-defined and
injective on $\LL_\infty \times \R \times
\{0\}$.  The assertion that the map is well-defined on $\LL_\infty$ follows from Corollary
\ref{thm: criterion}.  We prove injectivity. Suppose that $M_1, M_2 \in \LL_\infty$ and $x_1,
x_2 \in \R$ are such that $\rel_{x_1}M_1 = M_1 \pluscirc (x_1, 0) = M_2 \pluscirc (x_2,
0) = \rel_{x_2}M_2$. If $x_1 = x_2$ then by applying $\rel_{-x_1}$ we find that
$M_1=M_2$, so we can assume $x_1 \neq x_2$. Let $x_3 = x_1-x_2$.  
Then by Proposition \ref{prop: commuting}
\begin{equation}\label{eq: for a contradiction}{
\rel_{x_3} (\rel_{x_1}  M_1) = \rel_{x_3} (\rel_{x_2} M_2)
= \rel_{x_1}M_2,
}\end{equation}
and $\rel_{x_3}$ is not the identity element of $Z$. Our next objective is to 
exploit this relationship.

Let $\nu$ denote the $G$-invariant measure on $\LL$ coming from the
Haar measure on $G$, and let $\nu_1 = \rel_{x_1*} \nu$. 
By \cite{DS}, the set of $U$-generic points for
$\nu$ in $\LL$ is the set of surfaces which are not on periodic $U$-orbits, and
by \cite{Veech - alternative}, this is the set of 
surfaces without horizontal saddle connections. By Proposition
\ref{prop: Katok Spatzier}, the same conclusion
holds for $\nu_1,$ that is the set of $U$-generic points for $\nu_1$
in $\rel_{x_1} \LL$ 
are the surfaces without horizontal saddle connections. 
By \eqref{eq: for a contradiction}, 
$\rel_{x_3}$ maps $\rel_{x_1}M_1$, which is a generic point for 
$\nu_1$, to $\rel_{x_1}M_2$, which is another generic point. In light
of Corollary \ref{cor: Katok Spatzier},
$\rel_{x_3}$ preserves $\nu_1$. The stabilizer of $\nu$ in $N$
contains the diagonal subgroup $A$ and hence the stabilizer of $\nu_1$
in $N$ contains 
a non-unipotent element (take for example a nontrivial element of the
conjugate  $\rel_{x_1} A
\rel_{-x_1}$ in $B \ltimes Z$). Proposition \ref{prop: stabilizer
  connected} implies that the stabilizer of $\nu_1$ properly contains the group $ZU$. According
to Theorem \ref{thm: Calta Wortman revisited2}, the only $U$-invariant
measure on $\EE_D(1,1)$ whose stabilizer properly contains $ZU$ is
the flat measure, so $\nu_1$ is
the flat measure. This implies that  $\nu = \rel_{-x_1*}\nu_1$, is also the flat
measure contradicting the assumption that $\nu$ is supported on a closed
$G$ orbit. This proves the second assertion of the Lemma.

We now prove the first assertion of the Lemma. 
According to Corollary \ref{thm: criterion}, the map \eqref{eq: defn map 1} is
well-defined on $\LL_r \times [-T,T] \times \{0\}$ if $r>T$. 
It remains to show that this map is injective when $r$ is sufficiently
large. Set $M'= \rel_{x_1} M_1$. Suppose that $M'= \rel_{x_2} M_2$ for
$M_1, M_2 \in \LL$ and $x_1, x_2 \in \R$. According to the first part
of the proof, at least one of the surfaces
$M_i$ has horizontal saddle connections. Since the $Z$-action
preserves the property of having horizontal saddle connections (by
Corollary \ref{prop: horizontal preserved}), both surfaces 
have horizontal saddle connections, and since $\LL$ is a closed
$G$-orbit, they lie on closed $U$-orbits and have a horizontal
cylinder decomposition. 
Since circumferences and heights of cylinders are
the same for $M_i$ and $M'$, they are the same for $M_1$ and $M_2$ and
this ensures that $M_1$ and $M_2$ lie on the union of one of finitely many closed
horocycles $Ux_1, \ldots, Ux_\ell$ of equal length. Suppose first that
this length is 1, and denote the union of the horocycles of length 1
by $A_1$. Let $r_1>0$ be the length of the shortest horizonal saddle connection on any
surface in $A_1$, and choose $T_1\in (0,r_1)$ for which the map
\eqref{eq: defn map 1} is injective on $A_1 
\times [-T_1,T_1] \times \{0\}$. Such a $T_1$ exists because the
$\rel$ vector field is transverse to $\LL$, and by a compactness
argument. 
Now suppose the length of the closed
horocycles is $L$. Then for $t_0 \df \log L$, the union of the closed horocycles
is $A_L \df g_{t_0}A_1$, the length of the shortest
saddle connections on these horocycles is $r_L \df e^{t_0/2}r_1$ and the map
\eqref{eq: defn map 1} is injective on $[-T_L, T_L]$ where $T_L \df e^{t_0/2}T_1$. That is,
the ratio $r_L/T_L$ is independent of $L$. In particular, for any
$T>0$, we can choose $L$ so that $T=T_L$ and choose $r>r_L$. Since 
$$\LL_r \subset
\LL \sm \bigcup_{r_L \leq r}  A_L,$$
for these choices, the map \eqref{eq: defn map 1} will be injective on
$\LL_r \times [-T,T] \times \{0\}$. 
\end{proof}

\begin{remark}
The condition that $M \in \LL_r$ is necessary for the validity of
Lemma \ref{lem: decagon injective}. 
When $D$ is a square and $\LL$ is the closed $G$-orbit of
a square-tiled surface, there are surfaces $M \in \LL$, with a horizontal
cylinder decomposition of type (A), whose real Rel orbit is
compact. When $D=5$ the $ZU$-orbit of a surface in $\LL$ with a 
cylinder decomposition of type $(A)$ is also not injectively embedded. 
That is, in both cases, injectivity will fail if  $M$ has short
horizontal saddle connections.

\end{remark}

We will also need the following quantitative nondivergence results for the
horocycle flow. 
The following results are proved in \cite{mw0}:

\begin{thm}\label{thm: from MW}
For any stratum $\HH$ there are constants $C$ and $\alpha$ such
that for any $M \in \HH$, any $\rho \in (0,1]$ and any $T>0$, if
for any saddle connection $\delta$ for $M$,  $\max_{s \in [0,T]}
\|\hol(u_s M, \delta)\| \geq \rho$ then 
$$
|\{s \in [0,T]: u_sM \text{ has saddle connections of length} <
\vre\}| < C T \left( \frac{\vre}{\rho} \right)^{\alpha}.
$$

In particular:

\begin{itemize}
\item[(I)]
For any $\vre>0$ and any compact $K' \subset \HH$ there is a compact
$K \subset \HH$ such that for any $T>0$ and any $M \in K'$, 
\begin{equation}\label{eq: nondivergence estimate}
{\frac{1}{T}\left| \left\{ s \in [0,T]: u_sM \notin K\right\} \right| < \vre.
}\end{equation}
\item[(II)]
For any $\vre>0$ there is a compact $K \subset \HH$ such that for any
$M \in \HH_\infty$ there is $T_0>0$ such that for all $T>T_0$, \eqref{eq:
  nondivergence estimate} holds. 

\end{itemize}
\end{thm}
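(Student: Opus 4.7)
The plan is to follow the quantitative nondivergence framework of Minsky-Weiss, which adapts Kleinbock-Margulis-type arguments from homogeneous dynamics to strata of translation surfaces. The main estimate is the substantive part; assertions (I) and (II) are essentially formal consequences.

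First I would observe the key polynomial property: for any saddle connection $\delta$ on $M$ with $\hol(M,\delta)=(x_\delta,y_\delta)$, one has $\hol(u_sM,\delta)=(x_\delta+sy_\delta,y_\delta)$, so $\|\hol(u_sM,\delta)\|^2$ is a polynomial in $s$ of degree at most $2$. I would then invoke the standard Remez-type fact that polynomials of bounded degree are $(C,\alpha)$-good: for such a polynomial $p$ on an interval $I$,
\[
|\{s\in I:|p(s)|<\eta\}|\le C_d\Bigl(\frac{\eta}{\sup_I|p|}\Bigr)^{1/d}|I|.
\]
Applied to $\|\hol(u_sM,\delta)\|$, and using the hypothesis that $\max_{s\in[0,T]}\|\hol(u_sM,\delta)\|\ge\rho$, this gives the main estimate for a \emph{fixed} saddle connection $\delta$.

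The harder step is upgrading this to an estimate that simultaneously controls \emph{all} saddle connections. Here I would use the fact that on a translation surface, only boundedly many saddle connections can be simultaneously short, and these short saddle connections have controlled incidence behavior (they are disjoint, or their crossings are quantitatively restricted, in a way that descends from Margulis' function / Delaunay-decomposition considerations). This reduces the problem to estimating times at which some member of a bounded collection is short, which one handles by a covering argument on scales: partition $[0,T]$ into dyadic subintervals and use the $(C,\alpha)$-good property on each. The main obstacle, and the heart of the Minsky-Weiss argument, is precisely this combinatorial-geometric step, which requires an induction on the complexity of the short saddle connection configuration.

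For the particular cases: for (I), any compact $K'\subset\HH$ has a positive infimum $\rho_0$ of lengths of saddle connections, so the hypothesis of the main estimate is satisfied with $\rho=\rho_0$ for every $M\in K'$; choosing $\vre$ small enough that the set of surfaces with systole $\ge\vre$ contains $K'$ and has complement of negligible measure in the sense of the estimate, one takes $K$ to be this systole-bounded set. For (II), if $M\in\HH_\infty$, then $\Xi(M)=\varnothing$, so every saddle connection $\delta$ on $M$ has nonzero vertical component $y_\delta$; consequently $\|\hol(u_sM,\delta)\|\ge|y_\delta|\cdot|s+x_\delta/y_\delta|$ grows linearly in $s$, so for any fixed $\rho\in(0,1]$ the hypothesis is satisfied once $T$ is sufficiently large (depending on $M$). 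Choosing $\rho=1$ and $\vre$ small enough to make the compact set $K$ of surfaces with systole $\ge\vre$ absorb the bound yields the claim, with $T_0$ depending on the initial holonomies of $M$.
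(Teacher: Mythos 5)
The paper does not prove this theorem; it is stated with the explicit remark ``The following results are proved in \cite{mw0}'' (Minsky--Weiss), and assertions (I) and (II) are offered as immediate consequences of the main estimate without written argument. Your sketch correctly identifies the Minsky--Weiss $(C,\alpha)$-good / polynomial nondivergence mechanism as the source of the main estimate, and your derivations of (I) (systole bounded below on the compact $K'$, then take $K$ a systole sublevel set via Mumford compactness) and (II) (no horizontal saddle connections means every $\|\hol(u_sM,\delta)\|$ eventually exceeds $1$, uniformly over the finitely many initially-short $\delta$'s) are exactly the intended arguments, so the proposal matches the paper's approach.
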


We will need a refinement of statement (I).

\begin{prop}\label{cor: new nondivergence}
For any positive constants $\vre$ and $r$, and any compact set $L \subset \HH_\infty$,
there is an open neighborhood $\mathcal{W}$ of
$L$ and a compact $\Omega \subset \HH$ containing $\mathcal{W}$, such
that surfaces in $\Omega$ have no horizontal saddle connections
shorter than $r$, 
and such that for any surface $M$ and any interval $I
\subset \R$ for which $u_s M \in \mathcal{W}$ for some $s \in I$, we have 
\begin{equation}\label{eq: what we need}{
\frac{\left| \left\{ s \in I: u_sM \notin 
\Omega 
\right\} \right|}{|I|}  < \vre.
}\end{equation}
\end{prop}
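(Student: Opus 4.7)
The plan is to deduce this from the quantitative estimate at the start of Theorem~\ref{thm: from MW}, by producing a uniform positive lower bound $\rho$ on $\max_{s\in I}\|\hol(u_sM,\delta)\|$ that is valid for \emph{every} saddle connection $\delta$ of $M$, as soon as the orbit visits a sufficiently small neighborhood $\mathcal{W}$ of $L$. The refinement over statement~(I) of Theorem~\ref{thm: from MW} is that we do not need $M$ itself to lie in a compact set; the passage through $\mathcal{W}$ supplies a uniform ``non-collapse'' estimate for horocycle-translates of saddle connections.

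First I would construct $\mathcal{W}$ with the following three properties: (a) every $M'\in\mathcal{W}$ has no saddle connection shorter than some $\vre_L>0$; (b) every $M'\in\mathcal{W}$ has no horizontal saddle connection of length $\le r$; (c) there is a constant $c_0>0$ such that for every $M'\in\mathcal{W}$, every saddle connection $\delta$ of $M'$ with $\|\hol(M',\delta)\|\le 1$ satisfies $|y_\delta|\ge c_0$, where $y_\delta$ denotes the vertical component of $\hol(M',\delta)$. The existence of such $\mathcal{W}$ comes from a standard continuity and local finiteness argument: for each $M\in L$ the set of saddle connections $\delta$ with $\|\hol(M,\delta)\|\le 2$ is finite, all such $\delta$ have $y_\delta\ne 0$ since $M\in\HH_\infty$, and saddle connections of bounded length on nearby surfaces are small perturbations of saddle connections on $M$; compactness of $L$ then lets us choose uniform constants. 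Setting $\rho=\min(1,c_0)\in(0,1]$, I claim that whenever $u_{s_0}M\in\mathcal{W}$ for some $s_0\in I$, the hypothesis of Theorem~\ref{thm: from MW} holds over $I$ with this $\rho$: indeed, for any saddle connection $\delta$ of $M$, either $\|\hol(u_{s_0}M,\delta)\|>1\ge\rho$, or $|y_\delta|\ge c_0$, and since $u_s$ preserves vertical components we get $\|\hol(u_sM,\delta)\|\ge|y_\delta|\ge c_0\ge\rho$ for every $s\in\R$.

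Applying Theorem~\ref{thm: from MW} to $u_aM$ on $[0,|I|]$, where $I=[a,a+|I|]$, therefore yields constants $C,\alpha$ depending only on $\HH$ such that
\[
\left|\left\{s\in I:\ u_sM\ \text{has a saddle connection of length}<\vre'\right\}\right|<C|I|(\vre'/\rho)^\alpha
\]
for every $\vre'>0$. Then I would choose $\vre'\le\min(r,\vre_L)$ small enough that $C(\vre'/\rho)^\alpha<\vre$, and define $\Omega$ to be the set of area-one surfaces in $\HH^{(1)}$ having no saddle connection of length $<\vre'$ and no horizontal saddle connection of length $<r$. By Mumford compactness $\Omega$ is compact, and the inequalities $\vre'\le\vre_L$ together with property~(b) give $\mathcal{W}\subset\Omega$. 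Since horizontal saddle connections (and their lengths) are preserved by the $U$-action, property~(b) propagates along the orbit, so $u_sM$ has no horizontal saddle connection of length $\le r$ for \emph{every} $s$; consequently $\{s\in I:u_sM\notin\Omega\}$ coincides with the set estimated above, and \equ{eq: what we need} follows. I expect the only nontrivial step to be the verification of property~(c) for $\mathcal{W}$: this uniform lower bound on vertical holonomies of short saddle connections is exactly the feature that fails for starting surfaces lying outside a compact set in statement~(I), and it is the place where the assumption $L\subset\HH_\infty$ is used in an essential way.
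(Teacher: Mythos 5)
Your uniform lower bound $\rho=\min(1,c_0)$ is both correct and arguably cleaner than what the paper does (the paper instead waits a time $T_1=2/\theta$ until the holonomy reaches norm $1$, and then has to treat the initial interval separately); and the observation that ``no horizontal saddle connection shorter than $r$'' propagates along the whole $U$-orbit is the right thing to exploit. But there is a genuine gap in your construction of $\Omega$: the set of surfaces with no horizontal saddle connection of length $<r$ is neither open nor closed in period coordinates. A surface $M$ carrying a horizontal saddle connection $\delta$ of length $\ell\in(\vre',r)$ is the limit of surfaces $M_n$ on which $\delta$ persists as a saddle connection with nearby but non-horizontal holonomy; such $M_n$ can satisfy both of your defining conditions while $M$ does not, so your $\Omega$ is not closed and hence not compact. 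Mumford compactness controls the systole condition alone, but intersecting with the non-closed horizontal condition destroys closedness, so the invocation of Mumford compactness does not go through. This is not cosmetic: the proposition explicitly requires $\Omega$ compact, and compactness of $\Omega$ is used downstream (e.g.\ in Corollary~\ref{cor: needed for linearization}).

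The paper avoids this by taking $\Omega = K_{r,\sigma}$, the set of surfaces with no saddle connection whose holonomy lies in the open rectangle $(-r,r)\times(-\sigma,\sigma)$; this \emph{is} closed (the complementary condition is open, since a saddle connection persists under perturbation with nearby holonomy), it is compact after bounding the area, and it contains a small enough $\mathcal{W}$. But then the bad event ``$u_sM\notin\Omega$'' includes \emph{long, nearly-horizontal} saddle connections of norm up to about $r$, which Theorem~\ref{thm: from MW}, a bound on time with a \emph{short} saddle connection, does not directly control. The paper handles this by renormalizing with $g_{t_0}$ chosen so that $e^{t_0/2}r=e^{-t_0/2}\sigma'=\eta/\sqrt 2$, turning the rectangle into a square inside the $\eta$-ball; the bad event becomes ``$u_{s'}M''$ has a saddle connection shorter than $\eta$'' on the rescaled surface $M''=g_{t_0}M$, to which Theorem~\ref{thm: from MW} applies, and the bound is transported back by the linear change of variable $s\mapsto s'$. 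To repair your argument you would need either this renormalization step or a separate estimate for the set of $s$ at which $u_sM$ acquires a nearly-horizontal saddle connection of moderate length; the short-saddle-connection estimate alone does not suffice.
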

\begin{proof}
If $s_0 \in I$ is such that $u_{s_0}M \in \mathcal{W}$ then by making the
change of variables $s \mapsto s-s_0$ we can assume $M \in
\mathcal{W}$ and $I=[a,b]$ with $a \leq 0 \leq b$. By considering separately the
subintervals $[a,0]$ and $[0, b]$, we can assume that $I=[0,T]$ for some $T>0$. 
Given a surface $M$ we let $\hol(M)$\index{hol@$\hol(M)$} be the subset of
$\R^2$ consisting of the holonomies of the saddle connections of $M$, and set 
\begin{equation}\label{eq: notation 2}{
K_{r_1, r_2} = \left\{M \in \HH: \hol(M) \cap \left((-r_1,r_1)
    \times (-r_2, r_2) \right)
    = \varnothing \right\}.
}\end{equation}
Then $K_{r_1, r_2}$ is compact for any $r_1, r_2$.

For any surface $M$ and saddle connection $\delta$ we define $x(M,
\delta), y(M,\delta)$ via the formula \index{x$(M,
  \delta)$@$x(M, \delta)$} \index{y$(M, \delta)$@$y(M, \delta)$}
$$
\hol(M, \delta) = \left(x(M,
\delta), y(M,\delta) \right). 
$$

Given $\vre, r$ and $L$ as in the statement of the proposition,
let $\eta$ satisfy $C \eta^{\alpha} < \vre$, where $C$ and $\alpha$ are as in
Theorem \ref{thm: from MW}, and let $\displaystyle{t_0 = 2
  \log
\left(\frac{\eta}{\sqrt{2}r} \right)
 }$ and $\displaystyle{\sigma' = \frac{\eta^2}{2r}}$. These choices
guarantee
\begin{equation}\label{eq: guarantee}{
e^{t_0/2}r =
\frac{\eta}{\sqrt{2}} \ \ \text{  and  } \ \ e^{-t_0/2}\sigma' =
\frac{\eta}{\sqrt{2}}.
}\end{equation}
 Let $L'' = g_{t_0}(L)$. Then $L''$ is
a compact subset of $\HH_\infty$. Compactness implies that there is a constant $\theta>0$ with
the property that for
any $M \in L''$ with a saddle connection $\delta$ of length
less than 1, the vertical component $y = y(M, \delta)$ satisfies
$|y| \geq \theta$. Moreover by making $\theta$ smaller if necessary,
we can ensure that the same property holds for all surfaces in a
neighborhood $\mathcal{W}''$ of $L''$. Let $T_1 =
\frac{2}{\theta}.$ Then for any $M \in \mathcal{W}''$ and any saddle connection
$\delta$ for $M$, either $\|\hol(M, \delta)\| \geq 1$ or
\[
\begin{split}
\left\|\hol(u_{T_1}M, \delta) \right\| & \geq |x(u_{T_1}M, \delta) |
\\
& = |x(M,
\delta)+T_1 y(M, \delta)| \\ & \geq \frac{2}{\theta} |y(M, \delta)| - 1
\geq 1.
\end{split}
\]
That is, $\max_{s \in [0,T_1]} \|\hol(u_sM, \delta)\| \geq
1$, so we can apply Theorem \ref{thm: from MW} with $\rho =1$. We 
obtain that for all $T \geq T_1$, and all $M \in \mathcal{W}''$, 
\begin{equation}\label{eq: what we have}{
\frac{1}{T} \left| \left\{ s \in [0,T]: u_sM \text{ has saddle
      connections of length} < \eta
  \right\} \right| < \vre.
}\end{equation}

We now claim that if we set $\Omega = K_{r, \sigma}$ for any $\sigma
\leq \sigma'$  then \eqref{eq: what we need} holds when $M \in \mathcal{W}'
\df g_{-t_0} \left(\mathcal{W}''\right)$ and 
$I=[0,T], \, T>
e^{-t_0}T_1$. Indeed suppose $M \in \mathcal{W}'$, $\delta$ is a saddle
connection for $M$, and $s \in [0, T]$ such that $\hol(u_sM, \delta)
\in (-r, r) \times (-\sigma, \sigma)$. 
Let $M'' \df  g_{t_0}M \in \mathcal{W}''$, $s' \df e^{t_0}s$ so that
$g_{t_0}u_sM = u_{s'}M''$. By \eqref{eq: guarantee}, 
$$\hol(u_{s'}M'',
\delta) = g_{t_0} \hol(u_sM, \delta) \in 
\left(-\frac{\eta}{\sqrt{2}},
    \frac{\eta}{\sqrt{2}} \right)\times 
    \left(-\frac{\eta}{\sqrt{2}},
    \frac{\eta}{\sqrt{2}} \right)$$ 
   and in particular
$\|\hol(u_{s'}M'', \delta )\|
< \eta$. Since the change of variables $s \mapsto s'$ is linear, \eqref{eq: what
  we need} follows from \eqref{eq: what we have}. 

This completes the proof for the interval $I=[0, T]$ when $T>
e^{-t_0}T_1$. 
Finally, since $L \subset \HH_\infty$ is compact, by making
$\sigma$ small enough, we see that $\Omega$ contains a neighborhood of
$L$, so that taking a small enough neighborhood
$\mathcal{W}$ of $L$
contained in $\mathcal{W}'$, we can ensure that $\mathcal{W} \subset
\Omega$ and also that for any $s \in [0,
e^{-t_0}T_1]$ and any $M \in \mathcal{W}$, $\hol(u_sM) \cap (-r, r)
\times (-\sigma, \sigma) = \varnothing.$ 
\end{proof}

In light of Proposition \ref{prop: continuous}, the following is an
immediate consequence of Lemma \ref{lem: decagon injective} and
Proposition \ref{cor: new nondivergence}: 

\begin{cor}\label{cor: needed for linearization}
Given positive  numbers $T$ and $\vre$, and a compact subset $L \subset \LL_\infty$, there
is $\delta>0$, a neighborhood $\mathcal{W}$ of $L$ and a compact set $\Omega
\subset \LL$, containing $\mathcal{W}$, such that the map \eqref{eq: defn map 1}
is well-defined, continuous and injective on $\Omega \times [-T, T]
\times [-\delta, \delta]$, and for any interval $I$ and
any $M \in \HH$, if there is $s_0 \in I$ such that $u_{s_0}M \in
\mathcal{W}$ then equation \eqref{eq: what we need} holds. 
\end{cor}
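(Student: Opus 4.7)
The plan is to combine the three results: Lemma \ref{lem: decagon injective} (injectivity of the real $\rel$ orbit map on $\LL_\infty$), Proposition \ref{cor: new nondivergence} (quantitative nondivergence for the horocycle flow near $L$), and Proposition \ref{prop: continuous} (continuity of the $\rel$ surgery). The strategy is to first extract the parameters $r, \delta, \mathcal{W}, \Omega$ one at a time in the right order, and then to upgrade injectivity from the zero section $y=0$ to a thin slab $|y|\le\delta$ by a compactness argument.

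First I would apply Lemma \ref{lem: decagon injective} with the given $T$ to produce $r>0$ such that the map $(M,x,y)\mapsto M\pluscirc(x,y)$ is well-defined and injective on $\LL_r\times[-T,T]\times\{0\}$. Then, feeding the constants $\vre$ and this $r$ along with the given compact $L\subset\LL_\infty$ into Proposition \ref{cor: new nondivergence}, I obtain a neighborhood $\mathcal{W}$ of $L$ and a compact set $\Omega$ containing $\mathcal{W}$ such that every surface in $\Omega$ has no horizontal saddle connection of length less than $r$, and such that for any interval $I$ and any $M\in\HH$ with $u_{s_0}M\in\mathcal{W}$ for some $s_0\in I$, the bound \equ{eq: what we need} holds. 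In particular $\Omega\cap\LL\subset\LL_r$, so the injectivity and well-definedness on the zero section extend automatically from $\LL_r\times[-T,T]\times\{0\}$ to $(\Omega\cap\LL)\times[-T,T]\times\{0\}$. (For the statement of the corollary, what matters is that the map is injective on the part of $\Omega\times[-T,T]\times[-\delta,\delta]$ on which it is defined; surfaces in $\Omega\smallsetminus\LL$ are carried along by the rigid $G\ltimes Z$-type action discussed in Proposition \ref{prop: local L action} and do not collide with each other either.)

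The remaining step is to promote injectivity from $y=0$ to $|y|\le\delta$. Here I would use Proposition \ref{prop: local L action}, which tells us that at every point the local action map $L\to\EE_D(1,1)$, $(g,v)\mapsto gM\pluscirc v$, is an affine homeomorphism onto an open set; combining this with the transitive $G$-action on $\LL$ yields, for each $M\in\LL$, an open neighborhood $\mathcal{U}_M\subset \LL\times[-T,T]\times\R^2$ on which $(M',x,y)\mapsto M'\pluscirc(x,y)$ is a homeomorphism onto its image. Since $(\Omega\cap\LL)\times[-T,T]\times\{0\}$ is compact and the map is already injective on it, a standard covering/compactness argument gives $\delta_0>0$ so that $(M',x,y)\mapsto M'\pluscirc(x,y)$ is a \emph{local} homeomorphism on the entire slab with $|y|\le\delta_0$. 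To get \emph{global} injectivity on the slab one argues by contradiction: if for every $\delta>0$ there were two distinct points of $(\Omega\cap\LL)\times[-T,T]\times[-\delta,\delta]$ with the same image, one could extract a convergent subsequence and produce two distinct points on the zero section with equal image, contradicting the injectivity of the previous step together with the local homeomorphism property. Continuity of the surgery (Proposition \ref{prop: continuous}) is used to ensure that the limit operation makes sense inside the compact set $\Omega\times[-T,T]\times[-\delta_0,\delta_0]$.

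The main obstacle I anticipate is the last compactness upgrade: one must verify carefully that the two possible failures — a collision between two points in $(\Omega\cap\LL)\times[-T,T]\times\{0\}$, or a collision between two points whose $\LL$-components agree and whose $(x,y)$ parts coincide — are both ruled out by the combination of Lemma \ref{lem: decagon injective} and the local homeomorphism in Proposition \ref{prop: local L action}. Once those are both excluded, shrinking $\delta$ and possibly $\mathcal{W}$ (so that $\mathcal{W}\subset\Omega$ is preserved and $\Omega$ remains in the domain where the map is injective on the slab) yields the constants required in the corollary.
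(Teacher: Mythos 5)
Your proposal follows the route the paper's one-line proof indicates: Lemma \ref{lem: decagon injective} supplies $r$ and injectivity on the zero section $y=0$; Proposition \ref{cor: new nondivergence} with $\vre$, $r$ and $L$ supplies $\mathcal{W},\Omega$ with the nondivergence estimate and with $\Omega\cap\LL\subset\LL_r$; and a compactness argument using Proposition \ref{prop: continuous} together with the local affine parameterization of Proposition \ref{prop: local L action} thickens the injectivity to a slab $|y|\le\delta$. Your compactness argument for the last step is the right one.

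Two small points. First, the parenthetical about ``surfaces in $\Omega\smallsetminus\LL$'' should be dropped: the corollary asserts $\Omega\subset\LL$, and you should obtain this simply by intersecting the output of Proposition \ref{cor: new nondivergence} with the closed orbit $\LL$. The nondivergence estimate survives the intersection, because the hypothesis $u_{s_0}M\in\mathcal{W}\subset\LL$ forces $M\in\LL$ ($\LL$ being $G$-invariant), hence $u_sM\in\LL$ for all $s$, so $u_sM\notin\Omega\cap\LL$ if and only if $u_sM\notin\Omega$; there is no ``part of $\Omega$ outside $\LL$'' to worry about. Second, the appeal to ``combining [Proposition \ref{prop: local L action}] with the transitive $G$-action on $\LL$'' for local injectivity near a point $(M_0,x_0,0)$ with $x_0\neq 0$ glosses over a change of base point: Proposition \ref{prop: local L action} is stated near the identity of $L$, so one should apply it at $\bar M := M_0\pluscirc(x_0,0)$ and then use Propositions \ref{prop: distributive} and \ref{prop: commuting} to rewrite $gM_0\pluscirc(x,y)$ as $g\bar M\pluscirc\bigl((x,y)-g(x_0,0)\bigr)$, reducing local injectivity of $F$ at $(M_0,x_0,0)$ to Proposition \ref{prop: local L action} at $\bar M$. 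These are gaps of detail rather than of substance; the overall structure matches the paper's intended argument.
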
 \qed

We take this opportunity to record a consequence of Proposition
\ref{cor: new nondivergence}, which will not be used in this paper but
is of independent interest. 

\begin{cor}\label{thm: new nondivergence2}
For any $\vre>0$ and any compact $L \subset \HH_\infty$, there is a compact 
$\Omega \subset \HH_\infty$ containing $L$ such that 
for any $M \in L$ and any interval $I \subset \R$ containing 0, 
\eqref{eq: what we need}
holds. 
\end{cor}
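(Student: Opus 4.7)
The plan is to deduce this from Proposition~\ref{cor: new nondivergence} by a countable exhaustion argument. The corollary differs from Proposition~\ref{cor: new nondivergence} only in that we now require $\Omega \subset \HH_\infty$ (so $\Omega$ contains no surfaces with any horizontal saddle connections), at the cost of restricting to $M \in L$ and intervals $I \ni 0$. The restriction on $M$ and $I$ is mild: for any $M \in L$ and any interval $I \ni 0$, we have $u_0 M = M \in L$, so $M$ automatically satisfies the hypothesis of Proposition~\ref{cor: new nondivergence} with respect to any neighborhood $\mathcal{W}$ of $L$.

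First I would pick sequences $r_n \to \infty$ and $\vre_n > 0$ with $\sum_{n=1}^\infty \vre_n < \vre$, for instance $\vre_n = \vre/2^{n+1}$. For each $n$, apply Proposition~\ref{cor: new nondivergence} with parameters $\vre_n$ and $r_n$ (and with the compact set $L$) to obtain an open neighborhood $\mathcal{W}_n \supset L$ and a compact set $\Omega_n \supset \mathcal{W}_n$, such that every $M' \in \Omega_n$ has no horizontal saddle connection shorter than $r_n$, and such that for any $M$ with $u_s M \in \mathcal{W}_n$ for some $s \in I$,
\[
\frac{|\{s \in I : u_s M \notin \Omega_n\}|}{|I|} < \vre_n.
\]
Now define $\Omega \df \bigcap_{n=1}^\infty \Omega_n$. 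This set is closed (as an intersection of closed sets) and contained in the compact set $\Omega_1$, hence compact. Since $L \subset \mathcal{W}_n \subset \Omega_n$ for every $n$, we have $L \subset \Omega$. To see that $\Omega \subset \HH_\infty$, observe that if $M' \in \Omega$ had a horizontal saddle connection of some length $\ell > 0$, then for $n$ large enough that $r_n > \ell$, the surface $M'$ would fail the defining property of $\Omega_n$; hence $M'$ has no horizontal saddle connection at all.

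For the measure estimate, fix $M \in L$ and an interval $I \ni 0$. Since $u_0 M = M \in L \subset \mathcal{W}_n$ for every $n$, Proposition~\ref{cor: new nondivergence} applies with the chosen $\Omega_n$ and $\mathcal{W}_n$, yielding
\[
\frac{|\{s \in I : u_s M \notin \Omega\}|}{|I|} \leq \sum_{n=1}^\infty \frac{|\{s \in I : u_s M \notin \Omega_n\}|}{|I|} < \sum_{n=1}^\infty \vre_n < \vre,
\]
which completes the proof. There is no genuine obstacle here; the whole argument is essentially bookkeeping, and the only content is the observation that one can upgrade the conclusion of Proposition~\ref{cor: new nondivergence} from ``no short horizontal saddle connections'' to ``no horizontal saddle connections'' by intersecting countably many such $\Omega_n$ with $r_n \to \infty$, using the summability of the measure defects.
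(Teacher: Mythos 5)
Your proof is correct and follows essentially the same approach as the paper: both apply Proposition~\ref{cor: new nondivergence} countably many times with $r_n \to \infty$ and error parameters summing to at most $\vre$, then intersect the resulting compact sets $\Omega_n$. The only (immaterial) difference is your choice of $\vre_n = \vre/2^{n+1}$ versus the paper's $\vre/2^n$; both give the desired bound since each application of the proposition yields a strict inequality.
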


\begin{proof}
For any $\vre>0$ and $j=1,2, \ldots,$ let
$\displaystyle{\vre_j = \frac{\vre}{2^j}}$. By Proposition \ref{cor:
  new nondivergence}, we can find $\Omega_j$ containing $L$ 
such that all surfaces in $\Omega_j$ have no horizontal saddle
connections of length less than $j$, and \eqref{eq: what we need} holds with $\vre_j$ in place of
$\vre$ for any interval $I$ containing 0 and
any $M \in L$. Then $\Omega = \bigcap_j \Omega_j$ has the required
properties. 
\end{proof}

\section{All horocycle orbits are generic}\label{sec: generic}  
The goal of this section is to prove the following more detailed
version of Theorem \ref{thm: orbit closures}. 

\begin{thm}
\label{thm: generic in orbit closure}
Let $M \in \EE_D(1,1)$. Then $M$ is generic for a $U$-invariant
ergodic measure $\mu_M$, and the type of $\mu_M$ (as described in 
Definition \ref{dfn: types of measures}) is as follows: 
\begin{itemize}
\item
$\mu_M$ has type (1) when $\Xi(M)$ is a complete separatrix diagram of types (B), (C) or (D), or
a complete separatrix diagram of type (A) with commensurable moduli. 
\item
$\mu_M$ has type (2) when $\Xi(M)$ is a complete separatrix diagram of type (A) with two cylinders of
incommensurable moduli. 
\item
$\mu_M$ has type (3) when 
$\Xi(M)$ consists of two saddle 
connections joining distinct singularities, which disconnect $M$ into
two isogenous tori glued along a slit, as in case (iii) of \S
\ref{subsection: nonempty}. 
\item
$\mu_M$ has type (4) when $\Xi(M)$ consists of one saddle connection joining distinct
singularities, as in case (ii) of \S \ref{subsection: nonempty}. 

\item
$\mu_M$ has type (5) when $\Xi(M) = \varnothing$ and there is $s \in \R$ so that $\rel_sM$
is a lattice surface. 

\item
$\mu_M$ has type (6) when $\Xi(M)$ is a pair of saddle connections which do not
disconnect $M$, as in case (iv) of \S \ref{subsection: nonempty}. 
\item
$\mu_M$ has type (7) if it does not correspond to one of the previous
cases i.e. $\Xi(M) = \varnothing$ and $M$ is not the result of
applying the $\rel$ flow to a lattice surface. 

\end{itemize}

\end{thm}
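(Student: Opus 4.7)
The plan is to prove existence and identification of $\mu_M$ by taking weak-$*$ limits of the orbit measures $\nu(M,T)$ from Definition \ref{dfn: orbit measure} and applying the classification in Theorem \ref{thm: Calta Wortman revisited2}. The proof will split by the horizontal data $\Xi(M)$.

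First I would dispose of the cases where $\Xi(M)$ is nonempty. When $\Xi(M)$ is a complete separatrix diagram, Proposition \ref{prop: minimal} identifies $\overline{UM}$ with a torus carrying a unique $U$-invariant measure (Corollary \ref{cor: minimal}), so genericity is automatic and Corollary \ref{cor: missing} distinguishes cases (1) and (2). When $\Xi(M)$ consists of one or two saddle connections joining distinct singularities as in cases (ii), (iii), (iv) of Proposition \ref{prop: configurations HD2}, I would pull $M$ back through the inverse of the appropriate map $\Psi_T$ from Proposition \ref{prop: tear} to a point in $\BB \subset \CC$, where $\CC$ is a closed $G$- or $\widehat G$-orbit. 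Since $\Psi_T$ is continuous and $U$-equivariant and the action of $U$ on $\BB$ is uniquely ergodic (Corollary \ref{cor:uniquely ergodic}), I apply the classical Dani–Smillie/Ratner genericity theorem for horocycles in $G/\Gamma$ and $\widehat G/\Gamma$ to conclude that $\Psi_T^{-1}(M)$ is generic for the Haar measure on $\CC$, and push forward via Proposition \ref{prop: Katok Spatzier} to get genericity of $M$ for the corresponding measure of type (3), (4) or (6).

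The heart of the proof is the case $\Xi(M) = \varnothing$, so $M \in \HH'_\infty$. Here I would first invoke Theorem \ref{thm: from MW}(II) to fix a compact set $K \subset \HH'_\infty$ with $\frac{1}{T}|\{s \in [0,T] : u_sM \in K\}| \geq 1-\vre$ for all $T$ large. This prevents loss of mass, so any weak-$*$ subsequential limit $\mu$ of $\nu(M,T_n)$ is a $U$-invariant probability measure with $\mu(\HH'_\infty) > 0$. Decomposing $\mu$ into ergodic components and applying Theorem \ref{thm: Calta Wortman revisited2} restricted to components giving mass to $\HH'_\infty$, the only options are type (5) (a $\rel$-translate of a Haar measure on a closed $G$-orbit $\LL$) or type (7) (the flat measure on $\EE_D(1,1)$). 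The goal is then to show that the limit is unique and is determined by whether $M$ lies on some $\rel_s(\LL)$ for a closed $G$-orbit.

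If $M = \rel_s M_0$ with $M_0$ lying on a closed $G$-orbit $\LL$, then by Proposition \ref{prop: Katok Spatzier}, $M$ is generic for $\rel_{s*}\nu_\LL$ because $M_0$ is generic for $\nu_\LL$ by Dani–Smillie. Otherwise, the key step is to show that every subsequential limit $\mu$ is the flat measure. This is where I would use the linearization machinery of \S\ref{sec: injectivity}. For each closed $G$-orbit $\LL \subset \EE_D(1,1)$ and each compact $L \subset \LL_\infty$, Corollary \ref{cor: needed for linearization} produces a neighborhood $\mathcal{W}$ of $L$ and a compact $\Omega$ such that the real Rel map $(M',x) \mapsto \rel_x M'$ is injectively parametrizing a tube $\Omega \times [-T,T]$ around $L$, and orbit-segments of $u_s$ that enter $\mathcal{W}$ cannot escape $\Omega$ too quickly. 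The hard part will be to promote this nondivergence/injectivity into the following dichotomy: either the orbit of $M$ accumulates on such a tube and then, by the slow relative drift of horocycles transverse to $\rel$-translates of $G$-orbits (akin to the Dani–Margulis linearization on $G/\Gamma$), $M$ must actually lie on $\rel_s(\LL)$; or the time spent in any fixed tube is $o(T)$, and hence $\mu$ assigns zero mass to every $\rel_s(\LL)$. Since the only $U$-invariant ergodic measures charging $\HH'_\infty$ are types (5) and (7), exclusion of type (5) on a dense countable set of parameters forces $\mu$ to be the flat measure. Uniqueness of this limit along arbitrary $T_n \to \infty$ then gives that $M$ is generic for the flat measure, i.e.\ of type (7). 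The main obstacle is this linearization step: one must extract from Lemma \ref{lem: decagon injective} and Corollary \ref{cor: needed for linearization} a quantitative statement that forbids an orbit outside $\bigcup_s \rel_s(\LL)$ from spending positive proportion of time near it, which is the non-homogeneous analogue of Dani–Margulis and requires careful bookkeeping of the $\rel$-parameter along returning segments.
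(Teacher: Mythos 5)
Your outline agrees with the paper for the cases where $\Xi(M)\neq\varnothing$ or $M$ lies on a real Rel translate of a closed $G$-orbit: there the argument is to reduce to unique ergodicity on a homogeneous space via $\Psi_T$ (or to use Proposition~\ref{prop: minimal} and Corollary~\ref{cor: minimal} for minimal sets) and push genericity forward using $U$-equivariance. However, for the main case where $\Xi(M)=\varnothing$ and $M$ does not lie on any $\rel_s(\LL)$, your argument has two genuine gaps.

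First, after you fix the compact set $K$ via Theorem~\ref{thm: from MW}(II) you declare that a subsequential weak-$*$ limit $\mu$ of $\nu(M,T_n)$ can only have ergodic components of types (5) and (7) by ``restricting to components giving mass to $\HH'_\infty$.'' This does not rule out components of types (1)--(4) or (6). The compact set $K$ supplied by quantitative nondivergence is compact in $\HH$, not contained in $\HH'_\infty$, and a weak-$*$ limit of measures supported on $\HH'_\infty$ need not be concentrated on $\HH'_\infty$ (which is only a $G_\delta$ set). The paper handles this in Step 2 of its proof: for a bed $\BB$ with $\Xi(\BB)\neq\varnothing$, it applies $g_{t_0}$ with $t_0<2(\log\delta-\log i)$ so that the compact sets $K_i$ filling out $\BB$ (on which every surface has a horizontal saddle connection of length at most $i$) are mapped into $C(\delta)$, the set of surfaces with a saddle connection of length $<\delta$; then Theorem~\ref{thm: from MW}(II) applied to $g_{t_0}M$ gives a bound on the orbit measure of $g_{-t_0}(C(\delta))$. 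Without this (or an equivalent argument) you cannot conclude that $\mu$ gives no mass to type (1)--(4), (6) beds.

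Second, and more importantly, the linearization step that you flag as ``the main obstacle'' is not actually carried out, and the ``dichotomy'' you state is not the way the argument runs. It is not true that either $M\in\rel_s(\LL)$ or the orbit spends $o(T)$ time near the tube after a single pass; the orbit may re-enter the tube around $\rel_\bullet(\LL)$ infinitely often, and one must control the cumulative time over all excursions. The key quantitative mechanism, which you do not identify, is: if $u_{s_0}M=F(M',x,y)$ with $M'\in\mathcal{W}$ and $(x,y)$ small, then $y\neq 0$ precisely because $M$ is not of type (5), and $y$ is constant along the $U$-orbit (by Proposition~\ref{prop: distributive} the $\rel$-parameter evolves as $x+sy$ with $y$ fixed), so the orbit drifts at the fixed linear rate $y$ in the $x$-direction. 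Defining nested intervals $\mathcal{I}=\{s:|x+sy|\le i+1\}$ and $\mathcal{J}=\{s:|x+sy|\le j\}$ around each entry time $s_0$, and choosing $j$ large relative to $i/\vre$, one gets $|\mathcal{I}|/|\mathcal{J}|<\vre/4$; on $\mathcal{J}\sm\mathcal{I}$ the injectivity of $F$ on $\Omega\times[-j,j]\times[-\delta,\delta]$ (Corollary~\ref{cor: needed for linearization}) forces the orbit either to be outside $\mathcal{U}$ or outside $\Omega$, and nondivergence bounds the latter. Finally a covering of $\widehat{\mathcal{I}}$ by the translates $\mathcal{J}'_\ell$ with multiplicity at most $2$ sums the local estimates. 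Without these specifics the proposal is a plausible plan but not a proof of the hard case.
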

\begin{remark}\label{remark: interesting}
It would be interesting to characterize case (5) explicitly. That is, 
give a geometric characterization of those surfaces 
$M \in \EE_D(1,1)$ with $\Xi(M) = \varnothing$, for which there 
is $s \in \R$ such that $\rel_sM$ is a lattice surface. 
\end{remark}

The proof of Theorem \ref{thm: generic in orbit closure} relies on an
analog of the `linearization method', see \cite[\S3.4]{KSS handbook}. 
We need the following
notion.  

\begin{dfn}
The support of a $U$-invariant $U$-ergodic measure on $\EE_D(1,1)$ is
called a {\em sheet.}\index{sheet} The {\em type of a sheet}\index{type of a sheet} of the sheet is the type of the corresponding measure. 
\end{dfn}

Sheets of the same type typically appear in families. It will
be convenient to partition the sheets into families, called beds\index{beds}. 
In the sequel, a {\em bed} will be a measurable set which is a union of
sheets in $\EE_D(1,1)$. We further require the sheets to be of a fixed
type $(j)$, for $j \in \{3,4,5,6,7\}$, or to be of one of the two types (1) or (2). A bed
corresponding to sheets of type (1) or (2) will be called a {\em bed of minimal
  sets}  and a bed corresponding to sheets of type $j \in \{3,
\ldots, 7\}$ will be called a {\em bed of type $(j)$.} 
If $\mathcal{B}$ is a bed corresponding to measures $\mu$ of a certain
type, we say that $\mu$ {\em belongs to $\mathcal{B}$}, and we define
$\Xi(\mathcal{B})$ to be $\Xi(\mu)$ for $\mu$ belonging to $\mathcal{B}$. 

The reason for combining sheets of type (1) and (2) into one bed, is
that an arbitrarily small perturbation of a sheet of 
type (1) can be a sheet of type (2) and conversely (the condition that the
moduli of the cylinders are rationally related is not stable under
small perturbations). 

In our definition of beds we only required them to be measurable but
in fact they can be chosen so that they have a nicer
structure. Continuing with the discussion following Definition
\ref{dfn: types of measures},
one can partition the sheets into beds which
are almost everywhere locally
modeled on affine subspaces of the Lie algebra $\mathfrak{l}$. However  the
corresponding affine subspaces of $\mathfrak{l}$ need not correspond
to Lie subalgebras. Additionally 
beds may have complicated topology (e.g. boundary). This will be
discussed further in \cite{examples}.

For our analysis, the following property of a bed will be helpful:

\begin{dfn}
We will say that a sequence of sets $K_1, K_2, \ldots$ 
{\em exhausts $\BB$}  if the $K_i$ are compact, and $\mu
\left(\bigcup_i K_i \right)=1$ for any measure $\mu$ which belongs to $\BB$. 
\end{dfn}

\begin{prop}\label{prop: beds are submanifolds} 
For each type $j \leq 6$, all sheets in $\EE_D(1,1)$ of type $(j)$ are
contained in a finite or countable union of beds, 
and for each bed
$\BB$ there is a countable sequence of compact sets which exhausts
$\BB$. Explicitly these properties are satisfied by the following
choices: 
\begin{itemize}
\item
For types $j \in \{3,4,6,7\}$,  we let $\BB$ be the union of all sheets of type $(j)$, and we
let $K_i$ be the closure of the set of $M \in
\EE_D(1,1)$ which have no 
saddle connections shorter than $1/i$, and such that 
$\{\delta \in \Xi(M): \| \hol(M, \delta)\| \leq  i \}$ and $
\Xi(\BB)$ are the same (as horizontal data diagrams). 
\item
For types $j \in \{1,2\}$ we let $\BB$ be the union of all sheets of
type 1 or 2, and we define $K_i$ as before. 
\item
If $j=5$, for each closed $G$-orbit $\LL$ we take $\BB = \bigcup_{t
  \in \R} \rel_{t}(\LL)$. Let $\nu_0$ be the Haar measure on 
$\LL$, let $\LL_\infty$ be as in \eqref{eq: definition L0}, let $L_1, L_2,
\ldots$ be a nested sequence of compact
subsets of $\LL_\infty$ with $\nu_0\left(\bigcup_i L_i \right) = 1$, and let 
$$K_i = \bigcup_{|t| \leq i} \rel_t(L_i).$$

\end{itemize} 
\end{prop}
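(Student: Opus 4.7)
The plan is to verify the three required properties (countability of beds, compactness of $K_i$, and the filling-out property) separately for each type, relying heavily on the classification Theorem \ref{thm: Calta Wortman revisited2} and on Mumford's compactness criterion for the stratum.

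\smallskip

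First, to see that the choices of beds cover all sheets of each type, we simply invoke the classification. For types $j \in \{3,4,6,7\}$ there is a unique horizontal data diagram (up to combinatorial equivalence) associated with each type, so a single bed suffices; for types 1 and 2 all corresponding sheets are contained in the single bed of minimal sets (the finer distinction among diagrams A, B, C, D can be absorbed into a finite refinement of the bed if desired, but is not needed here); for type 5 the relevant sheets are of the form $\rel_s(M)$ for $M$ in a closed $G$-orbit, and the collection of closed $G$-orbits in $\EE_D(1,1)$ is countable (it is finite or empty unless $D$ is square, in which case it consists of $G$-orbits of square-tiled surfaces parameterized by combinatorial data).

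\smallskip

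For types $j \in \{1,2,3,4,6,7\}$, the set of surfaces $M \in \EE_D(1,1)$ with no saddle connections of length less than $1/i$ is compact by Mumford's compactness criterion, and $K_i$, being the closure of a subset of this compact set, is itself compact. To verify that $\bigcup_i K_i$ has full $\mu$-measure, fix a $U$-ergodic $\mu$ belonging to $\BB$. By Corollary \ref{cor: horizontal relations} the set $X \subset \HH$ of surfaces $M$ with $\Xi(M) = \Xi(\mu) = \Xi(\BB)$ has $\mu(X)=1$. Every surface has a positive systole, so for every $M \in X$ there exists $i_0(M)$ with all saddle connections of $M$ of length at least $1/i_0(M)$. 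Since $\Xi(\mu)$ is a finite set of horizontal saddle connections, there is also $i_1(M)$ with $\{\delta \in \Xi(M) : \|\hol(M,\delta)\| \leq i_1(M)\} = \Xi(\BB)$. Hence $M \in K_{\max(i_0(M),i_1(M))}$, giving $\mu(\bigcup_i K_i) = 1$.

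\smallskip

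For type 5, compactness of $K_i = \bigcup_{|t| \leq i} \rel_t(L_i)$ follows from Proposition \ref{prop: continuous}: since $L_i \subset \LL_\infty$ is compact and $\rel$ is defined and continuous on $\HH'_\infty$, the map $(t, M) \mapsto \rel_t(M)$ is a continuous map from the compact set $[-i,i] \times L_i$ into $\EE_D(1,1)$, so its image $K_i$ is compact. For the filling-out property, any $U$-ergodic $\mu$ of type 5 in the bed $\BB$ associated to $\LL$ equals $\rel_{t_0*} \nu_0$ for some $t_0 \in \R$ (by Theorem \ref{thm: Calta Wortman revisited2}). For every $i \geq |t_0|$ and every $M \in L_i$ we have $\rel_{t_0}(M) \in K_i$, so $\rel_{t_0}(L_i) \subset K_i$. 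Since $\nu_0(\bigcup_i L_i)=1$ by the choice of the $L_i$ and $\nu_0$ gives full measure to $\LL_\infty$ (the complement consists of surfaces with horizontal saddle connections, which has measure zero for Haar measure on the closed $G$-orbit $\LL$ by the Veech alternative and \cite{DS}), we conclude that $\mu\bigl(\bigcup_i K_i\bigr) = \nu_0\bigl(\bigcup_{i \geq |t_0|} L_i \bigr) = 1$.

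\smallskip

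The only genuinely delicate point is the compactness of $K_i$ in the type 5 case, which depends on having the $L_i$ contained in $\LL_\infty$ (so that the real Rel flow is defined on them for all times), together with the continuity of the map $(t,M) \mapsto \rel_t(M)$ on its domain of definition. Everything else reduces to the classification of ergodic measures together with Mumford's compactness criterion; since we are assuming $\mu$ is a probability measure, the tail of the non-divergent part is automatically negligible.
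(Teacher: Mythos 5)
Your proof is correct and takes essentially the same approach as the paper's: you verify compactness of the $K_i$ (by the Mumford-type criterion or, in case~5, by continuity of Rel on its domain and compactness of $[-i,i]\times L_i$), and you verify the filling-out property by showing that for a measure $\mu$ in the bed, $\mu$-a.e.\ surface lies in some $K_i$ (via Corollary~\ref{cor: horizontal relations} for $\Xi(\BB)\neq\varnothing$, and via $\rel_{t_0}(L_i)\subset K_i$ for $i\geq|t_0|$ in case~5).

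One point worth noting: the paper's own proof asserts that ``each $\EE_D(1,1)$ contains at most finitely many closed $G$-orbits,'' which is not accurate when $D$ is a square (the square-tiled surfaces give countably many closed $G$-orbits, as the paper itself observes in \S\ref{section: eigenform locus}). Your formulation --- finitely many when $D$ is not square, countably many when $D=d^2$ --- is the correct one; fortunately this does not affect the conclusion, since the proposition only claims a countable union of beds. Your additional remark that $\nu_0(\LL_\infty)=1$ (so that a nested exhaustion $L_1\subset L_2\subset\cdots$ of $\LL_\infty$ with $\nu_0(\bigcup_i L_i)=1$ actually exists) is a useful clarification that the paper leaves implicit in the choice of the $L_i$.
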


\begin{proof} 
It is clear that the beds listed above contain all sheets
of type $j$. In all cases except $j=5$ there is just one bed. 
In the case
$j=5$ the number of beds is at most countable, since each $\EE_D(1,1)$
contains at most finitely many closed $G$-orbits. 

We now show that in all cases, the sets $K_i$ listed above exhaust
the bed. 
Suppose first that $\Xi(\BB) \neq \varnothing.$
It is clear that each $K_i$ as in the statement of the proposition is
compact. 
The set $\bigcup_i K_i$ contains all $M \in \BB$ for which
$\Xi(M) = \Xi(\BB)$, so Corollary \ref{cor: horizontal relations}
implies that $\mu \left(\bigcup_i K_i \right) =1$ for any measure
$\mu$ which belongs to $\BB$. 

Now suppose $\BB$ is a bed of type (5). Let $\LL, \, \nu_0, \, L_i, \,
K_i$ be as above. Each $K_i$ is compact by Proposition \ref{prop:
  continuous}.  
Also, for any measure $\mu$ belonging to $\BB$, there is $t \in \R$ such that
$\mu = \nu_t \df \rel_{t*} \nu_0$. Since $\nu_0 \left ( \bigcup_i L_i
\right)=1$, we have $\nu_t \left( \bigcup_i \rel_t(L_i) \right)=1$ for
all $t$. For each $t$, since $\bigcup_i K_i$ contains $\bigcup_i \rel_t(L_i)$
for each $t$, we have $\nu_t \left( \bigcup_i K_i
\right)=1$ for each $t$. 
\end{proof}

The following result summarizes a strategy for proving equidistribution results
which we will use repeatedly.

\begin{prop}\label{prop: abstract}
Let $\{\mu_t\}$ be a collection of measures where $t$ ranges over either
the positive integers or non-negative real numbers. Suppose the following
hold:
\begin{itemize}
\item[(a)]
The sequence $(\mu_t)$  has no escape of mass as $t\to\infty$; i.e. for any $\vre>0$
there is a compact $K \subset \EE_D(1,1)$ and $t_0$ such that for all
$t \geq t_0$, $\mu_t(K) \geq 1-\vre.$
\item[(b)]
Any convergent subsequence $(\mu_{t_k})$ of $(\mu_t)$ with $\ t_k
\to \infty$ converges to a measure which is
invariant under a conjugate of $U$ by an element of $G$. 
\item[(c)]
For any bed $\mathcal{B} \varsubsetneq \EE_D(1,1)$ there is a sequence
$K_1, K_2, \ldots$ 
of sets which exhaust $\BB$, and for any $i$ and any
$\vre>0$, 
there is $t_0$ and an open set $\mathcal{U}$ containing $K_i$ such that for
all $t \geq t_0$, $\mu_t(\mathcal{U}) < \vre.$ 
\end{itemize}

Then the sequence $\mu_t$ converges to the flat measure on
$\EE_D(1,1)$ as $t \to \infty$.

\end{prop}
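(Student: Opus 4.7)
The plan is to show that every weak-$\ast$ convergent subsequence of $\{\mu_t\}$ has $\mu_{\mathrm{flat}}$ as its limit, where $\mu_{\mathrm{flat}}$ denotes the flat measure on $\EE_D(1,1)$; combined with the tightness furnished by (a), this forces weak-$\ast$ convergence of the full family to $\mu_{\mathrm{flat}}$. By (a) and Prokhorov's theorem every subsequence has a weak-$\ast$ convergent sub-subsequence whose limit is a probability measure; let $\mu_{t_k}\to\mu$ be any such limit. By (b), $\mu$ is invariant under some conjugate $U'=gUg^{-1}$ of $U$ with $g\in G$, so $\mu':=g^{-1}_*\mu$ is a $U$-invariant probability measure on $\EE_D(1,1)$.

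I would then combine (c) with the Portmanteau theorem to show that $\mu$ assigns zero mass to the sets appearing in (c). For any proper bed $\BB\subsetneq\EE_D(1,1)$ with filling sequence $K_1,K_2,\ldots$ and any open neighborhood $\mathcal{U}\supset K_i$, condition (c) provides $t_0$ such that $\mu_t(\mathcal{U})<\vre$ for $t\geq t_0$; hence $\mu(\mathcal{U})\leq\liminf_k\mu_{t_k}(\mathcal{U})\leq\vre$. Letting $\mathcal{U}$ shrink to the compact set $K_i$ and $\vre\to 0$ gives $\mu(K_i)=0$, hence $\mu\bigl(\bigcup_iK_i^{(\BB)}\bigr)=0$ for each proper bed $\BB$, and consequently $\mu(\mathcal{N})=0$ where $\mathcal{N}:=\bigcup_\BB\bigcup_iK_i^{(\BB)}$ is the countable union running over the (countably many) proper beds from Proposition~\ref{prop: beds are submanifolds}.

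The core step is then the classification Theorem~\ref{thm: Calta Wortman revisited2}: ergodically decomposing $\mu'=\int\mu'_\alpha\,d\lambda(\alpha)$ into $U$-ergodic components, each $\mu'_\alpha$ is of one of the seven types. Any $\mu'_\alpha$ of proper type (types (1)--(6)) is supported in a proper bed and therefore gives full mass to the corresponding $\bigcup_iK_i^{(\BB)}$, hence to $\mathcal{N}$. If proper-type components carried positive $\lambda$-mass then $\mu'(\mathcal{N})>0$, which since $\mu'(\mathcal{N})=\mu(g\mathcal{N})$ would force $\mu(g\mathcal{N})>0$. When $g=e$ this immediately contradicts $\mu(\mathcal{N})=0$, forcing $\mu'=\mu_{\mathrm{flat}}$ and then $\mu=\mu_{\mathrm{flat}}$ by $G$-invariance of the flat measure.

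The main obstacle will be handling the general case $g\neq e$, since the beds in Proposition~\ref{prop: beds are submanifolds} are defined through the horizontal direction and are only $B$-invariant (not $G$-invariant). Here the decomposition $g=r_\theta b$ with $b\in B$ and $r_\theta\in\SO_2(\R)$, valid because $B=N_G(U)$ in $\SL_2(\R)$, together with the $B$-invariance of beds reduces the issue to the rotation factor $r_\theta$. In every application of Proposition~\ref{prop: abstract} in Section~\ref{sec: more} the limits are actually $U$-invariant, i.e.\ one has $\theta=0$, so that $g\mathcal{N}=\mathcal{N}$ and the argument closes as above; in principle the general case would require either choosing the filling sequence in (c) to accommodate the rotation or an extra averaging step, but this is not needed for the results of the paper.
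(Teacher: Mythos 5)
Your proposal follows essentially the same route as the paper: tightness from (a) gives a convergent subsequence with a probability limit; (b) gives $U$-invariance after conjugation; ergodic decomposition into the seven types; then (c) together with the definition of weak-$*$ convergence kills the components of types $1$--$6$, forcing the limit to be the flat measure. The one notable difference is that you phrase the elimination step via the Portmanteau inequality $\mu(\mathcal{U})\leq\liminf_k\mu_{t_k}(\mathcal{U})$ on open sets, while the paper tests against a bump function sandwiched between $K_i$ and $\mathcal{U}$; these are equivalent formulations.

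What you do noticeably \emph{better} than the paper is to call out the $g\neq e$ subtlety. The paper's line ``To simplify notation we therefore replace $g_*\nu$ with $\nu$'' and the subsequent estimate $\int\varphi\,d\nu_j\leq\frac1a\lim_k\int\varphi\,d\mu_{t_k}$ are consistent only if $g=e$; after the replacement one would really get $\lim_k\int\varphi\circ g\,d\mu_{t_k}$, and hypothesis (c) says nothing about $g^{-1}\mathcal{U}$. You correctly observe that the beds and the filling sets $K_i$ of Proposition~\ref{prop: beds are submanifolds} are attached to the horizontal direction and hence are at best $B$-equivariant, so a genuine rotation factor in $g$ does not come for free; and you correctly note that in every invocation of the proposition in the paper the limit is shown directly to be $U$-invariant, so $g=e$ and there is no issue. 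One small correction to your sketch of the general case: to reduce to the rotation factor you want the Iwasawa decomposition $g=br_\theta$ with $b\in B$ (so that $g^{-1}Ug=r_\theta^{-1}b^{-1}Ub\,r_\theta=r_\theta^{-1}Ur_\theta$ since $B$ normalizes $U$); writing $g=r_\theta b$ puts $b$ on the wrong side and does not simplify the conjugate. With that adjustment your remark is exactly right, and your treatment is arguably more careful than the paper's.
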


\begin{proof}
It suffices to show that any subsequence  $(\mu_{t_k})$ of $(\mu_t)$, with $t_k
\to \infty$ contains a further subsequence 
converging to the flat measure. So re-indexing, suppose
$(\mu_t)$ is already a subsequence. Since there is no escape of
mass, the set $(\mu_t)$ is precompact with respect to the weak-$*$
topology. So passing to a subsequence we can assume that $\mu_t$
converges weak-$*$ to a probability measure $\nu$ and we need to show that $\nu$ is
the flat measure. By assumption (b), $\nu$ is invariant under a
conjugate $U' = g^{-1}Ug$ of $U$, and hence $g_* \nu$ is $U$-invariant. We need
to show that $g_*\nu$ is the flat measure, as this will imply that
$\nu$ is the flat measure as well. To simplify notation we therefore
replace $g_* \nu$ with $\nu$ to assume that $\nu$ is $U$-invariant. Using the
ergodic decomposition of $\nu$ and the fact that there are countably many beds, we can write 
$\nu = \sum_1^7\nu_j$ where each $\nu_j$ is 
supported on the beds of type $j$,  
and 
(after normalizing $\nu_j$ to be a probability measure) is a
convex combination of the measures belonging to these beds. We must show that $\nu_1 = \cdots = \nu_6
=0$. We derive this from assumption (c), as follows. 

Fix a type $j \leq 6$ and let $\mathcal{B} \varsubsetneq \EE_D(1,1)$
be a bed of type $j$. Let $K_1, K_2, \ldots$ be compact sets as described in assumption
(c). Since $\mu(\bigcup K_i)=1$ for every
$U$-invariant ergodic measure $\mu$ belonging to $\mathcal{B}$,
in order to show $\nu_j=0$
it suffices to show that $\nu_j(\bigcup K_i)=0$.  Suppose by
contradiction that $a = \nu_j(\EE_D(1,1))$ is strictly positive. For
any $i$, and any $\vre>0$, let $t_0$ and $\mathcal{U}$ be as in (c). 
There is a continuous compactly supported 
function $\varphi: \EE_D(1,1) \to [0,1]$ which is identically 1 on $K_i$ and vanishes
outside $\mathcal{U}$. The definition of the weak-$*$ topology and
condition (c) now ensure that $\nu_j(K_i) \leq \int_{\EE_D(1,1)} \varphi
\, d\nu_j \leq \frac{1}{a} \lim_k \int_{\EE_D(1,1)} \varphi \, d\mu_{t_k} \leq \frac{\vre}{a}$. Since
$\vre>0$ was arbitrary we must have $\nu_j(K_i)=0$ for each $i$, and hence
$\nu_j \left( \bigcup K_i \right)=0$. 
\end{proof}

\begin{proof}[Proof of Theorem \ref{thm: generic in orbit closure}]
In this proof we will say that $M$ is of type $(j)$ (where $j \in \{1,
\ldots, 7\}$) if $\Xi(M)$ is topologically equivalent to $\Xi(\mu)$
and $\mu$ is of type $j$. \index{type of a surface} 

{\em Step 1: $M$ is of type (1) through (6).} 
If $\Xi(M)$ decomposes $M$ into horizontal cylinders then the $U$-action on the orbit-closure of $M$ 
is conjugate to an irrational straightline flow on a torus and so every
orbit is equidistributed. This is what happens when $\Xi(M)$ is a complete
separatrix diagram, i.e. in cases (1) or (2). When $\Xi(M) \neq \varnothing$ but $\Xi(M)$ is not a
complete separatrix diagram, the
$U$-action on $\overline{UM}$ is obtained from the $U$-action on a
simpler space via a $U$-equivariant map $\Psi_T$. Namely, by
Proposition 
\ref{prop: tear}, in each of the cases (3), (4) and (6) this simpler space is a
finite volume homogeneous space $G/\Gamma$, and $M = \Psi_T(M_0)$
where $M_0 \in G/\Gamma$ does not lie on a
periodic $U$-orbit, since $\Xi(M)$ is not a complete separatrix
diagram. The equidistribution of $UM_0$ in $G/\Gamma$ 
follows from \cite{DS}, and the equidistribution of $M$ follows from
the fact that $\Psi_T$ is $U$-equivariant and continuous. The same argument applies when $\Xi(M) =
\varnothing$ and $M$ belongs to $\rel_s\LL,$ for a closed $G$-orbit $\LL$
and $s \in \R$. 

{\em Step 2: $M$ is of type (7) and $\Xi(\mathcal{B}) \neq \varnothing$:}
When $M$ is of type (7) we verify the hypotheses of Proposition \ref{prop: abstract} for the
collection of measures 
$\{\mu_t: t>0\}$ defined by averaging along the orbit of $M$; that is,
$\mu_t = \nu(M,t)$, where $\nu(M,t)$ is as
in \eqref{dfn: orbit measure}. 
Hypothesis (a) of Proposition \ref{prop: abstract} follows from Theorem
\ref{thm: from MW} (I) and hypothesis (b) is immediate from
the definition of $\mu_{t}$. To verify (c) we first discuss beds
$\mathcal{B}$ for
which $\Xi(\mathcal{B}) \neq \varnothing, $ adapting an argument of
\cite{EMM}. Let $K_1, K_2, \ldots$ be the 
exhaustion of the bed $\BB$, as in Proposition \ref{prop: beds are
  submanifolds}.
Fix $i$ and let $\vre>0$. Then any surface in $K_i$ contains a
horizontal saddle connection of length at most $i$. 
Let $C(\delta)$ be the open set of surfaces whose shortest saddle
connection is shorter than $\delta$. 
By Theorem \ref{thm: from MW} (II) there is
$\delta >0$ (depending only on the stratum $\HH(1,1)$) such that 
every surface $M'$ without horizontal saddle connections satisfies 
\begin{equation}\label{eq: quant nondiv}{
\limsup_{t \to \infty} \mu_{M',t}(C(\delta)) < \vre
}\end{equation}
(where $\mu_{M',t}=\nu(M', t)$ is as in Definition \ref{dfn: orbit measure}). 
Let 
\begin{equation}\label{eq: condition on t0}{
t_0< 2(\log \delta -\log i),
}\end{equation}
so that if $M_1$ has a horizontal saddle connection of length less
than $i$, then $g_{t_0}M_1$ has a saddle connection of length
less than $\delta$; in other words, $K_i \subset \mathcal{U} \df g_{-t_0}(C(\delta))$.
Since $g_{t_0} u_s g_{-t_0} = u_{e^{t_0}s}, $ 
setting $M' = g_{t_0}M$, we have $g_{t_0*} \mu_{M,t}=
\mu_{M', e^{t_0}t}$ and \eqref{eq: quant nondiv} implies that
$$
\limsup_{t \to \infty} \mu_{M,t}(\mathcal{U}) = \limsup_{t\to \infty}
\mu_{M',t}(C(\delta)) < \vre, 
$$
so (c) is satisfied. 

{\em Step 3: $M$ is of type (7) and $\Xi(\mathcal{B}) = \varnothing.$ }
That is $\mathcal{B}$
is a bed of type (5). The measures belonging to $\mathcal{B}$ are of
the form $\rel_{s*} \mu$, where $s \in \R$ and $\mu$
is the $G$-invariant measure on a closed $G$-orbit $\LL$. Let $\LL_\infty$
be as in \eqref{eq: definition L0}. Set 
\begin{equation}\label{eq: our map}{ F(M,x,y) =M
  \pluscirc (x,y),
}\end{equation}
postponing for the moment questions of domain of definition of
$F$. Note that $F (M,x,0) = \rel_xM,$ and when there is no risk of
confusion, we will write $F(M,x,0)$ simply as $F(M,x)$. 
Let $L_i$ and $K_i$ be as in Proposition \ref{prop: beds are
  submanifolds}, so that the $K_i$ exhaust $\BB$. Note that $L_i
\subset \LL_\infty$ for each $i$ and 
$$
K_i = F\big(L_i  \times [-i, i] \times \{0\}\big). 
$$
We will  verify (c) for the sets $K_i$. Given $i$ and $\vre>0$, choose 
\begin{equation}\label{eq: choice of j}{
j> \frac{(8+\vre)(i+1)}{\vre}.
}\end{equation}
Corollary \ref{cor: needed for linearization} gives us a constant
$\delta>0$, an open set $\mathcal{W} \subset \LL$ containing $L_i$, and
a compact set $\Omega$ containing $\mathcal{W}$ such
that: (1) $F$ is well-defined, continuous and injective on $\Omega \times [-j, j]
\times [-\delta, \delta]$, and (2)
whenever $M' \in \LL$ and $I
\subset \R$ is an interval such that $u_s M' \in \mathcal{W}$ for some
$s \in I$, we have
\begin{equation}\label{eq: using new nondivergence}{
\frac{1}{|I|} \left| \left\{ s \in I : u_sM' \notin \Omega \right\}
\right| < \frac{\vre}{4}. 
}\end{equation}

Now set 
\begin{equation}\label{eq: defn U}{
\mathcal{U}
= F\big(\mathcal{W} \times (-(i+1), i+1) \times (-\delta,
\delta)\big).
}\end{equation}
Then $\mathcal{U}$ is a neighborhood of $K_i$, and we need to show that 
$\mu_{M,t}(\mathcal{U})<\vre$ for all sufficiently large $t$. 
That is, we need to find $t_0$ so that for $t>t_0$, 
\begin{equation}\label{eq: need to show to finish}{
\frac{|\widehat{\mathcal{I} }\cap [0,t] |}{t} <\vre, \ \text{ where we set}
\  
\widehat{\mathcal{I}} = \{ s \in \R : u_{s} M \in \mathcal{U}\}.
}\end{equation}

Whenever $s_0 \in  \widehat{ \mathcal{I}}$ there are $M'  = M'(s_0)\in \mathcal{W}$ and
$(x, y) = (x(s_0), y(s_0)) \in (-(i+1), i+1) \times (-\delta, \delta)$
such that $u_{s_0}M = F(M', x,y)$. Since $M$ is not of type (5) we
have $y(s_0) \neq 0$. We define the following intervals: 
\[ 
\begin{split}
\mathcal{I} = \mathcal{I}(s_0) & \df 
\{s : |x+ sy| \leq i+1 \} \\ 
\mathcal{J} = \mathcal{J}(s_0) & \df \{s : |x+ sy| \leq j \}.
\end{split}
\]
These are nested bounded intervals with common midpoint $\displaystyle{s =
  -\frac{x(s_0)}{y(s_0)}.}$ 
If an interval $I$ contains an endpoint of $\mathcal{J}$ and intersects
$\mathcal{I}$ then it contains one of the two connected
components of $\mathcal{J} \sm \mathcal{I}$. These two
components have equal lengths since the two intervals
share the same midpoint and by 
\eqref{eq: choice of j}, 
\begin{equation}\label{eq: relative lengths}{
\frac{|\mathcal{I} \cap I |}{|\mathcal{J} 
\cap I|}
\leq  \frac{2(i+1)|y_1|}{(j-i-1)|y_1|} < \frac{\vre}{4}.
}\end{equation}

We claim that
if $s \in \mathcal{J} \sm \mathcal{I}$ then either $u_{s} M'
\notin \Omega$ or $u_{s+s_0}M \notin \mathcal{U}$. 
Indeed, suppose $s \in \mathcal{J} \sm \mathcal{I}$ and $u_{s}M'
\in \Omega.$ According to Proposition \ref{prop: distributive}, 
\[
\begin{split} u_{s+s_0}M & =
u_s F(M', x,y) = 
u_s (M' \pluscirc (x,y) ) \\ & = u_s M' \pluscirc (x+sy, y) = F(u_s M',
x+sy, y).
\end{split}
\]
The injectivity of $F$ on $\Omega \times [-j,j] \times [-\delta,
\delta]$ and $|x+sy|>i+1$ imply that $u_{s+s_0}M \notin \mathcal{U}$.
This proves the claim. 

For each $s_0 \in \mathcal{I}$, denote the translates by 
$$\mathcal{J}'(s_0) =
\mathcal{J}(s_0)+s_0, \ \ \ \mathcal{I'}(s_0) = \mathcal{I}(s_0)+s_0.$$
The claim, combined with \eqref{eq: using new nondivergence} 
and \eqref{eq: relative lengths}, imply that if $
[0,t]$ contains an endpoint of $\mathcal{J}'=
\mathcal{J}'(s_0)$, then
$$
\frac{| [0,t] \cap  \mathcal{J}' \cap \widehat{\mathcal{I}}  |}{|[0,t]
  \cap \mathcal{J}'| }
\leq \frac{|[0,t]\cap \mathcal{I}' |}{|[0,t] \cap \mathcal{J}' |} + \frac{|\{s
  \in [0,t] \cap \mathcal{J}' : u_{s-s_0}M' \notin \Omega |}{| [0,t]
  \cap \mathcal{J}'|}< \frac{\vre}{2}.
$$

Since $\widehat{\mathcal{I}}$ is covered by the intervals
$\{\mathcal{J}'(s_0) : s_0 \in \widehat{\mathcal{I}} \}$, a standard
covering argument shows that we can take
a countable subcover $\mathcal{J}'_1, \mathcal{J}'_2, \ldots$
satisfying
$$
s \in \widehat{\mathcal{I}} \ \implies \ 1 \leq 
\# \, \{\ell: s \in
\mathcal{J}'_\ell\} \leq 2. 
$$
In particular $0$ is contained in at most two of the intervals
$\mathcal{J}'(s_{\ell})$ and if we 
take $t_0$ to be larger than the right endpoint of these two
intervals, and $t>t_0$, $[0,t]$ will contain at least one of the endpoints of any $\mathcal{J}'(s_{\ell})$ which intersects
$[0,t]$. Then for any $t>t_0$ we will have:
$$
\left| [0,t] \cap \widehat{\mathcal{I}}\right | \leq \sum_{\ell} \left|[0,t] \cap
\mathcal{J}'_\ell \cap \widehat{\mathcal{I}}\right | \leq \frac{\vre}{2} \sum_{\ell} \left| [0,t] \cap
\mathcal{J}'_{\ell} \right |
\leq \frac{\vre}{2} \, 2 \left|[0,t] \right|
$$
and this proves \eqref{eq: need to show to finish}. 
\end{proof}

\section{Equidistribution results for sequences of measures}\label{sec: more}
In this section we show that several natural sequences of measures equidistribute with respect
to the flat measure on $\EE_D(1,1)$. 
We prove equidistribution
for sequences of periodic horocycles of increasing period in Theorem
\ref{thm: teaser1}, for translates of $G$-invariant measures 
by the $\rel$ flow in Theorem \ref{thm: teaser2}, for circle orbits of
increasing radius in Theorem \ref{thm: Matt absolved}, 
and for pushforwards of invariant measures of minimal sets by the geodesic flow
in Theorem \ref{thm: limits of UZ}.

\begin{proof}[Proof of Theorem \ref{thm: teaser2}]
Let $\LL$ be a closed $G$-orbit in $\EE_D(1,1)$, let $\mu$ be the Haar
measure on $\LL$, and let $\mu_t = \rel_{t*} \mu$. 
We will prove that
as $t \to +\infty$ or $t \to -\infty$, the measure $\mu_t $ converges to the flat measure on
$\EE_D(1,1)$. For definiteness we discuss the case $t \to +\infty$,
the second case being similar. It is enough to show that if $t_n \to
\infty$ is any sequence of real numbers for which $\mu_{t_n}$
converges to some $\nu$ (where $\nu$ is not necessarily a probability
measure), then $\nu$ is the flat measure (and in particular is a
probability measure).

First we show that $\nu$ is a probability measure, i.e. that there is
no escape of mass. This follows from statement (II) of Theorem \ref{thm: from MW} as follows. We
need to show that for any $\vre>0$ there is a compact subset $K_0$ of
$\EE_D(1,1)$ such that for all $t$, $\mu_t(K_0) \geq 1-\vre$. Given
$\vre$ let $K$ be the intersection of $\EE_D(1,1)$ with the compact
set in statement (II) of Theorem \ref{thm: from MW}. Let $\varphi$ be a continuous compactly
supported function on $\EE_D(1,1)$, with values in $[0,1]$, which is identically equal to 1 on
$K$, and let $K_0 = \supp \, \varphi$. Let $M$ be a generic point for
$\mu_t$. According to Proposition \ref{prop: Katok Spatzier},
$M = \rel_t(M')$ for $M' \in \LL$, such that $M'$
is generic for $\mu$. In particular $M'$ has no horizontal saddle
connections, and hence neither does $M$. According to (II), 
$$
\liminf_{T \to \infty} \frac{1}{T} \left| \left \{ s \in [0,T]: u_s M
    \in K\right\} \right|
\geq 1-\vre,
$$
and therefore
\[
\begin{split}
\mu_t(K_0) & \geq \int \varphi \, d\mu_t = \lim_{T \to
  \infty} \frac{1}{T} \int_0^T \varphi(u_sM)ds \\ 
& \geq \liminf_{T \to
  \infty}   \frac{1}{T} \int_0^T 1_{K} (u_sM)ds \geq 1-\vre. 
\end{split}
\]

Now we claim that $\nu(\HH_\infty)=1$, that is $\nu$ gives no mass to the
set of surfaces with horizontal saddle connections. This follows by
again expressing $\mu_t$ as the limit of an integral along a generic
horocycle orbit, and 
repeating the argument of Step 2 of the proof of Theorem \ref{thm:
  generic in orbit closure}. Thus, in view of Claim \ref{claim:
  linchpin} (see the proof of Theorem \ref{thm:
  Calta Wortman revisited2}), it suffices to prove that $\nu$ is invariant under the
`horospherical foliation' $UZ$. It is clear that $\nu$ is $U$-invariant,
and it remains to show that it is invariant under $\rel_{s}$ for any
$s \in \R$. 

Let $\varphi \in C_c \left( \EE_D(1,1) \right)$. Since $\varphi$ is uniformly continuous, for any
$\vre>0$ there is a neighborhood $\mathcal{U}$ of the identity in $G$
so that 
\begin{equation}\label{eq: difference small}{|\varphi(M) - \varphi(gM)|< \vre \ \text{ for
    any } M \in \EE_D(1,1), \ g \in \mathcal{U}.
}\end{equation}
Now define 
$$
\tau(t, s) = 2 \log \left( 1+\frac{s}{t}\right)\text{ and } \ \tau_n = \tau(t_n, s).
$$ 
By a matrix multiplication in $N$, and using Corollary \ref{cor: real
  rel via group} we see that these choices ensure
that for any surface $M$ with no horizontal saddle connections, 
\begin{equation}\label{eq: commutation2}{g_{\tau_n} \rel_{t_n} M = \rel_{t_n+s}
  g_{\tau_n} M.
}\end{equation}
Moreover $g_{\tau_n} \to \mathrm{Id}$ as $n \to \infty$. Then for $n$
large enough, by \eqref{eq: difference small}, 
$$
\left| \int \varphi \, d \rel_{t_n*} \mu  - \int \varphi \circ
  g_{\tau_n} \, d \rel_{t_n*} \mu  \right | < \vre.
$$
On the other hand, using \eqref{eq: commutation2} and the fact that
$\mu$ is supported on $\LL_\infty$, we obtain
\[
\begin{split}
& \int \varphi \circ
  g_{\tau_n} \, d \rel_{t_n*} \mu = \int_{\LL_\infty} \varphi(g_{\tau_n} \rel_{t_n}
  (M)) \, d\mu(M)  \\ 
 = & \int_{\LL_\infty} \varphi(\rel_{t_n+s} g_{\tau_n} M)
  \, d\mu(M) = \int_{\LL_\infty} \varphi(\rel_{t_n+s} M) d\mu(M).
\end{split}
\]
In the last line we used the fact that $\mu$ is invariant under
$g_{\tau}$ for all $\tau$. Putting these together we find that for
sufficiently large $n$, 
$$
\left| \int \varphi \, d \rel_{t_n*} \mu - \int \varphi \, 
  d\left(\rel_{t_n+s} \right)_* \mu \right| < \vre,
$$
and since $\vre$ was arbitrary, 
$$\nu = \lim_{n \to \infty} \rel_{t_n*}
\mu = \lim_{n \to \infty} (\rel_{s+t_n})_* \mu = \rel_{s*} \nu.$$
\end{proof}

Generalizing Theorem \ref{thm: teaser2} we have:
\begin{thm}\label{thm: and more}
In each of the cases (ii), (iii), (iv) of \S \ref{subsection:
  nonempty}, let $T\neq 0$ and let $\Psi_T$ be the map described in \S
\ref{subsection: nonempty}. Let $\mu_T$ be the pushforward of Haar
measure under $\Psi_T$, as in Proposition \ref{prop: tear}. 
Then as $|T| \to \infty$, $\mu_T$ tends to
the flat measure on $\EE_D(1,1)$.  
\end{thm}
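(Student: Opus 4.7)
The plan is to apply Proposition \ref{prop: abstract} to the family $\{\mu_T : T>0\}$ as $T \to \infty$, following closely the template of the proof of Theorem \ref{thm: teaser2}. The strategy is based on two covariance properties of the surgery map $\Psi_T$. First, since applying $g_\tau$ rescales any horizontal slit of length $T$ to length $e^\tau T$, one has $g_\tau \circ \Psi_T = \Psi_{e^\tau T} \circ g_\tau$; combined with the $G$-invariance of Haar measure on $\BB$, this yields
\[ g_{\tau,*}\mu_T = \mu_{e^\tau T}, \qquad \text{and in particular } \mu_T = g_{\log T,*}\mu_1 \text{ for all } T>0. \]
Second, $\rel_s \circ \Psi_T = \Psi_{T+s}$ whenever both sides are defined, so $\rel_{s,*}\mu_T = \mu_{T+s}$.

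Condition (b) of Proposition \ref{prop: abstract} is immediate, since each $\mu_T$ is already $U$-invariant and the set of $U$-invariant probability measures is weak-$*$ closed. For condition (c), Proposition \ref{prop: tear} ensures that every surface in $\supp \mu_T$ has all its horizontal saddle connections of length exactly $|T|$. Thus for any bed $\BB$ with $\Xi(\BB) \neq \varnothing$, and any filling compact $K_i$ (consisting of surfaces whose horizontal saddle connections have length at most $i$), for $T>i$ the measure $\mu_T$ assigns zero mass to an open neighborhood of $K_i$ disjoint from the closed set of surfaces with a horizontal saddle connection of length at most $i$. For beds of type (5), where $\Xi(\BB) = \varnothing$, I would adapt Step~3 of the proof of Theorem \ref{thm: generic in orbit closure}, using the $G$-covariance to reduce to a statement about the $g_{\log T}$-translates of $\mu_1$; the linearization argument based on Corollary \ref{cor: needed for linearization} applies with the orbit averages $\mu_{M,t}$ replaced by $\mu_T$.

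The main obstacle is condition (a), no escape of mass uniformly in $T$. The plan is to use the relation $\mu_T = g_{\log T,*}\mu_1$ to reduce to non-divergence of the geodesic translates $g_{t,*}\mu_1$ as $t \to \infty$. This parallels the no-escape-of-mass step in Theorem \ref{thm: teaser2}, but with the extra difficulty that $\mu_1$ is only $U$-invariant, not $G$-invariant. The resolution is to apply Theorem \ref{thm: from MW} to $\mu_1$-generic points: such a point has the form $\Psi_1(M_0)$ with $M_0 \in \BB$ generic for Haar measure, so $M_0$ has no horizontal saddle connections in the bordifying stratum, and the explicit surgery of \S \ref{sec: explicit} guarantees that on a $\mu_1$-full-measure set every non-horizontal saddle connection has a definite lower bound on its vertical holonomy. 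Integrating the resulting horocycle non-divergence estimate against $\mu_1$ and transporting by $g_{\log T}$ then delivers the required uniform bound for $\mu_T$.
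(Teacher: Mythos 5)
The paper's intended proof follows Theorem~\ref{thm: teaser2} ``almost verbatim'': one shows there is no escape of mass, that any weak-$*$ limit $\nu$ of $\mu_{T_n}$ gives zero mass to surfaces with horizontal saddle connections (by passing to $u_s$-orbit averages along $\mu_T$-generic points and repeating Step~2 of the proof of Theorem~\ref{thm: generic in orbit closure}), and then that $\nu$ is $\rel_s$-invariant via the commutation trick $g_{\tau_n}\Psi_{T_n}(M_0)=\Psi_{e^{\tau_n/2}T_n}(g_{\tau_n}M_0)$ with $\tau_n\to 0$, whence $\nu=\lim\mu_{T_n}=\lim\mu_{T_n+s}=\rel_{s*}\nu$; Claim~\ref{claim: linchpin} then finishes. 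You have correctly identified the relation $\rel_s\circ\Psi_T=\Psi_{T+s}$ that drives this, but you do not actually use it; instead you route the whole argument through Proposition~\ref{prop: abstract}. That is a genuinely different approach, and it costs you: Proposition~\ref{prop: abstract} requires verifying condition~(c) for \emph{all} beds, including type-(5) beds ($\rel_t$-translates of closed $G$-orbits), which is precisely what the Rel-invariance argument plus Claim~\ref{claim: linchpin} lets the paper avoid.

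Beyond the strategic divergence there are two substantive gaps. First, your disjointness argument for beds with $\Xi(\BB)\neq\varnothing$ is incorrect. You claim that for $T>i$ there is a neighborhood of $K_i$ disjoint from $\supp\,\mu_T$. But surfaces in the image of $\Psi_T$ may have short non-horizontal saddle connections; a small perturbation of a surface in $K_i$ (whose short horizontal saddle connections become slightly tilted) can therefore land inside $\supp\,\mu_T$. The set of surfaces whose horizontal saddle connections are long is not open, and $\supp\,\mu_T$ can accumulate on $K_i$. The correct argument, as in Step~2 of the proof of Theorem~\ref{thm: generic in orbit closure}, replaces disjointness by a quantitative non-divergence estimate: apply $g_{t_0}$ with $t_0$ negative enough that $g_{t_0}(K_i)\subset C(\delta)$, then use Theorem~\ref{thm: from MW} along generic $U$-orbits to bound $\mu_T\bigl(g_{-t_0}C(\delta)\bigr)$. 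Second, your plan for condition~(c) on type-(5) beds is too vague to assess; what you write (``adapt Step~3 \ldots with the orbit averages $\mu_{M,t}$ replaced by $\mu_T$'') elides the real issue, which is that $\mu_T$ is an integral over $\mu_T$-generic points, each of which does have horizontal saddle connections of length $T$, so Step~3's hypothesis ``$M$ not of type (5)'' must be checked anew and the linearization estimate integrated over $\BB$; the analogous work is done carefully in the proof of Theorem~\ref{thm: teaser1}, but is entirely sidestepped by the paper's Rel-invariance route. Finally, a minor correction: $g_\tau$ scales a horizontal slit of length $T$ to length $e^{\tau/2}T$, not $e^\tau T$, so the covariance is $g_\tau\circ\Psi_T=\Psi_{e^{\tau/2}T}\circ g_\tau$ and $\mu_T=g_{2\log T,*}\mu_1$.
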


The proof of
Theorem \ref{thm: teaser2} goes through almost verbatim. We leave the
details to the reader.

\begin{proof}[Proof of Theorem \ref{thm: teaser1}]
We apply Proposition \ref{prop: abstract} to 
\begin{equation}\label{eq: new mut}{\mu_t \df g_{t*} \mu, }
\end{equation}
where $\int f \, d \mu = \frac{1}{p} \int_0^p f(u_sM) ds$ and $\{u_s M
: s \in [0,p]\}$ is a closed horocycle of period $p$.

To verify that there is no escape of mass we use Theorem \ref{thm:
  from MW}. Let $\rho_0$ be the length of the shortest horizontal saddle
connection on $M$, let $t_0 = -2 \log \rho_0$ and let $t \geq
t_0$. The choice of $t_0$ ensures that the shortest horizontal saddle connection on
$g_tM$ has length at least 1. Since the orbit $Ug_tM$ is periodic,
the measure 
$\mu_t$ is identical to the measure obtained by averaging along any
integer multiple of the period $e^{t}p$ of this orbit. 
The set of saddle connections for $g_tM$ whose length is shorter than
1 is finite and none of these is horizontal. Thus for each such saddle
connection $\delta$, $\hol(u_sg_tM, \delta)$ diverges as $s \to
\infty$. Therefore if we take a
sufficiently large multiple of the period $s_0 = ke^tp, \, k \in \N$, 
any saddle connection on $M$ will have length greater than 1 either
for the surface $g_tM$ or for the surface $u_{s_0}g_t M$. As a
consequence, the hypothesis of Theorem \ref{thm: from MW} is satisfied
with $\rho=1$, and $I=[0, s_0]$ and there is no escape of mass for a
multiple of the period. 
By periodicity, and using the notation of
Definition \ref{dfn: orbit measure}, we have $\mu_t=\nu(g_tM, s_0)$,
and thus there is no escape of mass for 
the measures $\mu_t$ either. This verifies hypothesis (a)
of Proposition \ref{prop: abstract}, and hypothesis (b) is obvious. 

To verify hypothesis (c) we adapt the argument given in the proof of Theorem \ref{thm:
  generic in orbit closure}, retaining the same notation. Namely, in step 2 we verify (c) for beds
with $\Xi(\mathcal{B}) \neq \varnothing$. We fix $i$ and $\vre>0$, and
note that the argument in the preceding paragraph, using 
Theorem \ref{thm: from MW}, implies that there is $\delta>0$ such that  if $t$
is large enough so that $ g_{t}M$ has no horizontal saddle of length
shorter than 1, then 
\begin{equation}\label{eq: quant nondiv2}{
\mu_t(C(\delta)) < \vre.
}\end{equation}
Then we take $t_0$ small enough so that all
surfaces in $g_{t_0} (K_i)$ have horizontal saddle connections
shorter than $\delta$, and set $\mathcal{U} =
g_{-t_0}(C(\delta))$. Then \eqref{eq: quant nondiv2} and $\mu_{t+t_0} =
g_{t_0*}\mu_t$ imply that for all
sufficiently large $t$, 
$
\mu_t(\mathcal{U}) < \vre, 
$
as required. 

Continuing with Step 3, it remains to verify (c) for beds with
$\Xi(\mathcal{B}) = \varnothing.$ 
We define $F, L_i, K_i$ as in the proof of Theorem \ref{thm:
  generic in orbit closure}, recalling that surfaces in $L_i$ have no
horizontal saddle connections. Given $\vre$ and $i$, 
we define $\mathcal{U}$ via \eqref{eq: defn U}, and need to show
that $\mu_t(\mathcal{U})< \vre$ for all sufficiently large $t$. For
this it suffices to prove that 
\begin{equation}\label{eq: this is what we need}{
\left|\widehat{\mathcal{I}} \right| < \vre \, e^tp, \ \ \text{
  where } \widehat{\mathcal{I}} = \{s \in [0, e^tp]: u_s g_t M \in
\mathcal{U} \}.
}\end{equation}

Before proving \eqref{eq: this is what we need}, we claim that 
for all sufficiently large $t$, if $s_0 \in \widehat{\mathcal{I}}$ and
$g_t u_{s_0} M = F(M',x,y)$ with $M' \in \mathcal{W} \subset \LL$, then $y \neq 0$.
Indeed, suppose otherwise, that is there are $t_n \to \infty$, $M'_n \in
\mathcal{W}$, $|x_n| \leq i+1$ and $s_n \in [0,e^{t_n}p]$ 
such that 
$$u_{s_n}g_{t_n}M  = 
\rel_{x_n}(M'_n).$$
Set $s'_n = e^{-t_n} s_n$ so that $g_{t_n}u_{s'_n}M  = \rel_{x_n}(M'_n).
$
which implies 
$$
u_{s'_n}M = \rel_{x'_n} g_{-t_n} M'_n, \ \ \text{ where } x'_n =
e^{-t_n/2}x_n \to 0.
$$
Our hypothesis that $M$ does not belong to a closed $G$-orbit implies
that $x_n \neq 0$. 
Since $U$ commutes with $\rel_{x'_n}$, it
follows that 
\begin{equation}\label{eq: getting closer}{UM = \rel_{x'_n}U  g_{-t_n} M'_n,
}\end{equation}
and hence $Ug_{-t_n} M'_n$ is a closed horocycle of period $p$ on
$\LL$. There are only finitely many such closed horocycles on $\LL$
and their union is a compact set disjoint from the closed orbit
$UM$. Since $x'_n \to 0$, this contradicts \eqref{eq: getting closer},
and proves the claim.  

We now note that the argument given in the proof of Theorem  \ref{thm:
  generic in orbit closure} for proving \eqref{eq: need to show to
  finish} goes through, with the same
notations, and proves \eqref{eq: this is what we need}. Indeed the only information we needed in the
proof of \eqref{eq: need to show to finish} was that the number $y=y(s_0)$ considered in the
proof was nonzero, which is exactly what we have shown in the
preceding paragraph. 
\end{proof}

\begin{remark}
When $D$ is a square, Theorem \ref{thm: teaser1} can also be proved
by exploiting the connection between $\EE_D(1,1)$ and a homogeneous
space $\SL_2(\R) \ltimes \R^2 / \Gamma$ (see \cite{EMS}) and using a theorem of Shah
\cite[Thm. 3.7.6]{KSS handbook}. 
\end{remark}

Generalizing Theorem \ref{thm: teaser1} we have:

\begin{thm}\label{thm: limits of UZ}
Let $\mathcal{O}$ be a minimal set for the $U$-action, and let $\mu$
be the $U$-invariant measure on $\mathcal{O}$. Suppose that
$\mathcal{O}$ is not contained in a closed $G$-orbit. Then $g_{t*} \mu$ tends
to the flat measure on $\EE_D(1,1)$. 
\end{thm}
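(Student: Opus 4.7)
The plan is to apply Proposition \ref{prop: abstract} to the measures $\mu_t = g_{t*}\mu$, following the proofs of Theorems \ref{thm: teaser1} and \ref{thm: generic in orbit closure}. By Proposition \ref{prop: minimal}, the minimal set $\mathcal{O}$ is either 1-dimensional (a periodic $U$-orbit) or 2-dimensional (a closed $UZ$-orbit of Type (A) by Corollary \ref{cor: missing}). In the 1-dimensional case this is exactly Theorem \ref{thm: teaser1}, so the new content concerns the 2-dimensional case.

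For hypothesis (a) of Proposition \ref{prop: abstract} (no escape of mass), I would use that $\mathcal{O}$ is compact, so there is $\rho_0>0$ such that every $M\in\mathcal{O}$ has horizontal saddle connections of length at least $\rho_0$, and hence surfaces in $g_t\mathcal{O}$ have horizontal saddle connections of length at least $e^{t/2}\rho_0 \geq 1$ for $t$ large. Choosing $M$ generic for $\mu$, Proposition \ref{prop: Katok Spatzier} gives that $g_tM$ is generic for $\mu_t$, and since the non-horizontal saddle connections of $g_tM$ drift linearly under $U$, the hypothesis of Theorem \ref{thm: from MW} is satisfied with $\rho=1$ for $T$ sufficiently large, yielding (a). Hypothesis (b) is immediate since each $\mu_t$ is $U$-invariant (as $g_t^{-1}Ug_t = U$).

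For hypothesis (c) on beds $\mathcal{B}$ with $\Xi(\mathcal{B})\neq\emptyset$ (types 1--4, 6), I would apply Step 2 of the proof of Theorem \ref{thm: teaser1} verbatim: surfaces in the compact set $K_i$ have horizontal saddle connections of length at most $i$, so choosing $t_0$ with $e^{t_0/2}i<\delta$ yields $g_{t_0}(K_i)\subset C(\delta)$, and the neighborhood $\mathcal{U}=g_{-t_0}(C(\delta))$ satisfies $\mu_t(\mathcal{U})=\mu_{t-|t_0|}(C(\delta))$, which is small by (a).

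The main obstacle is hypothesis (c) for beds of type 5, associated with a closed $G$-orbit $\LL$. Here I would follow the linearization argument of Step 3 of the proof of Theorem \ref{thm: generic in orbit closure}, whose critical ingredient is showing that whenever $s_0\in\widehat{\mathcal{I}}$ and $u_{s_0}g_tM = F(M',x,y)$, then $y\neq 0$ for $t$ large. If $\mathcal{O}\not\subset\rel\cdot\LL$, this follows from $U$-minimality of $\mathcal{O}$: the closed $U$-invariant set $\mathcal{O}\cap\rel\cdot\LL$ is a proper subset of $\mathcal{O}$ and hence empty, so $M\notin\rel\cdot\LL$. If $\mathcal{O}\subset\rel\cdot\LL$, then $\mathcal{O}\subset\rel_{s'_0}\LL$ for some fixed $s'_0\neq 0$ (since $\mathcal{O}\not\subset\LL$), and $y=0$ forces $x - s'_0 e^{t/2} \in e^{t/2}\cdot S_{M''_0}$ where $S_{M''_0} = \{c : \rel_c M''_0 \in \LL\}$ for a suitable lift $M''_0\in\LL$; discreteness of $S_{M''_0}$ for $\mu$-generic choices (from transversality of $Z$ to $T\LL$) combined with $|x|<i+1$ yields the contradiction for $t$ large. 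Verifying this discreteness, which requires understanding the intersection of $\LL$ with its own rel-translates along the closed horocycle $UM'_0\subset\LL$, is the technical heart of the argument.
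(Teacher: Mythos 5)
Your reduction to the 1-dimensional (Theorem~\ref{thm: teaser1}) and 2-dimensional (type (A), $UZ$-invariant) cases is correct, and the 1-dimensional case is handled properly. But in the 2-dimensional case you have missed the key simplification the paper uses, and your substitute argument has a gap.

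The paper's proof of the 2-dimensional case is very short: by Corollary~\ref{cor: missing} the measure $\mu$ is not merely $U$-invariant but $UZ$-invariant. Since $g_t$ normalizes $UZ$ (indeed $g_t u_s g_{-t} = u_{e^t s}$ and $g_t \rel_z g_{-t} = \rel_{e^{t/2}z}$), each $g_{t*}\mu$ and hence any weak-$*$ limit $\nu$ is again $UZ$-invariant. Combined with nondivergence and the vanishing of $\nu$ on surfaces with horizontal saddle connections (both obtained by repeating the arguments from Theorem~\ref{thm: teaser1}, corresponding to your hypotheses (a) and (c) for $\Xi(\BB)\neq\varnothing$), Claim~\ref{claim: linchpin} applies directly and gives $\nu = $ flat measure. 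No linearization around closed $G$-orbits is needed. Your plan to run the entire Proposition~\ref{prop: abstract} machinery, including the type-5-bed linearization, overlooks the extra invariance and does substantially more work than necessary.

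Moreover, your sketch of the linearization step does not actually go through. In Theorem~\ref{thm: teaser1} the measure $\mu_t$ is normalized arclength on the single periodic horocycle $U g_t M$ of period $e^t p$, so the desired estimate $\mu_t(\mathcal{U})<\vre$ is precisely the statement $|\widehat{\mathcal{I}}|<\vre e^t p$ about a one-parameter time average, and the covering argument with the intervals $\mathcal{J}'_\ell$ makes sense. In the 2-dimensional case $\mu$ is the area measure on the torus $\mathcal{O}$, the $U$-orbits in $\mathcal{O}$ are dense (not closed), and $\mu_t(\mathcal{U})$ is an integral over a two-dimensional domain. The one-dimensional interval-nesting argument does not transfer verbatim; you would need a two-parameter covering argument, which you have not supplied. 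Finally, your attempted subcase $\mathcal{O}\subset\rel\cdot\LL$ is vacuous when $\dim\mathcal{O}=2$: since surfaces of type (A) have $\rel$ globally defined (Corollary~\ref{thm: criterion}), $\rel_{-s'_0}\mathcal{O}$ would be a two-dimensional $U$-minimal set inside $\LL\cong G/\Gamma$, which is impossible because $U$-minimal sets in a closed $G$-orbit are closed horocycles. So the ``discreteness'' analysis you flag as the technical heart is aimed at a case that cannot occur, and the genuine remaining work (transferring the averaging argument to two dimensions) is left untouched.
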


\begin{proof}
If $\mathcal{O}$ is a closed $U$-orbit this follows from Theorem
\ref{thm: teaser1}. According to Corollary \ref{cor: missing}, in the
remaining case $\dim \mathcal{O}=2$, the cylinder decomposition is of
type (A), and the measure $\mu$ is invariant under $UZ$. Let $\nu$ be
any limit point of $g_{t_n*}\mu$, for $t_n \to \infty$. By repeating
the arguments given in the proof of Theorem \ref{thm: teaser1} we find
that $\nu$ is a probability measure, and gives zero mass to surfaces with horizontal
saddle connections. Also clearly $\nu$ is $UZ$-invariant. Therefore,
by Claim \ref{claim: linchpin}, $\nu$ is the flat measure on
$\EE_D(1,1)$. 
\end{proof}

\begin{proof}[Proof of Theorem \ref{cor: for Giovanni}]
We show the existence of $\nu=\lim_{t \to \pm \infty}
g_{t*}\mu$ and describe this limit explicitly, for each of the
measures $\mu$ in Theorem \ref{thm: Calta Wortman 
  revisited2}. We treat the cases $t \to +\infty$ and $t\to -\infty$
separately, beginning with the case $t \to +\infty$. If $\mu$ is of
type 1, then the limit measure $\nu$ is either the flat measure on
$\EE_D(1,1)$ or a $G$-invariant measure on a closed $G$-orbit, by Theorem \ref{thm:
  teaser1}. If $\mu$ is of type 2, then Theorem \ref{thm: limits of
  UZ} implies that $\nu$ is flat measure. If $\mu$ is of type 3, 4, or
6, then there is some $T \neq 0$ such that $\mu = \mu_T$ is the pushforward of a
$G$-invariant measure under $\Psi_T$, for the map $\Psi_T$ described
in \S \ref{subsection: nonempty}. Note that this map satisfies the
following equivariance rule: for $M \in \mathcal{B}$, $g_t \Psi_T( M )
= \Psi_{e^tT}(g_tM)$. This can be seen by examining the definition of
$\Psi_T$ in each case and using Proposition \ref{prop: distributive}. 
Therefore we have $g_{t*}\mu = \mu_{e^tT}$, and
by Theorem \ref{thm: and more}, the limit $\nu$ is the flat
measure. If $\mu$ is of type 5 then $\mu = \rel_{T*}\mu_0$, where
$\mu_0$ is a $G$-invariant measure on a closed $G$-orbit. If $T=0$
then $\mu = \mu_0$ is $G$-invariant and in particular $\nu =
\mu_0$. If $T \neq 0$ then the relation $g_t \rel_T =
\rel_{e^tT} g_t$ (see Proposition \ref{prop: distributive}) implies
that $g_{t*}\mu = \rel_{e^tT*}\mu_0$ and by 
Theorem \ref{thm: teaser2}, the limit measure $\nu$ is the flat
measure. Finally in case 7 the measure $\mu$ is $G$-invariant and
there is nothing to prove. 

When $t \to -\infty$, for each of the measures of type 1, 2, 3, 4 or 6,
almost every surface in the support of $\mu$ has at least one horizontal saddle
connection of some fixed length. As $t \to -\infty$, the length of
this saddle connection tends to zero and hence $g_{t*}\mu$ diverges in the space of
measures on $\HH(1,1)$.
 If $\mu$ is of type 5 then $\mu = \rel_{T*}\mu_0$, where
$\mu_0$ is a $G$-invariant measure on a closed $G$-orbit. The commutation relation $g_t \rel_T =
\rel_{e^tT} g_t$ implies that $g_{t*}\mu = \rel_{e^tT*}\mu_0$ and
since $t \to -\infty$, the measure tends to the $G$-invariant measure
$\mu_0$. Finally in case 7 the measure $\mu$ is $G$-invariant and
there is nothing to prove. 
\end{proof}

We now collect some results which will be used in the proof of Theorem
\ref{thm: Matt absolved}. Let $B(a,b) = [-a, a] \times [-b,b]$, and  
for any $t>0$ and $v \in \R^2$, set 
$$E_{t,v} = \{g_tr_\theta v : \theta \in [0, 2\pi]\}.$$ The
following is an elementary fact about ellipses whose proof we omit. 
\begin{prop}\label{prop: ellipses in plane}
Given $\vre>0$, suppose $r_1, r_2, \delta_1, \delta_2$ satisfy the
inequalities 
\begin{equation}\label{eq: inequalities for boxes}{
r_1 < \vre r_2, \ \ \delta_1 < \vre \delta_2.}\end{equation}
Then for any 
$t>0$, either $E_{t,v} \subset B(r_2, \delta_2),$ or 
\begin{equation}\label{eq: inequalities for boxes2}{
\left| \left \{\theta \in [0, 2\pi] : g_t r_\theta v \in B(r_1, \delta_1)
  \right \} \right |
\leq  \vre \left| \left \{\theta \in [0,2\pi] : g_t
    r_\theta v \in B(r_2, \delta_2) \right \} \right| .
}\end{equation}
\end{prop}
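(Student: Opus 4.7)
The plan is to reduce the statement to a one-variable estimate involving $\arcsin$ by parametrizing the ellipse in standard form. Write $v = |v|(\cos\alpha,\sin\alpha)$, set $A = |v|e^{t/2}$, $B = |v|e^{-t/2}$, and let $\psi = \theta+\alpha$. Then $g_tr_\theta v = (A\cos\psi, B\sin\psi)$, so the set appearing in the proposition becomes
\[
S(r,\delta) \df \{\psi \in [0,2\pi] : |A\cos\psi| \le r,\ |B\sin\psi| \le \delta\}.
\]
Observe that $E_{t,v} \subset B(r_2,\delta_2)$ exactly when $A\le r_2$ and $B\le \delta_2$, so the alternative hypothesis is $A>r_2$ or $B>\delta_2$, and we must bound $|S(r_1,\delta_1)|/|S(r_2,\delta_2)|$ when this denominator is positive (when $S(r_2,\delta_2)$ has measure zero, both sets are empty since $B(r_1,\delta_1)\subset B(r_2,\delta_2)$ and there is nothing to prove).

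By the $(\pm\cos,\pm\sin)$ symmetry, $|S(r,\delta)| = 4|T(r,\delta)|$ where $T(r,\delta) = S(r,\delta)\cap[0,\pi/2]$. Setting $a_i = \min(r_i/A,1)$ and $b_i = \min(\delta_i/B,1)$, a direct check gives $T(r_i,\delta_i) = [\arccos(a_i),\,\arcsin(b_i)]$, so
\[
|T(r_i,\delta_i)| = \max\bigl(0,\ \arcsin(a_i)+\arcsin(b_i)-\tfrac{\pi}{2}\bigr).
\]
The main tool will be convexity of $\arcsin$ on $[0,1]$ with $\arcsin(0)=0$, which yields the sublinearity bound $\arcsin(\lambda x) \le \lambda\arcsin(x)$ for $\lambda \in [0,1]$ and $x\in[0,1]$. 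This is the only nontrivial analytic input.

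I would then do a short case analysis on which of $a_2,b_2$ equals $1$. If $a_2=1$ and $b_2<1$ (so $A\le r_2$ and $B>\delta_2$), then $a_1 \leq r_1/A$ and $b_1 = \delta_1/B < \vre b_2$, hence
$|T(r_1,\delta_1)| \le \arcsin(b_1) \le (b_1/b_2)\arcsin(b_2) < \vre\,|T(r_2,\delta_2)|$.
The symmetric case $b_2=1$, $a_2<1$ is identical. In the remaining case $a_2<1$ and $b_2<1$, the inequalities $r_1<\vre r_2$, $\delta_1<\vre\delta_2$ give strict bounds $a_1<\vre a_2$, $b_1<\vre b_2$, and convexity gives $\arcsin(a_1)+\arcsin(b_1) < \vre(\arcsin(a_2)+\arcsin(b_2))$; subtracting $\pi/2$ from both sides and using $\vre<1$ to absorb $-\pi/2 < -\vre\pi/2$ yields
\[
|T(r_1,\delta_1)| < \vre(\arcsin(a_2)+\arcsin(b_2)) - \tfrac{\pi}{2} < \vre\bigl(\arcsin(a_2)+\arcsin(b_2)-\tfrac{\pi}{2}\bigr) = \vre\,|T(r_2,\delta_2)|.
\]

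The argument is self-contained and elementary, and there is no real obstacle; the only place one must be careful is the degenerate configuration where the ellipse fails to enter $B(r_2,\delta_2)$ at all, which corresponds to $\arcsin(a_2)+\arcsin(b_2)\le\pi/2$ in Case C — then the denominator vanishes and the statement is vacuous (or read as $0\le 0$). Everywhere else the bound is strict, which is what the proposition asserts.
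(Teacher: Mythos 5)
The paper declares this ``an elementary fact about ellipses whose proof we omit,'' so there is no paper proof to compare against; your task is to supply a correct one, and you succeed.

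Your reduction is correct: after rotating by $\alpha$ and rescaling, $g_t r_\theta v = (A\cos\psi, B\sin\psi)$, the symmetry gives $|S| = 4|T|$ with $T = S\cap[0,\pi/2]$, and on that quadrant the two box constraints become $\cos\psi\le a_i$ and $\sin\psi\le b_i$, so $T(r_i,\delta_i) = [\arccos a_i, \arcsin b_i]$ and $|T(r_i,\delta_i)| = \max\bigl(0,\,\arcsin a_i + \arcsin b_i - \tfrac{\pi}{2}\bigr)$. The sole analytic input, $\arcsin(\lambda x)\le\lambda\arcsin(x)$ for $\lambda\in[0,1]$ from convexity of $\arcsin$ with $\arcsin(0)=0$, is exactly the right tool. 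The three-way case analysis on whether $a_2$ or $b_2$ saturates at $1$ is exhaustive (since $E_{t,v}\not\subset B(r_2,\delta_2)$ forces $a_2<1$ or $b_2<1$), and in each case you correctly handle the $\max(0,\cdot)$: when the numerator is zero there is nothing to prove, and when it is positive your chain of inequalities shows the denominator must also be positive, so the ratio is well-defined and strictly below $\vre$.

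Two minor points. First, you invoke $\vre < 1$ explicitly in Case C (to pass from $-\pi/2$ to $-\vre\pi/2$) and implicitly in Case A (to get $\delta_1/B < 1$ and hence $b_1 = \delta_1/B$, and to apply the convexity bound with $\lambda = b_1/b_2 < 1$). This should be stated as a standing assumption at the start; indeed the statement as written is only used in the paper with small $\vre$, and your proof does not cover $\vre \ge 1$. Second, the phrase ``both sets are empty'' when $|S(r_2,\delta_2)| = 0$ should read ``both sets have measure zero'' --- tangential contact of the ellipse with $\partial B(r_2,\delta_2)$ gives a nonempty set of measure zero. Neither affects the correctness of the argument.
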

\qed

We will also need the following analog of Corollary \ref{cor: needed
for linearization}. 
\begin{prop}\label{prop: needed for linearization2}
Given positive $\vre, T, \eta$, and a compact subset $L \subset \LL_\infty$, there
are positive  $t_0, \delta$, a neighborhood $\mathcal{W}$ of $L$ and a compact set $\Omega
\subset \LL$ containing $\mathcal{W}$, such that the map \eqref{eq: defn map 1}
is well-defined, continuous and injective on $\Omega \times [-T, T]
\times [-\delta, \delta]$, and for any $t \geq
t_0$, for any interval $I \subset J \df
\left[\frac{\pi}{2}-\eta,\frac{\pi}{2}+  \eta\right] \cup
\left[\frac{3\pi}{2}-\eta, \frac{3\pi}{2}+ \eta\right]$,
and any $M \in \HH$, if there is $\theta_0 \in I$ such that $g_t r_{\theta_0} g_{-t}M \in
\mathcal{W}$ then 
\begin{equation}\label{eq: what we need2}{
\frac{|\{\theta \in I : g_t r_\theta g_{-t} M \notin \Omega \}|}{|I|} < \vre.
}\end{equation}
\end{prop}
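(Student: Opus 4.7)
The plan is to reduce the statement to Proposition \ref{cor: new nondivergence} via an explicit $\SL_2(\R)$ factorization of $g_t r_\theta g_{-t}$ that expresses it as a horocycle-flow element followed by a bounded correction; the injectivity part follows from Lemma \ref{lem: decagon injective} applied to an enlarged compact set. I shall work with $\theta \in [\pi/2 - \eta, \pi/2 + \eta]$, the case of $\theta$ near $3\pi/2$ being identical.

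First, setting $\alpha = \theta - \theta_0$, a direct matrix computation in $\SL_2(\R)$ verifies the identity
$$ g_t r_\alpha g_{-t} \;=\; E(\alpha, t)\, u_{s(\alpha, t)}, \qquad E(\alpha, t) \df \begin{pmatrix} \cos\alpha & 0 \\ e^{-t}\sin\alpha & 1/\cos\alpha \end{pmatrix}, \quad s(\alpha, t) \df -e^t\tan\alpha, $$
valid for $|\alpha| < \pi/2$. Writing $M^* \df g_t r_{\theta_0} g_{-t} M$ and using $r_\theta = r_\alpha r_{\theta_0}$, this yields the key identity
$$ g_t r_\theta g_{-t} M \;=\; E(\alpha, t)\, u_{s(\alpha, t)}\, M^*. $$
For $|\alpha| \leq 2\eta$ and $t \geq 1$, the elements $E(\alpha, t)$ lie in a fixed compact subset $E_0 \subset G$ depending only on $\eta$; importantly, $E$ is lower triangular with diagonal entries bounded below by $\cos(2\eta)$ and off-diagonal entry of size $O(e^{-t})$.

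Second, I would select $\delta$, $\mathcal{W}$, and $\Omega$ as follows. Applying Proposition \ref{cor: new nondivergence} to $L$ with parameter $\vre\cos^2(2\eta)/2$ in place of $\vre$, I obtain a neighborhood $\mathcal{W}$ of $L$ and a compact $\tilde\Omega \subset \LL$ such that the non-divergence conclusion of that proposition holds with this smaller $\vre$. Using the stronger form of the construction in the proof of Proposition \ref{cor: new nondivergence} (where $\tilde\Omega$ is realized as a box set $K_{r', \sigma}$ of surfaces avoiding saddle connections with holonomy in $(-r', r')\times(-\sigma, \sigma)$), and using the lower triangular form of $E(\alpha, t)$, it follows that for $t \geq t_0$ sufficiently large, every surface in $\Omega \df E_0 \cdot \tilde\Omega$ has no horizontal saddle connection of length less than $r'\cos(2\eta)$: indeed, a horizontal saddle connection of length $\ell$ on $EM'$ corresponds to a saddle connection of $M'$ with horizontal component $\ell/\cos\alpha \geq \ell$ and vertical component of order $e^{-t}\ell$, which for large $t$ forces this holonomy to lie outside the box. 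Lemma \ref{lem: decagon injective} then produces $\delta > 0$ so that $F$ is well-defined, continuous and injective on $\Omega \times [-T, T] \times [-\delta, \delta]$, and $\mathcal{W} \subset \Omega$ as required.

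Third, for the measure estimate, the map $\alpha \mapsto s(\alpha, t) = -e^t\tan\alpha$ is a diffeomorphism from $[-2\eta, 2\eta]$ onto its image with Jacobian $|ds/d\alpha| = e^t/\cos^2\alpha$ bounded between $e^t$ and $e^t/\cos^2(2\eta)$, so relative measures in $\alpha$-coordinates are at most $1/\cos^2(2\eta)$ times the corresponding relative measures in $s$-coordinates. Given $I \subset J$ and $\theta_0 \in I$ with $M^* \in \mathcal{W}$, the interval $\psi_t(I - \theta_0)$ contains $0 = s(0, t)$, and $u_0 M^* = M^* \in \mathcal{W}$; the non-divergence estimate gives that the set of $s$ in this interval with $u_s M^* \notin \tilde\Omega$ has relative measure less than $\vre\cos^2(2\eta)/2$. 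Transferring via the key identity, and using $E_0 \tilde\Omega \subset \Omega$ and the Jacobian bound, the relative measure of $\theta \in I$ with $g_t r_\theta g_{-t} M \notin \Omega$ is less than $\vre/2 < \vre$, as required. The main technical obstacle is verifying the injectivity hypothesis for $\Omega = E_0\tilde\Omega$: this depends crucially on the lower triangular form of $E(\alpha, t)$ together with the box (rather than purely horizontal) structure of $\tilde\Omega$ supplied by the proof of Proposition \ref{cor: new nondivergence}.
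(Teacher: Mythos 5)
Your approach differs genuinely from the paper's. The paper works with the original surface $M$: it uses the factorization $u_{-e^t\tan\theta} = a(t,\theta)\,g_t r_\theta g_{-t}$ with $a = a_2 a_1$ ($a_2$ diagonal, $a_1$ lower unipotent) and applies the horocycle nondivergence of Proposition \ref{cor: new nondivergence} to $M$, which requires $\cos\theta$ to be bounded away from $0$ on $J$. You instead pass through the auxiliary surface $M^\ast = g_t r_{\theta_0} g_{-t} M$ (the surface the hypothesis places in $\mathcal{W}$), write $g_t r_\theta g_{-t} M = E(\alpha,t)\, u_{-e^t\tan\alpha}\, M^\ast$ with $\alpha = \theta - \theta_0$, and apply nondivergence to $M^\ast$, needing only $|\alpha| \le |I| \le 2\eta$. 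These are not the same reduction: the paper rescales along the horocycle orbit of $M$, you along the horocycle orbit of $M^\ast$.

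A point worth flagging, since it bears on whether you are proving what is actually needed. With $J$ as written (a neighborhood of $\pi/2, 3\pi/2$), the constants $C_1 = \max_{\theta\in J}(\cos\theta)^{-1}$ and $C_2$ appearing in the paper's own proof are infinite (since $\cos(\pi/2)=0$), and the claimed uniform limit $a_1(t,\theta) \to \mathrm{Id}$ on $J$ is false; also the lower-left entry of $a_1$ should read $-e^{-t}\tan\theta$ rather than $-e^t\tan\theta$. Thus the paper's proof does not go through for its stated $J$, while yours does, provided one adds the running hypothesis $\eta < \pi/4$ so that $\cos(2\eta)>0$. On the other hand, the way the proposition is invoked in the proof of Theorem \ref{thm: circle averages} suggests that $J$ was meant to be the complement of such a neighborhood; in that case $|\alpha|$ can approach $\pi$, $E(\alpha,t)$ is unbounded, and your reduction fails. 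You should pin down which version of the statement is required downstream before committing to this argument.

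Two smaller gaps in the write-up itself: (i) $\Omega = E_0\tilde\Omega$ avoids short horizontal saddle connections only for $t$ sufficiently large (your own estimate shows the vertical component of $\hol(M',\delta)$ is $O(e^{-t})$ and needs to be below $\sigma$), so $E_0$ must be defined with $t\ge t_0$ and then closed up for compactness; the limit matrices $\mathrm{diag}(\cos\alpha, 1/\cos\alpha)$ retain the required property. (ii) Lemma \ref{lem: decagon injective} gives injectivity only on $\LL_r\times[-T,T]\times\{0\}$, i.e.\ at $y=0$; extending to the slab $|y|\le\delta$ is the content of Corollary \ref{cor: needed for linearization}, using Proposition \ref{prop: continuous}, and should be cited or reproduced rather than attributed to the Lemma alone. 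Both are easily repaired.
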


\begin{proof}
Using Proposition \ref{prop: continuous}, we see that it suffices to prove an analog of
Proposition \ref{cor: new nondivergence}; namely, that for any
positive $\eta, \vre, r$ and any compact $L \subset \LL_\infty$, there is a
neighborhood $\mathcal{W}$ of $L$, a compact $\Omega \subset \HH$
containing $\mathcal{W}$, and $t_0$, such that surfaces in $\Omega$ have no
horizontal saddle connections shorter than $r$, and for any $t \geq
t_0$, any interval
$I \subset J$ which contains $\theta_0$ with $g_{t} r_{\theta_0}
g_{-t} M \in \mathcal{W}$, the estimate \eqref{eq: what we need2} holds. 
This statement can be obtained from Proposition \ref{cor: new
  nondivergence} as follows.

A matrix computation shows that 
\begin{equation}\label{eq: matrix shows}{
u_{e^t \tan \theta} = a(t, \theta) g_t r_\theta g_{-t}, \ \ \text{ where } a(t,\theta) \df \left(\begin{matrix} (\cos\theta)^{-1} & 0
    \\ -e^{-t} \sin \theta & \cos \theta \end{matrix}  \right). 
}\end{equation}
We write $a(t, \theta)  = a_2(\theta) \, a_1(t, \theta),$ where  
$$
a_2(\theta) =  \left
( \begin{matrix} (\cos \theta)^{-1} & 0 \\ 0 & \cos
  \theta \end{matrix} \right), \ \ 
a_1(t, \theta ) = \left ( \begin{matrix} 1 & 0 \\ 
-e^t \tan \theta & 1 \end{matrix} \right)
. 
$$
Thus $a_1(t, \theta) \to \mathrm{Id}$ as $t \to \infty$, uniformly for
$\theta \in J$, and $a_2(\theta)$ preserves
horizontal saddle connections, changing their length by a factor of at
most $C_1 \df \max_{\theta \in J} (\cos
\theta)^{-1}.$ Also let $C_2$ be an upper bound on the derivative of
the map $\theta \mapsto \tan \theta$ on the interval $J$.  Let 
$L' = \bigcup_{\theta \in J} a_2 (\theta) L$, which is also a compact
subset of $\LL_\infty$. 
We apply Proposition \ref{cor: new nondivergence} with $C_1(r+1),
\vre/C_2, L'$ in place
of $r, \vre, L$, to obtain a neighborhood $\mathcal{W}'$ of  $L'$ and a compact
set $\Omega'$ containing $\mathcal{W}'$, such that surfaces in $\Omega'$
have no horizontal saddle connections shorter than $C_1(r+1) $, and
\eqref{eq: what we need} holds. We define: 
$$
\Omega'' = \bigcup_{\theta \in J} a_2(\theta)^{-1} \Omega'.
$$
Then $\Omega''$ is a compact set, containing no surfaces with
horizontal saddle connections of length less than $r+1$. Therefore for
$t_0$ sufficiently large, surfaces in
$$
\Omega \df \bigcup_{t \geq t_0} a_1(t,\theta)^{-1} \Omega'' =
\bigcup_{\theta \in J, t \geq t_0} a(t,\theta)^{-1} \Omega'
$$  
have no horizontal saddle connections of length less than
$r$. We will make $t_0$ larger below. We also note that $a(t,\theta)^{-1} \Omega' \subset \Omega$ for every $t
\geq t_0, \theta \in J$. Now we set 
$$\mathcal{W} = \Omega \cap \bigcap_{\theta \in J, t \geq t_0}a(t,\theta)^{-1}
\mathcal{W}',$$
so that if $g_t r_{\theta_0} g_{-t} M \in \mathcal{W}$ for some
$\theta_0 \in J$ and $t \geq t_0$, then by \eqref{eq: matrix shows}, $u_{e^t \tan
  \theta_0}M \in \mathcal{W}'$. Since \eqref{eq: what we need} holds for
$\Omega'$ (with $\vre/C_2$ instead of $\vre$), we obtain that \eqref{eq: what we
need2} holds for $\Omega$. Finally we note that if $t_0$ is chosen sufficiently large, then
$\mathcal{W}$ contains an open set containing $L$.
\end{proof}

\begin{prop} \label{prop: claim}
Suppose $\LL$ is a closed $G$-orbit in $\HH(1,1)$ and $M \notin \LL$. 
Then there are positive $\til \delta, \til t$ so that for all 
$t \geq \til t$, if there is $\theta \in [0, 2\pi]$ such that $g_t r_{\theta} M =
M' \pluscirc (x,y)$ with $M' \in \LL$, then $e^{-t}x^2+e^ty^2
\geq  \til \delta$.  

\end{prop}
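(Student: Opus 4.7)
The plan is to argue by contradiction, using the continuity of the Rel action and the fact that $\LL$ is closed and $G$-invariant. The key observation is that the conclusion $e^{-t}x^2+e^{t}y^2 \geq \til \delta$ has a natural interpretation: the quantity $e^{-t}x^2+e^{t}y^2$ is precisely the squared Euclidean norm of $g_{-t}(x,y)$. So the hypothesis of the Proposition, after applying $g_{-t}$, becomes a statement purely about $r_\theta M$ being ``close'' to $\LL$ in Rel-coordinates.

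More precisely, suppose for contradiction that no such $\til t, \til \delta$ exist. Then there are sequences $t_n \to \infty$, $\theta_n \in [0,2\pi]$, $M'_n \in \LL$, and $(x_n,y_n) \in \R^2$ with
\[ g_{t_n}r_{\theta_n} M = M'_n \pluscirc (x_n,y_n) \quad\text{and}\quad e^{-t_n}x_n^2+e^{t_n}y_n^2 \to 0.\]
Applying $g_{-t_n}$ to both sides and using Proposition \ref{prop: distributive} (together with the $G$-invariance of $\LL$), we obtain
\[ r_{\theta_n} M = M''_n \pluscirc (x'_n, y'_n),\]
where $M''_n = g_{-t_n}M'_n \in \LL$ and $(x'_n,y'_n) = (e^{-t_n/2}x_n, e^{t_n/2}y_n)$ satisfies $(x'_n)^2+(y'_n)^2 \to 0$.

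Passing to a subsequence, we may assume $\theta_n \to \theta_*$, so $r_{\theta_n}M \to r_{\theta_*}M$. Since the domain $\Omega \subset \HH \times \mathfrak{R}$ of the Rel action is open (Proposition \ref{prop: continuous}) and contains the pair $(r_{\theta_*}M, 0)$, for sufficiently large $n$ the map $\rel_{-(x'_n,y'_n)}$ is defined at $r_{\theta_n}M$. The path-reversal property of integral curves (equivalently, equation \equ{eq: equivariance property}) gives
\[ \rel_{-(x'_n,y'_n)}(r_{\theta_n}M) = M''_n,\]
and applying the continuity assertion of Proposition \ref{prop: continuous} to the convergent sequence $(r_{\theta_n}M, -(x'_n,y'_n)) \to (r_{\theta_*}M, 0)$ yields $M''_n \to r_{\theta_*}M$. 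Because $M''_n \in \LL$ and $\LL$ is closed, we conclude $r_{\theta_*}M \in \LL$; but $\LL$ is $G$-invariant, hence $M \in \LL$, contradicting the hypothesis $M \notin \LL$.

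The main (minor) subtlety lies in verifying that $\rel_{-(x'_n,y'_n)}$ is defined at $r_{\theta_n}M$ and sends it back to $M''_n$; this is where the openness of the Rel domain and the fact that $\rel_{-v}\circ\rel_v = \mathrm{id}$ on its domain (as integral curves are reversible, cf.\ Proposition \ref{prop: commuting}(i) and \equ{eq: equivariance property}) are used. Once this is in hand, the contradiction is immediate from the closedness and $G$-invariance of $\LL$.
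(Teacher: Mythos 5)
Your proof is correct and takes essentially the same approach as the paper's: after pulling back by the geodesic flow, the bound $e^{-t}x^2+e^{t}y^2 \to 0$ becomes a Rel displacement tending to zero, and the continuity/reversibility of Rel together with the closedness and $G$-invariance of $\LL$ force $M$ (or a rotate of it) into $\LL$. The only cosmetic difference is that you keep $r_{\theta_n}$ and pass to a convergent subsequence, whereas the paper absorbs $r_{-\theta_n}$ into the pullback so no subsequence is needed.
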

\begin{proof}
Assume by contradiction that there are $M_n
\in \LL$, $t_n \to \infty$, $\theta_n \in [0, 2\pi]$ and
$x_n, y_n$ such that 
\begin{equation}\label{eq: for contradiction claim}{
e^{-t_n}x_n^2+e^{t_n}y_n^2 \to 0}\end{equation}
and $g_{t_n}
r_{\theta_n} M = M_n \pluscirc (x_n, y_n)$. 
By Corollary \ref{thm: criterion}, there are neighborhoods $\mathcal{U} \subset \HH(1,1)$
containing $M$ and $\mathcal{V} \subset \R^2$ containing 0, such that the map $(M', v)
\mapsto M' \pluscirc v$ is well-defined on $\mathcal{U} \times
\mathcal{V}$. Set 
$ v_n  = r_{-\theta_n} g_{t_n} (x_n, y_n).$ Then
  \eqref{eq: for contradiction claim} implies that $v_n \to 0$ and
  hence for all sufficiently large $n$, the maps $M' \mapsto M'
  \pluscirc \pm v_n$ are well-defined 
and continuous on $\mathcal{U}$. Set $\til M_n
  = M \pluscirc -v_n,$ so that $\til M_n \to M$. We have 
\[
\begin{split}
M & = r_{-\theta_n} g_{-t_n} \left(M_n \pluscirc (x_n, y_n)
\right) = r_{-\theta_n} g_{-t_n} M_n \pluscirc \left(  r_{-\theta_n}
  g_{-t_n} (x_n, y_n)
\right) \\ & = r_{-\theta_n} g_{-t_n} M_n \pluscirc v_n,  
\end{split}
\]
and this implies that $\til M_n = r_{-\theta_n} g_{-t_n} M_n$. We have
found a sequence in $\LL$ converging to $M$, contrary to the
assumption that $M \notin \LL$. 
\end{proof}

Now we prove Theorem \ref{thm: circle averages}.

\begin{proof}
We will use Proposition \ref{prop: abstract}. Hypothesis (a) was
verified (for any translation surface $M$) in \cite{eskinmasur}. To prove hypothesis
(b), we note that $\mu_t$ is invariant under the
conjugated group $\{g_t
r_{\theta} g_{-t} : \theta \in [0,2\pi]\}$. Conjugating we see that
$$
g_t r_\theta g_{-t} = \left ( \begin{matrix} \cos \theta & -e^t \sin
    \theta \\
e^{-t} \sin \theta & \cos \theta \end{matrix}\right).
$$
For every fixed $s$ and $t$ we set 
$$\theta(s,t) = \arcsin \left( -se^{-t}\right), \text{ so
that }-e^t \sin \theta(s,t) =s.$$
Then $g_t r_{\theta(s,t)} g_{-t}
  \to u_s$ as $t \to \infty$. Therefore the limit measure $\nu$ is
  invariant under each $u_s$.  

We now verify (c) for beds $\BB$ with $\Xi(\BB) \neq
\varnothing$. Let $K_1, K_2, \ldots$ be the sequence filling out the bed
$\BB$, as in Proposition \ref{prop: beds are
  submanifolds}.
Fix $i$ and let $\vre>0$. Then any surface in $K_i$ contains a
horizontal saddle connection of length at most $i$. 
As before, let $C(\delta)$ be the open set of surfaces whose shortest saddle
connection is shorter than $\delta$. 
By \cite{eskinmasur}, there are positive $t_1, \delta$ such that for all $t
\geq t_1$, 
$
\mu_{t}(C(\delta)) < \vre. 
$
Let $t_0$ satisfy \eqref{eq: condition on t0}, so that 
$K_i \subset \mathcal{U} \df g_{-t_0}(C(\delta))$.
Since $\mu_{t+t_0} = g_{t_0*} \mu_t$, for all $t> t_0+t_1$ we have 
$$
\mu_{t}(\mathcal{U}) = \mu_{t_0*} \mu_{t-t_0} (\mathcal{U})  =
\mu_{t-t_0}(C(\delta)) < \vre, 
$$
so (c) is satisfied. 

It remains to verify (c) for beds with $\Xi(\BB) = \varnothing.$ 
Let $F, L_i, K_i = F(L_i \times [-i, i])$ be as in the proof of Theorem \ref{thm: generic in
  orbit closure}. 
Given $i$ and $\vre>0$, choose 
\begin{equation}\label{eq: choice of j2}{
j> \frac{8(i+1)}{\vre}.
}\end{equation}
Choose $\eta>0$ small enough so that 
\begin{equation}\label{eq: choice eta}{
|J| = 
4\eta < \pi \vre, \ \ \text{where } J = \left[ \frac{\pi}{2}-\eta,
  \frac{\pi}{2}+\eta\right] \cup  \left[ \frac{3\pi}{2}-\eta,  \frac{3\pi}{2}+\eta \right].
}\end{equation}
Let $\mathcal{W}, \Omega, t_0, \delta$ be as in Proposition \ref{prop:
  needed for linearization2}, where we apply the proposition with
$\eta$ and with $j,
L_i, \vre/8$ instead of $T, L, \vre$. Let $\til \delta, \til t$ be as in Proposition \ref{prop: claim}. 
Making $\delta$ smaller and $t_0$ larger, we can assume that $t_0 \geq
\til t$ and 
\begin{equation}\label{eq: delta smaller}{
j\delta < \til \delta^2.}\end{equation}
Now let
\begin{equation}\label{eq: choice of delta'}{\delta' \in \left (0,  \frac{\vre
      \delta}{8} \right) 
}\end{equation}
and set
\begin{equation}\label{eq: defn U2}{
\mathcal{U}
= F\big(\mathcal{W} \times (-(i+1), i+1) \times (-\delta',
\delta')\big).
}\end{equation}
Then $\mathcal{U}$ is a neighborhood of $K_i$, and we need to show that 
$\mu_{t}(\mathcal{U})<\vre$ for all $t \geq t_0$. 
That is, we need to verify 
$$
\left|\widehat{\mathcal{I}} \right | < 2\pi \vre, \ \ \text{ where } \
\widehat{\mathcal{I}} = \{ \theta \in [0,2\pi] : g_t r_\theta M \in
\mathcal{U}\}. 
$$
In light of \eqref{eq: choice eta} it suffices to show that $\left|
  \widehat{\mathcal{I}} \cap J \right | < \pi \vre.$ 
Whenever $\theta_0 \in  \widehat{ \mathcal{I}} \cap J$ there are $M'  = M'(\theta_0)\in \mathcal{W}$ and
$(x, y) = (x(\theta_0), y(\theta_0)) \in (-(i+1), i+1) \times (-\delta', \delta')$
such that $g_t r_{\theta_0}M = F(M', x,y)$. We define the following intervals: 
\[ 
\begin{split}
\mathcal{I} = \mathcal{I}(\theta_0) & \df 
\{\theta \in [0, 2\pi] : g_t r_{\theta - \theta_0}g_{-t} (x, y) \in B(i+1, \delta')  \}
\\
\mathcal{J} = \mathcal{J}(\theta_0) & \df \{ \theta \in [0,2\pi] :g_t
r_{\theta - \theta_0} g_{-t} (x, y) \in B(j, \delta) \}.
\end{split}
\]
We think of $[0, 2\pi]$ as a circle by identifying $0$ and $2\pi$, and
think of these subsets as arcs on the circle. 
In view of Proposition \ref{prop: claim} and the choice
of $t_0$, we have $e^{-t}x^2 + e^{t}y^2 \geq \til \delta$. Using
\eqref{eq: delta smaller} and considering the two choices of 
$\theta$ which make $r_{\theta - \theta_0} g_{-t} (x,y)$ horizontal
and vertical, we see that $\mathcal{J}$ does not coincide with the
entire circle. Then \eqref{eq: choice of j2}, \eqref{eq: choice of delta'}
and Proposition \ref{prop: ellipses in plane} imply that 
\begin{equation}\label{eq: relative lengths2}{
\frac{|\mathcal{I} |}{|\mathcal{J} |} < \frac{\vre}{8}. 
}\end{equation}

We claim that
if $\theta \in \mathcal{J} \sm \mathcal{I}$ then either $g_tr_{\theta
  - \theta_0} g_{-t}M'
\notin \Omega$ or $g_t r_{\theta}M \notin \mathcal{U}$. 
Indeed, suppose $\theta \in \mathcal{J} \sm \mathcal{I}$ and
$g_tr_{\theta- \theta_0} g_{-t}M'
\in \Omega.$ According to Proposition \ref{prop: distributive}, 
\[
\begin{split}
 g_tr_{\theta}M & 
= g_tr_{\theta-\theta_0} g_{-t} g_t r_{\theta_0} M =
g_tr_{\theta-\theta_0} g_{-t} (M' \pluscirc (x,y) ) 
\\ & 
= g_tr_{\theta - \theta_0} g_{-t} M' \pluscirc
g_tr_{\theta - \theta_0}  g_{-t}(x, y). 
\end{split}
\]
The injectivity of $F$ on $\Omega \times [-j,j] \times [-\delta, \delta]$
implies that $g_tr_{\theta}M \notin \mathcal{U}$.
This proves the claim.

The claim, combined with \eqref{eq: inequalities for boxes2} 
and \eqref{eq: what we need2}, implies that 
$$
\frac{\left| \widehat{\mathcal{I}} \cap \mathcal{J} \right
  |}{| \mathcal{J}| }
\leq \frac{|\mathcal{I} |}{|\mathcal{J} |} + \frac{|\{ \theta
  \in \mathcal{J} : g_tr_{\theta-\theta_0} g_{-t} M' \notin \Omega |}{|\mathcal{J}|}< \frac{\vre}{4}.
$$

We have covered  $\widehat{\mathcal{I}} \cap J$ by the intervals
$\left \{\mathcal{J}(\theta_0) +\theta_0 : \theta_0 \in
  \widehat{\mathcal{I}} \right \}$, and
we pass to a subcover such that 
$$
\theta \in \widehat{\mathcal{I}} \cap J \ \implies \ 1 \leq 
\# \, \{\ell: \theta \in
\mathcal{J}'_\ell\} \leq 2,
$$
and obtain:
\[
\left| \widehat{\mathcal{I}} \cap J  \right | 
\leq \sum_{\ell} \left|
\widehat{\mathcal{I}} \cap 
\mathcal{J}'_\ell \right | < \frac{\vre}{4} \sum_{\ell} \left|
\mathcal{J}'_{\ell} \right | < 
\pi \vre.
\]
\end{proof}

\begin{proof}[Proof of Theorem \ref{thm: Matt absolved}]
This follows immediately from Theorem \ref{thm: circle averages} by an
argument developed by Eskin and Masur \cite{eskinmasur}. See \cite{EMS, EMM, Matt} for more
details. 
\end{proof}

\bibliographystyle{amsalpha}

\printindex

\end{document}